\documentclass[11pt,reqno]{amsart}

\usepackage[utf8]{inputenc}
\usepackage{enumerate}
\usepackage{dsfont}
\usepackage{color}
\usepackage{a4wide}
\usepackage[all]{xy}
\usepackage[colorlinks=true]{hyperref}
\usepackage{bbm}

\hypersetup{%
    ,urlcolor=blue
    ,citecolor=red
    ,linkcolor=blue
    }
\usepackage{amssymb}
\usepackage{parskip}
\setlength{\parindent}{17pt}
\usepackage[framemethod=tikz]{mdframed}
\usepackage{pifont}
\usepackage[geometry]{ifsym}

\allowdisplaybreaks

\definecolor{mycolor}{rgb}{0.122, 0.435, 0.698}

\newmdenv[innerlinewidth=0.5pt, roundcorner=4pt,linecolor=mycolor,innerleftmargin=6pt,
innerrightmargin=6pt,innertopmargin=6pt,innerbottommargin=6pt]{mybox}

\makeatletter
\renewcommand*{\eqref}[1]{%
  \hyperref[{#1}]{\textup{\tagform@{\ref*{#1}}}}%
}
\makeatother

\newcommand{\RN}[1]{%
  \textup{\uppercase\expandafter{\romannumeral#1}}%
}

\def\R{\mathbb{R}}

\newcommand{\blackdiamond}{\textrm{\FilledSmallDiamondshape}}
\newcommand{\whitediamond}{\textrm{\small{\SmallDiamondshape}}}
\newcommand{\Id}{\rm Id}
\newcommand{\Iq}{I_q}

\newtheorem{theorem}{Theorem}[section]
\newtheorem{lemma}[theorem]{Lemma}

\theoremstyle{definition}

\theoremstyle{Assumption}
\newtheorem*{assumption}{Assumption}

\theoremstyle{remark}
\newtheorem{remark}[theorem]{Remark}

\theoremstyle{Proposition}
\newtheorem{proposition}[theorem]{Proposition}

\theoremstyle{Corollary}
\newtheorem{corollary}[theorem]{Corollary}

\numberwithin{equation}{section}

\begin{document}
	\newpage
	\title[$\sigma$-Antithetic Multilevel Monte Carlo method]{Central Limit Theorem for the $\sigma$-antithetic multilevel Monte Carlo method}
	
	
	\author{Mohamed BEN ALAYA}
	\address{Mohamed Ben Alaya, Laboratoire De Math\'ematiques Rapha{\"{e}}l Salem, UMR 6085, Universit\'e De Rouen, Avenue de L'Universit\'e Technop\^ole du Madrillet, 76801 Saint-Etienne-Du-Rouvray, France}
	\curraddr{}
	\email{mohamed.ben-alaya@univ-rouen.fr}
	\thanks{}
	
	\author{Ahmed KEBAIER}
	\address{Ahmed, Kebaier, Université Sorbonne Paris Nord, LAGA, CNRS, UMR 7539,  F-93430, Villetaneuse, France}
	\curraddr{}
	\email{kebaier@math.univ-paris13.fr}
	\thanks{This research is supported by Laboratory of Excellence MME-DII, Grant no. ANR11LBX-0023-01 (\url{http://labex-mme-dii.u-cergy.fr/}).  Ahmed Kebaier benefited from the support of the chair Risques Financiers, Fondation du Risque.}
	
	\author{Thi Bao Tram NGO}
	\address{Thi Bao Tram NGO, Université Sorbonne Paris Nord, LAGA, CNRS, UMR 7539,  F-93430, Villetaneuse, France}
	\curraddr{}
	\email{ngo@math.univ-paris13.fr}
	\thanks{}
	
	\subjclass[2010]{60F05; 62F12; 65C05; 60H35}
	
	\keywords{ Multilevel Monte Carlo methods; Functional limit theorems;
Milstein scheme}
	
	\date{\today}
	
	\maketitle
	\begin{abstract}
	In this paper, we introduce the $\sigma$-antithetic multilevel Monte Carlo (MLMC) estimator for a multi-dimensional diffusion which is an extended version of the original antithetic MLMC one introduced by Giles and Szpruch \cite{a}. 
	Our aim is to study the asymptotic behavior of the weak errors involved in this new algorithm. 
	Among the obtained results, we prove that the error between on the one hand the average of the Milstein scheme without Lévy area and its $\sigma$-antithetic version build on the finer grid and on the other hand the  coarse approximation stably converges in distribution with a rate of order 1. 	
	We also prove that the error between the Milstein scheme without  L\'evy area and its $\sigma$-antithetic version stably converges in distribution with a rate of order $1/2$. More precisely, we have a functional limit theorem on the asymptotic behavior of the joined distribution of these errors based on a triangular array approach (see e.g. Jacod \cite{c}). 
Thanks to this result, we establish a central limit theorem of Lindeberg-Feller type for the $\sigma$-antithetic MLMC estimator. 
The time complexity of the algorithm is carried out.
	
	\end{abstract}
	
	\section{Introduction}
	In recent years, the  multilevel Monte Carlo (MLMC) algorithm, used to approximate $\mathbb E[\varphi(X_t,0\le t\le T)]$ for a given functional $\varphi$ and a stochastic process $(X_t)_{0\le t\le T}$,  have become a hot topic. This method introduced by Giles \cite{Gil},  that may  be seen as an extension of the works of Heinrich \cite{heinrich01} and Kebaier \cite{k},  is well known for reducing  significantly the approximation time complexity compared to a classical Monte Carlo method. Many authors have since been interested in the study of a central limit theorem associated to the MLMC estimator that can be found in the recent works by Ben Alaya and Kebaier \cite{h,b}, Dereich and Li \cite{Dereich_2016}, Giorgi et al. \cite{Gior},   H\"oel and Krumscheid \cite{HOEL} and Kebaier and Lelong \cite{KebLel}.  Like for the classical Monte Carlo method, obtaining a central limit theorem is important for the practical implementation of the MLMC method (see e.g. H\"oel  et al. \cite{Temp}). More recently, Giles and Szpruch \cite{a} introduced an antithetic version of the Milstein MLMC estimator without L\'evy area  that achieves the optimal complexity $O(\Delta_n^{-2})$ for a given precision $\Delta_n$ as for an unbiased Monte Carlo estimator. The efficiency of the antithetic MLMC estimator  was validated through a broad array of applications that can be found in  \cite{GilSzp1,GilSzp2}.	
	  Since then, many new studies were interested on  several types of use of the antithetic MLMC estimator ( see e.g. Debrabant and R\"ossler \cite{Debrabant_2015}, Debrabant et al. \cite{Debrabant_2019},  Al Gerbi et al.  \cite{Algerbi1,Algerbi2}). 
	  However,  the problem of studying the validity of  the central limit theorem for the antithetic  MLMC algorithm   has not been addressed in previous research.  In the present paper, we first introduce an extended version of this antithetic MLMC method  that we call  $\sigma$-antithetic MLMC estimator which allows permutations between the finer $m$ brownian increments associated to each corse increment with $m\ge2$. Let us emphasize that the original antithetic MLMC method  introduced in \cite{a} corresponds to $m=2$.
Then, we establish a central limit theorem on the $\sigma$-antithetic MLMC algorithm. This new result fills the gap in the literature for MLMC methods and yields new insights on the practical implementation  of the antithetic MLMC algorithm.
In order to establish this result, we prove a functional limit theorem for the normalized error on two consecutive levels for the joined distribution of  the couple 
\begin{equation}\label{joinederrors}
\big(\sqrt{n}(X^{nm}-X^{\sigma,nm}), n((X^{nm}+X^{\sigma,nm})/2-X^n)\big),
\end{equation}
where  $X^{nm}$ denotes the Milstein scheme with time step $T/mn$ without L\'evy area and $X^{\sigma,nm}$ is its $\sigma$-antithetic version.  This result extends the stable convergence limit theorem obtained by Ben Alaya and Kebaier \cite{b} for the normalized error on two consecutive levels  $\sqrt{n}(\tilde X^{mn} -\tilde X^n)$ where $\tilde X^{n}$ denotes the Euler scheme with time step $T/n$.  The proof of this result, written in a multidimensional setting,  relies on combining the limit theorems on martingale triangular arrays in Jacod \cite{c} with technics used in Jacod \cite{Jacod2004} and Jacod and Protter~\cite{d}. 

The rest of this paper is organized as follows. In Section 2, we recall from Giles and Szpruch \cite{a} the Milstein scheme without L\'evy area using our own notations and we introduce our assumptions. In Section 3, we introduce  the $\sigma$-antithetic scheme \eqref{eq:2.6.2} as well as the $\sigma$-antithetic MLMC estimator \eqref{eq:4.4.1} and  prove our main results namely Theorem \ref{thm:main}  a functional limit theorem for the couple of normalized errors \eqref{joinederrors} and Theorem \ref{thm:clt} the central limit theorem for the $\sigma$-antithetic MLMC estimator.  Section 4 gives the details of the error expansion needed to prove Theorem \ref{thm:main} with specifying the main and rest terms.  Based on these expansion, we study in Section 5 the asymptotic behaviors of the joined distribution of the main terms. The rest terms are treated in appendices \ref{app:B} and \ref{app:A}. Appendix \ref{app:C} is dedicated to recall some theoretical tools that we use throughout the paper.  

	\section{General framework}
	\subsection{Milstein scheme without L\'evy area}
	We consider the $d$-dimensional SDE  driven by a $q$-dimensional Brownian  motion $W=(W^1,\hdots,W^q)^{\top}$, $q\geq 1$, solution to
	\begin{align}\label{eq:2.1}
		X_t=&x_0+\int_0^tf(X_s)ds+\int_0^tg(X_s)dW_s, \mbox{ for } t\in [0,T], \, T>0,
	\end{align}
		where $x_0\in\mathbb{R}^d$, $f\in\mathcal{C}^2(\mathbb{R}^d,\mathbb{R}^d)$ and $g\in\mathcal{C}^2(\mathbb{R}^d,\mathbb{R}^{d\times q})$. 
		In what follows, we assume that $g$ does not have a commutativity property (see assumption (\nameref{Assume}) below). Without loss of generality we will take the solution of \eqref{eq:2.1} on the interval $[0,1]$ rather than $[0,T]$, $T>0$. We will consider a time grid on $[0,1]$ with a uniform time step  $\Delta_n=\frac{1}{n}$, $n\in\mathbb N$.
\paragraph{\bf{Notations}} Throughout this paper, we will  use the following notations:
\begin{itemize}
\item For $g\in\mathcal{C}^2(\mathbb{R}^d,\mathbb{R}^{d\times q})$, we introduce the tensor function $\{ h_{\ell jj'}, \, 1\le \ell\le d, 1\le j,j'\le q\}$ defined by
	$$h_{\ell jj'}(x)=\frac{1}{2}\nabla g_{\ell j}^{\top}(x)g_{\bullet j'}(x)= \frac{1}{2} \sum_{\ell'=1}^d \frac{\partial g_{\ell j}}{\partial{x_{\ell'}}}(x)g_{\ell'j'}(x),\quad  x\in\mathbb{R}^d$$
	with  $\nabla g_{\ell j}=(\frac{\partial g_{\ell j}}{\partial x_1},\cdots,\frac{\partial g_{\ell j}}{\partial x_d})^{\top}\in \mathbb R^d$ and $g_{\bullet j'}=(g_{1j'},\dots,g_{dj'})^{\top}\in\mathbb{R}^d$ is the ${j'}^{\rm th}$-column of $g$ and analogously  we also introduce the ${\ell}^{\rm th}$-row of $g$ given by  $g_{\ell\bullet }=(g_{\ell 1},\dots,g_{\ell q})$.
	The notation  $A^\top$ stands for the  transpose of the given matrix $A$. 
	\item For  $\ell\in\{1,\dots,d\}$, we denote the $q\times q$-matrix
	 $h_{\ell\bullet\bullet}=\left(\begin{array}{lll}h_{\ell11}&\hdots&h_{\ell1q}\\\vdots&\ddots&\vdots\\h_{\ell q1}&\hdots&h_{\ell qq}\end{array}\right)\in\mathbb{R}^{q\times q}$.
	 \item For more convenience, we set
	 $\mathbb{H}=(h_{1\bullet\bullet},\hdots,h_{d\bullet\bullet})^{\top}.$
	\item For any function $\psi: \mathbb R^d \rightarrow \mathbb R$, we denote
	$\nabla^2\psi=\left(\begin{array}{lll}\frac{\partial^2\psi}{\partial x_{1}\partial x_{1}}&\hdots&\frac{\partial^2\psi}{\partial x_{1}\partial x_{d}}\\\vdots&\ddots&\vdots\\\frac{\partial^2\psi}{\partial x_{d}\partial x_{1}}&\hdots&\frac{\partial^2\psi}{\partial x_{d}\partial x_{d}}\end{array}\right)$ the Hessian $d\times d$ matrix of $\psi$.
	\item For any $d$-dimensional function $f$, we denote its Jacobian matrix as $\nabla f =(\nabla f_1 ,\dots,\nabla f_d )^{\top}$.
	\item Let $\whitediamond$ denotes  the Frobenius inner products that is for any matrices  $A$ and $B\in \mathbb \mathcal M_{p\times q}(\mathbb R)$ 
$$
A\whitediamond B=\sum_{j=1}^p\sum_{j'=1}^q A_{jj'}B_{jj'} \in \mathbb R.
$$
Moreover, we introduce the operator $\blackdiamond$ defined by:  for any $A_{\ell\ell'}\in M_{p\times q}(\mathbb{R})$, $\ell\in\{1,\dots,r\}$ and $\ell'\in\{1,\dots,s\}$ with $r,s\in\mathbb N\setminus\{0\}$
$$
\left(\begin{array}{ccc}
A_{11}&\hdots&A_{1s}\\\vdots&\ddots&\vdots\\A_{r1}&\hdots&A_{rs}
\end{array}\right)\blackdiamond B=\left(\begin{array}{ccc}
A_{11}\whitediamond  B&\hdots&A_{1s}\whitediamond  B\\\vdots&\ddots&\vdots\\A_{r1}\whitediamond  B&\hdots&A_{rs}\whitediamond B
\end{array}\right)\in\mathbb R^{r\times s}.$$
\item We have the following property for any matrices $U$ and $A$ respectively in ${M}_{p\times 1}(\mathbb{R})$ and ${M}_{p\times p}(\mathbb{R})$
\begin{equation}\label{prop:bd}
 U^{\top} AU= A\blackdiamond (UU^\top).
\end{equation}
\item We denote $\eta_n(t)=\frac{[nt]}{n}$ for $t\in[0,1]$. For $i\in\{1,\dots,n\}$, $k\in\{1,\dots,m\}$, $n,m\in\mathbb{N}\backslash\{0,1\}$, we denote $\Delta W_i=W_{\frac{i}{n}}-W_{\frac{i-1}{n}}$ and $\delta W_{ik}=W_{\frac{m(i-1)+k}{nm}}-W_{\frac{m(i-1)+k-1}{nm}}$.
\item $\mathcal{S}_m$ stands for the set of all permutations of order $m$.  	
\item For $i\in\{1,\dots,n\}$, $k\in\{1,\dots,m\}$, $m\in\mathbb{N}\backslash\{0,1\}$, and $\tilde \sigma \in \mathcal{S}_m$ we denote the $\sigma$-algebra $\mathcal{F}_{\frac{i-1}{n}} ^{k,\tilde \sigma}=\mathcal{F}_{\frac{i-1}{n}}\bigvee \sigma(\delta W_{i\tilde\sigma(k')}:1\leq k'\leq k)$, where $(\mathcal{F}_t)_{t\in[0,1]}$ denotes the natural filtration of the brownian motion $W$ .
\item For $p>0$, let $(\Gamma^n)_{n\in\mathbb N}$ be a sequence of processes  in $L^p$. By $\Gamma^n \stackrel{L^p}\rightarrow0$  (resp. $\Gamma^n \stackrel{\mathbb P}\rightarrow0$) as  $n$ tends to infinity, we mean that $\sup_{s\le t} |\Gamma_s^n| \stackrel{L^p}{\rightarrow}0$  (resp. $\sup_{s\le t} |\Gamma_s^n| \stackrel{\mathbb P}{\rightarrow}0$) for all $t\in[0,1]$ as $n$ tends to infinity.  
\item For  any  block matrix $A=(A_{ij})$, the notation $|A|$ stands for the $L^1$-matrix norm, that satisfies  $|A|=\sum_{ij}|A_{ij}|$.
\end{itemize}

Thanks to the above  notations, the original Milstein scheme introduced in \cite{j} starting at $x_0$ can be rewritten in a compact form given by the following induction on the integer $i\in\{1,\dots,n\}$
	\begin{multline*}
		X^{{\rm Mil},n}_{\frac{i}{n}}=X^{{\rm Mil},n}_{\frac{i-1}{n}}+f(X^{{\rm Mil},n}_{\frac{i-1}{n}})\Delta_n+g(X^{{\rm Mil},n}_{\frac{i-1}{n}})\Delta W_{i}+\mathbb{H}(X^{{\rm Mil},n}_{\frac{i-1}{n}})\blackdiamond(\Delta W_{i}\Delta W_{i}^{\top}-{\Iq}\Delta_n -\mathcal{A}_i),
	\end{multline*} 
where   $\Delta W_{i}=W_\frac{i}{n}-W_{\frac{i-1}{n}}$ is the increment on the coarser partition, ${\Iq}=(\delta_{jj'})_{1\leq j,j'\leq q}$ is the correlation matrix for the driving Brownian paths  and $\mathcal{A}_{i}\in\mathbb{R}^{q\times q}$ is the L\'evy area defined by
	$$\mathcal{A}_{ijj'}=\int_{\frac{i-1}{n}}^{\frac{i}{n}}(W^{j}_s-W^{j}_{\frac{i-1}{n}})dW^{j'}_s-\int_{\frac{i-1}{n}}^{\frac{i}{n}}(W^{j'}_s-W^{j'}_{\frac{i-1}{n}})dW^{j}_s,\hskip 1cm j,j'\in\{1,\hdots,q\}.$$
In many applications, the simulation of L\'evy areas are very complicated. Recently,  Giles and Szpruch \cite{a} proposed to build a suitable antithetic MLMC estimator based on the Milstein scheme without the L\'evy area that achieves the optimal complexity $O(\Delta_n^{-2})$ for a given precision $\Delta_n$ as for an unbiased Monte Carlo estimator. Therefore, let us introduce the so called truncated Milstein scheme starting at $x_0$ defined by induction on the integer $i\in\{1,\dots,n\}$
	\begin{align}\label{eq:2.3}
		X^n_{\frac{i}{n}}=&X^n_{\frac{i-1}{n}}+f(X^n_{\frac{i-1}{n}})\Delta_n+g(X^n_{\frac{i-1}{n}})\Delta W_{i}+\mathbb{H}(X^{n}_{\frac{i-1}{n}})\blackdiamond(\Delta W_{i}\Delta W_{i}^{\top}-{\Iq}\Delta_n ).
		\end{align} 
\subsection{Settings and some standard results}
		
In what follows we introduce our assumption (\nameref{Assume}) on coefficients $f$ and $g$ in the spirit of Giles and Szpruch \cite{a}. Our condition is stricter than the one in  \cite{a} as we aim to prove functional limit theorems for this method. We also recall some standard results on the moment properties of \eqref{eq:2.3} (see Lemma 4.2, Corollary 4.3  and Lemma 4.4 of \cite{a}).
\begin{assumption}[{\textbf{H$_{f,g}$}}]\label{Assume}
	Let $f\in\mathcal{C}^3(\mathbb{R}^d,\mathbb{R}^d)$ and $g\in\mathcal{C}^3(\mathbb{R}^d,\mathbb{R}^{d\times q})$. We assume that 
	\begin{itemize}
		\item   there exists a positive constant $L$ such that 
	\begin{align*}
		&\left|\frac{\partial^{|\alpha|} f}{\partial x^{\alpha}}\right|\leq L, \hskip 1cm	\left|\frac{\partial^{|\alpha|} g}{\partial x^{\alpha}}\right|\leq L,\hskip 1cm	\left|\frac{\partial ^{|\beta|}h}{\partial x^{\beta}}\right|\leq L
	\end{align*}
where $\alpha,\beta\in\mathbb{N}^d$, $\alpha=(\alpha_1,\dots,\alpha_d)^{\top}$, $\beta=(\beta_1,\dots,\beta_d)^{\top}$ are two multi-indices  such that $|\alpha|=\sum_{i=1}^d\alpha_i\le 3$, $|\beta|=\sum_{i=1}^d\beta_i\le 2$.
\item the diffusion coefficient $g$ does not have a commutativity property which gives $h_{\ell jj'}= h_{\ell j'j}$ for all $\ell\in\{1,\dots,d\}$ and $j,j'\in\{1,\dots,q\}$.
\end{itemize}	
\end{assumption}
\begin{lemma}\label{lem:1}
        Under (\nameref{Assume}), for $p\geq2$  there exists a constant $C_p$, independent of $n$, such that
	$$\mathbb{E}\left(\max_{0\leq i\leq n}|X^n_{\frac{i}{n}}|^p\right)\leq C_p,\;\;\mbox{and }\;\;
	\mathbb{E}\left(\max_{0\leq i\leq n}|X^n_{\frac{i}{n}}-X_{\frac{i}{n}}|^p\right)\leq C_p\Delta_n^{p/2}.$$
\end{lemma}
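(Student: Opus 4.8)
The plan is to derive both estimates by the standard combination of the Burkholder--Davis--Gundy (BDG) inequality applied to the martingale parts of the scheme together with a discrete Gronwall argument, the moments of the Brownian increments supplying the correct powers of $\Delta_n$. These are exactly the bounds stated as Lemma 4.2, Corollary 4.3 and Lemma 4.4 in Giles and Szpruch \cite{a}, established there under a hypothesis weaker than (\nameref{Assume}); since (\nameref{Assume}) implies the global Lipschitz property and at most linear growth of $f$, $g$ and $\mathbb{H}$, it suffices to reproduce their estimates, the extra smoothness being needed only for the later functional limit theorems.

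For the moment bound, I would iterate \eqref{eq:2.3} into
$$X^n_{\frac{i}{n}}=x_0+\sum_{k=1}^{i}f(X^n_{\frac{k-1}{n}})\Delta_n+\sum_{k=1}^{i}g(X^n_{\frac{k-1}{n}})\Delta W_k+\sum_{k=1}^{i}\mathbb{H}(X^n_{\frac{k-1}{n}})\blackdiamond(\Delta W_k\Delta W_k^{\top}-\Iq\Delta_n),$$
and split the right-hand side into a drift term $D_i$, a diffusion term $M_i$ and a Milstein correction term $N_i$. Because $\mathbb{E}[\Delta W_k\mid\mathcal{F}_{\frac{k-1}{n}}]=0$ and $\mathbb{E}[\Delta W_k\Delta W_k^{\top}-\Iq\Delta_n\mid\mathcal{F}_{\frac{k-1}{n}}]=0$, both $(M_i)$ and $(N_i)$ are discrete $(\mathcal{F}_{\frac{i}{n}})$-martingales. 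Applying Jensen's inequality to $D_i$ and BDG to $M_i$, $N_i$, and using $\mathbb{E}|\Delta W_k|^p\le C_p\Delta_n^{p/2}$ and $\mathbb{E}|\Delta W_k\Delta W_k^{\top}-\Iq\Delta_n|^p\le C_p\Delta_n^{p}$ to control the quadratic variations $\langle M\rangle$ and $\langle N\rangle$ (the correction $N_i$ being of the smaller order $\Delta_n^{p/2}$), I obtain
$$\mathbb{E}\Big(\max_{1\le k\le i}\big|X^n_{\frac{k}{n}}\big|^p\Big)\le C_p\Big(1+\Delta_n\sum_{k=1}^{i}\mathbb{E}\big(\max_{1\le j\le k}\big|X^n_{\frac{j}{n}}\big|^p\big)\Big),$$
and the discrete Gronwall lemma yields the constant $C_p$ independent of $n$.

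For the strong error bound I would set $e_i=X^n_{\frac{i}{n}}-X_{\frac{i}{n}}$ and, on each subinterval $[\frac{k-1}{n},\frac{k}{n}]$, compare the scheme increment with the exact increment obtained by applying the It\^o--Taylor expansion to $f(X_s)$ and $g(X_s)$. The correction $\mathbb{H}\blackdiamond(\Delta W_k\Delta W_k^{\top}-\Iq\Delta_n)$ is precisely the symmetric part of the first iterated It\^o integrals of $g$; what the truncated scheme omits is the antisymmetric part, namely the L\'evy area $\mathcal{A}_k$, together with the higher-order Taylor remainders. Writing $e_i$ as a drift-type term plus a martingale term plus these local remainders, invoking BDG once more and the moment bound just proved, I would reach a recursion
$$\mathbb{E}\Big(\max_{1\le k\le i}|e_k|^p\Big)\le C_p\Big(\Delta_n^{p/2}+\Delta_n\sum_{k=1}^{i}\mathbb{E}\big(\max_{1\le j\le k}|e_j|^p\big)\Big),$$
whence discrete Gronwall gives the rate $\Delta_n^{p/2}$.

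The step deserving the most care, and the true source of the rate $1/2$ rather than $1$, is the bookkeeping of the omitted local increments in the second bound. One must verify that each term dropped by the truncation has conditional expectation zero given $\mathcal{F}_{\frac{k-1}{n}}$ --- so that it enters through a martingale and, since $\mathbb{E}|\mathcal{A}_k|^2$ is of order $\Delta_n^{2}$ per step over $n$ steps, contributes at order $\Delta_n^{1/2}$ through its quadratic variation rather than at order $\Delta_n^{1/4}$ --- or else is a deterministic remainder of order $\Delta_n^{3/2}$ per step. It is exactly this order-$1/2$ L\'evy area contribution, surviving in a single scheme but designed to cancel in the $\sigma$-antithetic average, that is the object of the subsequent analysis.
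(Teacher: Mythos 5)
Your proposal is correct and consistent with the paper, which offers no proof of this lemma at all but simply recalls it from Giles and Szpruch \cite{a} (Lemma 4.2, Corollary 4.3 and Lemma 4.4) --- precisely the results you identify --- and your sketch (discrete BDG plus discrete Gronwall for the moments; It\^o--Taylor comparison with the omitted L\'evy-area term entering as a martingale with zero conditional mean and bracket of order $\Delta_n$, hence an $L^p$ contribution of order $\Delta_n^{p/2}$, with the remaining non-centred remainders of order $\Delta_n^{3/2}$ per step summing to $\Delta_n^{1/2}$) is exactly the standard argument behind those results. The only blemish is the aside ``rather than at order $\Delta_n^{1/4}$'', which garbles the accounting --- the relevant comparison is the martingale bound $\Delta_n^{1/2}$ versus the useless triangle-inequality bound $\sum_k \mathbb{E}|\mathcal{A}_k| = O(1)$ --- but this does not affect the validity of the proof.
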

\begin{corollary}\label{cor:1}
	Under (\nameref{Assume}), for $p\geq2$ there exists a constant $C_p$, independent of $n$, such that 
	$$\mathbb{E}\left(\max_{0\leq i\leq n}|f_\ell(X^n_{\frac{i}{n}})|^p\right)\leq C_p,\hskip 1cm \mathbb{E}\left(\max_{0\leq i\leq n}|g_{\ell j}(X^n_{\frac{i}{n}})|^p\right)\leq C_p,$$
	and $$\mathbb{E}\left(\max_{0\leq i\leq n}|h_{\ell jj'}(X^n_{\frac{i}{n}})|^p\right)\leq C_p$$
	for all $1\leq \ell \leq d$ and $1\leq j,j'\leq q$.
\end{corollary}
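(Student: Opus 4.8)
The plan is to reduce the statement to the moment bound of Lemma~\ref{lem:1} via a linear growth estimate on the coefficients. The only structural fact I need from assumption (\nameref{Assume}) is that the first-order partial derivatives of each scalar coefficient are bounded by $L$: this holds for $f_\ell$ and $g_{\ell j}$ because the constraint $|\alpha|\le 3$ includes $|\alpha|=1$, and for $h_{\ell jj'}$ because $|\beta|\le 2$ includes $|\beta|=1$. Hence each of these functions is globally Lipschitz, and writing for instance $f_\ell(x)-f_\ell(0)=\int_0^1\nabla f_\ell(tx)^\top x\,dt$ yields a linear growth bound
\begin{equation*}
|f_\ell(x)|\le |f_\ell(0)|+K|x|,\qquad x\in\mathbb{R}^d,
\end{equation*}
with $K$ a constant depending only on $L$ and $d$; the same estimate, with their respective values at the origin, holds for $g_{\ell j}$ and $h_{\ell jj'}$.

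Next I would raise this bound to the power $p$ using the elementary convexity inequality $(a+b)^p\le 2^{p-1}(a^p+b^p)$, substitute $x=X^n_{\frac{i}{n}}$, and take the maximum over $0\le i\le n$. For $f_\ell$ this gives
\begin{equation*}
\max_{0\le i\le n}|f_\ell(X^n_{\frac{i}{n}})|^p\le 2^{p-1}\Big(|f_\ell(0)|^p+K^p\max_{0\le i\le n}|X^n_{\frac{i}{n}}|^p\Big),
\end{equation*}
and the analogous inequalities hold for $g_{\ell j}$ and $h_{\ell jj'}$. Taking expectations and invoking the first estimate of Lemma~\ref{lem:1}, namely $\mathbb{E}(\max_{0\le i\le n}|X^n_{\frac{i}{n}}|^p)\le C_p$, bounds the right-hand side by a constant depending on $p$, $d$, $L$ and the values of the coefficients at the origin, but not on $n$. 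Relabelling this constant gives the three claimed estimates at once.

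There is no genuine obstacle here: the argument is routine once Lemma~\ref{lem:1} is available, and the only point worth emphasising is that the uniform control of first derivatives in (\nameref{Assume}) is exactly what transfers the moment control of $X^n$ to its images under $f$, $g$ and $h$. For completeness I note that if (\nameref{Assume}) is read as also bounding the coefficients themselves (the $\alpha=0$ case), the three expectations are immediately dominated by $L^p$; the linear growth route above is the version that remains valid under the weaker reading in which only derivatives of positive order are controlled.
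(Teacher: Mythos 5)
Your proof is correct and follows essentially the same route as the paper, which simply recalls this as Corollary 4.3 of Giles and Szpruch \cite{a}: bounded first derivatives under (\nameref{Assume}) give linear growth of $f_\ell$, $g_{\ell j}$ and $h_{\ell jj'}$, and the claim then reduces to the uniform moment bound of Lemma \ref{lem:1}. Your closing remark on the $|\alpha|=0$ reading of the assumption is a sensible precaution, and the linear growth argument you give is the one that covers both interpretations.
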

\begin{lemma}\label{lem:2}
Under (\nameref{Assume}), for $p\geq2$, there exists a constant $C_p$, independent of $n$, such that
	$$\max_{1\leq i\leq n}\mathbb{E}\left(|X^n_{\frac{i}{n}}-X^n_{\frac{i-1}{n}}|^p\right)\leq C_p\Delta_n^{p/2}.$$
\end{lemma}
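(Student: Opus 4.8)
The plan is to work directly from the one-step recursion \eqref{eq:2.3}, writing the increment as
\begin{equation*}
X^n_{\frac{i}{n}}-X^n_{\frac{i-1}{n}}=f(X^n_{\frac{i-1}{n}})\Delta_n+g(X^n_{\frac{i-1}{n}})\Delta W_{i}+\mathbb{H}(X^{n}_{\frac{i-1}{n}})\blackdiamond(\Delta W_{i}\Delta W_{i}^{\top}-{\Iq}\Delta_n),
\end{equation*}
and then to bound the three summands separately in $L^p$ via the $c_r$-inequality. The single structural fact that makes everything go through is that, conditionally on $\mathcal{F}_{\frac{i-1}{n}}$, the coefficients $f(X^n_{\frac{i-1}{n}})$, $g(X^n_{\frac{i-1}{n}})$ and $\mathbb{H}(X^n_{\frac{i-1}{n}})$ are measurable (hence ``frozen''), while the increment $\Delta W_i$ is a centered Gaussian vector with covariance ${\Iq}\Delta_n$ independent of $\mathcal{F}_{\frac{i-1}{n}}$. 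So I would first condition on $\mathcal{F}_{\frac{i-1}{n}}$, pull out the coefficient factors, compute the Gaussian moments of the remaining Brownian quantities, and finally take expectation, controlling the coefficient moments uniformly in $i$ by Corollary \ref{cor:1}.

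For the drift term, $\mathbb{E}|f(X^n_{\frac{i-1}{n}})\Delta_n|^p=\Delta_n^{p}\,\mathbb{E}|f(X^n_{\frac{i-1}{n}})|^p\le C_p\Delta_n^{p}$ by Corollary \ref{cor:1}. For the diffusion term I would write its $\ell$-th component as $\sum_{j}g_{\ell j}(X^n_{\frac{i-1}{n}})\Delta W_i^{j}$, condition on $\mathcal{F}_{\frac{i-1}{n}}$, and use the Gaussian moment $\mathbb{E}|\Delta W_i^{j}|^{p}=C_p\Delta_n^{p/2}$ together with Corollary \ref{cor:1}, giving a bound of order $\Delta_n^{p/2}$; this is the dominant contribution and produces the claimed rate. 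For the Milstein correction, the $\ell$-th component is $\sum_{j,j'}h_{\ell jj'}(X^n_{\frac{i-1}{n}})\big(\Delta W_i^{j}\Delta W_i^{j'}-\delta_{jj'}\Delta_n\big)$; the key point is that each entry of $\Delta W_i\Delta W_i^{\top}-{\Iq}\Delta_n$ has a $p$-th moment of order $\Delta_n^{p}$, since for $j=j'$ the variable $(\Delta W_i^{j})^2-\Delta_n$ is a recentered square with $\mathbb{E}|(\Delta W_i^{j})^2-\Delta_n|^p\le C_p\Delta_n^{p}$, and for $j\neq j'$ the product $\Delta W_i^{j}\Delta W_i^{j'}$ of two independent centered Gaussians satisfies $\mathbb{E}|\Delta W_i^{j}\Delta W_i^{j'}|^{p}=C_p\Delta_n^{p}$. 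Combined with Corollary \ref{cor:1} this term is therefore of order $\Delta_n^{p}$.

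Collecting the three bounds and using that $\Delta_n\le 1$ and $p\ge 2$ (so $\Delta_n^{p}\le\Delta_n^{p/2}$), I obtain $\mathbb{E}|X^n_{\frac{i}{n}}-X^n_{\frac{i-1}{n}}|^p\le C_p\Delta_n^{p/2}$, and since every constant arises from Corollary \ref{cor:1} and from universal Gaussian moments, all bounds are uniform in $i$, which yields the maximum over $1\le i\le n$. There is no serious obstacle here: this is essentially a bookkeeping estimate. The only point requiring a little care is the Milstein term, where one must verify that the recentering ${\Iq}\Delta_n$ indeed upgrades the naive order $\Delta_n^{p/2}$ (which the raw products $\Delta W_i^j\Delta W_i^{j'}$ might suggest) to the genuine order $\Delta_n^{p}$, so that this correction does not degrade the overall $\Delta_n^{p/2}$ rate coming from the diffusion term.
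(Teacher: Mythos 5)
Your proof is correct, and it is essentially the standard argument behind Lemma 4.4 of Giles and Szpruch \cite{a}, which the paper simply recalls without proof: decompose the one-step increment \eqref{eq:2.3}, condition on $\mathcal{F}_{\frac{i-1}{n}}$ to freeze the coefficients, and combine Gaussian moments of $\Delta W_i$ with the uniform coefficient bounds of Corollary \ref{cor:1}. One minor remark: your closing worry about the Milstein term is unfounded in the opposite direction — the recentering by ${\Iq}\Delta_n$ is not needed for this one-step bound, since the raw products $\Delta W_i^{j}\Delta W_i^{j'}$ already have $p$-th moment of order $\Delta_n^{p}$ (each factor contributes $\Delta_n^{p/2}$); the centering only becomes essential in summed, martingale-type estimates such as Lemma \ref{lem:1}.
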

	\section{Main results}
\paragraph{\bf{The $\sigma$-antithetic scheme}} In view of running a MLMC method, we consider two types of schemes, a coarser one and a finer one. The antithetic MLMC estimator was  introduced in \cite{a} for $m=2$.  For each level, the main idea consists in switching the two finer Brownian increments to obtain an antithetic version of the approximation scheme. In order to extend this idea for a general $m\in\mathbb N^*\setminus{\{1\}}$, we consider $\sigma\in\mathcal{S}_m\setminus\{\Id\}$  and for each level  $\ell\in\{1,\dots,L\}$, we introduce the $\sigma$-antithetic  scheme  $X^{m^\ell,\sigma}$ obtained by permuting the $m$ finer Brownian increments lying in each of the coarse intervals with length $1/m^{\ell-1}$. Based on this, we will also introduce the $\sigma$-antithetic MLMC estimator. To do so, we  set the scheme given by the equation \eqref{eq:2.3} as the coarser approximation with time step $1/m^{\ell-1}$.
The  finer scheme with time step $1/m^{\ell}$  can be rewritten as follows : for $i\in\{1,\dots,m^{\ell-1}\}$ and $k\in\{1,\dots ,m\}$, 
\begin{multline}\label{eq:2.6.1}
	X^{m^{\ell}}_{\frac{m(i-1)+k}{m^{\ell}}}=X^{m^{\ell}}_{\frac{m(i-1)+k-1}{m^{\ell}}}+f(X^{m^{\ell}}_{\frac{m(i-1)+k-1}{m^{\ell}}})\frac{\Delta_{m^{\ell-1}}}{m}+g(X^{m^{\ell}}_{\frac{m(i-1)+k-1}{m^{\ell}}})\delta W_{ik}\\+\mathbb{H}(X^{m^{\ell}}_{\frac{m(i-1)+k-1}{m^{\ell}}})\blackdiamond (\delta W_{ik}\delta W_{ik}^{\top}-{\Iq}\frac{\Delta_{m^{\ell-1}}}{m}),
\end{multline}
where $\delta W_{ik}=W_{\frac{m(i-1)+k}{m^{\ell}}}-W_{\frac{m(i-1)+k-1}{m^{\ell}}}\in\mathbb{R}^q$. Now, for a given $\sigma\in \mathcal{S}_m\setminus\{{\Id}\}$ our $\sigma$-antithetic scheme is defined by  
\begin{multline}\label{eq:2.6.2}
	X^{m^{\ell},\sigma}_{\frac{m(i-1)+k}{m^{\ell}}}=X^{m^{\ell},\sigma}_{\frac{m(i-1)+k-1}{m^{\ell}}}+f(X^{m^{\ell},\sigma}_{\frac{m(i-1)+k-1}{m^{\ell}}})\frac{\Delta_{m^{\ell-1}}}{m}+g(X^{m^{\ell},\sigma}_{\frac{m(i-1)+k-1}{m^{\ell}}})\delta W_{i\sigma(k)}\\+\mathbb{H}(X^{m^{\ell},\sigma}_{\frac{m(i-1)+k-1}{m^{\ell}}}) \blackdiamond(\delta W_{i\sigma(k)}\delta W_{i\sigma(k)}^{\top}-{\Iq}\frac{\Delta_{m^{\ell-1}}}{m}).
\end{multline}
When $\sigma=\Id$, we clearly have $X^{m^{\ell},{\Id}}=X^{m^{\ell}}$.
Throughout the paper we take  $\boxed{\sigma(k)=m-k+1}$. The reason for fixing  $\sigma$ in this way is explained  in Remark \ref{rk:fix_sigma}. 
	 Since  the increments $(\delta W_{ik})_{1\le i\le m^{\ell-1}, 1\le k \le m}$ are independent and identically distributed, it is obvious  that $X^{m^{\ell},\sigma}\stackrel{\rm Law}{=}X^{m^{\ell}}$ and for any $i\in\{1,\dots,m^{\ell-1}\}$ and $k\in\{1,\dots,m\}$,  $X^{m^{\ell},\sigma}_{\frac{m(i-1)+k}{m^{\ell}}}$ is $\mathcal{F}_{\frac{i-1}{m^{\ell-1}}}^{k,\sigma}$-measurable.
\paragraph{\bf{The $\sigma$-antithetic MLMC method}}	
	Recall that  the idea of the original multilevel Monte Carlo method (MLMC)  is based on writing $\mathbb{E}(\varphi(X^{m^L}_1))$  using the following  telescoping summation
	\begin{align}\label{eq:3.3}
	\mathbb{E}(\varphi(X^{m^L}_1))=\mathbb{E}(\varphi(X^1_1))+\sum_{\ell=1}^L\mathbb{E}(\varphi(X^{m^{\ell}}_1)-\varphi(X^{m^{\ell-1}}_1)).
	\end{align}
 As $X^{m^\ell,\sigma}\stackrel{ law}{=}X^{m^\ell}$, we rewrite the above telescoping sum as follows

	$$\mathbb{E}(\varphi(X^{m^L}_1))=\mathbb{E}(\varphi(X^1_1))+\sum_{\ell=1}^L\mathbb{E}\left(\frac{\varphi(X^{m^{\ell}}_1)+\varphi(X^{m^{\ell,\sigma}}_1)}{2}-\varphi(X^{m^{\ell-1}}_1)\right).$$
Then we estimate independently each  expectation using an empirical mean. Thus, the $\sigma$-antithetic MLMC estimator  ${\hat Q}$  approximates $\mathbb{E}(\varphi(X^{m^L}_1))$  by
\begin{align}\label{eq:4.4.1}
		\left\{\begin{array}{l}	
		{\hat Q}=\hat{{Q}}_{0}+\sum_{\ell=1}^L\hat{ Q}_{\ell}, \mbox{ with }\\
		\hat{{Q}}_{0}=\frac{1}{N_0}\sum_{k=1}^{N_0}\varphi(X^{1}_{1,k})\mbox{ and }
		\hat{ Q}_{\ell}=\frac{1}{N_{\ell}}\sum_{k=1}^{N_{\ell}}\left(\frac{\varphi(X^{m^{\ell}}_{1,k})+\varphi(X^{m^{\ell,\sigma}}_{1,k})}{2}-\varphi(X^{m^{\ell-1}}_{1,k})\right)\end{array}\right.
	\end{align}
	where for each level  $\ell\in\{1,\dots,L\}$, $(X^{m^{\ell}}_{1,k},X^{m^{\ell,\sigma}}_{1,k},X^{m^{{\ell}-1}}_{1,k})_{1\leq k\leq N_{\ell}}$ are independent copies of $(X^{m^{\ell}}_{1},X^{m^{\ell,\sigma}}_{1},X^{m^{{\ell}-1}}_{1})$ whose components are simulated using the same Brownian path and $(X^{1}_{1,k})_{1\leq k\leq N_0}$ are independent copies of $X^1_1$. 
In order to study the error of the $\sigma$-antithetic MLMC method, we assume that $\varphi\in\mathcal{C}^2(\mathbb{R}^d,\mathbb{R})$ and introduce $\bar{X}^{m^\ell,\sigma}_1= \frac{1}{2}(X^{m^\ell,\sigma}_1+X^{m^\ell}_1),$  for $\ell\in\{1,\dots,L\}$ and use a Taylor expansion  to write 
\begin{align}\label{eq:2.13}
\nonumber	\frac{1}{2}(\varphi(X^{m^\ell}_1)+\varphi(X^{m^\ell,\sigma}_1))-\varphi(X^{m^{\ell-1}}_1)
	=& \nabla \varphi^{\top}(\xi_1)(\bar{X}^{m^\ell,\sigma}_1-X^{m^{\ell-1}}_1)\\&+\frac{1}{8}(X^{m^\ell}_1-X^{m^{\ell},\sigma}_1)^{\top} {\nabla^2}{\varphi}(\xi_2)(X^{m^{\ell}}_1-X^{m^\ell,\sigma}_1),
\end{align}
where $\xi_1$ is a point lying between $\bar{X}^{m^\ell,\sigma}_1$ and $X^{m^{\ell-1}}_1$,  $\xi_2$ is a point lying between ${X}^{m^\ell,\sigma}_1$ and $X^{m^{\ell}}_1$ and ${\nabla^2}{\varphi}$ denotes the Hessian matrix of $\varphi$. More generally, if we consider the $\sigma$-antithetic MLMC method on the coarse time grid  we have to introduce the error process
$${\mathcal E}^{m^{\ell-1},m^\ell}_t=	\frac{1}{2}(\varphi(X^{m^{\ell}}_{\eta_{m^{\ell-1}}(t)})+\varphi(X^{m^{\ell},\sigma}_{\eta_{m^{\ell-1}}(t)}))-\varphi(X^{m^{\ell-1}}_{\eta_{m^{\ell-1}}(t)}), \;\; t\in[0,1].
$$
 The work of Giles and Szpruch in \cite{a} corresponds to $m=2$ and in this case they proved the $L^p$ boundedness of the process $m^\ell {\mathcal E}^{m^{\ell-1},m^\ell}$. 
In this paper, we establish this result   for a general setting with $m\in\mathbb N\backslash\{0,1\}$ and we further study its asymptotic  distribution behavior.  To do so and in view of the decomposition $\eqref{eq:2.13}$, we study the couple of two errors $\bar{X}^{m^{\ell}}_{\eta_{m^{\ell-1}}(t)}-X^{m^{\ell-1}}_{\eta_{m^{\ell-1}}(t)}$ and 
$X^{m^{\ell}}_{\eta_{m^{\ell-1}}(t)}-X^{m^{\ell},\sigma}_{\eta_{m^{\ell-1}}(t)}$, where $\bar{X}^{m^{\ell},\sigma}_{\eta_{m^{\ell-1}}(t)}=\frac{1}{2}(X^{m^{\ell},\sigma}_{\eta_{m^{\ell-1}}(t)}+X^{m^{\ell}}_{\eta_{m^{\ell-1}}(t)})$,
$t\in[0,1]$. 

At first we reduce the problem to the study of the error given by the process $(\bar{X}^{nm,\sigma}_{\eta_{n}(t)}-X^{n}_{\eta_{n}(t)}, X^{nm}_{\eta_{n}(t)}-X^{mn,\sigma}_{\eta_{n}(t)})_{0\leq t \leq 1}$, where  
$\bar{X}^{nm,\sigma}_{\eta_{n}(t)}=\frac{1}{2}(X^{nm,\sigma}_{\eta_{n}(t)}+X^{nm}_{\eta_{n}(t)})$ and
	$X^{nm}$ , $X^{nm,\sigma}$ and  $X^n$ respectively stand for the finer approximation scheme with time step $1/nm$, its antithetic version and the coarser approximation scheme with time step $1/n$, 
	with $n\in\mathbb N\backslash\{0\}$  and $m\in\mathbb N\backslash\{0,1\}$. All these approximation schemes are constructed using the same Brownian path. Second, we extend Theorem 4.10. and Lemma 4.6.   in \cite{a} to get.
\begin{lemma}\label{Lp}
	Under (\nameref{Assume}), for $p\geq2$, $\tilde{\sigma}\in\mathcal S_m$,  there exists a constant $C_p>0$, independent of the time step, such that 
 \begin{align*}
	&\mathbb{E}(\max_{0\leq i\leq n}|X^{nm,\tilde\sigma}_{\frac{i}{n}}|^p)\leq C_p, \quad\mathbb{E}(\max_{0\leq i\leq n}|X^{nm}_{\frac{i}{n}}-X^{nm,\tilde{\sigma}}_{\frac{i}{n}}|^{p}) \leq C_p\Delta_n^{p/2} \mbox{ and}\\
			&\max_{1\leq i\leq n}\max_{1\leq k\leq m}\mathbb{E}(|X^{nm,\tilde\sigma}_{\frac{m(i-1)+k-1}{nm}}-X^{nm,\tilde\sigma}_{\frac{i-1}{n}}|^p)\leq C_p{\Delta_{n}}^{p/2}.
	\end{align*}
\end{lemma}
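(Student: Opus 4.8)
The plan is to prove the three estimates separately: the first and third are pure \emph{marginal} statements about $X^{nm,\tilde\sigma}$, so they reduce to the already-established non-antithetic bounds via an equality-in-law argument, and the only genuine work lies in the second estimate, which couples the two schemes on the same path.

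\textbf{Bounds 1 and 3.} For each fixed $i$ the increments $(\delta W_{i1},\dots,\delta W_{im})$ are i.i.d., and since $\tilde\sigma$ merely permutes them inside the block while distinct blocks are independent, the driving array $(\delta W_{i\tilde\sigma(k)})_{i,k}$ has the same joint law as $(\delta W_{ik})_{i,k}$; hence the whole fine-grid process $X^{nm,\tilde\sigma}$ has the same law as $X^{nm}$, which is exactly scheme \eqref{eq:2.3} run with time step $1/(nm)$. Applying Lemma \ref{lem:1} with $n$ replaced by $nm$ bounds $\mathbb{E}\max_{0\le j\le nm}|X^{nm}_{j/(nm)}|^p$ by a constant $C_p$ independent of $n,m$; since the coarse nodes $\{i/n\}$ form a subgrid, bound~1 follows. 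For bound~3, the same equality in law turns $X^{nm,\tilde\sigma}_{\frac{m(i-1)+k-1}{nm}}-X^{nm,\tilde\sigma}_{\frac{i-1}{n}}$ into the increment of $X^{nm}$ over at most $m-1$ consecutive fine sub-steps; Lemma \ref{lem:2} (again with $n\mapsto nm$) controls each single sub-step increment by $C_p(\Delta_n/m)^{p/2}$ in $L^p$, and Minkowski's inequality over the $\le m-1$ steps, $m$ being a fixed constant, yields the claim.

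\textbf{Setup for bound 2.} Fix a coarse block $i$ and set, for $0\le k\le m$, $Y_k:=X^{nm}_{\frac{m(i-1)+k}{nm}}$ and $Y^\sigma_k:=X^{nm,\tilde\sigma}_{\frac{m(i-1)+k}{nm}}$, together with $\delta_k:=Y_k-Y^\sigma_k$, so that $\delta_0=e_{i-1}$ and $\delta_m=e_i$, where $e_i:=X^{nm}_{i/n}-X^{nm,\tilde\sigma}_{i/n}$ and $e_0=0$. Subtracting the recursion for the finer scheme (driven by $\delta W_{ik}$) from that for its $\tilde\sigma$-version (driven by $\delta W_{i\tilde\sigma(k)}$) expresses $\delta_k-\delta_{k-1}$ as a sum of \emph{Lipschitz} terms, of the shape $[f(Y_{k-1})-f(Y^\sigma_{k-1})]\Delta_n/m$, $[g(Y_{k-1})-g(Y^\sigma_{k-1})]\delta W_{ik}$ and $[\mathbb{H}(Y_{k-1})-\mathbb{H}(Y^\sigma_{k-1})]\blackdiamond(\cdot)$, each controlled by $|\delta_{k-1}|$ through the bounded derivatives of (\nameref{Assume}); plus two \emph{transport} terms $g(Y^\sigma_{k-1})(\delta W_{ik}-\delta W_{i\tilde\sigma(k)})$ and $\mathbb{H}(Y^\sigma_{k-1})\blackdiamond(\delta W_{ik}\delta W_{ik}^\top-\delta W_{i\tilde\sigma(k)}\delta W_{i\tilde\sigma(k)}^\top)$ that do not vanish individually. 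Note that, summed over a block, the first-order transport term would by itself yield an $O(1)$ contribution, so the entire gain rests on the cancellation below.

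\textbf{Mechanism and closing.} The driving identity is the permutation relation
\[
\sum_{k=1}^m(\delta W_{ik}-\delta W_{i\tilde\sigma(k)})=0 .
\]
Writing $g(Y^\sigma_{k-1})=g(Y^\sigma_0)+(g(Y^\sigma_{k-1})-g(Y^\sigma_0))$, the constant part multiplies this vanishing sum and drops out, leaving $\sum_k(g(Y^\sigma_{k-1})-g(Y^\sigma_0))(\delta W_{ik}-\delta W_{i\tilde\sigma(k)})$, a product of two $O(\Delta_n^{1/2})$ factors (bound~3 and the Lipschitz property of $g$), hence $O(\Delta_n)$ per block in $L^p$. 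The delicate point is its conditional mean given $\mathcal{F}_{(i-1)/n}$: Taylor expanding $g(Y^\sigma_{k-1})-g(Y^\sigma_0)$ to first order produces an $O(\Delta_n)$ conditional mean exactly from those indices with $\tilde\sigma^{-1}(k)<k$, where $\delta W_{ik}$ has already entered $Y^\sigma_{k-1}$, and this is precisely cancelled by the conditional mean of the quadratic Milstein transport term, thanks to $h_{\ell jj'}=\tfrac12\nabla g_{\ell j}^\top g_{\bullet j'}$. After this cancellation the per-block increment $e_i-e_{i-1}$ has conditional mean of order $\Delta_n^{3/2}+\Delta_n|e_{i-1}|$ and conditional second moment of order $\Delta_n^2+\Delta_n|e_{i-1}|^2$. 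Splitting $e_i-e_{i-1}=\zeta_i+\rho_i$ into its $\mathcal{F}_{(i-1)/n}$-martingale part $\zeta_i$ and predictable part $\rho_i$, the discrete Burkholder--Davis--Gundy inequality gives $\mathbb{E}\max_i|\sum_{i'\le i}\zeta_{i'}|^p\lesssim \mathbb{E}\big(\sum_i(\Delta_n^2+\Delta_n|e_{i-1}|^2)\big)^{p/2}\lesssim \Delta_n^{p/2}+\Delta_n\sum_{i'}\mathbb{E}(\max_{j\le i'}|e_j|^p)$, while $\sum_i|\rho_i|\lesssim n\Delta_n^{3/2}+\Delta_n\sum_i|e_{i-1}|$; a discrete Gronwall lemma then delivers $\mathbb{E}\max_{0\le i\le n}|e_i|^p\le C_p\Delta_n^{p/2}$, the within-block maxima $\max_{0\le k\le m}|\delta_k|$ being handled in terms of $|\delta_0|$ by the same recursion at the fine scale (here $m$ fixed, so no extra smallness is needed). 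The main obstacle is this third step: correctly tracking which Brownian sub-increments are correlated with $Y^\sigma_{k-1}$ under the permutation and verifying that the $\mathbb{H}$-correction annihilates the otherwise fatal $O(\Delta_n)$ conditional mean --- the point at which the Milstein scheme and the explicit form of $\mathbb{H}$ are indispensable --- while carrying the whole analysis at the level of $p$-th moments, which requires BDG with the $L^{p/2}$ bracket together with the higher-order Taylor remainders bounded via (\nameref{Assume}).
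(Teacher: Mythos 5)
Your treatment of the first and third bounds coincides with the paper's (equality in law of the permuted driving array, then Lemma \ref{lem:1} and Lemma \ref{lem:2} at step $1/nm$). For the second bound, however, you have massively over-engineered, and the one step on which your argument pivots is false. The paper's proof is a four-line triangle inequality: since $(X^{nm,\tilde\sigma},X^n)$ has the same law as $(X^{nm},X^n)$, one has $\mathbb{E}\max_i|X^{nm}_{\frac in}-X^{nm,\tilde\sigma}_{\frac in}|^p\le C_p\,\mathbb{E}\max_i|X^{nm}_{\frac in}-X^{n}_{\frac in}|^p\le C_p\big(\mathbb{E}\max_i|X^{nm}_{\frac in}-X_{\frac in}|^p+\mathbb{E}\max_i|X^{n}_{\frac in}-X_{\frac in}|^p\big)\le C_p\Delta_n^{p/2}$, by the strong rate in Lemma \ref{lem:1}. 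The point you missed is that the rate $\Delta_n^{1/2}$ requires \emph{no} antithetic cancellation whatsoever: each scheme is separately within $O(\Delta_n^{1/2})$ of the exact solution. The delicate cancellation you describe belongs to the order-one estimate for the \emph{average}, Corollary \ref{bar_LP}, not to this lemma.

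The genuine error is your claim that the $O(\Delta_n)$ conditional bias of the first-order transport term $\sum_k g(Y^\sigma_{k-1})(\delta W_{ik}-\delta W_{i\tilde\sigma(k)})$ is ``precisely cancelled by the conditional mean of the quadratic Milstein transport term.'' It is not, and cannot be, for size reasons: conditionally on $\mathcal{F}_{\frac{i-1}{n}}$, $\delta W_{i\tilde\sigma(k)}$ is independent of $Y^\sigma_{k-1}$, so $\mathbb{E}\big[\mathbb{H}(Y^\sigma_{k-1})\blackdiamond\delta W_{i\tilde\sigma(k)}\delta W_{i\tilde\sigma(k)}^{\top}\,\big|\,\mathcal{F}_{\frac{i-1}{n}}\big]=\mathbb{E}[\mathbb{H}(Y^\sigma_{k-1})|\mathcal{F}_{\frac{i-1}{n}}]\blackdiamond\Iq\frac{\Delta_n}{m}$, while the $\delta W_{ik}\delta W_{ik}^{\top}$ piece produces the same compensator up to Taylor corrections whose leading terms are odd products of Gaussian increments and hence vanish; the surviving discrepancy is $O(\Delta_n^2)$ per block, two orders too small to absorb an $O(\Delta_n)$ bias. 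The actual cancellation sits inside the bucket you labelled harmless ``Lipschitz'' terms: since $\delta W_{ik}$ is independent of $Y_{k-1}$, the conditional mean of $[g(Y_{k-1})-g(Y^\sigma_{k-1})]\delta W_{ik}$ equals $-\mathbb{E}[g(Y^\sigma_{k-1})\delta W_{ik}|\mathcal{F}_{\frac{i-1}{n}}]$, which is exactly minus your transport bias. Equivalently, pairing by scheme rather than by increment, $g(Y_{k-1})\delta W_{ik}$ and $g(Y^\sigma_{k-1})\delta W_{i\tilde\sigma(k)}$ are martingale increments with respect to $\mathcal{F}^{k-1,\Id}_{\frac{i-1}{n}}$ and $\mathcal{F}^{k-1,\tilde\sigma}_{\frac{i-1}{n}}$ respectively (this is precisely the tower-property mechanism used in the paper's proof of Lemma \ref{finer}), so no $O(\Delta_n)$ drift ever arises and $\mathbb{H}$ plays no role at this order. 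Your final moment estimates and the BDG--Gronwall closure are attainable after this repair, but the justification as written would fail at its declared crux --- and is in any case unnecessary for the stated rate.
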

\begin{proof}
The first and the third inequalities are  straightforward consequences of Lemma \ref{lem:1} and Lemma \ref{lem:2}. Next, we prove the second  inequality following similar arguments as in Lemma 4.6 and Theorem 4.10 in \cite{a}. As  
$X^{nm}_{\frac{i}{n}}-X^{n}_{\frac{i}{n}}\stackrel{\rm law}{=}X^{nm,\tilde{\sigma}}_{\frac{i}{n}}-X^{n}_{\frac{i}{n}}$, by Jensen inequality and Lemma 2.1, we have
\begin{align*}
	\mathbb{E}(\max_{0\leq i\leq n}|X^{nm}_{\frac{i}{n}}-X^{nm,\tilde{\sigma}}_{\frac{i}{n}}|^{p})
	\le& C_p (\mathbb{E}(\max_{0\leq i\leq n}|X^{nm}_{\frac{i}{n}}-X^{n}_{\frac{i}{n}}|^{p})+\mathbb{E}(\max_{0\leq i\leq n}|X^{nm,\tilde{\sigma}}_{\frac{i}{n}}-X^{n}_{\frac{i}{n}}|^{p}))\\
	\leq &C_p\mathbb{E}(\max_{0\leq i\leq n}|X^{nm}_{\frac{i}{n}}-X^{n}_{\frac{i}{n}}|^{p})\\
	\le& C_p (\mathbb{E}(\max_{0\leq i\leq n}|X^{nm}_{\frac{i}{n}}-X_{\frac{i}{n}}|^{p})+\mathbb{E}(\max_{0\leq i\leq n}|X^{n}_{\frac{i}{n}}-X_{\frac{i}{n}}|^{p}))
	\leq C_p \Delta_n^{p/2},
	\end{align*}
where $C_p$ is a generic positive constant.
	\end{proof}
\subsection{Functional limit theorem for the errors}
As we have  the uniform  $L^p$-boundedness  of  $(\sqrt{n}(X^{nm}_{\eta_n(t)}-X^{nm,\sigma}_{\eta_n(t)}))_{t\in[0,1]}$ (see Lemma \ref{Lp}) and $(n(\bar{X}^{nm,\sigma}_{\eta_n(t)}-X^{n}_{\eta_n(t)}))_{t\in[0,1]}$ (see Corollary \ref{bar_LP}), we get the tightness of these quantities
(see e.g.  \cite{f}).  Then, it  is natural to study the weak convergence of the couple $(\sqrt{n}(X^{nm}_{\eta_n(t)})-X^{nm,\sigma}_{\eta_n(t)})),n(\bar{X}^{nm,\sigma}_{\eta_n(t)}-X^{n}_{\eta_n(t)}))_{t\in[0,1]}$. The following theorem is our main result.
\begin{theorem}\label{thm:main}
	Under the assumption  (\nameref{Assume}),  let us denote $U^n_t=X^{nm}_{\eta_n(t)}-X^{nm,\sigma}_{\eta_n(t)}$ and $V^n_t=\bar{X}^{nm,\sigma}_{\eta_n(t)}-X^n_{\eta_n(t)}$, $t\in[0,1]$. Then  we have 
	\begin{align}\label{eq:2.10}
		(\sqrt{n}U^n,nV^n)\stackrel{\rm stably}{\Rightarrow} (U,V),\hskip 1cm \textrm{ as }n\rightarrow\infty,
	\end{align}
	with $U$ and $V$ are  solutions to
	\begin{align}
		U_t&=\sum_{j=0}^q\int_0^t \dot{F}^j_sU_sdY_s^j+\mathcal{M}_{1,t},\label{eq:2.11}\\
		V_t&=\sum_{j=0}^q\int_0^t \dot{F}^j_sV_sdY_s^j+\mathcal{M}_{2,t},\label{eq:2.12}
	\end{align}
where for $\ell\in\{1,\dots,d\}$, the $\ell$-th component of $\mathcal{M}_{1,t}$ and $\mathcal{M}_{2,t}$ are  given by  
	\begin{align*}
	&\mathcal{M}_{1,t}^{\ell}=-2\int_0^th_{\ell\bullet\bullet}(X_s)\blackdiamond dZ_{2,s}\\
	&\mathcal{M}_{2,t}^{\ell}=\\&\sum_{j=0}^q\int_0^t\bigg[\frac{m-1}{2m}\bigg(\nabla f_{\ell}(X_{s})^{\top}g_{\bullet j}(X_{s})\mathbf 1_{j\neq0}+\nabla g_{\ell j}(X_s)^{\top}f(X_s)+
	\frac{1}{2}g(X_s)^{\top}\nabla^2 g_{\ell j}(X_s)g(X_s)\blackdiamond {\Iq^j} \bigg)\\&+\frac{1}{8}U^{\top}_s\nabla^2 g_{\ell j}(X_s)U_s\bigg]dY^{j}_s+\frac{1}{2}\sum_{j=1}^q\int_0^t\bigg[\nabla g_{\ell j}(X_{s})^{\top}\mathbb{H}(X_{s})+\frac{1}{2}g(X_s)^{\top}\nabla^2g_{\ell j}(X_s)g(X_s)\\&+{\dot h}_{\ell\bullet\bullet}^s\blackdiamond g_{\bullet j}(X_s)\bigg]\blackdiamond dZ_{1,s}^{\bullet\bullet j}-\frac{1}{2}\int_0^t({\dot h}_{\ell\bullet\bullet}^s\blackdiamond U_s)\blackdiamond dZ_{2,s}+\frac{1}{2}\sum_{j,j'=1}^q\int_0^t\nabla g_{\ell j}(X_s)^{\top}[{\dot{g}^s}\blackdiamond g_{\bullet j''}(X_s)]dZ^{j\bullet j'}_{3,s}\\&+\frac{1}{2}\sum_{j=1}^q\int_0^t[g(X_s)^{\top}\nabla^2 g_{\ell  j}(X_{s})g(X_s)]\blackdiamond dZ_{3,s}^{j \bullet\bullet},
	\end{align*}
with $Y_t:=(t, W^1_t,\dots,W^q_t)^{\top}$,  $I_q^0=\mathbf 1_{q\times q}$ is the $\mathbb R^{q\times q}$ matrix with all its elements equal to 1, $\dot{F}^0=\nabla f$ and for $j\neq 0$, $I_q^j=I_q$  and $\dot{F}^j=\nabla g_{\bullet j}$,  $\dot{g}^s\in(\R^{d\times1})^{d\times q}$ is a block matrix such that for $\ell\in\{1,\dots,d\}$, $j\in\{1,\dots,q\}$, the $\ell j$-th block is given by $(\dot{g}^s)_{\ell j}=\nabla g_{\ell j}(X_s)$, $s\in[0,t]$ and  the  ${\dot  h}_{\ell \bullet\bullet}^s\in(\R^{d\times 1})^{q\times q}$ is a block matrix such that for $j$ and $j'\in\{1,\dots,q\}$, the $jj'$-th block is given by  $({\dot  h}_{\ell \bullet\bullet}^s)_{j j'}=\nabla h_{\ell jj'}( X_s)\in \R^{d\times 1}$, $s\in[0,t]$. 
Here, $Z_{1}$, $Z_{3}$ are $\mathbb{R}^{q^3}$-dimensional processes and  $Z_{2}$ is a $\mathbb{R}^{q\times q}$-dimensional process given by: for $j,j',j''\in\{1,\dots,q\}$,   
	\begin{align*}
	Z_{1,t}^{jj'j''}&=\left\{\begin{array}{lll}
	\frac{\sqrt{m-1}}{m}B^{jj'j''}_{1,t}&,&j> j'\\\frac{\sqrt{2(m-1)}}{m}B^{jjj''}_{1,t}&,&j=j'\\
	\frac{\sqrt{m-1}}{m}B^{j'jj''}_{1,t}&,&j< j'
	\end{array}\right.,
	\quad
	Z_{2,t}^{jj'}=\left\{\begin{array}{lll}
	\sqrt{\frac{m-1}{m}}B^{jj'}_{2,t}&,&j>j'\\0&,&j=j'\\-\sqrt{\frac{m-1}{m}}B^{j'j}_{2,t}&,&j< j'\end{array}\right.
	\\\\
	\mbox{ and }\quad 
	 Z_{3,t}^{jj'j''}&=\left\{\begin{array}{lll}
	\sqrt{\frac{(m-1)(m-2)}{3m^2}}B^{jj'j''}_{3,t}&,&j> j''\\ \sqrt{\frac{2(m-1)(m-2)}{3m^2}}B^{jj'j''}_{3,t}&,&j= j''\\ \sqrt{\frac{(m-1)(m-2)}{3m^2}}B^{j''j'j}_{3,t}&,&j< j''\end{array}\right.
	\end{align*}
	with $(B^{jj'j''}_1)_{\substack{1\leq j,j',j''\leq q\\j\geq j'}}$ and $(B^{jj'j''}_3)_{\substack{1\leq j,j',j''\leq q\\j\geq j''}}$   are two standard $q^2(q+1)/2$-dimensional Brownian motions and $(B^{jj'}_2)_{1\leq j'<j\leq q}$ is a standard $q(q-1)/2$-dimensional Brownian motion. Moreover, we have $B_1$,  $B_2$ and $B_3$ are independent of the original $q$-dimensional Brownian motion $W$ and also independent of each other.
	These processes are defined on an extension $(\tilde{{\Omega}},\tilde{\mathcal{F}},(\tilde{\mathcal{F}}_t)_{t\geq0},\tilde{\mathbb{P}})$ of the space $({{\Omega}},{\mathcal{F}},({\mathcal{F}}_t)_{t\geq0},{\mathbb{P}})$.
	Here we use that for $r\in\{1,3\}$ we have $$Z_{r,s}^{\bullet\bullet j}=\left(\begin{array}{ccc}Z_{r,s}^{11 j}&\hdots&Z_{r,s}^{1q j}\\\vdots&\ddots&\vdots\\Z_{r,s}^{q1 j}&\hdots&Z_{r,s}^{qq j}\end{array}\right)\in\mathbb R^{q\times q}\quad \textrm{and} \quad Z^{j\bullet j''}_{r,t}=(Z^{j1j''}_{r,t},\dots,Z^{jq j''}_{r,t})^{\top}\in\mathbb R^{q\times 1}.$$
\end{theorem}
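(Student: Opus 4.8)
The plan is to realize the pair $(\sqrt{n}U^n,nV^n)$ as a martingale triangular array indexed by the coarse grid $\{i/n:0\le i\le n\}$ and to apply the stable central limit theorem for such arrays (Jacod \cite{c}, recalled in Appendix \ref{app:C}), combined with the localization techniques of Jacod \cite{Jacod2004} and Jacod and Protter \cite{d}. The starting point is an \emph{error expansion} on a generic coarse interval $[\frac{i-1}{n},\frac{i}{n}]$, where the single coarse step of $X^n$ is compared with the composition of the $m$ fine steps of $X^{nm}$ and of $X^{nm,\sigma}$. Taylor-expanding $f$, $g$ and $\mathbb{H}$ around the common value at $\frac{i-1}{n}$ and linearizing yields, for each of $U^n$ and $V^n$, a recursion of the schematic form $\Xi^n_{\frac{i}{n}}=\Xi^n_{\frac{i-1}{n}}+\big(\text{increment of }\textstyle\sum_{j=0}^q\dot{F}^j\,dY^j\big)\Xi^n_{\frac{i-1}{n}}+(\text{fluctuation})_i+(\text{rest})_i$, in which the linear feedback factor is exactly what generates, in the limit, the transport terms $\sum_{j=0}^q\int_0^t\dot{F}^j_s\,\cdot\,dY^j_s$ appearing in \eqref{eq:2.11} and \eqref{eq:2.12}.

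The decisive structural feature is the antithetic symmetry. Since $(\delta W_{i\sigma(k)})_{k}$ is merely a reordering of $(\delta W_{ik})_{k}$, every contribution that is a symmetric function of the $m$ sub-increments (the Euler increment and the summed Milstein correction, to leading order) produces the same value for $X^{nm}$ and $X^{nm,\sigma}$; the two schemes differ only through the order in which the sub-increments are paired with the intermediate values of the coefficients. Hence in $U^n=X^{nm}-X^{nm,\sigma}$ the leading surviving contribution is the order-sensitive double sum $\nabla g\,(\sum_{k'<k}\delta W_{ik'})\,\delta W_{ik}$ minus its $\sigma$-permuted analogue, an antisymmetric, Lévy-area-type quantity that explains the rate $1/2$; whereas in the average $\bar{X}^{nm,\sigma}$ these first-order fluctuations cancel and only a genuinely second-order term survives, giving the rate $1$ for $V^n$. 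Matching the surviving contributions to the correct powers of $\Delta_n$ dictates which products of the $\delta W_{ik}$ enter the fluctuations: symmetric pair products feed $Z_1$, the antisymmetric differences $\delta W_{ik}^{j}\delta W_{ik'}^{j'}-\delta W_{ik}^{j'}\delta W_{ik'}^{j}$ feed $Z_2$ (whence $Z_2^{jj}=0$ and $Z_2^{jj'}=-Z_2^{j'j}$), and triple products feed $Z_3$.

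The core computation is the conditional moment analysis on the successively enlarged $\sigma$-algebras $\mathcal{F}_{\frac{i-1}{n}}^{k,\sigma}$. Taking $\mathcal{F}_{\frac{i-1}{n}}$-conditional expectations of the normalized fluctuation increments produces the remaining drift contributions in $\mathcal{M}_{2,t}$ (those carrying the prefactor $\frac{m-1}{2m}$), while the conditional covariances produce the three Brownian families: the combinatorial moments of sums and differences of the $m$ i.i.d. sub-increments yield precisely the $m$-dependent variances $\frac{m-1}{m}$, $\frac{2(m-1)}{m}$ and $\frac{(m-1)(m-2)}{3m^2}$ attached to $Z_1$, $Z_2$ and $Z_3$. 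Verifying the hypotheses of Jacod's theorem then amounts to: (i) convergence of the predictable brackets of the martingale parts to the brackets of $\mathcal{M}_1$ and $\mathcal{M}_2$; (ii) vanishing of their predictable covariation with $W$ (and with $t$), which simultaneously yields the stability of the limit and the asserted independence of $B_1,B_2,B_3$ from $W$ and from one another; and (iii) a conditional Lindeberg/Lyapunov bound, for which the uniform $L^p$ estimates of Lemma \ref{Lp} (together with Lemmas \ref{lem:1} and \ref{lem:2}) are sufficient.

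It remains to dispose of the rest terms and to identify the limit. The accumulated rest terms must be shown to satisfy $\stackrel{\mathbb{P}}{\rightarrow}0$; this is where the moment bounds of Lemmas \ref{lem:1}, \ref{lem:2} and \ref{Lp} are invoked repeatedly to dominate the Taylor remainders and the higher-order cross terms, and it is relegated to Appendices \ref{app:B} and \ref{app:A}. Once the array convergence is established, the limit is the solution of the linear system \eqref{eq:2.11}--\eqref{eq:2.12}: the equation for $U$ is autonomous and is solved first, and its solution then enters the quadratic source term $\frac{1}{8}U_s^{\top}\nabla^2 g_{\ell j}(X_s)U_s$ and the term $({\dot h}^s_{\ell\bullet\bullet}\blackdiamond U_s)\blackdiamond dZ_{2,s}$ of $\mathcal{M}_{2}$, so that $V$ is obtained by solving a second linear SDE with known inhomogeneity; existence and uniqueness for both are standard for linear equations driven by continuous semimartingales. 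I expect the main obstacle to be the error expansion together with the attendant combinatorial moment computations: correctly tracking the antithetic cancellations and extracting the exact $m$-dependent coefficients of $Z_1$, $Z_2$, $Z_3$ (and the prefactors in $\mathcal{M}_2$), while confirming that every discarded term is genuinely of lower order, is the most delicate and labor-intensive part of the argument.
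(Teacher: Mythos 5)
Your proposal is correct and follows essentially the same route as the paper: the expansion of $U^n$ and $V^n$ into a linear feedback part plus martingale main terms and negligible rest terms, the stable limit theorem of Jacod \cite{c} for triangular arrays with the combinatorial conditional-moment computations on the enlarged $\sigma$-algebras $\mathcal{F}_{\frac{i-1}{n}}^{k,\sigma}$ yielding the $m$-dependent variances of $Z_1,Z_2,Z_3$, the $L^p$ control of the remainders via Lemmas \ref{lem:1}, \ref{lem:2} and \ref{Lp}, and the identification of the limit through the linear SDEs \eqref{eq:2.11}--\eqref{eq:2.12}. The only points you leave implicit — the uniform tightness (\ref{ut}) of the noise arrays and the appeal to the Jacod--Protter results (Theorems \ref{thm:B5} and \ref{thm:2.5 1998}) to pass the stochastic integrals and the linear recursions to the limit — are exactly the tools the paper invokes at the corresponding steps, so your sketch matches the published argument.
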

\begin{proof}
	From section 4, equations $\eqref{eq:3.2}$ and $\eqref{eq:3.1}$, we can rewrite $U^n$ and $V^n$ as follows
	\begin{align*}
		U^n_t=&\sum_{j=0}^q\int_0^{t} (\dot{F}^{n,j}_{\eta_n(s)}\blackdiamond U^n_{\eta_n(s)})\mathbbm{1}_{s\leq \eta_n(t)}dY^{j}_s+J_t^{n,1},\\
		V^n_t=&\sum_{j=0}^q\int_0^{t} (\bar{\dot{F}}^{n,j}_{\eta_n(s)}\blackdiamond V^n_{\eta_n(s)})\mathbbm{1}_{ s\leq \eta_n(t)}dY^{j}_s+J_t^{n,2},
	\end{align*}	
	where  $Y_t:=(t,W^1_t,\dots,W^q_t)^{\top}$,
	$J_t^{n,1}=\mathcal{M}^{n,1}_t+\mathcal{R}^{n,1}_t$, $J_t^{n,2}=\mathcal{M}^{n,2}_t+\mathcal{R}^{n,2}_t$ and for $i\in\{1,\dots,n\}$ we denote
	\begin{align}\label{eq:coef}
\dot{F}^{n,j}_{\frac{i-1}{n}}=	\left\{\begin{array}{ll}\dot{f}^{n}_i,&j=0\\(\dot{g}^{n}_i)_{\bullet j},&j\in\{1,\dots,q\}\end{array}\right., \hskip 0.1cm \textrm{ where }(\dot{g}^{n}_i)_{\bullet j}=((\dot{g}^{n}_i)_{1 j},\hdots,(\dot{g}^{n}_i)_{d j})^{\top},
	\end{align}
	and 
	\begin{align}\label{eq:coefbar}
\bar{\dot{F}}^{n,j}_{\frac{i-1}{n}}=	\left\{\begin{array}{ll}\bar{\dot{f}}^{n}_i,&j=0\\(\bar{\dot{g}}^{n}_i)_{\bullet j},&j\in\{1,\dots,q\}\end{array}\right., \hskip 0.1cm \textrm{ where }(\bar{\dot{g}}^{n}_i)_{\bullet j}=((\bar{\dot{g}}^{n}_i)_{1 j},\hdots,(\bar{\dot{g}}^{n}_i)_{d j})^{\top},
	\end{align}
	with $\dot{f}^n_i\in(\R^{d\times1})^{d\times1}$ and $\dot{g}^n_i\in(\R^{d\times 1})^{d\times q}$  are block matrices  such that for  $\ell\in\{1,\dots,d\}$ the $\ell$-th block of $\dot{f}^n_i$ is given by $(\dot{f}^n_i)_{\ell}=\nabla f_{\ell}(\xi_{\frac{i-1}{n}}^{1,n})$ and for $\ell,j\in\{1,\dots,d\}$ the $\ell j$-th block of $\dot{g}^n_i$ is given by $(\dot{g}^n_i)_{\ell j}=\nabla g_{\ell j}(\xi_{\frac{i-1}{n}}^{2,n})$ with $\xi_{\frac{i-1}{n}}^{1,n}$ and $\xi_{\frac{i-1}{n}}^{2,n}$  are some vector points lying between $X^{nm}_{\frac{i-1}{n}}$ and $X^{nm,\sigma}_{\frac{i-1}{n}}$. In the same way,  $\bar{\dot f}^n_i\in(\R^{d\times1})^{d\times1}$ and $\bar{\dot g}^n_i\in(\R^{d\times 1})^{d\times q}$  are block matrices such that for $\ell\in\{1,\dots,d\}$ the $\ell$-th block of $\bar{\dot f}^n_i$ is given by $(\bar{\dot f}^n_i)_{\ell}=\nabla f_{\ell}({\bar\xi}_{\frac{i-1}{n}}^{1,n})$ and 
 $\ell,j\in\{1,\dots,d\}$   the  $\ell j$-th block of  $\bar{\dot g}^n_i$  is given by $(\bar{\dot g}^n_i)_{\ell j}=\nabla g_{\ell j}(\bar{\xi}^{2,n}_{\frac{i-1}{n}})$ with ${\bar\xi}_{\frac{i-1}{n}}^{1,n}$ and $\bar{\xi}^{2,n}_{\frac{i-1}{n}}$
 are some vector points lying between $X^{n}_{\frac{i-1}{n}}$ and $\bar{X}^{nm,\sigma}_{\frac{i-1}{n}}$. The aim now is to use  Theorem \ref{thm:2.5 1998} to get the joined convergence of our couple of errors. To do so, let us introduce the processes $Z^n_t=\sum_{j=0}^q\int_0^{t} \dot{F}^{n,j}_{\eta_n(s)}\blackdiamond \mathds{1}_{d}\mathbbm{1}_{ s\leq \eta_n(t)}(s)dY^{j}_s$, $\bar{Z}^n_t=\sum_{j=0}^q\int_0^{t} \bar{\dot{F}}^{n,j}_{\eta_n(s)}\blackdiamond \mathds{1}_{d}\mathbbm{1}_{ s\leq \eta_n(t)}(s)dY^{j}_s$ and $Z_t=\sum_{j=0}^q\int_0^{t} \dot{F}^j_{s}\blackdiamond \mathds{1}_ddY^{j}_s$, where $\mathds{1}_{d}=(1,\hdots,1)^{\top}\in\mathbb R^{d\times 1}$. Thanks to Lemma \ref{lem:1} and assumption (\nameref{Assume}), using the Burkholder-Davis-Gundy (BDG) inequality  with $p\ge 2$, there is a generic constant $C_p>0$ such that
 \begin{align*}
 	\mathbb{E}(\sup_{0\leq t\leq 1}|Z^n_t-Z_t|^p)&\le C_p\mathbb{E}(|\sum_{j=1}^q\int_0^{1} (\dot{F}^{n,j}_{\eta_n(s)}-\dot{F}^j_{s})^2\blackdiamond \mathds{1}_{d}ds|^{p/2})\\
 	&\le C_p\sum_{\ell=1}^d\sum_{j=1}^q\mathbb{E}(|\int_0^{1} (\nabla g_{\ell j}(\xi^{2,n}_{\frac{i-1}{n}})-\nabla g_{\ell j}(X_{\frac{i-1}{n}}))^2\blackdiamond \mathds{1}_{d}ds|^{p/2})\\
 	&\le C_p \Delta_n^{p/2}.
 \end{align*}
   Similarly, $\mathbb{E}(\sup_{0\leq t\leq 1}|\bar{Z}^n_t-Z_t|^p)$ is also bounded by $C_p \Delta_n^{p/2}$. Therefore, we have $Z^n-Z\stackrel{L^p}{\rightarrow}0$ and $\bar{Z}^n-Z\stackrel{L^p}{\rightarrow}0$ as $n\to\infty$. By Lemma \ref{lem:R1} and Lemma \ref{lem:R2} and Proposition \ref{prop:M}, we deduce that $(\sqrt{n}J^{n,1},nJ^{n,2})\stackrel{\rm stably}{\Rightarrow} (\mathcal{M}_{1},\mathcal{M}_{2})$ as $n\to\infty$, where the limit processes are defined on an extension  $(\tilde{{\Omega}},\tilde{\mathcal{F}},(\tilde{\mathcal{F}}_t)_{t\geq0},\tilde{\mathbb{P}})$ of the original space $({{\Omega}},{\mathcal{F}},({\mathcal{F}}_t)_{t\geq0},{\mathbb{P}})$. By Lemma \ref{lm:d5}, we get that $(Y,\sqrt{n}J^{n,1},nJ^{n,2},Z^n)$  stably converges to the limit $(Y,\mathcal{M}_{1},\mathcal{M}_{2},Z)$ as $n\to\infty$
 Finally, by Theorem \ref{thm:2.5 1998} , we have $(Y,\sqrt{n}J^{n,1},nJ^{n,2},Z^n, \sqrt{n}U^n,nV^n)$  stably converges to the limit $(Y,\mathcal{M}_{1},\mathcal{M}_{2},Z,U,V)$  as $n\to\infty$,  where $U$ and $V$ respectively satisfy $\eqref{eq:2.11}$ and $\eqref{eq:2.12}$.  
\end{proof}	
\subsection{Central limit theorem}
The $\sigma$-antithetic Multilevel Monte Carlo method uses information from a sequence of computations with increasing step sizes and approximates the quantity  of interest $\mathbb{E}(\varphi(X_1))$ by
$$\hat{\mathcal{Q}}_n=\frac{1}{N_0}\sum_{k=1}^{N_0}\varphi(X^1_{1,k})+\sum_{\ell=1}^L\frac{1}{N_{\ell}}\sum_{k=1}^{N_{\ell}}[\frac{1}{2}(\varphi(X^{{\ell},m^{\ell}}_{1,k})+\varphi(X^{{\ell},m^{\ell},\sigma}_{1,k}))-\varphi(X^{{\ell},m^{{\ell}-1}}_{1,k})],$$
$m\in\mathbb{N}\backslash\{0,1\}$, and $L=\frac{\log{n}}{\log{m}}$.
We denote the weak error
$\epsilon_n=\mathbb{E}(\varphi(X^n_1))-\varphi(X_1))$. In the spirit of Kebaier \cite{k}, we assume that $\epsilon_n$ is of order $1/n^{\alpha}$, for any $\alpha\in[1/2,1]$. Taking advantage from Theorem \ref{central}, we are now able to establish a central limit theorem of Lindeberg Feller type on the error $\hat{\mathcal{Q}}_n-\mathbb{E}(\varphi(X_1))$. To do so, we introduce a real sequence $(a_\ell)_{\ell\in\mathbb{N}}$ of positive weights such that   
\begin{equation}\label{W}\tag{\textbf{W}} 
	\lim_{L\uparrow\infty}\sum_{\ell=1}^{L}a_{\ell}=\infty, \textrm{ for }p>2,\textrm{ and }	\lim_{L\uparrow\infty}\frac{1}{\left(\sum_{\ell=1}^{L}a_{\ell}\right)^{p/2}}	\sum_{\ell=1}^{L}a_{\ell}^{p/2}=0
\end{equation}
and we choose the same form of $N_{\ell}$ as in \cite{b}, namely
\begin{align}\label{eq:3.7}
	N_{\ell}=\frac{n^{2\alpha}}{m^{2(\ell-1)}a_{\ell}}\sum_{\ell=1}^{L}a_{\ell},\hskip 1cm \ell\in\{0,\dots,L\}\textrm{ and }L=\frac{\log{n}}{\log{m}}.
\end{align}	
This generic form for the sample size  allows us a straightforward use of Theorem \ref{thm:main} to prove a central limit theorem for the $\sigma$-antithetic MLMC estimator. 
In the sequel, we denote by $\tilde{E}$ and $\tilde{\rm Var}$ the expectation and the variance respectively defined on the probability space $(\tilde{\Omega},\tilde{\mathcal{F}},(\tilde{\mathcal{F}}_t)_{t\geq0},\tilde{\mathbb{P}})$ introduced in Theorem \ref{thm:main}. 
\begin{theorem}\label{thm:clt}
	Assume that $f$ and $g$ satisfy assumption (\nameref{Assume}). Let  $\varphi\in\mathcal{C}^2(\mathbb{R}^d,\mathbb{R})$ satisfying
	\begin{equation}\label{phi}\tag{\textbf{H$_\varphi$}}
		\begin{array}{l}	|\varphi(x)-\varphi(y)|\leq C(1+|x|^p+|y|^p)|x-y|, \textrm{ for some constant $C$ and $p>0$}\\\textrm{ with bounded second derivatives}.\end{array}
	\end{equation}
	Assume that for some $\alpha\in[1/2,1]$ we have 
	\begin{equation}\label{epsilon}\tag{\textbf{H$_{\epsilon_n}$}}
		\lim_{n\uparrow\infty}n^{\alpha}\epsilon_n=C_{\varphi}(\alpha).
	\end{equation}
	Then, for the choice of $N_{\ell}$, $\ell\in\{0,\dots,L\}$ given by the equation $\eqref{eq:3.7}$, we have 
	\begin{align*}
		n^{\alpha}(\hat{\mathcal{Q}}_n-\mathbb{E}(\varphi(X_1))){\Rightarrow} \mathcal{N}(C_{\varphi}(\alpha),\mathcal{V}), \hskip 1cm \textrm{as } n\rightarrow\infty
	\end{align*}
	with $\mathcal{V}= \tilde{\rm Var}(\nabla\varphi^{\top}(X_1)V_1+\frac{1}{8}U_1^{\top}\nabla^2\varphi(X_1)U_1),$ where the limit processes
	$U$ and $V$ are explicitly given in Theorem \ref{thm:main}.
\end{theorem}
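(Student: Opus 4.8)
The plan is to separate the normalized error into a deterministic bias and a centered statistical fluctuation,
\[
n^\alpha(\hat{\mathcal Q}_n-\mathbb E(\varphi(X_1)))=n^\alpha\big(\hat{\mathcal Q}_n-\mathbb E(\hat{\mathcal Q}_n)\big)+n^\alpha\big(\mathbb E(\hat{\mathcal Q}_n)-\mathbb E(\varphi(X_1))\big),
\]
and to treat the two pieces separately before recombining them by Slutsky's lemma. The bias is the easy part: since the level estimators in \eqref{eq:4.4.1} are unbiased for the corresponding expectations and $X^{m^\ell,\sigma}\stackrel{\rm law}{=}X^{m^\ell}$, the telescoping identity \eqref{eq:3.3} gives $\mathbb E(\hat{\mathcal Q}_n)=\mathbb E(\varphi(X^{m^L}_1))=\mathbb E(\varphi(X^n_1))$ because $m^L=n$. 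Hence the second term equals $n^\alpha\epsilon_n$, which tends to $C_\varphi(\alpha)$ by assumption \eqref{epsilon}.

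The heart of the proof is the statistical part. Writing $Z_\ell=\tfrac12(\varphi(X^{m^\ell}_1)+\varphi(X^{m^\ell,\sigma}_1))-\varphi(X^{m^{\ell-1}}_1)$ for $\ell\ge1$ and $Z_0=\varphi(X^1_1)$, the fluctuation $\hat{\mathcal Q}_n-\mathbb E(\hat{\mathcal Q}_n)=\sum_{\ell=0}^L\frac1{N_\ell}\sum_{k=1}^{N_\ell}(Z_{\ell,k}-\mathbb E Z_{\ell,k})$ is a sum of row-wise independent, centered contributions, the levels being built on independent Brownian paths. I would first identify the limiting per-level law and variance. Applying Theorem \ref{thm:main} with coarse step $1/m^{\ell-1}$ and fine step $1/m^\ell$ yields the joint stable convergences $\sqrt{m^{\ell-1}}(X^{m^\ell}_1-X^{m^\ell,\sigma}_1)\Rightarrow U_1$ and $m^{\ell-1}(\bar X^{m^\ell,\sigma}_1-X^{m^{\ell-1}}_1)\Rightarrow V_1$. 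Inserting these into the Taylor decomposition \eqref{eq:2.13}, and using that the intermediate points $\xi_1,\xi_2$ both converge to $X_1$ together with the continuous mapping theorem, I obtain
\[
m^{\ell-1}Z_\ell\ \stackrel{\rm stably}{\Rightarrow}\ \Theta:=\nabla\varphi^\top(X_1)V_1+\tfrac18\,U_1^\top\nabla^2\varphi(X_1)U_1 .
\]
Since Lemma \ref{Lp}, Corollary \ref{bar_LP} and the growth bound \eqref{phi} give a uniform $L^p$ bound (some $p>2$) on $m^{\ell-1}Z_\ell$, the family $\{(m^{\ell-1}Z_\ell)^2\}_\ell$ is uniformly integrable, so the convergence in law upgrades to convergence of the first two moments and $\mathrm{Var}(m^{\ell-1}Z_\ell)\to\tilde{\rm Var}(\Theta)=\mathcal V$.

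I would then feed this into the Lindeberg--Feller statement, Theorem \ref{central}. The allocation \eqref{eq:3.7} is calibrated so that $N_\ell\,m^{2(\ell-1)}=n^{2\alpha}\big(\sum_{\ell'=1}^{L}a_{\ell'}\big)/a_\ell$, which makes the total normalized variance $n^{2\alpha}\sum_\ell \mathrm{Var}(Z_\ell)/N_\ell\to\mathcal V$ (the level-$0$ contribution being negligible since $n^{2\alpha}/N_0\to0$). The weight hypothesis \eqref{W} is exactly what is needed so that the per-level fluctuations are asymptotically negligible in the Lindeberg sense: the dominant, Gaussian part of the Lyapunov ratio collapses to $\mathcal V^{p/2}\big(\sum_{\ell=1}^{L}a_\ell^{p/2}\big)/\big(\sum_{\ell'=1}^{L}a_{\ell'}\big)^{p/2}\to0$. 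Theorem \ref{central} then delivers $n^\alpha(\hat{\mathcal Q}_n-\mathbb E(\hat{\mathcal Q}_n))\Rightarrow\mathcal N(0,\mathcal V)$, and combining with the bias term through Slutsky's lemma gives the announced $\mathcal N(C_\varphi(\alpha),\mathcal V)$ limit.

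I expect the main obstacle to lie in making the variance identification and the Lindeberg verification rigorous out of the merely \emph{stable} convergence supplied by Theorem \ref{thm:main}. On one hand, upgrading $m^{\ell-1}Z_\ell\Rightarrow\Theta$ to $\mathrm{Var}(m^{\ell-1}Z_\ell)\to\mathcal V$ rests on the uniform integrability argument above, which leans on the $L^p$ estimates with $p>2$ and on a uniform-in-$\ell$ control of the Hessian remainder in \eqref{eq:2.13} evaluated at the moving point $\xi_2$. On the other hand, the Lindeberg condition must be checked over the whole triangular array rather than only through its dominant Gaussian part, and this is where the interaction between the refinement factor $m^{\ell-1}$, the sample allocation \eqref{eq:3.7} (which for $\alpha<1$ effectively limits the active hierarchy to the levels with $N_\ell\ge1$, i.e.\ down to step $\sim n^{-\alpha}$) and the weight condition \eqref{W} has to be handled with care.
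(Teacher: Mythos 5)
Your proposal is correct and follows essentially the same route as the paper's proof: bias term handled by \eqref{epsilon}, level-zero term negligible, per-level stable convergence obtained from Theorem \ref{thm:main} through the Taylor decomposition \eqref{eq:2.13}, uniform integrability from \eqref{phi} and Lemma \ref{Lp} upgrading to convergence of the rescaled variances, the Toeplitz lemma for the variance sum under the allocation \eqref{eq:3.7}, and Lindeberg--Feller via Theorem \ref{central}. The obstacle you flag at the end is resolved exactly as you anticipate: the paper verifies the Lyapunov condition over the whole triangular array directly, using Burkholder's and Jensen's inequalities together with the bound $\mathbb{E}|Z^{m^{\ell},m^{\ell-1}}_{1}|^p\leq K_p\,m^{-p(\ell-1)}$ supplied by Lemma \ref{Lp}, so that the ratio $\bigl(\sum_{\ell=1}^{L}a_{\ell}^{p/2}\bigr)/\bigl(\sum_{\ell=1}^{L}a_{\ell}\bigr)^{p/2}\to 0$ from \eqref{W} controls everything, with no separate treatment of a ``dominant Gaussian part.''
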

\begin{proof}
	To simplify our notation, we give the proof for $\alpha=1$, the case $\alpha\in[1/2,1)$ is straightforward by similar arguments. At first, we rewrite the error term as follows
	$$
	\hat{\mathcal{Q}}_n-\mathbb{E}(\varphi(X_1))=\hat{\mathcal{Q}}_n^1+\hat{\mathcal{Q}}_n^2+\epsilon_n,\;\mbox{ where }
	$$
	\begin{align*}
		\hat{\mathcal{Q}}^1_n=& \frac{1}{N_0}\sum_{k=1}^{N_0}(\varphi(X^1_{1,k})-\mathbb{E}(\varphi(X^1_{1}))),\\
		\hat{\mathcal{Q}}^2_n=&\sum_{\ell=1}^L\frac{1}{N_{\ell}}\sum_{k=1}^{N_{\ell}}[\frac{1}{2}(\varphi(X^{{\ell},m^{\ell}}_{1,k})+\varphi(X^{{\ell},m^{\ell},\sigma}_{1,k}))-\varphi(X^{\ell,m^{\ell-1}}_{1,k})-\mathbb{E}(\varphi(X^{\ell,m^{\ell}}_{1})-\varphi(X^{\ell,m^{\ell-1}}_{1}))].
	\end{align*}
	 For $N_0=\frac{n^2}{a_0}\sum_{\ell=1}^{L}a_{\ell}$  we simply apply the classical central limit theorem to get 
	 	$$	n\hat{\mathcal{Q}}^1_n=\sqrt{\frac{a_0}{\sum_{\ell=1}^{L}a_{\ell}}}\sqrt{N_0}\hat{\mathcal{Q}^1_n}\stackrel{\mathbb{P}}{\rightarrow}0 \quad{as }\; n\to \infty. $$ 
	Finally, we only need to study the convergence of $n\hat{\mathcal{Q}}_n^2$ and the proof is completed by assumption (\ref{epsilon}).
	To do so,  we use Theorem \ref {central} and set
	\begin{align*}
		X_{n,\ell}:=&\frac{n}{N_{\ell}}\sum_{k=1}^{N_{\ell}}Z^{m^{\ell},m^{\ell-1}}_{1,k},\textrm{ where } (Z^{m^{\ell},m^{\ell-1}}_{1,k})_{1\le k\le N_\ell} \mbox{ are independent copies of}\\
		Z^{m^{\ell},m^{\ell-1}}_{1}:=&	\frac{1}{2}(\varphi(X^{{\ell},m^{\ell}}_{1})+\varphi(X^{{\ell},m^{\ell},\sigma}_{1}))-\varphi(X^{\ell,m^{\ell-1}}_{1})-\mathbb{E}(\varphi(X^{\ell,m^{\ell}}_{1})-\varphi(X^{\ell,m^{\ell-1}}_{1})).
	\end{align*} 
	First, we check the limit variance of $n\hat{\mathcal{Q}}^2_n$. We have
	\begin{align}\label{eq:3.8}
		\sum_{\ell=1}^{L}\mathbb{E}(X_{n,\ell})^2
		=\sum_{\ell=1}^{L}\frac{n^2}{N_{\ell}}{\rm Var}(Z^{m^{\ell},m^{\ell-1}}_{1})
		=\sum_{\ell=1}^{L}	\frac{1}{\sum_{\ell=1}^{L}a_{\ell}}a_{\ell}m^{2(\ell-1)}{\rm Var}(Z^{m^{\ell},m^{\ell-1}}_{1}).
	\end{align}
	Besides, since $\varphi\in\mathcal{C}^2(\mathbb{R}^d,\mathbb{R})$, applying Taylor expansion  twice we get
	\begin{align*}
		\frac{1}{2}(\varphi(X^{{\ell},m^{\ell}}_{1})&+\varphi(X^{{\ell},m^{\ell},\sigma}_{1}))-\varphi(X^{\ell,m^{\ell-1}}_{1})\\&=\nabla \varphi^{\top}	(\xi_1)(\bar{X}^{\ell,m^{\ell},\sigma}_{1}-X^{\ell,m^{\ell-1}}_{1})+\frac{1}{8}(X^{\ell,m^{\ell}}_{1}-X^{\ell,m^{\ell},\sigma}_{1})^{\top}\nabla^2\varphi(\xi_2)(X^{\ell,m^{\ell}}_{1}-X^{\ell,m^{\ell},\sigma}_{1}),
	\end{align*}
	for some $\xi_1$ a vector point lying between $X^{\ell,m^{\ell}}_{1}$ and $X^{\ell,m^{\ell},\sigma}_{1}$ and $\xi_2$ a vector point lying between $\bar{X}^{\ell,m^{\ell},\sigma}_{1}$ and $X^{\ell,m^{\ell-1}}_{1}$. Thus, 
	under assumption (\ref{phi}), thanks to Theorem \ref{thm:main}  we get  as $\ell\rightarrow\infty$
		\begin{align*}
		n\left[\frac{1}{2}(\varphi(X^{{\ell},m^{\ell}}_{1})+\varphi(X^{{\ell},m^{\ell},\sigma}_{1}))-\varphi(X^{\ell,m^{\ell-1}}_{1})\right]\stackrel{\rm stably}{\Rightarrow}\nabla\varphi^{\top}(X_1)V_1+\frac{1}{8}U_1^{\top}\nabla^2\varphi(X_1)U_1.	\end{align*}
	From the uniform integrability obtained by combining  (\ref{phi}) and Lemma \ref{Lp},  we get for $k\in\{1,2\}$ 
	\begin{align*}
		\mathbb{E}\left(n\left[\frac{1}{2}(\varphi(X^{{\ell},m^{\ell}}_{1})+\varphi(X^{{\ell},m^{\ell},\sigma}_{1}))-\varphi(X^{\ell,m^{\ell-1}}_{1})\right]\right)^k\underset{\ell\rightarrow\infty}{\longrightarrow}\tilde{\mathbb{E}}\left(\nabla\varphi^{\top}(X_1)V_1+\frac{1}{8}U_1^{\top}\nabla^2\varphi(X_1)U_1\right)^k.
	\end{align*}
	Consequently, 
	$
		m^{2(\ell-1)}{\rm Var}(Z^{m^{\ell},m^{\ell-1}}_{1,1}){\longrightarrow}\mathcal{V},
	$
	as ${\ell\rightarrow\infty}$. 
	Thus,  by $\eqref{eq:3.8}$ and Toeplitz lemma we get
	$
		\lim_{L\uparrow\infty}\sum_{\ell=1}^{L}\mathbb{E}(X_{n,\ell})^2=\mathcal{V}.
	$
	Finally, we only need to check the Lyapunov condition. By Burkholder's inequality and Jensen's inequality, we get for $p>2$,
	\begin{align*}
		\mathbb{E}|X_{n,\ell}|^p=\frac{n^p}{N_{\ell}^{p/2}}\mathbb{E}|\sum_{\ell=1}^{L}Z^{m^{\ell},m^{\ell-1}}_{1}|^p{\leq}C_p\frac{n^p}{N_{\ell}^p}\mathbb{E}|Z^{m^{\ell},m^{\ell-1}}_{1}|^p,
	\end{align*}
	where $C_p$ is a generic positive constant depending on $p$. Besides, Lemma \ref{Lp} ensures that there is a constant $K_p>0$ such that 
	$
		\mathbb{E}|Z^{m^{\ell},m^{\ell-1}}_{1}|^p\leq \frac{K_p}{m^{p(\ell-1)}}.
	$
	Therefore, 
	\begin{align*}
		\sum_{\ell=1}^{L}\mathbb{E}|X_{n,\ell}|^p&\leq{C}_p	\sum_{\ell=1}^{L} \frac{n^p}{N_{\ell}^{p/2}}\frac{1}{m^{p(\ell-1)}}\leq {C}_p \frac{1}{\left(\sum_{\ell=1}^{L}a_{\ell}\right)^{p/2}}	\sum_{\ell=1}^{L}a_{\ell}^{p/2}\stackrel{n\rightarrow\infty}{\rightarrow}0
	\end{align*}
	which complets the proof.
\end{proof}
\subsection{The time complexity}
The time complexity in the $\sigma$-antithetic MLMC method is given by
\begin{align*}
	C_{\sigma \mbox{-}{\rm MLMC}}&=C\times\sum_{\ell=1}^{L}N_{\ell}(2m^{\ell}+m^{\ell-1})	\textrm{ with }C>0\\
	&=C\times\sum_{\ell=1}^{L} \frac{n^2}{m^{2(\ell-1)}a_{\ell}}(2m^{\ell}+m^{\ell-1})\sum_{\ell=1}^{L}a_{\ell}\\
	&=C\times n^2(2m^2+m)\sum_{\ell=1}^{L} \frac{1}{m^{\ell}a_{\ell}}\sum_{\ell=1}^{L}a_{\ell}.
\end{align*}
This analysis  is online with the one obtained by Ben Alaya and Kebaier \cite{h} in the context of  pricing Asian options using numerical schemes with a strong convergence order  equal to 1. The optimal choice corresponding to $a^*_{\ell}=m^{-\ell/2}$, $\ell\in\{1,\dots,L\}$ leads to the optimal time complexity   $C^{*}_{\sigma \mbox{-}{\rm MLMC}}=O(n^2)$ the same one as for an unbiased Monte Carlo method having the same precision. However, this optimal weight $a^*_{\ell}$ does not satisfy (\ref{W}) which ensures Theorem \ref{thm:clt}. In what follows, we recall from \cite{h},  three examples of weights  $(a_{\ell})_{1\leq\ell\leq L}$ satisfying (\ref{W}) and for which the time complexity gets closer and closer to $C^{*}_{\sigma \mbox{-}{\rm MLMC}}$:
\begin{enumerate}[i)]
	\item The choice $a_{\ell}=1$, corresponding to $N_{\ell}=\frac{n^2}{m^{2(\ell-1)}}L$, $\ell\in\{1,\dots,L\}$ leads to the complexity $C_{\sigma \mbox{-}{\rm MLMC}}=O(n^2\log{n})$.
	\item The choice $a_{\ell}=\frac{1}{\ell}$, corresponding to $N_{\ell}=\frac{n^2\ell}{m^{2(\ell-1)}}\sum_{\ell=1}^{L}\frac{1}{\ell}$, $\ell\in\{1,\dots,L\}$ leads to the complexity  $C_{\sigma \mbox{-}{\rm MLMC}}=O(n^2\log{\log{n}})$.
	\item The choice $a_{\ell}=\frac{1}{\ell\log{\ell}}$, corresponding to $N_{\ell}=\frac{n^2\ell\log{\ell}}{m^{2(\ell-1)}}\sum_{\ell=1}^{L}\frac{1}{\ell\log{\ell}}$, $\ell\in\{1,\dots,L\}$ leads to the complexity $C_{\sigma \mbox{-}{\rm MLMC}}=O(n^2\log{\log{\log{n}}})$.
\end{enumerate} 
\section{Expanding analysis of $\sigma$-antithetic scheme}
In this section, we have two main purposes. Firstly, for $\tilde{\sigma}\in\{\Id,\sigma\}$, we give the expansion of two error terms $X^{nm,\tilde{\sigma}}_{\frac{i}{n}}-X^{nm,\tilde{\sigma}}_{\frac{i-1}{n}}$ and $\bar{X}^{nm,\sigma}_{\frac{i}{n}}-\bar{X}^{nm,\sigma}_{\frac{i-1}{n}}$ together with some related  $L^p$ estimates. Secondly, we give the expansions of the errors $U^n$ and $V^n$ with specifying the main and the rest terms. From now on we assume that  assumption (\nameref{Assume}) is satisfied.
\subsection{Expansion of the error $X^{nm,\tilde{\sigma}}_{\frac{i}{n}}-X^{nm,\tilde{\sigma}}_{\frac{i-1}{n}}$, with $\tilde{\sigma}\in\{\Id,\sigma\}$}
By $\eqref{eq:2.6.1}$ and $\eqref{eq:2.6.2}$, we have for all $k\in\{1,\dots,m\}$
\begin{multline}\label{eq:iter1}
	X^{nm,\tilde{\sigma}}_{{\frac{m(i-1)+k}{nm}}}-X^{nm,\tilde{\sigma}}_{\frac{i-1}{n}}=\sum_{k'=1}^k f(X^{nm,\tilde{\sigma}}_{\frac{m(i-1)+k'-1}{nm}})\frac{\Delta_n}{m}+\sum_{k'=1}^kg(X^{nm,\tilde{\sigma}}_{\frac{m(i-1)+k'-1}{nm}})\delta W_{i\tilde{\sigma}(k')}\\
	+\sum_{k'=1}^k\mathbb{H}(X^{nm,\tilde{\sigma}}_{\frac{m(i-1)+k'-1}{nm}})\blackdiamond(\delta W_{i\tilde{\sigma}(k')}\delta W_{i\tilde{\sigma}(k')}^{\top}-{\Iq}\frac{\Delta_n}{m}).
\end{multline}
In particular, we have 
\begin{multline}\label{eq:iter2}
	X^{nm,\tilde{\sigma}}_{\frac{i}{n}}-X^{nm,\tilde{\sigma}}_{\frac{i-1}{n}}=\sum_{k=1}^m f(X^{nm,\tilde{\sigma}}_{\frac{m(i-1)+k-1}{nm}})\frac{\Delta_n}{m}+\sum_{k=1}^mg(X^{nm,\tilde{\sigma}}_{\frac{m(i-1)+k-1}{nm}})\delta W_{i\tilde{\sigma}(k)}\\
	+\sum_{k=1}^m\mathbb{H}(X^{nm,\tilde{\sigma}}_{\frac{m(i-1)+k-1}{nm}})\blackdiamond(\delta W_{i\tilde{\sigma}(k)}\delta W_{i\tilde{\sigma}(k)}^{\top}-{\Iq}\frac{\Delta_n}{m}).
\end{multline}
This last equation can be rewritten as follows
\begin{align}\label{eq:2.15}
	\nonumber X^{nm,\tilde{\sigma}}_{\frac{i}{n}}&-X^{nm,\tilde{\sigma}}_{\frac{i-1}{n}}=f(X^{nm,\tilde{\sigma}}_{\frac{i-1}{n}})\Delta_n+g(X^{nm,\tilde{\sigma}}_{\frac{i-1}{n}})\Delta W_{i}+\mathbb{H}(X^{nm,\tilde{\sigma}}_{\frac{i-1}{n}})\blackdiamond(\Delta W_{i}\Delta W_{i}^{\top}-{\Iq}\Delta_n)\\\nonumber&+\sum_{k=1}^m \left[f(X^{nm,\tilde{\sigma}}_{\frac{m(i-1)+k-1}{nm}})-f(X^{nm,\tilde{\sigma}}_{\frac{i-1}{n}})\right]\frac{\Delta_n}{m}+\sum_{k=1}^m\left[g(X^{nm,\tilde{\sigma}}_{\frac{m(i-1)+k-1}{nm}})-g(X^{nm,\tilde{\sigma}}_{\frac{i-1}{n}})\right]\delta W_{i\tilde{\sigma}(k)}\\
	&+\sum_{k=1}^m\mathbb{H}(X^{nm,\tilde{\sigma}}_{\frac{m(i-1)+k-1}{nm}})\blackdiamond(\delta W_{i\tilde{\sigma}(k)}\delta W_{i\tilde{\sigma}(k)}^{\top}-{\Iq}\frac{\Delta_n}{m})-\mathbb{H}(X^{nm,\tilde{\sigma}}_{\frac{i-1}{n}})\blackdiamond(\Delta W_{i}\Delta W_{i}^{\top}-{\Iq}\Delta_n).
\end{align}
Let us start dealing with the last four  terms in the right-hand side (r.h.s.) of the above equality. By a Taylor expansion,  we have for any fixed index component  $\ell\in\{1,\dots,d\}$,
\begin{multline}\label{eq:xi1n}
	f_{\ell}(X^{nm,\tilde{\sigma}}_{\frac{m(i-1)+k-1}{nm}})-f_{\ell}(X^{nm,\tilde{\sigma}}_{\frac{i-1}{n}})
	=	{\nabla f_{\ell}}(X^{nm,\tilde{\sigma}}_{\frac{i-1}{n}})^{\top}(X^{nm,\tilde{\sigma}}_{\frac{m(i-1)+k-1}{nm}}-X^{nm,\tilde{\sigma}}_{\frac{i-1}{n}})\\+\frac{1}{2}(X^{nm,\tilde{\sigma}}_{\frac{m(i-1)+k-1}{nm}}-X^{nm,\tilde{\sigma}}_{\frac{i-1}{n}})^\top{\nabla^2}{f_{\ell}}(\xi^{1,n}_{ik})(X^{nm,\tilde{\sigma}}_{\frac{m(i-1)+k-1}{nm}}-X^{nm,\tilde{\sigma}}_{\frac{i-1}{n}}),
\end{multline}	
for some vector point $\xi^{1,n}_{ik}$  lying between $X^{ nm,\tilde{\sigma}}_{\frac{m(i-1)+k-1}{nm}}$ and $X^{nm,\tilde{\sigma}}_{\frac{i-1}{n}}$.
Then, using \eqref{eq:iter1}
\begin{equation}\label{eq:decf}
\sum_{k=1}^m \left[f(X^{nm,\tilde{\sigma}}_{\frac{m(i-1)+k-1}{nm}})-f(X^{nm,\tilde{\sigma}}_{\frac{i-1}{n}})\right]\frac{\Delta_n}{m}=:
M^{nm,\tilde{\sigma},1}_{\frac{i-1}{n}}+N^{nm,\tilde{\sigma}}_{\frac{i-1}{n}},
\end{equation}
where for $\ell\in\{1,\dots,d\}$ the $\ell^{ th}$-component of  $N^{nm,\tilde{\sigma}}_{\frac{i-1}{n}}$ and $M^{nm,\tilde{\sigma},1}_{\frac{i-1}{n}}$  are  given by 
\begin{align}\label{eq:N}
N^{nm,\tilde{\sigma}}_{\ell,\frac{i-1}{n}}&=\sum_{k=2}^m\nabla f^{\top}_{\ell}(X^{nm,\tilde{\sigma}}_{\frac{i-1}{n}})\\\times&\sum_{k'=1}^{k-1}\left[f(X^{nm,\tilde{\sigma}}_{\frac{m(i-1)+k'-1}{nm}})\frac{\Delta_n}{m}+\mathbb{H}(X^{nm,\tilde{\sigma}}_{\frac{m(i-1)+k'-1}{nm}})\blackdiamond(\delta W_{i\tilde{\sigma}(k')}\delta W_{i\tilde{\sigma}(k')}^{\top}-{\Iq} \frac{\Delta_n}{m})\right]\frac{\Delta_n}{m}\nonumber\\
	&+\frac{1}{2}\sum_{k=2}^m (X^{nm,\tilde{\sigma}}_{\frac{m(i-1)+k-1}{nm}}-X^{nm,\tilde{\sigma}}_{\frac{i-1}{n}})^{\top}\nabla^2f_{\ell}(\xi^{1,n}_{ik})(X^{nm,\tilde{\sigma}}_{\frac{m(i-1)+k-1}{nm}}-X^{nm,\tilde{\sigma}}_{\frac{i-1}{n}})\frac{\Delta_n}{m}\nonumber
\\\label{eq:M1}
M^{nm,\tilde{\sigma},1}_{\ell,\frac{i-1}{n}}&=\sum_{k=2}^m{\nabla f_{\ell}}(X^{nm,\tilde{\sigma}}_{\frac{i-1}{n}})^\top\sum_{k'=1}^{k-1}g(X^{nm,\tilde{\sigma}}_{\frac{m(i-1)+k'-1}{nm}})\delta W_{i\tilde{\sigma}(k')}\frac{\Delta_n}{m}.
\end{align}	
For the last two terms in the r.h.s. of  $\eqref{eq:2.15}$ by $\Delta W_{i}\Delta W_{i}^{\top}=\sum_{k,k'=1}^m\delta W_{i,\tilde{\sigma}(k)}\delta W_{i\tilde{\sigma}(k')}^{\top},$ we get
\begin{align}\label{eq:2.17}
	\nonumber&\sum_{k=1}^m\mathbb{H}(X^{nm,\tilde{\sigma}}_{\frac{m(i-1)+k-1}{nm}})\blackdiamond(\delta W_{i\tilde{\sigma}(k)}\delta W_{i\tilde{\sigma}(k)}^{\top}-{\Iq}\frac{\Delta_n}{m})-\mathbb{H}(X^{nm,\tilde{\sigma}}_{\frac{i-1}{n}})\blackdiamond(\Delta W_{i}\Delta W_{i}^{\top}-{\Iq}{\Delta_n})\\
	=&\sum_{k=2}^m\left[\mathbb{H}(X^{nm,\tilde{\sigma}}_{\frac{m(i-1)+k-1}{nm}})-\mathbb{H}(X^{nm,\tilde{\sigma}}_{\frac{i-1}{n}})\right]\blackdiamond(\delta W_{i\tilde{\sigma}(k)}\delta W_{i\tilde{\sigma}(k)}^{\top}-{\Iq} \frac{\Delta_n}{m})\\\nonumber-&2\hspace{-0.4cm}\sum_{\substack{1\le k'<k\le m}}\hspace{-0.4cm}\mathbb{H}(X^{nm,\tilde{\sigma}}_{\frac{i-1}{n}})\blackdiamond\delta W_{i\tilde{\sigma}(k)}\delta W_{i\tilde{\sigma}(k')}^{\top}-\hspace{-0.4cm}\sum_{\substack{1\le k<k'\le m}}\hspace{-0.4cm}\mathbb{H}(X^{nm,\tilde{\sigma}}_{\frac{i-1}{n}})\blackdiamond(\delta W_{i\tilde{\sigma}(k)}\delta W_{i\tilde{\sigma}(k')}^{\top}-\delta W_{i\tilde{\sigma}(k')}\delta W_{i\tilde{\sigma}(k)}^{\top}).
\end{align}
From $\eqref{eq:2.15}$ and $\eqref{eq:2.17}$, if we denote 
\begin{align*}
	M^{nm,\tilde{\sigma},2}_{\frac{i-1}{n}}=&\sum_{k=2}^m\left[g(X^{nm,\tilde{\sigma}}_{\frac{m(i-1)+k-1}{nm}})-g(X^{nm,\tilde{\sigma}}_{\frac{i-1}{n}})\right]\delta W_{i\tilde{\sigma}(k)}-2\hspace{-0.4cm}\sum_{\substack{1\le k<k'\le m}}\hspace{-0.4cm}\mathbb{H}(X^{nm,\tilde{\sigma}}_{\frac{i-1}{n}})\blackdiamond\delta W_{i\tilde{\sigma}(k')}\delta W_{i\tilde{\sigma}(k)}^{\top},\\
	M^{nm,\tilde{\sigma},3}_{\frac{i-1}{n}}=&\sum_{k=2}^m\left[\mathbb{H}(X^{nm,\tilde{\sigma}}_{\frac{m(i-1)+k-1}{nm}})-\mathbb{H}(X^{nm,\tilde{\sigma}}_{\frac{i-1}{n}})\right]\blackdiamond(\delta W_{i\tilde{\sigma}(k)}\delta W_{i\tilde{\sigma}(k)}^{\top}-{\Iq} \frac{\Delta_n}{m}).
\end{align*} 
Let us set 
\begin{equation}\label{eq:decM}
M^{nm,\tilde{\sigma}}=M^{nm,\tilde{\sigma},1}+M^{nm,\tilde{\sigma},2}+M^{nm,\tilde{\sigma},3}.
\end{equation}
Then combining \eqref{eq:2.15}, \eqref{eq:decf} and \eqref{eq:decM} we obtain the first assertion of the following lemma. The proof of the remaining results are postponed to appendix \ref{app:A}
\begin{lemma}\label{finer}
	The difference equation for $X^{nm,\tilde{\sigma}}_{\frac{i}{n}}-X^{nm,\tilde{\sigma}}_{\frac{i-1}{n}}$, $i\in\{1,\dots,n\}$ is given by
	\begin{multline}\label{eq:2.6.3}
		X^{nm,\tilde{\sigma}}_{\frac{i}{n}}-X^{nm,\tilde{\sigma}}_{\frac{i-1}{n}}=f(X^{nm,\tilde{\sigma}}_{\frac{i-1}{n}})\Delta_n+g(X^{nm,\tilde{\sigma}}_{\frac{i-1}{n}})\Delta W_{i}+\mathbb{H}(X^{nm,\tilde{\sigma}}_{\frac{i-1}{n}})\blackdiamond(\Delta W_{i}\Delta W_{i}^{\top}-{\Iq}\Delta_n)\\-\mathbb{H}(X^{nm,\tilde{\sigma}}_{\frac{i-1}{n}})\blackdiamond\hspace{-0.5cm}\sum_{\substack{1\le k<k'\le m}}\hspace{-0.5cm}(\delta W_{i\tilde{\sigma}(k)}\delta W_{i\tilde{\sigma}(k')}^{\top}-\delta W_{i\tilde{\sigma}(k')}\delta W_{i\tilde{\sigma}(k)}^{\top})+M^{nm,\tilde{\sigma}}_{\frac{i-1}{n}}+N^{nm,\tilde{\sigma}}_{\frac{i-1}{n}},
	\end{multline}
	where $\mathbb{E}(M^{nm,\tilde{\sigma}}_{\frac{i-1}{n}}|\mathcal{F}_{\frac{i-1}{n}})=0$, and for any integer $p\geq2$ there exists a constant $K_p$ such that
	\begin{align}\label{eq:2.6.3a}
		\max_{0\leq i\leq n}\mathbb{E}(| M^{nm,\tilde{\sigma}}_{\frac{i-1}{n}}|^p)&\leq K_p\Delta_n^{3p/2},\\
\label{eq:2.6.3b}
		\max_{0\leq i\leq n}\mathbb{E}(| N^{nm,\tilde{\sigma}}_{\frac{i-1}{n}}|^p)&\leq K_p\Delta_n^{2p}.
	\end{align}
\end{lemma}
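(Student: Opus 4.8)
The first assertion, the difference equation \eqref{eq:2.6.3}, is already produced by combining \eqref{eq:2.15}, \eqref{eq:decf} and \eqref{eq:decM}, so the plan is to establish the two remaining claims: the martingale property $\mathbb{E}(M^{nm,\tilde\sigma}_{\frac{i-1}{n}}\mid\mathcal F_{\frac{i-1}{n}})=0$, and the moment bounds \eqref{eq:2.6.3a} and \eqref{eq:2.6.3b}. I would treat the three blocks $M^{nm,\tilde\sigma,1}$, $M^{nm,\tilde\sigma,2}$, $M^{nm,\tilde\sigma,3}$ separately, exploiting that each is already a finite sum (the ranges of $k,k'$ are bounded by the fixed $m$) of terms adapted to the refined filtration $(\mathcal F^{k,\tilde\sigma}_{\frac{i-1}{n}})_{0\le k\le m}$.

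For the martingale property the key observation is that every summand factorises as a coefficient which is $\mathcal F^{k-1,\tilde\sigma}_{\frac{i-1}{n}}$-measurable (since $X^{nm,\tilde\sigma}_{\frac{m(i-1)+k-1}{nm}}$ is $\mathcal F^{k-1,\tilde\sigma}_{\frac{i-1}{n}}$-measurable) times an increment built from the latest relevant $\delta W_{i\tilde\sigma(\cdot)}$. I would condition on the appropriate intermediate $\sigma$-algebra and use that $\delta W_{i\tilde\sigma(k)}$ is independent of $\mathcal F^{k-1,\tilde\sigma}_{\frac{i-1}{n}}$ with $\mathbb E[\delta W_{i\tilde\sigma(k)}]=0$ and $\mathbb E[\delta W_{i\tilde\sigma(k)}\delta W_{i\tilde\sigma(k)}^\top]=\Iq\frac{\Delta_n}{m}$. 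Concretely: in $M^{nm,\tilde\sigma,1}$ and in the first sum of $M^{nm,\tilde\sigma,2}$, conditioning on $\mathcal F^{k-1,\tilde\sigma}_{\frac{i-1}{n}}$ kills the single factor $\delta W_{i\tilde\sigma(k')}$, resp. $\delta W_{i\tilde\sigma(k)}$; in the double sum of $M^{nm,\tilde\sigma,2}$ (where $k<k'$) conditioning on $\mathcal F^{k'-1,\tilde\sigma}_{\frac{i-1}{n}}$ kills the latest increment $\delta W_{i\tilde\sigma(k')}$; and in $M^{nm,\tilde\sigma,3}$ the centred factor $\delta W_{i\tilde\sigma(k)}\delta W_{i\tilde\sigma(k)}^\top-\Iq\frac{\Delta_n}{m}$ has vanishing conditional mean. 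The tower property then gives the claim.

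For the moment bounds I would use Minkowski's inequality to reduce to a term-by-term estimate, then Hölder to separate the smooth coefficients, whose $L^p$ moments are controlled by Corollary \ref{cor:1} and Lemma \ref{Lp}, from the Gaussian increments, for which $\mathbb E|\delta W_{i\tilde\sigma(k)}|^p\le C_p\Delta_n^{p/2}$. This immediately yields $\Delta_n^{3p/2}$ for $M^{nm,\tilde\sigma,1}$ (one factor $\Delta_n/m$ times one increment) and for $M^{nm,\tilde\sigma,3}$ (a Lipschitz increment of $\mathbb H$ of size $\Delta_n^{1/2}$, via the third estimate of Lemma \ref{Lp}, times a centred quadratic factor of size $\Delta_n$), and $\Delta_n^{2p}$ for every term of $N^{nm,\tilde\sigma}$, so \eqref{eq:2.6.3b} is routine.

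The delicate point, and the \emph{main obstacle}, is $M^{nm,\tilde\sigma,2}$: estimated crudely its first sum is only $O(\Delta_n)$ in $L^p$, which is too large by a factor $\Delta_n^{1/2}$. I would therefore Taylor-expand $g(X^{nm,\tilde\sigma}_{\frac{m(i-1)+k-1}{nm}})-g(X^{nm,\tilde\sigma}_{\frac{i-1}{n}})$ to first order and insert the expansion of $X^{nm,\tilde\sigma}_{\frac{m(i-1)+k-1}{nm}}-X^{nm,\tilde\sigma}_{\frac{i-1}{n}}$ from \eqref{eq:iter1}, keeping only its leading Brownian part $\sum_{k'=1}^{k-1}g(X^{nm,\tilde\sigma}_{\frac{i-1}{n}})\delta W_{i\tilde\sigma(k')}$. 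Using $\nabla g_{\ell j}^\top g_{\bullet j'}=2h_{\ell jj'}$, this leading contribution becomes $2\sum_{1\le k'<k\le m}h_{\ell\bullet\bullet}(X^{nm,\tilde\sigma}_{\frac{i-1}{n}})\blackdiamond\delta W_{i\tilde\sigma(k')}\delta W_{i\tilde\sigma(k)}^\top$, which, after exchanging the roles of $k$ and $k'$, cancels exactly the subtracted double sum $-2\sum_{1\le k<k'\le m}\mathbb H(X^{nm,\tilde\sigma}_{\frac{i-1}{n}})\blackdiamond\delta W_{i\tilde\sigma(k')}\delta W_{i\tilde\sigma(k)}^\top$. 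What remains are only genuinely higher-order pieces: the second-order Taylor term, the drift and $\mathbb H$-corrections dropped from \eqref{eq:iter1}, and the error from freezing $g$ at $X^{nm,\tilde\sigma}_{\frac{i-1}{n}}$ inside the inner sum. Each of these is $O(\Delta_n^{3/2})$ in $L^p$ by the same Minkowski–Hölder scheme, which establishes \eqref{eq:2.6.3a}. The only care needed is bookkeeping the index matching in the cancellation and verifying that every discarded term indeed carries one additional factor of $\Delta_n^{1/2}$.
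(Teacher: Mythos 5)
Your proposal is correct and follows essentially the same route as the paper: the same tower-property argument on the refined filtration $\mathcal{F}^{k-1,\tilde\sigma}_{\frac{i-1}{n}}$ for the martingale property, and the same term-by-term moment estimates via independence of increments, Lemma \ref{Lp} and $\mathbb{E}|\delta W_{i\tilde\sigma(k)}|^p=O(\Delta_n^{p/2})$, $\mathbb{E}|\delta W_{i\tilde\sigma(k)}\delta W_{i\tilde\sigma(k)}^{\top}-\Iq\frac{\Delta_n}{m}|^p=O(\Delta_n^{p})$. The cancellation you single out as the main obstacle in $M^{nm,\tilde\sigma,2}$ — Taylor-expanding $g$, using $h_{\ell jj'}=\frac{1}{2}\nabla g_{\ell j}^{\top}g_{\bullet j'}$ to absorb the subtracted double sum of $\mathbb{H}$-terms — is exactly the algebra the paper performs in Section 4 when deriving the displayed form of $M^{nm,\tilde\sigma,2}$, so the appendix proof only needs the crude bounds you describe; your index bookkeeping for the cancellation is correct.
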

In what follows, we give further expansion studies for the terms  $N^{nm,\tilde{\sigma}}_{\frac{i-1}{n}}$ and $M^{nm,\tilde{\sigma},1}_{\frac{i-1}{n}}$, $M^{nm,\tilde{\sigma},2}$  and $M^{nm,\tilde{\sigma},3}$ defined above. 
\paragraph{\bf $\bullet$ The term $N^{nm,\tilde{\sigma}}_{\frac{i-1}{n}}$:} Starting from relation  \eqref{eq:N} we replace the increment $X^{nm,\tilde{\sigma}}_{\frac{m(i-1)+k-1}{nm}}-X^{nm,\tilde{\sigma}}_{\frac{i-1}{n}} $ using \eqref{eq:iter1} and we only freeze the coefficients of the contributing terms in the asymptotic behavior of the error at the limit point $X_{\frac{i-1}{n}}$. Then thanks to  \eqref{prop:bd} and using that
\begin{align*}
	\sum_{k=2}^m\sum_{k'=1}^{k-1}\delta W_{i\tilde{\sigma}(k')}\delta W_{i\tilde{\sigma}(k')}^{\top}&=\sum_{k=1}^{m-1}(m-k)\delta W_{i\tilde{\sigma}(k)}\delta W_{i\tilde{\sigma}(k)}^{\top},
\end{align*}
we get  the following result.
\begin{lemma}\label{lem:N}
 For $\ell\in\{1,\dots,d\}$ the $\ell^{ th}$-component of  $N^{nm,\tilde{\sigma}}_{\frac{i-1}{n}}$ has the following expansion 
 \begin{multline}\label{exp:N}
 N^{nm,\tilde{\sigma}}_{\ell,\frac{i-1}{n}}=\frac{(m-1)}{2m}\nabla f^{\top}_{\ell}(X_{\frac{i-1}{n}})f(X_{\frac{i-1}{n}}){\Delta^2_n}+\\
	\frac{1}{2}\Big[g(X_{\frac{i-1}{n}})^\top\nabla^2f_{\ell}(X_{\frac{i-1}{n}})g(X_{\frac{i-1}{n}})\Big]\blackdiamond \sum_{k=1}^{m-1}(m-k)\delta W_{i\tilde{\sigma}(k)}\delta W_{i\tilde{\sigma}(k)}^{\top}\frac{\Delta_n}{m}+R^{nm,\tilde{\sigma}}_{\ell,\frac{i-1}{n}}(0)+{\tilde R}^{nm,\tilde{\sigma}}_{\ell,\frac{i-1}{n}}(0),
\end{multline}
where 
 \begin{multline*}
	R^{nm,\tilde{\sigma}}_{\ell,\frac{i-1}{n}}(0)=\frac{\Delta_n}{m}\hspace{-0.3cm}\sum_{\substack{1\le k'<k\le m}}\hspace{-0.3cm}\nabla f^{\top}_{\ell}(X^{nm,\tilde{\sigma}}_{\frac{i-1}{n}})\mathbb{H}(X^{nm,\tilde{\sigma}}_{\frac{m(i-1)+k'-1}{nm}})\blackdiamond(\delta W_{i\tilde{\sigma}(k')}\delta W_{i\tilde{\sigma}(k')}^{\top}-{\Iq} \frac{\Delta_n}{m})\\+\Big[g(X_{\frac{i-1}{n}})^\top\nabla^2f_{\ell}(X_{\frac{i-1}{n}})g(X_{\frac{i-1}{n}})\Big]\blackdiamond\hspace{-0.5cm} \sum_{\substack{1\le k''<k'<k\le m}}\hspace{-0.5cm}\delta W_{i\tilde{\sigma}(k')}\delta W_{i\tilde{\sigma}(k'')}^{\top}\frac{\Delta_n}{m}
\end{multline*} 
satisfies $\mathbb{E}(R^{nm,\tilde{\sigma}}_{\ell,\frac{i-1}{n}}(0)|\mathcal{F}_{\frac{i-1}{n}})=0$. Moreover, for any integer $p\geq2$ there exists 
\begin{align}\label{eq:R0}
&\max_{0\leq i\leq n}\mathbb{E}(| R^{nm,\tilde{\sigma}}_{\ell,\frac{i-1}{n}}(0)|^p)=o\Big(\Delta_n^{3p/2}\Big),\\\label{eq:tilR0}
&\max_{0\leq i\leq n}\mathbb{E}(| {\tilde R}^{nm,\tilde{\sigma}}_{\ell,\frac{i-1}{n}}(0)|^p)=o\Big(\Delta_n^{2p}\Big).
\end{align}
\end{lemma}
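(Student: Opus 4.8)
The plan is to start from the definition \eqref{eq:N} of $N^{nm,\tilde{\sigma}}_{\ell,\frac{i-1}{n}}$, to substitute each increment $X^{nm,\tilde{\sigma}}_{\frac{m(i-1)+k-1}{nm}}-X^{nm,\tilde{\sigma}}_{\frac{i-1}{n}}$ by its explicit expression \eqref{eq:iter1}, and then to sort the resulting terms into three groups according to their $L^p$ size and their $\mathcal{F}_{\frac{i-1}{n}}$-conditional mean. A contribution is kept explicit only if, after freezing its coefficients at the limit point $X_{\frac{i-1}{n}}$, it is of exact order $\Delta_n^2$ in $L^p$ and has a non-vanishing conditional mean; conditionally centered contributions of the same order $\Delta_n^2$ are collected in $R^{nm,\tilde{\sigma}}_{\ell,\frac{i-1}{n}}(0)$, while everything of $L^p$-order $o(\Delta_n^2)$ is collected in $\tilde R^{nm,\tilde{\sigma}}_{\ell,\frac{i-1}{n}}(0)$. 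Throughout, orders refer to $L^p$-norms, so that $\|\cdot\|_{L^p}=o(\Delta_n^{3/2})$ is exactly claim \eqref{eq:R0} and $\|\cdot\|_{L^p}=o(\Delta_n^2)$ is exactly claim \eqref{eq:tilR0}.

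I would treat the inner drift sum in \eqref{eq:N} first by splitting the bracket into its $f$ part and its $\mathbb{H}$ part. In the $f$ part, freezing $\nabla f_{\ell}$ and $f$ at $X_{\frac{i-1}{n}}$ and using $\sum_{k=2}^m\sum_{k'=1}^{k-1}1=\frac{m(m-1)}{2}$ produces the first main term $\frac{m-1}{2m}\nabla f^{\top}_{\ell}(X_{\frac{i-1}{n}})f(X_{\frac{i-1}{n}})\Delta_n^2$; the gap between the frozen and the true coefficients is Lipschitz-controlled through Lemma \ref{lem:1} and Lemma \ref{Lp}, carries an extra factor $\Delta_n^{1/2}$, hence is $O(\Delta_n^{5/2})=o(\Delta_n^2)$ and goes to $\tilde R(0)$. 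The $\mathbb{H}$ part, namely $\frac{\Delta_n}{m}\sum_{1\le k'<k\le m}\nabla f^{\top}_{\ell}(X^{nm,\tilde{\sigma}}_{\frac{i-1}{n}})\,\mathbb{H}(X^{nm,\tilde{\sigma}}_{\frac{m(i-1)+k'-1}{nm}})\blackdiamond(\delta W_{i\tilde{\sigma}(k')}\delta W_{i\tilde{\sigma}(k')}^{\top}-\Iq\frac{\Delta_n}{m})$, is already conditionally centered and is set aside unfrozen as the first piece of $R(0)$.

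For the Hessian sum I would substitute \eqref{eq:iter1} and write the increment as $G_{ik}+D_{ik}$, where $G_{ik}=\sum_{k'=1}^{k-1}g(X^{nm,\tilde{\sigma}}_{\frac{m(i-1)+k'-1}{nm}})\delta W_{i\tilde{\sigma}(k')}$ is of order $\Delta_n^{1/2}$ and $D_{ik}$ (the drift and the $\mathbb{H}$ contributions) is of order $\Delta_n$. Expanding the quadratic form, the cross term $G^{\top}\nabla^2 f_{\ell}D$ and the term $D^{\top}\nabla^2 f_{\ell}D$ are of order $\Delta_n^{3/2}$ and $\Delta_n^2$ and, once multiplied by the prefactor $\frac{\Delta_n}{m}$, are $O(\Delta_n^{5/2})$ and $O(\Delta_n^3)$, hence $o(\Delta_n^2)$ and go to $\tilde R(0)$. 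In the leading term $G^{\top}\nabla^2 f_{\ell}G$ I would freeze $g$ and $\nabla^2 f_{\ell}$ at $X_{\frac{i-1}{n}}$ (the freezing error again losing a factor $\Delta_n^{1/2}$ by the $\mathcal{C}^3$-regularity of Assumption (\nameref{Assume}) and Lemma \ref{Lp}), apply \eqref{prop:bd} to rewrite $(g\,\delta W_{i\tilde{\sigma}(k')})^{\top}\nabla^2 f_{\ell}(g\,\delta W_{i\tilde{\sigma}(k'')})=[g^{\top}\nabla^2 f_{\ell}g]\blackdiamond(\delta W_{i\tilde{\sigma}(k')}\delta W_{i\tilde{\sigma}(k'')}^{\top})$, and split the double sum into its diagonal $k'=k''$ and off-diagonal $k'\neq k''$ parts. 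The diagonal part, via the identity $\sum_{k=2}^m\sum_{k'=1}^{k-1}\delta W_{i\tilde{\sigma}(k')}\delta W_{i\tilde{\sigma}(k')}^{\top}=\sum_{k=1}^{m-1}(m-k)\delta W_{i\tilde{\sigma}(k)}\delta W_{i\tilde{\sigma}(k)}^{\top}$, yields the second main term; the off-diagonal part, reorganized over $1\le k''<k'<k\le m$ using the symmetry of $\nabla^2 f_{\ell}$, is conditionally centered and becomes the second piece of $R(0)$.

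It remains to verify the two properties. The conditional centering $\mathbb{E}(R^{nm,\tilde{\sigma}}_{\ell,\frac{i-1}{n}}(0)\mid\mathcal{F}_{\frac{i-1}{n}})=0$ follows by nested conditioning along the filtration $\mathcal{F}_{\frac{i-1}{n}}^{k,\tilde{\sigma}}$: in each piece the latest increment $\delta W_{i\tilde{\sigma}(k')}$ (respectively the centered square $\delta W_{i\tilde{\sigma}(k')}\delta W_{i\tilde{\sigma}(k')}^{\top}-\Iq\frac{\Delta_n}{m}$) is centered given the strictly earlier sub-steps, on which the accompanying coefficient is measurable, so the tower property gives mean zero. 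For the size bounds, Corollary \ref{cor:1} controls the frozen coefficients, the Brownian increment moments give $\|\delta W_{i\tilde{\sigma}(k')}\delta W_{i\tilde{\sigma}(k'')}^{\top}-\Iq\frac{\Delta_n}{m}\mathbf{1}_{k'=k''}\|_{L^p}=O(\Delta_n)$, and summing the finitely many ($m$ fixed) terms shows both pieces of $R(0)$ are $O(\Delta_n^2)=o(\Delta_n^{3/2})$ while $\tilde R(0)=o(\Delta_n^2)$, which are exactly \eqref{eq:R0} and \eqref{eq:tilR0}. The \emph{main obstacle} is the bookkeeping: tracking precisely which index sums survive the freezing and checking that every discarded contribution genuinely carries at least one extra half-power of $\Delta_n$, either through a coefficient increment controlled by Lemma \ref{Lp}, through the higher order of the drift-type factor $D_{ik}$, or through the martingale cancellation exploited by the Burkholder--Davis--Gundy inequality.
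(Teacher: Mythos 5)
Your proposal is correct and follows essentially the same route as the paper's own proof: substitution of \eqref{eq:iter1} into \eqref{eq:N}, freezing the contributing coefficients at $X_{\frac{i-1}{n}}$ with the freezing errors controlled through the Lipschitz bounds of (\nameref{Assume}) together with Lemmas \ref{lem:1} and \ref{Lp}, the diagonal/off-diagonal split of the quadratic Gaussian term via \eqref{prop:bd} and the symmetry of $\nabla^2 f_\ell$, the tower-property centering of $R^{nm,\tilde{\sigma}}_{\ell,\frac{i-1}{n}}(0)$ along the refined filtration $\mathcal{F}_{\frac{i-1}{n}}^{k,\tilde{\sigma}}$, and Cauchy--Schwarz plus independence of the increments giving $O(\Delta_n^{2p})$ for $R(0)$ and $O(\Delta_n^{5p/2})$ for $\tilde R(0)$, which yield \eqref{eq:R0} and \eqref{eq:tilR0}. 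The only differences are cosmetic --- your $G_{ik}+D_{ik}$ bookkeeping replaces the paper's fully written-out expression for $\tilde R^{nm,\tilde{\sigma}}_{\ell,\frac{i-1}{n}}(0)$, and your closing appeal to the Burkholder--Davis--Gundy inequality is not needed here, since all estimates in this lemma are per-timestep bounds (the martingale cancellation is only exploited later, when these terms are summed in Lemma \ref{lem:R2}).
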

The proof of the above lemma is postponed to appendix \ref{app:A}.
\paragraph{\bf $\bullet$ The term $M^{nm,\tilde{\sigma},1}_{\frac{i-1}{n}}$:} For this term we only need to freeze the coefficients in relation  \eqref{eq:M1} at the limit point $X_{\frac{i-1}{n}}$. Then using 
\begin{align*}
	\sum_{k=2}^m\sum_{k'=1}^{k-1}\delta W^j_{i\tilde{\sigma}(k')}&=\sum_{k=1}^{m-1}(m-k)\delta W^{j}_{i\tilde{\sigma}(k)},
	\end{align*} 
we get the following result.	
\begin{lemma}\label{lem:M1}
 For $\ell\in\{1,\dots,d\}$ the $\ell^{ th}$-component of the term $M^{nm,\tilde{\sigma},1}_{\frac{i-1}{n}}$ has the following expansion
 \begin{equation}\label{exp:M1}
  M^{nm,\tilde{\sigma},1}_{\ell,\frac{i-1}{n}}=\Big[{\nabla f_{\ell}}(X_{\frac{i-1}{n}})^\top g(X_{\frac{i-1}{n}})\Big]\sum_{k=1}^{m-1}(m-k)\delta W_{i\tilde{\sigma}(k)}\frac{\Delta_n}{m}+R^{nm,\tilde{\sigma}}_{\ell,\frac{i-1}{n}}(1),
 \end{equation}
 with $\mathbb{E}(R^{nm,\tilde{\sigma}}_{\ell,\frac{i-1}{n}}(1)|\mathcal{F}_{\frac{i-1}{n}})=0$. Moreover, for any integer $p\geq2$ there exists 
\begin{align}\label{eq:R1}
\max_{0\leq i\leq n}\mathbb{E}(| R^{nm,\tilde{\sigma}}_{\ell,\frac{i-1}{n}}(1)|^p)=o\Big(\Delta_n^{3p/2}\Big).
\end{align}
\end{lemma}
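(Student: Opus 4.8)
The plan is to derive \eqref{exp:M1} by \emph{freezing} the coefficients appearing in \eqref{eq:M1} at the limit point $X_{\frac{i-1}{n}}$ and collecting the resulting mismatch into the remainder $R^{nm,\tilde{\sigma}}_{\ell,\frac{i-1}{n}}(1)$. First I would interchange the order of the double summation in \eqref{eq:M1}: since $\nabla f_\ell(X^{nm,\tilde\sigma}_{\frac{i-1}{n}})$ and $\frac{\Delta_n}{m}$ do not depend on the summation indices, the same reindexing that gives the identity $\sum_{k=2}^m\sum_{k'=1}^{k-1}\delta W_{i\tilde\sigma(k')}=\sum_{k=1}^{m-1}(m-k)\delta W_{i\tilde\sigma(k)}$ recalled just above (each increment $\delta W_{i\tilde\sigma(k')}$, carried with its attached factor $g(X^{nm,\tilde\sigma}_{\frac{m(i-1)+k'-1}{nm}})$, occurs with multiplicity $m-k'$) yields
$$M^{nm,\tilde\sigma,1}_{\ell,\frac{i-1}{n}}=\nabla f_\ell(X^{nm,\tilde\sigma}_{\frac{i-1}{n}})^{\top}\sum_{k=1}^{m-1}(m-k)\,g(X^{nm,\tilde\sigma}_{\frac{m(i-1)+k-1}{nm}})\,\delta W_{i\tilde\sigma(k)}\frac{\Delta_n}{m}.$$
The principal term in \eqref{exp:M1} is then obtained by replacing $\nabla f_\ell(X^{nm,\tilde\sigma}_{\frac{i-1}{n}})$ by $\nabla f_\ell(X_{\frac{i-1}{n}})$ and each $g(X^{nm,\tilde\sigma}_{\frac{m(i-1)+k-1}{nm}})$ by $g(X_{\frac{i-1}{n}})$, and I would set $R^{nm,\tilde\sigma}_{\ell,\frac{i-1}{n}}(1)=\sum_{k=1}^{m-1}(m-k)\,D_k^{(\ell)}\,\delta W_{i\tilde\sigma(k)}\frac{\Delta_n}{m}$, where $D_k^{(\ell)}:=\nabla f_\ell(X^{nm,\tilde\sigma}_{\frac{i-1}{n}})^{\top}g(X^{nm,\tilde\sigma}_{\frac{m(i-1)+k-1}{nm}})-\nabla f_\ell(X_{\frac{i-1}{n}})^{\top}g(X_{\frac{i-1}{n}})$ is the coefficient mismatch.

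For the centering property I would argue term by term. Each factor $D_k^{(\ell)}$ depends only on $\mathcal F_{\frac{i-1}{n}}$ and on the increments $\delta W_{i\tilde\sigma(1)},\dots,\delta W_{i\tilde\sigma(k-1)}$, hence is $\mathcal F^{k-1,\tilde\sigma}_{\frac{i-1}{n}}$-measurable, while $\delta W_{i\tilde\sigma(k)}$ is centered and independent of $\mathcal F^{k-1,\tilde\sigma}_{\frac{i-1}{n}}$. Conditioning first on $\mathcal F^{k-1,\tilde\sigma}_{\frac{i-1}{n}}$ and then on $\mathcal F_{\frac{i-1}{n}}$ gives $\mathbb E(D_k^{(\ell)}\delta W_{i\tilde\sigma(k)}\mid\mathcal F_{\frac{i-1}{n}})=0$ for every $k$, whence $\mathbb E(R^{nm,\tilde\sigma}_{\ell,\frac{i-1}{n}}(1)\mid\mathcal F_{\frac{i-1}{n}})=0$.

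For the $L^p$ bound \eqref{eq:R1}, the key observation is that freezing the coefficients costs exactly one extra factor $\Delta_n^{1/2}$, which is what pushes the remainder below the order $\Delta_n^{3p/2}$ of the principal term. I would split $D_k^{(\ell)}=[\nabla f_\ell(X^{nm,\tilde\sigma}_{\frac{i-1}{n}})-\nabla f_\ell(X_{\frac{i-1}{n}})]^{\top}g(X^{nm,\tilde\sigma}_{\frac{m(i-1)+k-1}{nm}})+\nabla f_\ell(X_{\frac{i-1}{n}})^{\top}[g(X^{nm,\tilde\sigma}_{\frac{m(i-1)+k-1}{nm}})-g(X_{\frac{i-1}{n}})]$, and further decompose the second $g$-difference as an in-block increment plus a scheme-versus-solution difference. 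Under (\nameref{Assume}) the maps $\nabla f_\ell$ and $g$ are Lipschitz (and $\nabla f_\ell$ is bounded), so each difference is dominated by $|X^{nm,\tilde\sigma}_{\frac{i-1}{n}}-X_{\frac{i-1}{n}}|$ or by the in-block increment $|X^{nm,\tilde\sigma}_{\frac{m(i-1)+k-1}{nm}}-X^{nm,\tilde\sigma}_{\frac{i-1}{n}}|$; Lemma \ref{lem:1} together with Lemma \ref{Lp} then give $\|D_k^{(\ell)}\|_{L^{p}}=O(\Delta_n^{1/2})$ for every $p\ge 2$, the bounded remaining factors $g(X^{nm,\tilde\sigma}_{\cdots})$ being handled by Corollary \ref{cor:1}. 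Finally, a Cauchy--Schwarz estimate yields $\mathbb E|D_k^{(\ell)}\delta W_{i\tilde\sigma(k)}|^p\le (\mathbb E|D_k^{(\ell)}|^{2p})^{1/2}(\mathbb E|\delta W_{i\tilde\sigma(k)}|^{2p})^{1/2}=O(\Delta_n^{p})$, and multiplying by $(\Delta_n/m)^p$ and summing over the finitely many indices $k$ gives $\max_{0\le i\le n}\mathbb E(|R^{nm,\tilde\sigma}_{\ell,\frac{i-1}{n}}(1)|^p)=O(\Delta_n^{2p})=o(\Delta_n^{3p/2})$. The only delicate point is the bookkeeping ensuring that the mismatch $D_k^{(\ell)}$ genuinely carries the extra $\Delta_n^{1/2}$ uniformly in $k$ and $i$; once this is secured the estimate is routine.
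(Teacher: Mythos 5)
Your proof is correct and takes essentially the same route as the paper's: you freeze the coefficients at $X_{\frac{i-1}{n}}$ via the same telescoping split of the mismatch, obtain the centering $\mathbb{E}(R^{nm,\tilde\sigma}_{\ell,\frac{i-1}{n}}(1)\mid\mathcal{F}_{\frac{i-1}{n}})=0$ by the same tower-property argument through $\mathcal{F}^{k-1,\tilde\sigma}_{\frac{i-1}{n}}$ (the paper's remainder is exactly your $D_k^{(\ell)}$-sum, merely written with the double sum kept and the Lipschitz differences paired with the other companion factors), and reach the same bound $\max_{0\le i\le n}\mathbb{E}|R^{nm,\tilde\sigma}_{\ell,\frac{i-1}{n}}(1)|^p=O(\Delta_n^{2p})=o(\Delta_n^{3p/2})$ from Lemmas \ref{lem:1} and \ref{Lp} together with $\mathbb{E}|\delta W_{i\tilde\sigma(k)}|^{2p}=O(\Delta_n^{p})$ and Cauchy--Schwarz. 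The only cosmetic caveats are that your preliminary reindexing to $\sum_{k=1}^{m-1}(m-k)(\cdot)$ is harmless precisely because the $g$-factor depends only on the inner index, and that $g$ is not bounded but of linear growth, so its factor is controlled in $L^{2p}$ via Corollary \ref{cor:1} and Lemma \ref{Lp} as you indicate.
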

The proof of the above lemma is postponed to the appendix \ref{app:A}.
\paragraph{\bf $\bullet$ The term $M^{nm,\tilde{\sigma},2}_{\frac{i-1}{n}}$:}
For this term we first proceed similarly as in \eqref{eq:decf}  and we use a Taylor expansion to write for $\ell,j\in\{1,\dots,q\}$ 
\begin{multline}\label{eq:2.16}
	g_{\ell j}(X^{nm,\tilde{\sigma}}_{\frac{m(i-1)+k-1}{nm}})-g_{\ell j}(X^{nm,\tilde{\sigma}}_{\frac{i-1}{n}})
	=	\nabla g_{\ell j}(X^{nm,\tilde{\sigma}}_{\frac{i-1}{n}})^{\top}(X^{nm,\tilde{\sigma}}_{\frac{m(i-1)+k-1}{nm}}-X^{nm,\tilde{\sigma}}_{\frac{i-1}{n}})\\+\frac{1}{2}(X^{nm,\tilde{\sigma}}_{\frac{m(i-1)+k-1}{nm}}-X^{nm,\tilde{\sigma}}_{\frac{i-1}{n}})^{\top}\nabla^2g_{\ell j}(\xi^{2,n}_{ik})(X^{nm,\tilde{\sigma}}_{\frac{m(i-1)+k-1}{nm}}-X^{nm,\tilde{\sigma}}_{\frac{i-1}{n}}),
\end{multline}	
for some vector point $\xi^{2,n}_{ik}$ lying between $X^{nm,\tilde{\sigma}}_{\frac{m(i-1)+k-1}{nm}}$ and $X^{nm,\tilde{\sigma}}_{\frac{i-1}{n}}$.  Once again by \eqref{eq:iter1} we get
\begin{align*}
	g_{\ell  j}(X^{nm,\tilde{\sigma}}_{\frac{m(i-1)+k-1}{nm}})&-g_{\ell j}(X^{nm,\tilde{\sigma}}_{\frac{i-1}{n}})
=\nabla g_{\ell  j}(X^{nm,\tilde{\sigma}}_{\frac{i-1}{n}})^{\top}\Bigg[\sum_{k'=1}^{k-1}\bigg(f(X^{nm,\tilde{\sigma}}_{\frac{m(i-1)+k'-1}{nm}})\frac{\Delta_n}{m}\\&+g(X^{nm,\tilde{\sigma}}_{\frac{m(i-1)+k'-1}{nm}})\delta W_{i\tilde{\sigma}(k')}+	\mathbb{H}(X^{nm,\tilde{\sigma}}_{\frac{m(i-1)+k'-1}{nm}})\blackdiamond(\delta W_{i\tilde{\sigma}(k')}\delta W_{i\tilde{\sigma}(k')}^{\top}-{\Iq} \frac{\Delta_n}{m})\bigg)\Bigg]
	\\&+\frac{1}{2}(X^{nm,\tilde{\sigma}}_{\frac{m(i-1)+k-1}{nm}}-X^{nm,\tilde{\sigma}}_{\frac{i-1}{n}})^{\top}\nabla^2g_{\ell j}(\xi^{2,n}_{ik})(X^{nm,\tilde{\sigma}}_{\frac{m(i-1)+k-1}{nm}}-X^{nm,\tilde{\sigma}}_{\frac{i-1}{n}}).
\end{align*}
Then we have
\begin{align*}
	M^{nm,\tilde{\sigma},2}_{\ell,\frac{i-1}{n}}=	&\sum_{k=2}^m\sum_{j=1}^q\Bigg[\nabla g_{\ell j}(X^{nm,\tilde{\sigma}}_{\frac{i-1}{n}})^{\top}\sum_{k'=1}^{k-1}\bigg(f(X^{nm,\tilde{\sigma}}_{\frac{m(i-1)+k'-1}{nm}})\frac{\Delta_n}{m}+g(X^{nm,\tilde{\sigma}}_{\frac{m(i-1)+k'-1}{nm}})\delta W_{i\tilde{\sigma}(k')}\\&+\mathbb{H}(X^{nm,\tilde{\sigma}}_{\frac{m(i-1)+k'-1}{nm}})\blackdiamond(\delta W_{i\tilde{\sigma}(k')}\delta W_{i\tilde{\sigma}(k')}^{\top}-{\Iq}\frac{\Delta_n}{m})\bigg)-2\sum_{k'=1}^{k-1}\sum_{j'=1}^qh_{\ell jj'}(X^{nm,\tilde{\sigma}}_{\frac{i-1}{n}})\delta W_{i\tilde{\sigma}(k')}^{j'}\\&+\frac{1}{2}(X^{nm,\tilde{\sigma}}_{\frac{m(i-1)+k-1}{nm}}-X^{nm,\tilde{\sigma}}_{\frac{i-1}{n}})^{\top}\nabla^2g_{\ell j}(\xi^{2,n}_{ik})(X^{nm,\tilde{\sigma}}_{\frac{m(i-1)+k-1}{nm}}-X^{nm,\tilde{\sigma}}_{\frac{i-1}{n}})\Bigg]\delta W_{i\tilde{\sigma}(k)}^{j}.
\end{align*}
Recalling that $h_{\ell jj'}=\frac{1}{2}\nabla g_{\ell j}^{\top}g_{\bullet j'}$ we obtain
\begin{align*}
	&M^{nm,\tilde{\sigma},2}_{\ell,\frac{i-1}{n}}=\sum_{k=2}^m\sum_{j=1}^q\Bigg[\nabla g_{\ell j}(X^{nm,\tilde{\sigma}}_{\frac{i-1}{n}})^{\top}\sum_{k'=1}^{k-1}\bigg(f(X^{nm,\tilde{\sigma}}_{\frac{m(i-1)+k'-1}{nm}})\frac{\Delta_n}{m}+\\&\Big[g(X^{nm,\tilde{\sigma}}_{\frac{m(i-1)+k'-1}{nm}})-g(X^{nm,\tilde{\sigma}}_{\frac{i-1}{n}})\Big]\delta W_{i\tilde{\sigma}(k')}+\mathbb{H}(X^{nm,\tilde{\sigma}}_{\frac{m(i-1)+k'-1}{nm}})\blackdiamond(\delta W_{i\tilde{\sigma}(k')}\delta W_{i\tilde{\sigma}(k')}^{\top}-{\Iq}\frac{\Delta_n}{m})\bigg)\\&+\frac{1}{2}(X^{nm,\tilde{\sigma}}_{\frac{m(i-1)+k-1}{nm}}-X^{nm,\tilde{\sigma}}_{\frac{i-1}{n}})^{\top}\nabla^2g_{\ell j}(\xi^{2,n}_{ik})(X^{nm,\tilde{\sigma}}_{\frac{m(i-1)+k-1}{nm}}-X^{nm,\tilde{\sigma}}_{\frac{i-1}{n}})\Bigg]\delta W_{i\tilde{\sigma}(k)}^j.
\end{align*}
Again by applying Taylor expansion for each component of the matrix function $g$, we get
$
g(X^{nm,\tilde{\sigma}}_{\frac{m(i-1)+k'-1}{nm}})-g(X^{nm,\tilde{\sigma}}_{\frac{i-1}{n}})={\dot g}^n_{ik'}\blackdiamond(X^{nm,\tilde{\sigma}}_{\frac{m(i-1)+k'-1}{nm}}-X^{nm,\tilde{\sigma}}_{\frac{i-1}{n}})\in\R^{d\times q}$,
where  ${\dot g}^n_{ik'}\in (\R^{d\times 1})^{d\times q}$ is a block matrix such that for $\ell'\in\{1,\dots,d\},j'\in\{1,\dots,q\}$, the $\ell'j'$-th block is given by $({\dot g}^n_{ik'})_{\ell' j'}=\nabla g_{\ell' j'}( \xi'^{2,n}_{ik'})\in\R^{d\times 1}$ where  $ \xi'^{2,n}_{ik'}$ is a vector point lying between $X^{nm,\tilde{\sigma}}_{\frac{m(i-1)+k'-1}{nm}}$ and $X^{nm,\tilde{\sigma}}_{\frac{i-1}{n}}$.
Then, we have
\begin{align}\label{eq:M2}
	&M^{nm,\tilde{\sigma},2}_{\ell,\frac{i-1}{n}}=\sum_{k=2}^m\sum_{j=1}^q\Bigg[\nabla g_{\ell j}^{\top}(X^{nm,\tilde{\sigma}}_{\frac{i-1}{n}})\sum_{k'=1}^{k-1}\bigg(f(X^{nm,\tilde{\sigma}}_{\frac{m(i-1)+k'-1}{nm}})\frac{\Delta_n}{m}+\nonumber\\&\mathbb{H}(X^{nm,\tilde{\sigma}}_{\frac{m(i-1)+k'-1}{nm}})\blackdiamond(\delta W_{i\tilde{\sigma}(k')}\delta W_{i\tilde{\sigma}(k')}^{\top}-{\Iq}\frac{\Delta_n}{m})\bigg)\Bigg]\delta W_{i\tilde{\sigma}(k)}^j\nonumber\\&+\sum_{k=3}^m\sum_{j=1}^q\nabla g_{\ell j}^{\top}(X^{nm,\tilde{\sigma}}_{\frac{i-1}{n}})\sum_{k'=2}^{k-1}\Big[ {\dot g}^n_{ik'}\blackdiamond(X^{nm,\tilde{\sigma}}_{\frac{m(i-1)+k'-1}{nm}}-X^{nm,\tilde{\sigma}}_{\frac{i-1}{n}})\Big]\delta W_{i\tilde{\sigma}(k')}\delta W_{i\tilde{\sigma}(k)}^j\nonumber\\&+\sum_{k=2}^m\sum_{j=1}^q\frac{1}{2}(X^{nm,\tilde{\sigma}}_{\frac{m(i-1)+k-1}{nm}}-X^{nm,\tilde{\sigma}}_{\frac{i-1}{n}})^{\top}\nabla^2g_{\ell j}(\xi^{2,n}_{ik})(X^{nm,\tilde{\sigma}}_{\frac{m(i-1)+k-1}{nm}}-X^{nm,\tilde{\sigma}}_{\frac{i-1}{n}})\delta W_{i\tilde{\sigma}(k)}^j.
\end{align}
Now, we replace the increment $X^{nm,\tilde{\sigma}}_{\frac{m(i-1)+k-1}{nm}}-X^{nm,\tilde{\sigma}}_{\frac{i-1}{n}} $ using \eqref{eq:iter1} and we only freeze   the coefficients of the contributing terms in the asymptotic behavior of the error at the limit point $X_{\frac{i-1}{n}}$.
\begin{lemma}\label{lem:M2}
 For $\ell\in\{1,\dots,d\}$ the $\ell^{ th}$-component of the term $M^{nm,\tilde{\sigma},2}_{\frac{i-1}{n}}$ has the following expansion
 \begin{align}\label{exp:M2}
  &M^{nm,\tilde{\sigma},2}_{\ell,\frac{i-1}{n}}=\notag\\&\sum_{j=1}^q\nabla g_{\ell j}^{\top}(X_{\frac{i-1}{n}})\hspace{-0.5cm}\sum_{\substack{1\le k'<k\le m}}\hspace{-0.3cm}\bigg(f(X_{\frac{i-1}{n}})\frac{\Delta_n}{m}+\mathbb{H}(X_{\frac{i-1}{n}})\blackdiamond(\delta W_{i\tilde{\sigma}(k')}\delta W_{i\tilde{\sigma}(k')}^{\top}-{\Iq}\frac{\Delta_n}{m})\bigg)\delta W_{i\tilde{\sigma}(k)}^j\nonumber\\&+\sum_{j=1}^q \nabla g_{\ell j}^{\top}(X_{\frac{i-1}{n}}) \hspace{-0.5cm}\sum_{\substack{1\le k''< k'<k\le m}}\hspace{-0.3cm}\Big[ {\dot g}^n_{i}
  \blackdiamond\bigg(g(X_{\frac{i-1}{n}})\delta W_{i\tilde{\sigma}(k'')}\bigg)\Big]\delta W_{i\tilde{\sigma}(k')}\delta W_{i\tilde{\sigma}(k)}^j
  \nonumber\\&
+\sum_{j=1}^q\frac{1}{2}\Big[g(X_{\frac{i-1}{n}})^\top\nabla^2g_{\ell j}(X_{\frac{i-1}{n}})g(X_{\frac{i-1}{n}})\Big]\blackdiamond\hspace{-0.5cm}\sum_{\substack{1\le k''< k'<k\le m}}\hspace{-0.5cm}\delta W_{i\tilde{\sigma}(k'')}\delta W_{i\tilde{\sigma}(k')}^{\top}\delta W_{i\tilde{\sigma}(k)}^j+R^{nm,\tilde{\sigma}}_{\ell,\frac{i-1}{n}}(2)
 \end{align}
with $\mathbb{E}(R^{nm,\tilde{\sigma}}_{\ell,\frac{i-1}{n}}(2)|\mathcal{F}_{\frac{i-1}{n}})=0$ and  $\dot{g}^n_i\in(\R^{d\times1})^{d\times q}$ is a block matrix such that for $\ell\in\{1,\dots,d\}$, $j\in\{1,\dots,q\}$, the $\ell j$-th block is given by $(\dot{g}^n_i)_{\ell j}=\nabla g_{\ell j}(X_{\frac{i-1}{n}})$. Moreover,  for any integer $p\geq2$ 
\begin{align}\label{eq:R2}
\max_{0\leq i\leq n}\mathbb{E}(| R^{nm,\tilde{\sigma}}_{\ell,\frac{i-1}{n}}(2)|^p)=o\Big(\Delta_n^{3p/2}\Big).
\end{align}
\end{lemma}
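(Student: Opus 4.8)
The proof follows the same substitute--freeze--bound template already used for Lemmas \ref{lem:N} and \ref{lem:M1}, now applied to the three sums composing $M^{nm,\tilde{\sigma},2}_{\ell,\frac{i-1}{n}}$ in \eqref{eq:M2}. The plan is to replace, inside the second and third sums of \eqref{eq:M2}, every occurrence of the inner increment $X^{nm,\tilde{\sigma}}_{\frac{m(i-1)+k'-1}{nm}}-X^{nm,\tilde{\sigma}}_{\frac{i-1}{n}}$ by its one-step expansion \eqref{eq:iter1}, and simultaneously to freeze each coefficient function ($f$, $\mathbb{H}$, $g$, $\nabla g_{\ell j}$, the block matrix $\dot{g}^n_{ik'}$, and $\nabla^2 g_{\ell j}(\xi^{2,n}_{ik})$) at the limit point $X_{\frac{i-1}{n}}$. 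Each frozen-coefficient error has the form $[\,\mathrm{coeff}(\xi)-\mathrm{coeff}(X_{\frac{i-1}{n}})\,]\times(\text{increments})$; by the Lipschitz bounds of assumption (\nameref{Assume}) such a difference is controlled by $|\xi-X_{\frac{i-1}{n}}|$, which, together with the fine-increment estimate of Lemma \ref{Lp} and Hölder's inequality, produces one extra power $\Delta_n^{1/2}$ and hence an $o(\Delta_n^{3/2})$ contribution that is absorbed into $R^{nm,\tilde{\sigma}}_{\ell,\frac{i-1}{n}}(2)$.

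Next I would isolate the three leading terms. In the first sum of \eqref{eq:M2}, merging $\sum_{k=2}^m\sum_{k'=1}^{k-1}$ into $\sum_{1\le k'<k\le m}$ yields, after freezing, the first displayed term of \eqref{exp:M2}. In the second sum, substituting \eqref{eq:iter1} and retaining only its leading Brownian part $g(X_{\frac{i-1}{n}})\delta W_{i\tilde{\sigma}(k'')}$ produces the triple-increment term $\sum_{1\le k''<k'<k\le m}[\dot{g}^n_i\blackdiamond(g(X_{\frac{i-1}{n}})\delta W_{i\tilde{\sigma}(k'')})]\delta W_{i\tilde{\sigma}(k')}\delta W^j_{i\tilde{\sigma}(k)}$, the drift part $f\frac{\Delta_n}{m}$ and the second-order part $\mathbb{H}\blackdiamond(\cdots)$ of the substituted increment contributing only at order $o(\Delta_n^{3/2})$. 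In the third sum I would expand the quadratic form via \eqref{prop:bd}, writing the leading contribution as $[g(X_{\frac{i-1}{n}})^{\top}\nabla^2 g_{\ell j}(X_{\frac{i-1}{n}})g(X_{\frac{i-1}{n}})]\whitediamond\big(\sum_{k',k''}\delta W_{i\tilde{\sigma}(k')}\delta W^{\top}_{i\tilde{\sigma}(k'')}\big)$ after keeping only the leading Brownian parts of both increments; the combinatorial identity $\sum_{k=2}^m\sum_{k',k''<k}=\sum_{k''<k'<k}$ and the $\delta W^j_{i\tilde{\sigma}(k)}$ factor then aim to match the third displayed term of \eqref{exp:M2}.

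The genuinely delicate step is this reduction of the Hessian quadratic form. One must split the inner double sum into its off-diagonal part $\sum_{k''\neq k'}$ and its diagonal part $\sum_{k'}\delta W_{i\tilde{\sigma}(k')}\delta W^{\top}_{i\tilde{\sigma}(k')}$. The off-diagonal part, exploiting the symmetry of $g^{\top}\nabla^2 g_{\ell j}g$ so that the two orderings coincide, recombines into the single displayed triple-increment term, and getting the resulting coefficient right is exactly where the bookkeeping is easiest to get wrong. The diagonal part, by contrast, is \emph{a priori} of the retained order $\Delta_n^{3/2}$ and conditionally centered (through its $\delta W^j_{i\tilde{\sigma}(k)}$ factor), so one must check carefully how it is classified: I expect that, via the symmetry $\sigma(k)=m-k+1$, its averaged contribution is precisely what feeds the limiting $dY^j$-drift $\tfrac{m-1}{2m}\cdot\tfrac12\,g^{\top}\nabla^2 g_{\ell j}g\blackdiamond \Iq^{\,j}$ appearing in $\mathcal{M}_2$, and reconciling this with the clean three-term form of \eqref{exp:M2} is the main obstacle. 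Once the classification is settled, I would gather every leftover --- the coefficient-freezing errors, the sub-leading $f\frac{\Delta_n}{m}$ and $\mathbb{H}\blackdiamond(\cdots)$ contributions of all substituted increments, and the sub-leading parts of the quadratic form --- into $R^{nm,\tilde{\sigma}}_{\ell,\frac{i-1}{n}}(2)$; its conditional centering $\mathbb{E}(R^{nm,\tilde{\sigma}}_{\ell,\frac{i-1}{n}}(2)\mid\mathcal{F}_{\frac{i-1}{n}})=0$ follows termwise because in each leftover the increment carrying the largest fine index is independent of the remaining factors and of $\mathcal{F}_{\frac{i-1}{n}}$ and has zero mean, while the sharp bound \eqref{eq:R2} follows from the Burkholder--Davis--Gundy and Hölder inequalities together with Lemma \ref{Lp}, each leftover carrying at least one extra $\Delta_n^{1/2}$ relative to the retained $\Delta_n^{3/2}$ terms.
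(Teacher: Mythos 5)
Your overall template is exactly the paper's: $R^{nm,\tilde{\sigma}}_{\ell,\frac{i-1}{n}}(2)$ is defined implicitly by substituting \eqref{eq:iter1} into \eqref{eq:M2}, freezing all coefficients at $X_{\frac{i-1}{n}}$ and using \eqref{prop:bd}; the conditional centering is obtained termwise by the tower property, since in every leftover the increment with the largest fine index is independent of $\mathcal{F}_{\frac{i-1}{n}}^{k-1,\tilde{\sigma}}$ and centered; and the bound \eqref{eq:R2} follows from Cauchy--Schwarz, the independence of the increments, Lemma \ref{Lp} and the key freezing estimates $\mathbb{E}|\nabla^2 g_{\ell j}(\xi^{2,n}_{ik})-\nabla^2 g_{\ell j}(X_{\frac{i-1}{n}})|^{2p}=O(\Delta_n^p)$ and $\mathbb{E}|\dot{g}^n_{ik'}-\dot{g}^n_i|^{2p}=O(\Delta_n^p)$, each contributing the extra $\Delta_n^{p/2}$ you describe. (One small inaccuracy: no BDG inequality is needed at this stage --- it only enters in Lemma \ref{lem:R2}, when the remainders are summed over $i$.)

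However, at the step you yourself flagged as the main obstacle, your argument is incomplete, and your tentative resolution is not the paper's and would not work as stated. The diagonal part $\frac{1}{2}\big[g(X_{\frac{i-1}{n}})^{\top}\nabla^2 g_{\ell j}(X_{\frac{i-1}{n}})g(X_{\frac{i-1}{n}})\big]\blackdiamond\sum_{k'<k}\delta W_{i\tilde{\sigma}(k')}\delta W_{i\tilde{\sigma}(k')}^{\top}\,\delta W^j_{i\tilde{\sigma}(k)}$ has $L^p$-norm of \emph{exact} order $\Delta_n^{3/2}$; it is conditionally centered, but centering alone does not make it a remainder, and absorbing it into $R^{nm,\tilde{\sigma}}_{\ell,\frac{i-1}{n}}(2)$ would contradict \eqref{eq:R2}, which demands $o(\Delta_n^{3p/2})$. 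In the paper the retained main term is the \emph{full} double sum $\frac{1}{2}\big[g^{\top}\nabla^2 g_{\ell j}g\big]\blackdiamond\sum_{k',k''=1}^{k-1}\delta W_{i\tilde{\sigma}(k')}\delta W_{i\tilde{\sigma}(k'')}^{\top}\,\delta W^j_{i\tilde{\sigma}(k)}$ --- this is what the explicit formula for $R^{nm,\tilde{\sigma}}_{\ell,\frac{i-1}{n}}(2)$ in appendix \ref{app:A} corresponds to --- and only afterwards is it split, as displayed in \eqref{exp:M2a}, into a centered diagonal piece, its compensator $\frac{1}{2}\big[g^{\top}\nabla^2 g_{\ell j}g\big]\blackdiamond\sum_{k=2}^m(k-1)\Iq\frac{\Delta_n}{m}\delta W^j_{i\tilde{\sigma}(k)}$, and the strictly ordered off-diagonal sum, which by the symmetry of the Hessian carries coefficient $1$, not $\frac{1}{2}$ (so your suspicion about the prefactor bookkeeping was well founded: the printed third term of \eqref{exp:M2} must be read through the remark \eqref{exp:M2a}). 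Moreover, no property of $\sigma(k)=m-k+1$ enters here: the identity and the bound hold for every fixed $\tilde{\sigma}\in\mathcal{S}_m$; the specific choice of $\sigma$ and the averaging over $\tilde{\sigma}\in\{\Id,\sigma\}$ are used only later, in Lemma \ref{lem:gambar} and Remark \ref{rk:fix_sigma}. Within the present lemma the diagonal's compensator feeds the third piece of $\Gamma^{n,\tilde{\sigma}}(2)$ and its centered part the second piece of $\Gamma^{n,\tilde{\sigma}}(3)$ --- these, not $R^{nm,\tilde{\sigma}}_{\ell,\frac{i-1}{n}}(2)$, carry the contribution that becomes the drift $\frac{1}{2}g^{\top}\nabla^2 g_{\ell j}g\blackdiamond\Iq\,dW^j$ in $\mathcal{M}_2$. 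As written, your proposal does not produce a definite $R^{nm,\tilde{\sigma}}_{\ell,\frac{i-1}{n}}(2)$ satisfying both claimed properties, so the proof has a genuine gap at precisely this classification step.
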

The proof of the above lemma is postponed to the appendix \ref{app:A}.
\begin{remark}
The $\ell$-th component of $M^{nm,\tilde{\sigma},2}$ can be rewritten as follows:
\begin{align}\label{exp:M2a}
&M^{nm,\tilde{\sigma},2}_{\ell,\frac{i-1}{n}}=\notag\\&\sum_{j=1}^q\nabla g_{\ell j}^{\top}(X_{\frac{i-1}{n}})\hspace{-0.4cm}\sum_{\substack{1\le k'<k\le m}}\bigg[f(X_{\frac{i-1}{n}})\frac{\Delta_n}{m}+\mathbb{H}(X_{\frac{i-1}{n}})\blackdiamond(\delta W_{i\tilde{\sigma}(k')}\delta W_{i\tilde{\sigma}(k')}^{\top}-{\Iq}\frac{\Delta_n}{m})\bigg]\delta W_{i\tilde{\sigma}(k)}^j\nonumber\\&+\sum_{j=1}^q \nabla g_{\ell j}^{\top}(X_{\frac{i-1}{n}}) \hspace{-0.5cm}\sum_{\substack{1\le k''< k'<k\le m}}\hspace{-0.5cm}\Big[ {\dot g}^n_{i}
\blackdiamond\bigg(g(X_{\frac{i-1}{n}})\delta W_{i\tilde{\sigma}(k'')}\bigg)\Big]\delta W_{i\tilde{\sigma}(k')}\delta W_{i\tilde{\sigma}(k)}^j
\nonumber\\&
+\sum_{j=1}^q\frac{1}{2}\Big[g(X_{\frac{i-1}{n}})^\top\nabla^2g_{\ell j}(X_{\frac{i-1}{n}})g(X_{\frac{i-1}{n}})\Big]\blackdiamond\hspace{-0.4cm}\sum_{\substack{1\le  k'<k\le m}}\hspace{-0.4cm}(\delta W_{i\tilde{\sigma}(k')}\delta W_{i\tilde{\sigma}(k')}^{\top}-{\Iq}\frac{\Delta_n}{m})\delta W_{i\tilde{\sigma}(k)}^j\nonumber\\&+\sum_{j=1}^q\frac{1}{2}\Big[g(X_{\frac{i-1}{n}})^\top\nabla^2g_{\ell j}(X_{\frac{i-1}{n}})g(X_{\frac{i-1}{n}})\Big]\blackdiamond\sum_{k=2}^m(k-1){\Iq}\frac{\Delta_n}{m}\delta W_{i\tilde{\sigma}(k)}^j\nonumber\\&+\sum_{j=1}^q\Big[g(X_{\frac{i-1}{n}})^\top\nabla^2g_{\ell j}(X_{\frac{i-1}{n}})g(X_{\frac{i-1}{n}})\Big]\blackdiamond\hspace{-0.7cm}\sum_{\substack{1\le k''< k'<k\le m}}\hspace{-0.5cm}\delta W_{i\tilde{\sigma}(k')}\delta W_{i\tilde{\sigma}(k'')}^{\top}\delta W_{i\tilde{\sigma}(k)}^j+R^{nm,\tilde{\sigma}}_{\ell,\frac{i-1}{n}}(2).
\end{align}
\end{remark}

\paragraph{\bf $\bullet$ The term $M^{nm,\tilde{\sigma},3}_{\frac{i-1}{n}}$:}

Considering each component  of $M^{nm,\tilde{\sigma},3}$,   for $\ell\in\{1,\dots,d\}$ we can also consider a Taylor expansion for the components of the matrix $h_{\ell \bullet\bullet}\in\mathbb R^{q\times q}$ to get 
\begin{align}\label{eq:M3}	M^{nm,\tilde{\sigma},3}_{\ell,\frac{i-1}{n}}=&\sum_{k=2}^m\Big[ {\dot  h}_{\ell \bullet\bullet}^{n,ik}\blackdiamond(X^{nm,\tilde{\sigma}}_{\frac{m(i-1)+k-1}{nm}}-X^{nm,\tilde{\sigma}}_{\frac{i-1}{n}})\Big]\blackdiamond(\delta W_{i\tilde{\sigma}(k)}\delta W_{i\tilde{\sigma}(k)}^{\top}-{\Iq} \frac{\Delta_n}{m}),
\end{align}
where the  ${\dot  h}_{\ell \bullet\bullet}^{n,ik}\in(\R^{d\times 1})^{q\times q}$ is a random block matrix such that for $j$ and $j'\in\{1,\dots,q\}$, the $jj'$-th block is given by  $({\dot  h}_{\ell \bullet\bullet}^{n,ik})_{j j'}=\nabla h_{\ell jj'}( \xi^{3,n}_{ik})\in \R^{d\times 1}$ and  $ \xi^{3,n}_{ik}$ is a vector point lying between $X^{nm,\tilde{\sigma}}_{\frac{m(i-1)+k-1}{nm}}$ and $X^{nm,\tilde{\sigma}}_{\frac{i-1}{n}}$.
\begin{remark}
Concerning $M^{nm,\tilde{\sigma},3}$, the last formula can be written differently. In fact, as $\mathbb H \in(\R^{q\times q})^{d\times 1}=\mathbb R^{dq\times q}$, we proceed similarly as above using a Taylor expansion to get the existence of a random block matrix $\dot{\mathbb H}^n_{ik}$  such that 
\begin{align*}	M^{nm,\tilde{\sigma},3}_{\frac{i-1}{n}}=\sum_{k=2}^m\Big[\dot{\mathbb H}^n_{ik}\blackdiamond(X^{nm,\tilde{\sigma}}_{\frac{m(i-1)+k-1}{nm}}-X^{nm,\tilde{\sigma}}_{\frac{i-1}{n}})\Big]\blackdiamond(\delta W_{i\tilde{\sigma}(k)}\delta W_{i\tilde{\sigma}(k)}^{\top}-{\Iq} \frac{\Delta_n}{m}).
\end{align*}
More precisely, we have $\dot{\mathbb H}^n_{ik}\in(\R^{d\times 1})^{dq\times q}$ where for $\ell'\in\{1,\dots,dq\}$ and $j'\in\{1,\dots,q\}$, the $\ell' j'$-th block is given by  $(\dot{\mathbb H}^n_{ik})_{\ell' j'}=\nabla h_{\ell jj'}( \xi^{3,n}_{ik})\in\R^{d\times 1}$ where $\ell'=q(\ell-1)+j$ and  $ \xi^{3,n}_{ik}$ is a vector point lying between $X^{nm,\tilde{\sigma}}_{\frac{m(i-1)+k-1}{nm}}$ and $X^{nm,\tilde{\sigma}}_{\frac{i-1}{n}}$.
\end{remark}
Now, we replace the increment $X^{nm,\tilde{\sigma}}_{\frac{m(i-1)+k-1}{nm}}-X^{nm,\tilde{\sigma}}_{\frac{i-1}{n}} $ using \eqref{eq:iter1} and we only freeze   the coefficients of the contributing terms in the asymptotic behavior of the error at the limit point $X_{\frac{i-1}{n}}$.
\begin{lemma}\label{lem:M3}
	For $\ell\in\{1,\dots,d\}$ the $\ell^{ th}$ component of the term $M^{nm,\tilde{\sigma},3}_{\frac{i-1}{n}}$ has the following expansion
	\begin{align}\label{exp:M3}
		&M^{nm,\tilde{\sigma},3}_{\ell,\frac{i-1}{n}}=\sum_{k=2}^m\sum_{k'=1}^{k-1}\Big[ {\dot  h}_{\ell \bullet\bullet}^{n,i}\blackdiamond(g(X_{\frac{i-1}{n}})\delta W_{i\tilde{\sigma}(k')})\Big]\blackdiamond(\delta W_{i\tilde{\sigma}(k)}\delta W_{i\tilde{\sigma}(k)}^{\top}-{\Iq} \frac{\Delta_n}{m})+R^{nm,\tilde{\sigma}}_{\ell,\frac{i-1}{n}}(3)
	\end{align}
	with $\mathbb{E}(R^{nm,\tilde{\sigma}}_{\ell,\frac{i-1}{n}}(3)|\mathcal{F}_{\frac{i-1}{n}})=0$ and $\dot{h}^{n,i}_{\ell\bullet\bullet}\in(\R^{d\times1})^{q\times q}$ is a block matrix such that for $j$ and $j'\in\{1,\dots,q\}$, the $j j'$-th block is given by $(\dot{h}^{n,i}_{\ell\bullet\bullet})_{j j'}=\nabla h_{\ell j j'}(X_{\frac{i-1}{n}})$. Moreover, for any integer $p\geq2$ there exists 
	\begin{align}\label{eq:R3}
	\max_{0\leq i\leq n}\mathbb{E}(| R^{nm,\tilde{\sigma}}_{\ell,\frac{i-1}{n}}(3)|^p)=o\Big(\Delta_n^{3p/2}\Big).
	\end{align}
\end{lemma}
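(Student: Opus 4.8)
The plan is to follow the same scheme already used for Lemmas \ref{lem:N}, \ref{lem:M1} and \ref{lem:M2}: start from the exact expression \eqref{eq:M3}, inject the one-step expansion \eqref{eq:iter1} for the increment $X^{nm,\tilde{\sigma}}_{\frac{m(i-1)+k-1}{nm}}-X^{nm,\tilde{\sigma}}_{\frac{i-1}{n}}$, isolate the single contributing term, and then replace every coefficient evaluated at an intermediate point by its value at the limit point $X_{\frac{i-1}{n}}$, collecting all discarded pieces into $R^{nm,\tilde\sigma}_{\ell,\frac{i-1}{n}}(3)$.

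First I would substitute \eqref{eq:iter1} into \eqref{eq:M3}. The increment splits into three contributions of respective orders $\Delta_n$ (the drift part $\sum_{k'} f\,\frac{\Delta_n}{m}$), $\Delta_n^{1/2}$ (the diffusion part $\sum_{k'} g\,\delta W_{i\tilde\sigma(k')}$) and $\Delta_n$ (the $\mathbb H$-correction part). Since the outer factor $\delta W_{i\tilde\sigma(k)}\delta W_{i\tilde\sigma(k)}^\top - \Iq\frac{\Delta_n}{m}$ is of order $\Delta_n$, only the diffusion part yields a term of order $\Delta_n^{3/2}$, whereas the drift and $\mathbb H$ parts yield terms of order $\Delta_n^{2}$. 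Keeping the diffusion contribution and freezing $\dot h^{n,ik}_{\ell\bullet\bullet}$ from $\xi^{3,n}_{ik}$ down to $X_{\frac{i-1}{n}}$ and $g$ from $X_{\frac{m(i-1)+k'-1}{nm}}$ down to $X_{\frac{i-1}{n}}$ produces exactly the announced main term. The remainder $R^{nm,\tilde\sigma}_{\ell,\frac{i-1}{n}}(3)$ then gathers three groups: (a) the drift and $\mathbb H$ contributions of the increment; (b) the error of replacing $\nabla h_{\ell jj'}(\xi^{3,n}_{ik})$ by $\nabla h_{\ell jj'}(X_{\frac{i-1}{n}})$; (c) the error of replacing $g(X_{\frac{m(i-1)+k'-1}{nm}})$ by $g(X_{\frac{i-1}{n}})$.

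For the martingale property $\mathbb E(R^{nm,\tilde\sigma}_{\ell,\frac{i-1}{n}}(3)\mid\mathcal F_{\frac{i-1}{n}})=0$ I would establish it for the main term and for $M^{nm,\tilde\sigma,3}_{\frac{i-1}{n}}$ separately, the claim following by difference. In the main term each summand has $k'<k$, hence $\tilde\sigma(k')\neq\tilde\sigma(k)$, so the factors $\delta W_{i\tilde\sigma(k')}$ and $\delta W_{i\tilde\sigma(k)}\delta W_{i\tilde\sigma(k)}^\top - \Iq\frac{\Delta_n}{m}$ are independent and both centered, while $\dot h^{n,i}_{\ell\bullet\bullet}$ and $g(X_{\frac{i-1}{n}})$ are $\mathcal F_{\frac{i-1}{n}}$-measurable; conditioning then gives zero. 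For $M^{nm,\tilde\sigma,3}_{\frac{i-1}{n}}$ itself I use that $X^{nm,\tilde\sigma}_{\frac{m(i-1)+k-1}{nm}}$ is $\mathcal F^{k-1,\tilde\sigma}_{\frac{i-1}{n}}$-measurable whereas $\delta W_{i\tilde\sigma(k)}$ is independent of that $\sigma$-field and centered, so conditioning on $\mathcal F^{k-1,\tilde\sigma}_{\frac{i-1}{n}}$ and applying the tower property annihilates each summand.

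The $L^p$ estimate \eqref{eq:R3} is the main technical point. I would bound each group by Hölder's inequality, using the moment bounds of Lemma \ref{Lp} (so that $|X^{nm,\tilde\sigma}_{\frac{m(i-1)+k-1}{nm}}-X^{nm,\tilde\sigma}_{\frac{i-1}{n}}|$ and each $\delta W_{i\tilde\sigma(k')}$ are of order $\Delta_n^{1/2}$ in every $L^p$), the moment bounds for $f$, $g$, $\mathbb H$ from Corollary \ref{cor:1}, and the boundedness together with the Lipschitz continuity of $\nabla h$ from assumption (\nameref{Assume}). Group (a) is $O(\Delta_n^{2})$ in $L^p$, the drift and $\mathbb H$ contributions being of order $\Delta_n$ and the outer factor of order $\Delta_n$. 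For groups (b) and (c) the Lipschitz bounds control $|\nabla h_{\ell jj'}(\xi^{3,n}_{ik})-\nabla h_{\ell jj'}(X_{\frac{i-1}{n}})|$ and $|g(X_{\frac{m(i-1)+k'-1}{nm}})-g(X_{\frac{i-1}{n}})|$ by the increments, of order $\Delta_n^{1/2}$; multiplying by the remaining Brownian increment ($\Delta_n^{1/2}$) and the outer factor ($\Delta_n$) again gives $O(\Delta_n^{2})$. Since $\Delta_n^{2}=o(\Delta_n^{3/2})$, all three groups satisfy $\max_{0\le i\le n}\mathbb E(|\cdot|^p)=o(\Delta_n^{3p/2})$, which is \eqref{eq:R3}. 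The hard part will be the bookkeeping, namely matching the exact powers of $\Delta_n$ across the many cross terms and making sure that nothing of genuine order $\Delta_n^{3/2}$ is inadvertently absorbed into the remainder, but no idea beyond those used for Lemmas \ref{lem:N}--\ref{lem:M2} is required.
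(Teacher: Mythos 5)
Your proposal is correct and follows essentially the same route as the paper's proof: you substitute \eqref{eq:iter1} into \eqref{eq:M3}, identify the diffusion part with coefficients frozen at $X_{\frac{i-1}{n}}$ as the sole contribution of order $\Delta_n^{3/2}$, verify centering through the tower property and the independence of $\delta W_{i\tilde{\sigma}(k)}$ from $\mathcal{F}^{k-1,\tilde{\sigma}}_{\frac{i-1}{n}}$, and bound each remainder group in $L^p$ by $O(\Delta_n^{2p})=o(\Delta_n^{3p/2})$ using Lemmas \ref{lem:1} and \ref{Lp} together with the Lipschitz estimate $\mathbb{E}|{\dot h}^{n,ik}_{\ell\bullet\bullet}-{\dot h}^{n,i}_{\ell\bullet\bullet}|^{2p}=O(\Delta_n^{p})$, exactly as in appendix \ref{app:A}. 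The only cosmetic deviation is that you obtain $\mathbb{E}(R^{nm,\tilde{\sigma}}_{\ell,\frac{i-1}{n}}(3)\mid\mathcal{F}_{\frac{i-1}{n}})=0$ by centering the main term and $M^{nm,\tilde{\sigma},3}_{\frac{i-1}{n}}$ separately and subtracting, whereas the paper conditions the explicit remainder directly; the two arguments are equivalent.
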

The proof of the above lemma is postponed to the appendix \ref{app:A} .

\subsection{Expansion of the error $\bar{X}^{nm,\sigma}_{\frac{i}{n}}-\bar{X}^{nm,\sigma}_{\frac{i-1}{n}}$}
We remind that $\bar{X}^{nm,\sigma}= \frac{1}{2}(X^{nm}+X^{nm,\sigma})$. By $\eqref{eq:iter2}$, we have
\begin{align*}
	&\bar{X}^{nm,\sigma}_{\frac{i}{n}}-\bar{X}^{nm,\sigma}_{\frac{i-1}{n}}=\frac{1}{2}\sum_{k=1}^m\left[f(X^{nm}_{\frac{m(i-1)+k-1}{nm}})+f(X^{nm,\sigma}_{\frac{m(i-1)+k-1}{nm}})\right]\frac{\Delta_n}{m}\\&+\frac{1}{2}\sum_{k=1}^m\left[\mathbb{H}(X^{nm}_{\frac{m(i-1)+k-1}{nm}})\blackdiamond(\delta W_{ik}\delta W_{ik}^{\top}-{\Iq}\frac{\Delta_n}{m})+\mathbb{H}(X^{nm,\sigma}_{\frac{m(i-1)+k-1}{nm}})\blackdiamond(\delta W_{i\sigma(k)}\delta W_{i\sigma(k)}^{\top}-{\Iq}\frac{\Delta_n}{m})\right]\\&+\frac{1}{2}\sum_{k=1}^m\left[g(X^{nm}_{\frac{m(i-1)+k-1}{nm}})\delta W_{ik}+g(X^{nm,\sigma}_{\frac{m(i-1)+k-1}{nm}})\delta W_{i\sigma(k)}\right].
\end{align*}
Then we rewrite it as follows
\begin{align*}
	\bar{X}^{nm,\sigma}_{\frac{i}{n}}-\bar{X}^{nm,\sigma}_{\frac{i-1}{n}}=&f(\bar{X}^{nm,\sigma}_{\frac{i-1}{n}})\Delta_n+g(\bar{X}^{nm,\sigma}_{\frac{i-1}{n}})\Delta W_{i}+\mathbb{H}(\bar{X}^{nm,\sigma}_{\frac{i-1}{n}})\blackdiamond(\Delta W_{i}\Delta W_{i}^{\top}-{\Iq}\Delta_n)\\
	&+A_{\frac{i-1}{n}}+B_{\frac{i-1}{n}}+C_{\frac{i-1}{n}},
\end{align*}
where 
\begin{align*}
	A_{\frac{i-1}{n}}=&\frac{1}{2}\sum_{k=1}^m\left[f(X^{nm}_{\frac{m(i-1)+k-1}{nm}})+f(X^{nm,\sigma}_{\frac{m(i-1)+k-1}{nm}})\right]\frac{\Delta_n}{m}-f(\bar{X}^{nm,\sigma}_{\frac{i-1}{n}})\Delta_n,\\
	B_{\frac{i-1}{n}}=&\frac{1}{2}\sum_{k=1}^m\left[g(X^{nm}_{\frac{m(i-1)+k-1}{nm}})\delta W_{i,k}+g(X^{nm,\sigma}_{\frac{m(i-1)+k-1}{nm}})\delta W_{i\sigma(k)}\right]-g(\bar{X}^{nm,\sigma}_{\frac{i-1}{n}})\Delta W_{i},\\
	C_{\frac{i-1}{n}}=&\frac{1}{2}\sum_{k=1}^m\left[\mathbb{H}(X^{nm}_{\frac{m(i-1)+k-1}{nm}})\blackdiamond(\delta W_{ik}\delta W_{ik}^{\top}-{\Iq}\frac{\Delta_n}{m})+\mathbb{H}(X^{nm,\sigma}_{\frac{m(i-1)+k-1}{nm}})\blackdiamond(\delta W_{i\sigma(k)}\delta W_{i\sigma(k)}^{\top}-{\Iq}\frac{\Delta_n}{m})\right]\\
	&-\mathbb{H}(\bar{X}^{nm,\sigma}_{\frac{i-1}{n}})\blackdiamond(\Delta W_{i}\Delta W_{i}^{\top}-{\Iq}\Delta_n).
\end{align*}
Now considering $A_{\frac{i-1}{n}}$, we use $\eqref{eq:decf}$ to get
\begin{align*}
	A_{\frac{i-1}{n}}=&\frac{1}{2}\sum_{k=1}^m\left[f(X^{nm}_{\frac{m(i-1)+k-1}{nm}})-f(X^{nm}_{\frac{i-1}{n}})\right]\frac{\Delta_n}{m}+\frac{1}{2}\sum_{k=1}^m\left[f(X^{nm,\sigma}_{\frac{m(i-1)+k-1}{nm}})-f(X^{nm,\sigma}_{\frac{i-1}{n}})\right]\frac{\Delta_n}{m}\\
	&+\frac{1}{2}\left(f(X^{nm}_{\frac{i-1}{n}})+f(X^{nm,\sigma}_{\frac{i-1}{n}})\right)\Delta_n-f(\bar{X}^{nm,\sigma}_{\frac{i-1}{n}})\Delta_n\\
	=&\frac{1}{2}(M^{nm,\Id,1}_{\frac{i-1}{n}}+M^{nm,\sigma,1}_{\frac{i-1}{n}}+N^{nm,\Id}_{\frac{i-1}{n}}+N^{nm,\sigma}_{\frac{i-1}{n}})+ {\tilde N}^{nm}_{\frac{i-1}{n}},
\end{align*}
where $\displaystyle {\tilde N}^{nm}_{\frac{i-1}{n}}=\frac{1}{2}\big(f(X^{nm}_{\frac{i-1}{n}})+f(X^{nm,\sigma}_{\frac{i-1}{n}})\big)\Delta_n-f(\bar{X}^{nm,\sigma}_{\frac{i-1}{n}})\Delta_n$.
Similarly,  we  have
\begin{align*}
&B_{\frac{i-1}{n}}+C_{\frac{i-1}{n}}=\left[\frac{1}{2}\left(g(X^{nm}_{\frac{i-1}{n}})+g(X^{nm,\sigma}_{\frac{i-1}{n}})\right)-g(\bar{X}^{nm,\sigma}_{\frac{i-1}{n}})\right]\Delta W_{i}
\\&+\frac{1}{2}\sum_{k=1}^m\left[g(X^{nm}_{\frac{m(i-1)+k-1}{nm}})-g(X^{nm}_{\frac{i-1}{n}})\right]\delta W_{ik}+\frac{1}{2}\sum_{k=1}^m\left[g(X^{nm,\sigma}_{\frac{m(i-1)+k-1}{nm}})-g(X^{nm,\sigma}_{\frac{i-1}{n}})\right]\delta W_{i\sigma(k)}
\\&+\frac{1}{2}\sum_{k=1}^m\left[\mathbb{H}(X^{nm}_{\frac{m(i-1)+k-1}{nm}})\blackdiamond(\delta W_{ik}\delta W_{ik}^{\top}-{\Iq}\frac{\Delta_n}{m})-\mathbb{H}(X^{nm}_{\frac{i-1}{n}})\blackdiamond(\Delta W_{i}\Delta W_{i}^{\top}-{\Iq}\Delta_n)\right]
\\
&+\frac{1}{2}\sum_{k=1}^m\left[\mathbb{H}(X^{nm,\sigma}_{\frac{m(i-1)+k-1}{nm}})\blackdiamond(\delta W_{i\sigma(k)}\delta W_{i\sigma(k)}^{\top}-{\Iq}\frac{\Delta_n}{m})-\mathbb{H}(X^{nm,\sigma}_{\frac{i-1}{n}})\blackdiamond(\Delta W_{i}\Delta W_{i}^{\top}-{\Iq}\Delta_n)\right]\\
&+\left[\frac{1}{2}\left(\mathbb{H}(X^{nm}_{\frac{i-1}{n}})+\mathbb{H}(X^{nm,\sigma}_{\frac{i-1}{n}})\right)-\mathbb{H}(\bar{X}^{nm,\sigma}_{\frac{i-1}{n}})\right]\blackdiamond(\Delta W_{i}\Delta W_{i}^{\top}-{\Iq}\Delta_n).
\end{align*}
Now, by  \eqref{eq:2.17} and the expressions of $M^{nm,\tilde{\sigma},2}$ and $M^{nm,\tilde{\sigma},3}$ given above relation \eqref{eq:decM} we rearrange our terms to get
$$
	B_{\frac{i-1}{n}}+C_{\frac{i-1}{n}}=\frac{1}{2}(M^{nm,\Id,2}_{\frac{i-1}{n}}+M^{nm,\sigma,2}_{\frac{i-1}{n}}+M^{nm,\Id,3}_{\frac{i-1}{n}}+M^{nm,\sigma,3}_{\frac{i-1}{n}})+{\tilde M}^{nm,1}_{\frac{i-1}{n}}+{\tilde M}^{nm,2}_{\frac{i-1}{n}}-\frac{1}{2}{\tilde M}^{nm,3}_{\frac{i-1}{n}},
$$
where
\begin{align*}
	{\tilde M}^{nm,1}_{\frac{i-1}{n}}=&\left[\frac{1}{2}\left(g(X^{nm}_{\frac{i-1}{n}})+g(X^{nm,\sigma}_{\frac{i-1}{n}})\right)-g(\bar{X}^{nm,\sigma}_{\frac{i-1}{n}})\right]\Delta W_{i},\\
	{\tilde M}^{nm,2}_{\frac{i-1}{n}}=&\left[\frac{1}{2}\left(\mathbb{H}(X^{nm}_{\frac{i-1}{n}})+\mathbb{H}(X^{nm,\sigma}_{\frac{i-1}{n}})\right)-\mathbb{H}(\bar{X}^{nm,\sigma}_{\frac{i-1}{n}})\right]\blackdiamond(\Delta W_{i}\Delta W_{i}^{\top}-{\Iq}\Delta_n),\\
	{\tilde M}^{nm,3}_{\frac{i-1}{n}}=&\sum_{\tilde \sigma\in\{\Id,\sigma\}}\sum_{\substack{1\le k<k'\le m}}\mathbb{H}(X^{nm,\tilde\sigma}_{\frac{i-1}{n}})\blackdiamond(\delta W_{i\tilde\sigma(k)}\delta W_{i\tilde\sigma(k')}^{\top}-\delta W_{i\tilde\sigma(k')}\delta W_{i\tilde\sigma(k)}^{\top})
\end{align*}
Now recalling  that $\sigma(k)=m-k+1$, for all $k\in\{1,\dots,m\}$, we get 
$$
	{\tilde M}^{nm,3}_{\frac{i-1}{n}}=\sum_{\substack{1\le k<k'\le m}}\left[\mathbb{H}(X^{nm}_{\frac{i-1}{n}})-\mathbb{H}(X^{nm,\sigma}_{\frac{i-1}{n}})\right]\blackdiamond(\delta W_{ik}\delta W_{ik'}^{\top}-\delta W_{ik'}\delta W_{ik}^{\top}).
$$
In what follows, by \eqref{eq:decM} we introduce for  $i\in\{1,\dots,n\}$
\begin{align}\label{eq:nbar}
	{\bar N}^{nm}_{\frac{i-1}{n}}=&\frac{1}{2}(N^{nm,\Id}_{\frac{i-1}{n}}+N^{nm,\sigma}_{\frac{i-1}{n}})+{\tilde N}^{nm}_{\frac{i-1}{n}},\\\label{eq:mbar}
	{\bar M}^{nm}_{\frac{i-1}{n}}=&\frac{1}{2}(M^{nm,\Id}_{\frac{i-1}{n}}+M^{nm,\sigma}_{\frac{i-1}{n}})+{\tilde M}_{\frac{i-1}{n}}^{nm,1}+{\tilde M}_{\frac{i-1}{n}}^{nm,2}-\frac{1}{2}{\tilde M}_{\frac{i-1}{n}}^{nm,3}.
\end{align}
 The proof of the  following lemma is postponed to the appendix \ref{app:A}.
\begin{lemma}\label{average}
	The error $\bar{X}^{nm,\sigma}_{\frac{i}{n}}-\bar{X}^{nm,\sigma}_{\frac{i-1}{n}}$, $i\in\{1,\dots,n\}$ can be expressed as follows
	\begin{multline}\label{eq:2.6a}
		\bar{X}^{nm,\sigma}_{\frac{i}{n}}-\bar{X}^{nm,\sigma}_{\frac{i-1}{n}}=f(\bar{X}^{nm,\sigma}_{\frac{i-1}{n}})\Delta_n+g(\bar{X}^{nm,\sigma}_{\frac{i-1}{n}})\Delta W_{i}\\+\mathbb{H}(\bar{X}^{nm,\sigma}_{\frac{i-1}{n}})\blackdiamond(\Delta W_{i}\Delta W_{i}^{\top}-{\Iq}\Delta_n)
		+{\bar M}^{mn}_{\frac{i-1}{n}}+{\bar N}^{nm}_{\frac{i-1}{n}},
	\end{multline}
	where $\mathbb{E}({\bar M}^{nm}_{\frac{i-1}{n}}|\mathcal{F}_{\frac{i-1}{n}})=0$ and for any integer $p\geq2$ there exists a constant $K_p$ such that
	\begin{align}\label{eq:2.6a1}
		\max_{0\leq i\leq n}\mathbb{E}(|{\bar M}^{nm}_{\frac{i-1}{n}}|^p)&\leq K_p\Delta_n^{3p/2},\\
	\label{eq:2.6a2}
		\max_{0\leq i\leq n}\mathbb{E}(|{\bar N}^{nm}_{\frac{i-1}{n}}|^p)&\leq K_p\Delta_n^{2p}.
	\end{align}
\end{lemma}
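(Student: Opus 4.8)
The expansion \eqref{eq:2.6a} is purely algebraic: it suffices to collect $A_{\frac{i-1}{n}}+B_{\frac{i-1}{n}}+C_{\frac{i-1}{n}}$ from the computation carried out just before the statement and to substitute the definitions \eqref{eq:nbar} and \eqref{eq:mbar} of $\bar N^{nm}_{\frac{i-1}{n}}$ and $\bar M^{nm}_{\frac{i-1}{n}}$. Thus the content of the lemma lies entirely in the conditional centering of $\bar M^{nm}_{\frac{i-1}{n}}$ and in the two moment bounds \eqref{eq:2.6a1}--\eqref{eq:2.6a2}, which I would establish term by term using the decompositions \eqref{eq:nbar}, \eqref{eq:mbar}.

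To prove $\mathbb{E}(\bar M^{nm}_{\frac{i-1}{n}}\mid\mathcal{F}_{\frac{i-1}{n}})=0$, I would treat each summand of \eqref{eq:mbar} separately. Lemma \ref{finer} already gives $\mathbb{E}(M^{nm,\tilde\sigma}_{\frac{i-1}{n}}\mid\mathcal{F}_{\frac{i-1}{n}})=0$ for $\tilde\sigma\in\{\Id,\sigma\}$, so $\tfrac12(M^{nm,\Id}+M^{nm,\sigma})$ is centered. In each of $\tilde M^{nm,1}$, $\tilde M^{nm,2}$ and $\tilde M^{nm,3}$ the bracketed coefficient is $\mathcal{F}_{\frac{i-1}{n}}$-measurable, so centering reduces to the elementary identities $\mathbb{E}(\Delta W_i\mid\mathcal{F}_{\frac{i-1}{n}})=0$, $\mathbb{E}(\Delta W_i\Delta W_i^\top-\Iq\Delta_n\mid\mathcal{F}_{\frac{i-1}{n}})=0$, and $\mathbb{E}(\delta W_{ik}\delta W_{ik'}^\top\mid\mathcal{F}_{\frac{i-1}{n}})=0$ for $k\neq k'$, the last one following from the independence and zero mean of the finer increments.

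The heart of the matter is a quadratic gain coming from the antithetic symmetrization. Writing $X^{nm}_{\frac{i-1}{n}}-\bar X^{nm,\sigma}_{\frac{i-1}{n}}=\tfrac12(X^{nm}_{\frac{i-1}{n}}-X^{nm,\sigma}_{\frac{i-1}{n}})$ and $X^{nm,\sigma}_{\frac{i-1}{n}}-\bar X^{nm,\sigma}_{\frac{i-1}{n}}=-\tfrac12(X^{nm}_{\frac{i-1}{n}}-X^{nm,\sigma}_{\frac{i-1}{n}})$, a second-order Taylor expansion of $f$ (resp. of each entry of $g$ and of $\mathbb H$) around the midpoint $\bar X^{nm,\sigma}_{\frac{i-1}{n}}$ makes the first-order contributions cancel, leaving
\[
\tfrac12\big(\psi(X^{nm}_{\frac{i-1}{n}})+\psi(X^{nm,\sigma}_{\frac{i-1}{n}})\big)-\psi(\bar X^{nm,\sigma}_{\frac{i-1}{n}})=O\big(|X^{nm}_{\frac{i-1}{n}}-X^{nm,\sigma}_{\frac{i-1}{n}}|^2\big)
\]
for $\psi\in\{f,\,g_{\ell j},\,\mathbb H\}$, uniformly in $i$ thanks to the boundedness of the second derivatives in (\nameref{Assume}). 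By Lemma \ref{Lp}, $\mathbb{E}(|X^{nm}_{\frac{i-1}{n}}-X^{nm,\sigma}_{\frac{i-1}{n}}|^{p})\le C_p\Delta_n^{p/2}$ for every $p\ge2$, so the coefficients of $\tilde M^{nm,1}$, $\tilde M^{nm,2}$ and $\tilde N^{nm}$ have $L^{2p}$-norm of order $\Delta_n$, whereas the coefficient $\mathbb H(X^{nm}_{\frac{i-1}{n}})-\mathbb H(X^{nm,\sigma}_{\frac{i-1}{n}})$ of $\tilde M^{nm,3}$ is merely Lipschitz, hence only of order $\Delta_n^{1/2}$ in $L^{2p}$.

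The bounds then follow from H\"older's inequality combined with the standard increment estimates $\mathbb{E}(|\Delta W_i|^{2p})\le C_p\Delta_n^p$, $\mathbb{E}(|\Delta W_i\Delta W_i^\top-\Iq\Delta_n|^{2p})\le C_p\Delta_n^{2p}$ and, for the fixed (finite, since $m$ is fixed) double sum, $\mathbb{E}\big(|\sum_{k<k'}(\delta W_{ik}\delta W_{ik'}^\top-\delta W_{ik'}\delta W_{ik}^\top)|^{2p}\big)\le C_p\Delta_n^{2p}$. Concretely $\mathbb{E}(|\tilde M^{nm,1}|^p)\le C\Delta_n^{p}\Delta_n^{p/2}=C\Delta_n^{3p/2}$, $\mathbb{E}(|\tilde M^{nm,2}|^p)\le C\Delta_n^{p}\Delta_n^{p}=C\Delta_n^{2p}\le C\Delta_n^{3p/2}$ and $\mathbb{E}(|\tilde M^{nm,3}|^p)\le C\Delta_n^{p/2}\Delta_n^{p}=C\Delta_n^{3p/2}$; together with $\max_i\mathbb{E}(|M^{nm,\tilde\sigma}|^p)\le K_p\Delta_n^{3p/2}$ from \eqref{eq:2.6.3a} this yields \eqref{eq:2.6a1}. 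For \eqref{eq:2.6a2}, \eqref{eq:2.6.3b} gives $\max_i\mathbb{E}(|N^{nm,\tilde\sigma}|^p)\le K_p\Delta_n^{2p}$, while $\tilde N^{nm}=\big[\tfrac12(f(X^{nm}_{\frac{i-1}{n}})+f(X^{nm,\sigma}_{\frac{i-1}{n}}))-f(\bar X^{nm,\sigma}_{\frac{i-1}{n}})\big]\Delta_n$ is the deterministic multiple $\Delta_n$ of an $O(|X^{nm}_{\frac{i-1}{n}}-X^{nm,\sigma}_{\frac{i-1}{n}}|^2)$ coefficient, so $\mathbb{E}(|\tilde N^{nm}|^p)\le C\Delta_n^p\Delta_n^p=C\Delta_n^{2p}$. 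The main obstacle is precisely the verification of the quadratic order of the symmetrization defects (the cancellation of the first-order terms about the midpoint); once that is in place, the rest is routine moment bookkeeping via H\"older's inequality and the a priori estimates of Lemma \ref{Lp}.
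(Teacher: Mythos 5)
Your proposal is correct and follows essentially the same route as the paper: the same decomposition \eqref{eq:nbar}--\eqref{eq:mbar}, conditional centering via Lemma \ref{finer} and the $\mathcal{F}_{\frac{i-1}{n}}$-measurability of the Taylor coefficients, and moment bounds combining the midpoint Taylor expansion with Lemma \ref{Lp} and the standard increment estimates. The only (harmless) deviation is that you exploit the Lipschitz gain on ${\dot h}^{n,i,1}_{\ell\bullet\bullet}$ to bound $\tilde M^{nm,2}$ by $O(\Delta_n^{2p})$, whereas the paper's proof of this lemma settles for the cruder $O(\Delta_n^{3p/2})$ and reserves that sharper estimate for the proof of Lemma \ref{lem:R2}.
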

\begin{corollary}\label{bar_LP} We have 
$$
\mathbb{E}(\max_{0\leq i\leq n}|\bar{X}^{nm,\sigma}_{\frac{i}{n}}-X^{n}_{\frac{i}{n}}|^{p}) \leq C_p\Delta_n^p.
$$
\end{corollary}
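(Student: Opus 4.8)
\emph{Proof proposal.} The plan is to derive a closed difference equation for the error $e_{i}:=\bar{X}^{nm,\sigma}_{\frac{i}{n}}-X^{n}_{\frac{i}{n}}$ and then run a discrete Gronwall argument combined with the Burkholder--Davis--Gundy (BDG) inequality; the whole point is to convert the sharp per-step sizes of the remainder terms supplied by Lemma~\ref{average} into the global order-one rate. First I would note $e_{0}=0$ since both schemes start at $x_{0}$, and subtract the coarse recursion \eqref{eq:2.3} from the expansion \eqref{eq:2.6a} of Lemma~\ref{average} to obtain, for $i\in\{1,\dots,n\}$,
\[
e_{i}=e_{i-1}+\Delta f_{i-1}\,\Delta_{n}+\Delta g_{i-1}\,\Delta W_{i}+\Delta\mathbb{H}_{i-1}\blackdiamond(\Delta W_{i}\Delta W_{i}^{\top}-\Iq\Delta_{n})+{\bar M}^{nm}_{\frac{i-1}{n}}+{\bar N}^{nm}_{\frac{i-1}{n}},
\]
where $\Delta f_{i-1}:=f(\bar{X}^{nm,\sigma}_{\frac{i-1}{n}})-f(X^{n}_{\frac{i-1}{n}})$ and analogously for $\Delta g_{i-1}$ and $\Delta\mathbb{H}_{i-1}$. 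Under (\nameref{Assume}) the maps $f,g,\mathbb{H}$ have bounded first derivatives, hence are globally Lipschitz, so each of these three differences is bounded in norm by $L\,|e_{i-1}|$. Summing over $i$ then splits $e_{i}$ into three ``Lipschitz'' sums plus the two remainder sums $\sum_{k}{\bar M}^{nm}_{\frac{k-1}{n}}$ and $\sum_{k}{\bar N}^{nm}_{\frac{k-1}{n}}$.

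The key observation is that ${\bar M}^{nm}$ is conditionally centered, $\mathbb{E}({\bar M}^{nm}_{\frac{i-1}{n}}\mid\mathcal F_{\frac{i-1}{n}})=0$, so its partial sums form a discrete martingale. Applying Doob's maximal inequality, then BDG, then discrete Jensen together with the per-step bound \eqref{eq:2.6a1}, I would get
\[
\mathbb{E}\Big(\max_{i\le n}\big|\textstyle\sum_{k\le i}{\bar M}^{nm}_{\frac{k-1}{n}}\big|^{p}\Big)\le C_{p}\,n^{p/2}\Delta_{n}^{3p/2}=C_{p}\Delta_{n}^{p},
\]
using $n=\Delta_{n}^{-1}$. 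It is precisely here that the order-one rate is produced: a crude triangle inequality on the $n$ increments of size $\Delta_{n}^{3/2}$ would only yield $\Delta_{n}^{p/2}$, so exploiting the martingale cancellation of ${\bar M}^{nm}$ is essential. For the non-centered remainder I instead use the triangle inequality and Jensen with the smaller bound \eqref{eq:2.6a2} to get $\mathbb{E}(\max_{i\le n}|\sum_{k\le i}{\bar N}^{nm}_{\frac{k-1}{n}}|^{p})\le C_{p}\,n^{p}\Delta_{n}^{2p}=C_{p}\Delta_{n}^{p}$.

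For the three Lipschitz sums I set $u_{i}:=\mathbb{E}(\max_{0\le j\le i}|e_{j}|^{p})$, which is finite by the first bound of Lemma~\ref{Lp} (and of Lemma~\ref{lem:1}). The drift term contributes $L^{p}\Delta_{n}^{p}n^{p-1}\sum_{k}|e_{k-1}|^{p}=L^{p}\Delta_{n}\sum_{k}|e_{k-1}|^{p}$ after a Hölder step; the $\Delta W_{i}$ term and the centered quadratic term are martingales, so Doob plus BDG plus a discrete Jensen step against the sub-probability measure $\Delta_{n}\sum_{k}(\cdot)$ show that each also contributes at most $C_{p}\Delta_{n}\sum_{k}u_{k-1}$ (the quadratic term even carries an extra factor $\Delta_{n}^{p/2}$, since the conditional variance of $\Delta W_{i}\Delta W_{i}^{\top}-\Iq\Delta_{n}$ is of order $\Delta_{n}^{2}$). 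Collecting all contributions yields
\[
u_{i}\le C_{p}\Delta_{n}^{p}+C_{p}\Delta_{n}\sum_{k=1}^{i}u_{k-1},\qquad i\in\{1,\dots,n\},
\]
and the discrete Gronwall lemma, together with $\Delta_{n}\,i\le 1$, gives $u_{n}\le C_{p}\Delta_{n}^{p}e^{C_{p}}$, which is the claim. The main obstacle is the martingale estimate on $\sum{\bar M}^{nm}$ highlighted above; everything else is routine discrete stochastic calculus once the moment bounds \eqref{eq:2.6a1}--\eqref{eq:2.6a2} of Lemma~\ref{average} are in hand.
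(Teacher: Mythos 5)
Your proposal is correct and follows essentially the same route as the paper: subtracting the coarse recursion \eqref{eq:2.3} from the expansion \eqref{eq:2.6a}, bounding the Lipschitz sums via Jensen and the discrete BDG inequality, exploiting the martingale cancellation of $\bar M^{nm}$ (per-step size $\Delta_n^{3p/2}$, hence $n^{p/2}\Delta_n^{3p/2}=\Delta_n^{p}$) versus a crude triangle bound for $\bar N^{nm}$ (per-step size $\Delta_n^{2p}$, hence $n^{p}\Delta_n^{2p}=\Delta_n^{p}$), and closing with the discrete Gr\"onwall lemma. The only cosmetic deviations are that you insert a (redundant) Doob step before BDG and handle the quadratic $\mathbb{H}$ term as a martingale via BDG, where the paper gets the same $\Delta_n\sum S_{i-1}$ contribution from Jensen plus the independence of $\Delta W_i$ from $\mathcal{F}_{\frac{i-1}{n}}$.
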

\begin{proof}
 Let us define $S_k=\mathbb{E}(\max\limits_{0\leq k'\leq k}|\bar{X}^{nm,\sigma}_{\frac{k'}{n}}-X^{n}_{\frac{k'}{n}}|^{p})$, for any $0\le k\le n$.
For a fixed $k$, by summing \eqref{eq:2.6a} over the first $k'$ timesteps, we obtain
\begin{multline*}
	\bar{X}^{nm,\sigma}_{\frac{k'}{n}}-X^{n}_{\frac{k'}{n}}=\sum_{i=1}^{k'}(f(\bar{X}^{nm,\sigma}_{\frac{i-1}{n}})-f(X^{n}_{\frac{i-1}{n}}))\Delta_n+\sum_{i=1}^{k'}(g(\bar{X}^{nm,\sigma}_{\frac{i-1}{n}})-g(X^{n}_{\frac{i-1}{n}}))\Delta W_{i}\\+\sum_{i=1}^{k'}(\mathbb{H}(\bar{X}^{nm,\sigma}_{\frac{i-1}{n}})-\mathbb{H}(X^{n}_{\frac{i-1}{n}}))\blackdiamond (\Delta W_{i}\Delta W_{i}^{\top}-{\Iq}\Delta_n)
	+\sum_{i=1}^{k'}\bar{M}^{nm}_{\frac{i-1}{n}}+\sum_{i=1}^{k'}\bar{N}^{nm}_{\frac{i-1}{n}},
\end{multline*}
Then there is a generic constant $C_p>0$ such that 
\begin{align*}
	\mathbb{E}(\max_{0\leq k'\leq k}|\bar{X}^{nm,\sigma}_{\frac{k}{n}}-X^{n}_{\frac{k}{n}}|^{p})&\le C_p\mathbb{E}(\max_{0\leq k'\leq k}|\sum_{i=1}^{k'}(f(\bar{X}^{nm,\sigma}_{\frac{i-1}{n}})-f(X^{n}_{\frac{i-1}{n}}))\Delta_n|^{p})\\+&C_p\mathbb{E}(\max_{0\leq k\leq n}|\sum_{i=1}^{k'}(g(\bar{X}^{nm,\sigma}_{\frac{i-1}{n}})-g(X^{n}_{\frac{i-1}{n}}))\Delta W_{i}|^{p})\\+&C_p\mathbb{E}(\max_{0\leq k'\leq k}|\sum_{i=1}^{k'}(\mathbb{H}(\bar{X}^{nm,\sigma}_{\frac{i-1}{n}})-\mathbb{H}(X^{n}_{\frac{i-1}{n}}))\blackdiamond (\Delta W_{i}\Delta W_{i}^{\top}-{\Iq}\Delta_n)|^{p})\\
	+&C_p\mathbb{E}(\max_{0\leq k'\leq k}|\sum_{i=1}^{k'}\bar{M}^{nm}_{\frac{i-1}{n}}|^{p})+C_p\mathbb{E}(\max_{0\leq k'\leq k}|\sum_{i=1}^{k'}\bar{N}^{nm}_{\frac{i-1}{n}}|^{p}),
\end{align*}
By Jensen's  inequality and  (\nameref{Assume}), we have
\begin{align*}
&\mathbb{E}(\max_{0\leq k'\leq k}|\sum_{i=1}^{k'}(f(\bar{X}^{nm,\sigma}_{\frac{i-1}{n}})-f(X^{n}_{\frac{i-1}{n}}))\Delta_n|^{p})\le C_p\mathbb{E}(\max_{0\leq k\leq n}k^{p-1}\sum_{i=1}^{k}|(f(\bar{X}^{nm,\sigma}_{\frac{i-1}{n}})-f(X^{n}_{\frac{i-1}{n}}))\Delta_n|^{p}) \\
&\le C_p n^{p-1}\sum_{i=1}^{k}\mathbb{E}((f(\bar{X}^{nm,\sigma}_{\frac{i-1}{n}})-f(X^{n}_{\frac{i-1}{n}})|^{p})\Delta_n^p\le  C_p \sum_{i=1}^{k}\mathbb{E}(\max_{0\leq k\leq i-1}|\bar{X}^{nm,\sigma}_{\frac{k}{n}}-X^{n}_{\frac{k}{n}}|^{p})\Delta_n. 
\end{align*}
Similarly, by Jensen's inequality, the independence between $\Delta W_i$ and $\mathcal{F}_{\frac{i-1}{n}}$ and the assumption (\nameref{Assume}), $\mathbb{E}(\max\limits_{0\leq k'\leq k}|\sum_{i=1}^{k'}(\mathbb{H}(\bar{X}^{nm,\sigma}_{\frac{i-1}{n}})-\mathbb{H}(X^{n}_{\frac{i-1}{n}}))\blackdiamond (\Delta W_{i}\Delta W_{i}^{\top}-{\Iq}\Delta_n)|^{p})$ has an upper bound $ C_p \sum_{i=0}^{k-1}\mathbb{E}(\max_{0\leq j\leq i-1}|\bar{X}^{nm,\sigma}_{\frac{j}{n}}-X^{n}_{\frac{j}{n}}|^{p})\Delta_n$. 
Now, by Jensen's inequality and Lemma \ref{average}, $\mathbb{E}(\max\limits_{0\leq k'\leq k}|\sum_{i=1}^{k'}\bar{N}^{nm}_{\frac{i-1}{n}}|^{p})$ has an upper bound  $ C_p\Delta_n^p$.
Finally, by  the discrete BDG inequality in \cite{i} combined with Jensen's inequality, we have
\begin{align*}
&\mathbb{E}(\max_{0\leq k'\leq k}|\sum_{i=1}^{k'}(g(\bar{X}^{nm,\sigma}_{\frac{i-1}{n}})-g(X^{n}_{\frac{i-1}{n}}))\Delta W_{i}|^{p})\le
C_p\mathbb{E}(\sum_{i=1}^{k}|g(\bar{X}^{nm,\sigma}_{\frac{i-1}{n}})-g(X^{n}_{\frac{i-1}{n}}))\Delta W_{i}|^{2})^{p/2}\\&\le
C_pn^{p/2-1}\sum_{i=1}^{k}\mathbb{E}(|g(\bar{X}^{nm,\sigma}_{\frac{i-1}{n}})-g(X^{n}_{\frac{i-1}{n}}))|^{p})\mathbb{E}(|\Delta W_{i}|^{p})\le C_p\sum_{i=1}^{k}\mathbb{E}(\max_{0\leq k\leq i-1}|\bar{X}^{nm,\sigma}_{\frac{k}{n}}-X^{n}_{\frac{k}{n}}|^{p})\Delta_n.
\end{align*}
Similarly,  thanks to Lemma \ref{average}, $\mathbb{E}(\max\limits_{0\leq k'\leq k}|\sum_{i=1}^{k'}\bar{M}^{nm}_{\frac{i-1}{n}}|^{p})$ has an upper bound $C_p\Delta_n^p$.
Thus, it follows that
$$S_k\le C_p(\Delta_n^p+\sum_{i=0}^{k-1}S_i\Delta_n),\quad\textrm{for any }0\le k\le n. $$
By the discrete Gr\"onwal inequality, we have
$$S_n\le C_p\Delta_n^p+C\Delta_n^{p+1}\sum_{i=0}^{n-1}\exp\{(n-1-i)\Delta_n\}\le C_p\Delta_n^p+C_p\Delta_n^{p+1}\sum_{i=0}^{n-1}e\le  C_p\Delta_n^p. $$
\end{proof}
In what follows we give further expansions for the terms ${\tilde N}^{nm}_{\frac{i-1}{n}}$, ${\tilde M}^{nm,1}_{\frac{i-1}{n}}$,  ${\tilde M}^{nm,2}_{\frac{i-1}{n}}$ and 
${\tilde M}^{nm,3}_{\frac{i-1}{n}}$ defined above. These expansions will be useful later on. To do so, we apply twice the Taylor expansion until the second order, for each  $\ell\in\{1,\dots,d\}$, we get
\begin{align}\label{eq:Ntild}
	{\tilde N}^{nm}_{\ell,\frac{i-1}{n}}=&\frac{1}{16}(X^{nm}_{\frac{i-1}{n}}-X^{nm,\sigma}_{\frac{i-1}{n}})^{\top}\left(\nabla^2 f_{\ell}(\zeta^{n,1}_{\frac{i-1}{n}})+\nabla^2 f_{\ell}(\zeta^{n,2}_{\frac{i-1}{n}})\right)(X^{nm}_{\frac{i-1}{n}}-X^{nm,\sigma}_{\frac{i-1}{n}})\Delta_n,\\\label{eq:M1tild}
	{\tilde M}^{nm,1}_{\ell,\frac{i-1}{n}}=&\frac{1}{16}\sum_{j'=1}^q(X^{nm}_{\frac{i-1}{n}}-X^{nm,\sigma}_{\frac{i-1}{n}})^\top \left(\nabla^2g_{\ell j'}(\zeta^{n,3}_{\frac{i-1}{n}})+\nabla^2g_{\ell j'}(\zeta^{n,4}_{\frac{i-1}{n}})\right)(X^{nm}_{\frac{i-1}{n}}-X^{nm,\sigma}_{\frac{i-1}{n}})\Delta W^{j'}_{i}.
\end{align}	
Then using  twice the Taylor expansion until the first order we get
\begin{align}\label{eq:M2tild}
	{\tilde M}^{nm,2}_{\ell,\frac{i-1}{n}}=&\frac{1}{4}\Big[{{\dot h}}^{n,i,1}_{\ell\bullet\bullet}\blackdiamond(X^{nm}_{\frac{i-1}{n}}-X^{nm,\sigma}_{\frac{i-1}{n}})\Big] \blackdiamond  (\Delta W_{i}\Delta W_{i}-{\Iq}\Delta_n)
	\end{align}
and similarly
	\begin{align}\label{eq:M3tild}
	{\tilde M}^{nm,3}_{\ell,\frac{i-1}{n}}=&\sum_{\substack{1\le k<k'\le m}}\Big[{{\dot h}}^{n,i,2}_{\ell\bullet\bullet}\blackdiamond(X^{nm}_{\frac{i-1}{n}}-X^{nm,\sigma}_{\frac{i-1}{n}})\Big] \blackdiamond (\delta W_{ik}\delta W_{ik'}^{\top}-\delta W_{ik'}\delta W_{ik}^{\top}),
\end{align}
 where for $j$ and $j'\in\{1,\dots,q\}$, the $jj'$-th elements of the block matrices ${{\dot h}}^{n,i,1}_{\ell\bullet\bullet}$ and ${{\dot h}}^{n,i,2}_{\ell\bullet\bullet}$ are respectively given by  $({\dot  h}_{\ell \bullet\bullet}^{n,i,1})_{j j'}=\nabla h_{\ell jj'}( \zeta^{n,5}_{\frac{i-1}{n}})-\nabla h_{\ell jj'}( \zeta^{n,6}_{\frac{i-1}{n}})\in \R^{d\times 1}$ and $({\dot  h}_{\ell \bullet\bullet}^{n,i,2})_{j j'}=\nabla h_{\ell jj'}( \zeta^{n,7}_{\frac{i-1}{n}})\in \R^{d\times 1}$; for some vector point $\xi^{7,n}_{i}$ lying between ${X}^{nm,\sigma}_{\frac{i-1}{n}}$ and $X^{nm}_{\frac{i-1}{n}}$, some vector points  $\zeta^{n,1}_{\frac{i-1}{n}},\zeta^{n,3}_{\frac{i-1}{n}},\zeta^{n,5}_{\frac{i-1}{n}}$ lying between $\bar{X}^{nm,\sigma}_{\frac{i-1}{n}}$ and $X^{nm}_{\frac{i-1}{n}}$ and some vector points $\zeta^{n,2}_{\frac{i-1}{n}},\zeta^{n,4}_{\frac{i-1}{n}},\zeta^{n,6}_{\frac{i-1}{n}}$ lying  between $\bar{X}^{nm,\sigma}_{\frac{i-1}{n}}$ and $X^{nm,\sigma}_{\frac{i-1}{n}}$.
\begin{remark}\label{rk:fix_sigma}
In order to get the good rate of convergence, we need to assume that our $\sigma$ is strictly decreasing which leads us to take  the unique choice defined by  $\sigma(k)=m-k+1$. Otherwise, it is easy to check that  the term $n\sum_{i=1}^{[nt]}\tilde{M}^{nm,3}_{\frac{i-1}{n}}$ appearing  in the decomposition of the normalized error $n(\bar{X}_{\eta_n(t)}^{nm,\sigma}-X^n_{\eta_n(t)})$ is not tight.
\end{remark}
	
\subsection{Errors analysis of $U^n$ and $V^n$}
For $t\in[0,1]$ we have 
\begin{align}\label{eq:2.6}
	X^{nm,\sigma}_{\eta_n(t)}=&x_0+\sum_{i=1}^{[nt]}\sum_{k=1}^mf(X^{nm,\sigma}_{\frac{m(i-1)+k-1}{nm}})\frac{\Delta_n}{m}+\sum_{i=1}^{[nt]}\sum_{k=1}^mg(X^{nm,\sigma}_{\frac{m(i-1)+k-1}{nm}})\delta W_{i\sigma(k)}\nonumber\\&+\sum_{i=1}^{[nt]}\sum_{k=1}^m\mathbb{H}(X^{nm,\sigma}_{\frac{m(i-1)+k-1}{nm}})\blackdiamond (\delta W_{i\sigma(k)}\delta W_{i\sigma(k)}^{\top}-{\Iq}\frac{\Delta_n}{m}).
\end{align}

\paragraph{\bf Error analysis of $U^n$ }
At first, we consider the error $U^n_t=(U^{n,1}_t,\dots,U^{n,d}_t)^{\top}\in\mathbb{R}^d$  between the finer and the $\sigma$-antithetic Milstein approximations given by  $U^n_t=X^{nm}_{\eta_n(t)}-X^{nm,\sigma}_{\eta_n(t)}$. Then by $\eqref{eq:2.6.3}$, the expansion of $U^n$  takes the following form
\begin{align*}
	U^n_t=&\sum_{i=1}^{[nt]}(f(X^{nm}_{\frac{i-1}{n}})-f(X^{nm,\sigma}_{\frac{i-1}{n}}))\Delta_n+\sum_{i=1}^{[nt]}(g(X^{nm}_{\frac{i-1}{n}})-g(X^{nm,\sigma}_{\frac{i-1}{n}}))\Delta W_{i}\\&+\sum_{i=1}^{[nt]}(\mathbb{H}(X^{nm}_{\frac{i-1}{n}})-\mathbb{H}(X^{nm,\sigma}_{\frac{i-1}{n}}))\blackdiamond (\Delta W_{i}\Delta W_{i}^{\top}-{\Iq}\Delta_n)\nonumber\\
	&-\sum_{i=1}^{[nt]}\sum_{\substack{k,k'=1\\k<k'}}^m(\mathbb{H}(X^{nm}_{\frac{i-1}{n}})+\mathbb{H}(X^{nm,\sigma}_{\frac{i-1}{n}}))\blackdiamond\left(\delta W_{ik}\delta W_{ik'}^{\top}-\delta W_{ik'}\delta W_{ik}^{\top}\right)\\&+\sum_{i=1}^{[nt]}(M^{nm,\Id}_{\frac{i-1}{n}}-M^{nm,\sigma}_{\frac{i-1}{n}})+\sum_{i=1}^{[nt]}(N^{nm,\Id}_{\frac{i-1}{n}}-N^{nm,\sigma}_{\frac{i-1}{n}}).
\end{align*}
By Taylor's expansion, we rewrite $U^n$ as follows
\begin{align}\label{eq:4.4}
	U^n_t=&\sum_{i=1}^{[nt]}\dot{f}^{n}_i\blackdiamond U^n_{\frac{i-1}{n}}\Delta_n+\sum_{i=1}^{[nt]}\left(\dot{g}^n_i\blackdiamond U^{n}_{\frac{i-1}{n}}\right)\Delta W_{i}+\sum_{i=1}^{[nt]}(\mathbb{H}(X^{nm}_{\frac{i-1}{n}})-\mathbb{H}(X^{nm,\sigma}_{\frac{i-1}{n}}))\blackdiamond (\Delta W_{i}\Delta W_{i}^{\top}-{\Iq}\Delta_n)\nonumber\\
	&-\sum_{i=1}^{[nt]}\sum_{\substack{1\leq k<k'\le m}}(\mathbb{H}(X^{nm}_{\frac{i-1}{n}})+\mathbb{H}(X^{nm,\sigma}_{\frac{i-1}{n}}))\blackdiamond\left(\delta W_{ik}\delta W_{ik'}^{\top}-\delta W_{ik'}\delta W_{ik}^{\top}\right)\nonumber\\&+\sum_{i=1}^{[nt]}(M^{nm,\Id}_{\frac{i-1}{n}}-M^{nm,\sigma}_{\frac{i-1}{n}})+\sum_{i=1}^{[nt]}(N^{nm,\Id}_{\frac{i-1}{n}}-N^{nm,\sigma}_{\frac{i-1}{n}}),
\end{align}
where $\dot{f}^n_i\in(\R^{d\times1})^{d\times1}$ and $\dot{g}^n_i\in(\R^{d\times 1})^{d\times q}$  are block matrices  such that for  $\ell\in\{1,\dots,d\}$ the $\ell$-th block of $\dot{f}^n_i$ is given by $(\dot{f}^n_i)_{\ell}=\nabla f_{\ell}(\xi_{\frac{i-1}{n}}^{1,n})$, for $\ell\in\{1,\dots,d\}$ and $j\in\{1,\dots,q\}$ the $\ell j$-th block of $\dot{g}^n_i$ is given by $(\dot{g}^n_i)_{\ell j}=\nabla g_{\ell j}(\xi_{\frac{i-1}{n}}^{2,n})$ with $\xi_{\frac{i-1}{n}}^{1,n}$ and $\xi_{\frac{i-1}{n}}^{2,n}$  are some vector points lying between $X^{nm}_{\frac{i-1}{n}}$ and $X^{nm,\sigma}_{\frac{i-1}{n}}$ .
Now, the equation \eqref{eq:4.4} can be rewritten as 
\begin{align}\label{eq:3.2}
	U^n_{t}=&\sum_{i=1}^{[nt]}\dot{f}^n_i\blackdiamond U^n_{\frac{i-1}{n}}\Delta_n+\sum_{i=1}^{[nt]}\left(\dot{g}^n_i \blackdiamond U^{n}_{\frac{i-1}{n}}\right)\Delta W_{i}+\mathcal{M}^{n,1}_t+\mathcal{R}^{n,1}_{t}\nonumber\\
	=&\sum_{i=1}^{[nt]}\dot{f}^n_i\blackdiamond U^n_{\frac{i-1}{n}}\Delta_n+\sum_{j=1}^q\sum_{i=1}^{[nt]}\left((\dot{g}^n_i)_{\bullet j} \blackdiamond U^{n}_{\frac{i-1}{n}}\right)\Delta W_{i}^j+\mathcal{M}^{n,1}_t+\mathcal{R}^{n,1}_{t},
\end{align}
with $(\dot{g}^{n}_i)_{\bullet j}=((\dot{g}^{n}_i)_{1 j},\hdots,(\dot{g}^{n}_i)_{d j})^{\top}$,  $\mathcal{M}^{n,1}$ is the main term  and  $\mathcal{R}^{n,1}$ is the rest term given by
\begin{align*}
 \mathcal{M}^{n,1}_t=&-\sum_{i=1}^{[nt]}\sum_{\substack{k,k'=1\\ k<k'}}^m(\mathbb{H}(X^{nm}_{\frac{i-1}{n}})+\mathbb{H}(X^{nm,\sigma}_{\frac{i-1}{n}}))\blackdiamond\left(\delta W_{ik}\delta W_{ik'}^{\top}-\delta W_{ik'}\delta W_{ik}^{\top}\right),\\
 \mathcal{R}^{n,1}_{t}=&\sum_{i=1}^{[nt]}(\mathbb{H}(X^{nm}_{\frac{i-1}{n}})-\mathbb{H}(X^{nm,\sigma}_{\frac{i-1}{n}}))\blackdiamond (\Delta W_{i}\Delta W_{i}^{\top}-{\Iq}\Delta_n)+\sum_{i=1}^{[nt]}(M^{nm,\Id}_{\frac{i-1}{n}}-M^{nm,\sigma}_{\frac{i-1}{n}})\\&+\sum_{i=1}^{[nt]}(N^{nm,\Id}_{\frac{i-1}{n}}-N^{nm,\sigma}_{\frac{i-1}{n}}).
\end{align*}
The proof of the following lemma is postponed to appendix \ref{app:B}.
\begin{lemma}\label{lem:R1}
	Under the assumption (\nameref{Assume}), we have $\sqrt{n}\mathcal{R}^{n,1}\stackrel{L^p}{\rightarrow}0$ as $n\to\infty$.
\end{lemma}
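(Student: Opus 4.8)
The plan is to decompose $\mathcal{R}^{n,1}$ into its three constituent sums, $\mathcal{R}^{n,1}=T^n_1+T^n_2+T^n_3$, where
\begin{align*}
T^n_{1,t}&=\sum_{i=1}^{[nt]}\big(\mathbb{H}(X^{nm}_{\frac{i-1}{n}})-\mathbb{H}(X^{nm,\sigma}_{\frac{i-1}{n}})\big)\blackdiamond (\Delta W_{i}\Delta W_{i}^{\top}-{\Iq}\Delta_n),\\
T^n_{2,t}&=\sum_{i=1}^{[nt]}(M^{nm,\Id}_{\frac{i-1}{n}}-M^{nm,\sigma}_{\frac{i-1}{n}}),\qquad T^n_{3,t}=\sum_{i=1}^{[nt]}(N^{nm,\Id}_{\frac{i-1}{n}}-N^{nm,\sigma}_{\frac{i-1}{n}}),
\end{align*}
and to prove $\mathbb{E}(\sup_{s\le t}|T^n_{k,s}|^p)\le C_p\Delta_n^p$ for each $k\in\{1,2,3\}$. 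Multiplying by $\sqrt n$ then yields an $L^p$ norm of order $\sqrt n\,\Delta_n=n^{-1/2}$, which vanishes, giving $\sqrt n\,\mathcal{R}^{n,1}\stackrel{L^p}{\to}0$.

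The decisive point for $T^n_1$ and $T^n_2$ is that both are discrete-time martingales, so the discrete Burkholder--Davis--Gundy inequality applies. For $T^n_1$ the coefficient $\mathbb{H}(X^{nm}_{\frac{i-1}{n}})-\mathbb{H}(X^{nm,\sigma}_{\frac{i-1}{n}})$ is $\mathcal{F}_{\frac{i-1}{n}}$-measurable (both schemes at time $(i-1)/n$ use only increments of completed coarse blocks, which the permutation merely rearranges), while $\Delta W_i\Delta W_i^{\top}-\Iq\Delta_n$ is independent of $\mathcal{F}_{\frac{i-1}{n}}$ with zero conditional mean; for $T^n_2$ the centering $\mathbb{E}(M^{nm,\tilde\sigma}_{\frac{i-1}{n}}\mid\mathcal{F}_{\frac{i-1}{n}})=0$ from Lemma \ref{finer} does the job. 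In each case BDG followed by Jensen gives $\mathbb{E}(\sup_{s\le t}|T^n_{k,s}|^p)\le C_p n^{p/2-1}\sum_{i=1}^n\mathbb{E}(|\xi^k_i|^p)$ for the $i$-th summand $\xi^k_i$. For $T^n_1$ I would bound $|\xi^1_i|\le L|U^n_{\frac{i-1}{n}}|\,|\Delta W_i\Delta W_i^{\top}-\Iq\Delta_n|$ via the Lipschitz property of $\mathbb{H}$ from (\nameref{Assume}); independence lets me factorize, so that $\mathbb{E}(|U^n_{\frac{i-1}{n}}|^p)\le C_p\Delta_n^{p/2}$ from Lemma \ref{Lp} and $\mathbb{E}(|\Delta W_i\Delta W_i^{\top}-\Iq\Delta_n|^p)\le C_p\Delta_n^p$ yield $\mathbb{E}(|\xi^1_i|^p)\le C_p\Delta_n^{3p/2}$. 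For $T^n_2$, the estimate \eqref{eq:2.6.3a} gives $\mathbb{E}(|\xi^2_i|^p)\le K_p\Delta_n^{3p/2}$ directly. In both cases the sum is at most $C_p n^{p/2-1}\cdot n\cdot\Delta_n^{3p/2}=C_p n^{p/2}\Delta_n^{3p/2}=C_p\Delta_n^p$.

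For $T^n_3$, which is not centered, the triangle inequality suffices: $\sup_{s\le t}|T^n_{3,s}|\le\sum_{i=1}^n|N^{nm,\Id}_{\frac{i-1}{n}}-N^{nm,\sigma}_{\frac{i-1}{n}}|$, and by \eqref{eq:2.6.3b} each summand has $L^p$ norm at most $C_p\Delta_n^2$, whence $\mathbb{E}(\sup_{s\le t}|T^n_{3,s}|^p)^{1/p}\le C_p n\Delta_n^2=C_p\Delta_n$. Combining the three bounds gives $\mathbb{E}(\sup_{s\le t}|\mathcal{R}^{n,1}_s|^p)\le C_p\Delta_n^p$, completing the proof. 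The argument is essentially routine bookkeeping of $L^p$ orders; the single genuine subtlety — and the step I would be most careful about — is recognizing the martingale structure of $T^n_1$ and $T^n_2$ so that BDG (and not the triangle inequality) is used. Indeed, the triangle bound alone would only give these terms an $L^p$ size $n\Delta_n^{3/2}=\Delta_n^{1/2}$, for which $\sqrt n$ times the bound stays $O(1)$ and the claim would fail; the $\sqrt n$ gain furnished by the quadratic-variation structure is exactly what makes the lemma hold.
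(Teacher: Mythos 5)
Your proposal is correct and follows essentially the same route as the paper's proof: the same three-term decomposition, the discrete BDG inequality plus Jensen for the two martingale pieces (using Lemma \ref{Lp} with the Lipschitz property of $\mathbb{H}$ for the first, and \eqref{eq:2.6.3a} for the second), and a plain triangle/Jensen bound with \eqref{eq:2.6.3b} for the non-centered third term, all yielding $\mathbb{E}(\sup_{s\le t}|\mathcal{R}^{n,1}_s|^p)=O(\Delta_n^p)$. Your closing observation that the $\sqrt{n}$ gain from the martingale structure is exactly what makes the lemma hold (the triangle inequality alone giving only $O(1)$ after scaling) is accurate and matches the role BDG plays in the paper's argument.
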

\paragraph{\bf Error analysis of $V^n$ }
Now, we consider the error  $V^n_t=(V^{n,1}_t,\dots,V^{n,d}_t)^{\top}\in\mathbb{R}^d$ between the average of the finer and the coarser $\sigma$-antithetic Milstein approximations  given by $V^n_t=\bar{X}^{nm,\sigma}_{\eta_n(t)}-X^n_{\eta_n(t)}$. Similarly, by \eqref{eq:2.6a} and \eqref{eq:2.3}, we rewrite $V^n$ as follows
\begin{multline*}
	V^n_t=\sum_{i=1}^{[nt]}(f(\bar{X}^{nm,\sigma}_{\frac{i-1}{n}})-f(X^{n}_{\frac{i-1}{n}}))\Delta_n+\sum_{i=1}^{[nt]}(g(\bar{X}^{nm,\sigma}_{\frac{i-1}{n}})-g(X^{n}_{\frac{i-1}{n}}))\Delta W_{i}\\+\sum_{i=1}^{[nt]}(\mathbb{H}(\bar{X}^{nm,\sigma}_{\frac{i-1}{n}})-\mathbb{H}(X^{n}_{\frac{i-1}{n}}))\blackdiamond (\Delta W_{i}\Delta W_{i}^{\top}-{\Iq}\Delta_n)
	+\sum_{i=1}^{[nt]}\bar{M}^{nm}_{\frac{i-1}{n}}+\sum_{i=1}^{[nt]}\bar{N}^{nm}_{\frac{i-1}{n}},
\end{multline*}
where $\bar{N}^{nm}_{\frac{i-1}{n}}$ and $\bar{M}^{nm}_{\frac{i-1}{n}}$are respectively given by \eqref{eq:nbar} and \eqref{eq:mbar}. 
By the Taylor expansion, we have
\begin{multline*}	V^n_t=\sum_{i=1}^{[nt]}\bar{\dot{f}}^n_i\blackdiamond V^n_{\frac{i-1}{n}}\Delta_n+\sum_{i=1}^{[nt]}\left(\bar{\dot g}^n_i \blackdiamond V^{n}_{\frac{i-1}{n}}\right)\Delta W_{i}\\+\sum_{i=1}^{[nt]}(\mathbb{H}(\bar{X}^{nm,\sigma}_{\frac{i-1}{n}})-\mathbb{H}(X^{n}_{\frac{i-1}{n}}))\blackdiamond (\Delta W_{i}\Delta W_{i}^{\top}-{\Iq}\Delta_n)+\sum_{i=1}^{[nt]}\bar{M}_{\frac{i-1}{n}}+\sum_{i=1}^{[nt]}\bar{N}_{\frac{i-1}{n}},
\end{multline*}
where $\bar{\dot f}^n_i\in(\R^{d\times1})^{d\times1}$ and $\bar{\dot g}^n_i\in(\R^{d\times 1})^{d\times q}$  are block matrices such that for $\ell\in\{1,\dots,d\}$ the $\ell$-th block of $\bar{\dot f}^n_i$ is given by $(\bar{\dot f}^n_i)_{\ell}=\nabla f_{\ell}({\bar\xi}_{\frac{i-1}{n}}^{1,n})$,  for 
 $\ell\in\{1,\dots,d\}$  and $j\in\{1,\dots,q\}$ the  $\ell j$-th block of  $\bar{\dot g}^n_i$  is given by $(\bar{\dot g}^n_i)_{\ell j}=\nabla g_{\ell j}(\bar{\xi}^{2,n}_{\frac{i-1}{n}})$ with ${\bar\xi}_{\frac{i-1}{n}}^{1,n}$ and $\bar{\xi}^{2,n}_{\frac{i-1}{n}}$
 are some vector points lying between $X^{n}_{\frac{i-1}{n}}$ and $\bar{X}^{nm,\sigma}_{\frac{i-1}{n}}$.
Thanks to Lemma \ref{lem:N}, Lemma \ref{lem:M1}, Lemma \ref{lem:M2} and Lemma \ref{lem:M3}, the above equation  rewrites as follows
\begin{align}\label{eq:3.1}
  		V^{n}_{t}=&\sum_{i=1}^{[nt]}\bar{f}^n_i\blackdiamond V^n_{\frac{i-1}{n}}\Delta_n+\sum_{i=1}^{[nt]}\left(\bar{g}^n_i \blackdiamond V^{n}_{\frac{i-1}{n}}\right)\Delta W_{i}+\mathcal{M}^{n,2}_{t}+\mathcal{R}^{n,2}_{t},
  	\end{align}
where $\mathcal{M}^{n,2}$ stands for the main contributing term of the above error expansion and $\mathcal{R}^{n,2}$ is the rest term, for $t\in[0,1]$ they are given by 
	\begin{align}\label{eq:cal-M2}
	\mathcal{M}_t^{n,2}&=\frac{1}{2}\sum_{\tilde{\sigma}\in\{\Id,\sigma\}}\sum_{r=1}^4\Gamma^{n,\tilde{\sigma}}_t(r)+\tilde{N}^{nm}_t+\tilde{M}^{nm,1}_t-\frac{1}{2}\tilde{M}^{nm,3}_t,\\
	\label{eq:cal-R2}
		\mathcal{R}^{n,2}_t=&\frac{1}{2}\sum_{\tilde{\sigma}\in\{\Id,\sigma\}}\sum_{i=1}^{[nt]}\bigg({\tilde R}^{nm,\tilde{\sigma}}_{\frac{i-1}{n}}(0)+\sum_{r=0}^3R^{nm,\tilde{\sigma}}_{\frac{i-1}{n}}(r)\bigg)+\tilde{M}^{nm,2}_t\\
		&+\sum_{i=1}^{[nt]}(\mathbb{H}(\bar{X}^{nm,\sigma}_{\frac{i-1}{n}})-\mathbb{H}(X^{n}_{\frac{i-1}{n}}))\blackdiamond (\Delta W_{i}\Delta W_{i}^{\top}-{\Iq}\Delta_n),\nonumber
\end{align}
where for $r\in\{1,2,3\}$, $\tilde{M}^{nm,r}_t=\sum_{i=1}^{[nt]}\tilde{M}^{nm,r}_{\frac{i-1}{n}}$, $\tilde{N}^{nm}_t=\sum_{i=1}^{[nt]}\tilde{N}^{nm}_{\frac{i-1}{n}}$  with  $(\tilde{M}^{nm,r}_{\frac{i-1}{n}})_{1\le r\le 3}$, $\tilde{N}^{nm}_{\frac{i-1}{n}}$ are respectively given by \eqref{eq:Ntild},\eqref{eq:M1tild}, \eqref{eq:M2tild} and \eqref{eq:M3tild} and 
the rest terms  ${R}^{nm,\tilde{\sigma}}_{\frac{i-1}{n}}(0)$  and ${\tilde R}^{nm,\tilde{\sigma}}_{\frac{i-1}{n}}(0)$ are implicitly defined in \eqref{exp:N}  and  $(R^{nm,\tilde{\sigma}}_{\frac{i-1}{n}}(r))_{1\le r\le 3}$   are respectively  implicitly defined in \eqref{exp:M1}, \eqref{exp:M2} and \eqref{exp:M3}. Now, we introduce the $d$-dimensional processes $(\Gamma^{n,\tilde{\sigma}}_t(i),1\le i\le 4,t\in[0,1])$ whose $\ell^{th}$ components are given by 
\begin{align}\nonumber
	\Gamma^{n,\tilde{\sigma}}_{\ell,t}(1)=&\sum_{i=1}^{[nt]}\Bigg[\frac{(m-1)}{2m}\nabla f^{\top}_{\ell}(X_{\frac{i-1}{n}})f(X_{\frac{i-1}{n}}){\Delta^2_n}\\\label{eq:gam1}
	&+\frac{1}{2}\Big[g(X_{\frac{i-1}{n}})^\top\nabla^2f_{\ell}(X_{\frac{i-1}{n}})g(X_{\frac{i-1}{n}})\Big]\blackdiamond \sum_{k=1}^{m-1}(m-k)\delta W_{i\tilde{\sigma}(k)}\delta W_{i\tilde{\sigma}(k)}^{\top}\frac{\Delta_n}{m}\Bigg],\\\nonumber
	\Gamma^{n,\tilde{\sigma}}_{\ell,t}(2)=&\sum_{i=1}^{[nt]}\frac{\Delta_n}{m}\Bigg[\Big[{\nabla f_{\ell}}(X_{\frac{i-1}{n}})^\top g(X_{\frac{i-1}{n}})\Big]\sum_{k=1}^{m-1}(m-k)\delta W_{i\tilde{\sigma}(k)}\\\nonumber&+\sum_{j=1}^q\nabla g_{\ell j}^{\top}(X_{\frac{i-1}{n}})f(X_{\frac{i-1}{n}})\sum_{k=1}^{m-1}(m-k)\delta W_{i\tilde{\sigma}(k)}^j\\\label{eq:gam2}&+\sum_{j=1}^q\frac{1}{2}g(X_{\frac{i-1}{n}})^\top\nabla^2g_{\ell j}(X_{\frac{i-1}{n}})g(X_{\frac{i-1}{n}})\blackdiamond{\Iq}\sum_{k=2}^{m}(k-1)\delta W_{i\tilde{\sigma}(k)}^j\Bigg],\\\nonumber
	\Gamma^{n,\tilde{\sigma}}_{\ell,t}(3)=&\sum_{i=1}^{[nt]}\sum_{j=1}^q\Bigg[\nabla g_{\ell j}^{\top}(X_{\frac{i-1}{n}})\mathbb{H}(X_{\frac{i-1}{n}})\blackdiamond\hspace{-0.5cm}\sum_{\substack{1\le k'<k\le m}}\hspace{-0.5cm}(\delta W_{i\tilde{\sigma}(k')}\delta W_{i\tilde{\sigma}(k')}^{\top}-{\Iq}\frac{\Delta_n}{m})\delta W_{i\tilde{\sigma}(k)}^j\\\nonumber&+\frac{1}{2}\Big[g(X_{\frac{i-1}{n}})^\top\nabla^2g_{\ell j}(X_{\frac{i-1}{n}})g(X_{\frac{i-1}{n}})\Big]\blackdiamond\hspace{-0.5cm}\sum_{\substack{1\le k'<k\le m}}\hspace{-0.5cm}(\delta W_{i\tilde{\sigma}(k')}\delta W_{i\tilde{\sigma}(k')}^{\top}-{\Iq}\frac{\Delta_n}{m})\delta W_{i\tilde{\sigma}(k)}^j\\\label{eq:gam3}&+\Big[ {\dot  h}_{\ell \bullet\bullet}^{n,i}\blackdiamond g_{\bullet j}(X_{\frac{i-1}{n}})\Big]\blackdiamond\sum_{\substack{1\le k'<k\le m}}(\delta W_{i\tilde{\sigma}(k)}\delta W_{i\tilde{\sigma}(k)}^{\top}-{\Iq} \frac{\Delta_n}{m})\delta W^j_{i\tilde{\sigma}(k')}\Bigg],\\\nonumber
	\Gamma^{n,\tilde{\sigma}}_{\ell,t}(4)&=\sum_{i=1}^{[nt]}\Bigg[\sum_{j,j'=1}^q\nabla g_{\ell j}^{\top}(X_{\frac{i-1}{n}})
	\Big[ {\dot g}^n_{i}
	\blackdiamond g_{\bullet j'}(X_{\frac{i-1}{n}})\Big] \sum_{\substack{1\le k'' <k'<k\le m}}\hspace{-0.5cm}\delta W_{i\tilde{\sigma}(k')}\delta W_{i\tilde{\sigma}(k'')}^{j'}\delta W_{i\tilde{\sigma}(k)}^j
	\nonumber\\\label{eq:gam4}&
	+\sum_{j=1}^q\Big[g(X_{\frac{i-1}{n}})^\top\nabla^2g_{\ell j}(X_{\frac{i-1}{n}})g(X_{\frac{i-1}{n}})\Big]\blackdiamond\hspace{-0.5cm}\sum_{\substack{1\le k'' <k'<k\le m}}\hspace{-0.5cm}\delta W_{i\tilde{\sigma}(k')}\delta W_{i\tilde{\sigma}(k'')}^{\top}\delta W_{i\tilde{\sigma}(k)}^j\Bigg].
\end{align}	
The proof of the following lemma is also postponed to appendix \ref{app:B}.
\begin{lemma}\label{lem:R2}
We have $n\mathcal{R}^{n,2}\stackrel{L^p}{\rightarrow}0$ as $n\to\infty$.
\end{lemma}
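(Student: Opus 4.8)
The plan is to treat $\mathcal{R}^{n,2}$ term by term following its decomposition \eqref{eq:cal-R2}, sorting the contributions into two families: those forming conditionally centered martingale-difference arrays and the genuinely non-centered remainder. The two workhorse estimates I would record first are the following. For any array $(D^n_i)_{1\le i\le n}$ adapted to $(\mathcal{F}_{i/n})$ with $\mathbb{E}(D^n_i\mid\mathcal{F}_{\frac{i-1}{n}})=0$, the discrete Burkholder--Davis--Gundy inequality followed by Jensen's inequality gives
\begin{align*}
\mathbb{E}\Big(\sup_{s\le t}\Big|n\sum_{i=1}^{[ns]}D^n_i\Big|^p\Big)\le C_p\, n^{p}\,n^{p/2-1}\sum_{i=1}^{[nt]}\mathbb{E}(|D^n_i|^p)\le C_p\, n^{3p/2}\max_{1\le i\le n}\mathbb{E}(|D^n_i|^p),
\end{align*}
so that a per-term bound $\max_i\mathbb{E}(|D^n_i|^p)=o(\Delta_n^{3p/2})$ forces $n\sum_{i\le[n\cdot]}D^n_i\to0$ in $L^p$. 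For a non-centered array the martingale gain is lost and only the triangle (Jensen) inequality is available, yielding $\mathbb{E}(\sup_{s\le t}|n\sum_{i=1}^{[ns]}D^n_i|^p)\le C_p\, n^{2p}\max_i\mathbb{E}(|D^n_i|^p)$, which demands the stronger bound $o(\Delta_n^{2p})$.

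With these at hand I would dispatch the groups of terms. The rest terms $R^{nm,\tilde\sigma}_{\frac{i-1}{n}}(r)$, $r\in\{0,1,2,3\}$, are conditionally centered by Lemmas \ref{lem:N}, \ref{lem:M1}, \ref{lem:M2} and \ref{lem:M3}, and satisfy exactly the borderline bound $o(\Delta_n^{3p/2})$ coming from \eqref{eq:R0}, \eqref{eq:R1}, \eqref{eq:R2} and \eqref{eq:R3}; the martingale estimate then applies verbatim, and here it is the little-$o$ that is decisive. The non-centered term $\tilde R^{nm,\tilde\sigma}_{\frac{i-1}{n}}(0)$ carries the sharper bound $o(\Delta_n^{2p})$ from \eqref{eq:tilR0}, which is precisely what the triangle-inequality estimate requires.

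The delicate contributions are $\tilde M^{nm,2}$ and the trailing term $\sum_i(\mathbb{H}(\bar X^{nm,\sigma}_{\frac{i-1}{n}})-\mathbb{H}(X^n_{\frac{i-1}{n}}))\blackdiamond(\Delta W_i\Delta W_i^\top-\Iq\Delta_n)$, for which no $L^p$ bound has yet been recorded and which at first sight only look like $O(\Delta_n^{3p/2})$. Both are martingale differences: the factor $\Delta W_i\Delta W_i^\top-\Iq\Delta_n$ is independent of $\mathcal{F}_{\frac{i-1}{n}}$ and centered, while the coefficients are $\mathcal{F}_{\frac{i-1}{n}}$-measurable. The crux is to extract an extra half power of $\Delta_n$ from each coefficient. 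For $\tilde M^{nm,2}$, the block matrix $\dot h^{n,i,1}_{\ell\bullet\bullet}$ in \eqref{eq:M2tild} is a \emph{difference} $\nabla h_{\ell jj'}(\zeta^{n,5})-\nabla h_{\ell jj'}(\zeta^{n,6})$ of gradients evaluated at two points that both lie within $O(|X^{nm}_{\frac{i-1}{n}}-X^{nm,\sigma}_{\frac{i-1}{n}}|)$ of $\bar X^{nm,\sigma}_{\frac{i-1}{n}}$; by the mean value theorem and the boundedness of the second derivatives of $h$ in (\nameref{Assume}), $|\dot h^{n,i,1}_{\ell\bullet\bullet}|\le C|X^{nm}_{\frac{i-1}{n}}-X^{nm,\sigma}_{\frac{i-1}{n}}|$, so that combined with the explicit $U^n$ factor the coefficient contributes $|U^n_{\frac{i-1}{n}}|^2=O(\Delta_n)$ in $L^p$ by Lemma \ref{Lp}, and multiplying by the $O(\Delta_n)$ centered increment yields $\mathbb{E}(|\tilde M^{nm,2}_{\frac{i-1}{n}}|^p)=O(\Delta_n^{2p})$. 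For the $\mathbb{H}$-term, the Lipschitz property of $\mathbb{H}$ together with Corollary \ref{bar_LP} bounds the coefficient by $C|V^n_{\frac{i-1}{n}}|=O(\Delta_n)$, and again the centered increment supplies another $O(\Delta_n)$, giving $O(\Delta_n^{2p})$. Since both bounds are $o(\Delta_n^{3p/2})$, the martingale estimate closes the argument. The main obstacle is precisely this step: recognizing that these two apparently order-$\Delta_n^{3p/2}$ terms actually gain the additional $\Delta_n^{1/2}$ — from the gradient-difference structure of $\dot h^{n,i,1}$ in the first case and from the $O(\Delta_n)$ size of $V^n$ in the second — without which the prefactor $n$ would leave a non-vanishing remainder.
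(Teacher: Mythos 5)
Your proposal is correct and follows essentially the same route as the paper: the same term-by-term treatment of the decomposition \eqref{eq:cal-R2}, with the discrete BDG--Jensen estimate for the conditionally centered arrays $R^{nm,\tilde\sigma}(r)$ (where the little-$o$ in \eqref{eq:R0}--\eqref{eq:R3} is indeed what saves the borderline rate) and the plain Jensen bound for the non-centered $\tilde R^{nm,\tilde\sigma}(0)$ via \eqref{eq:tilR0}. Your handling of the two delicate terms also matches the paper's: the gradient-difference structure of $\dot h^{n,i,1}_{\ell\bullet\bullet}$ yielding $|U^n_{\frac{i-1}{n}}|^2=O(\Delta_n)$ in $L^p$ for $\tilde M^{nm,2}$ (the paper gets the same $O(\Delta_n^{2p})$ per-term bound via Cauchy--Schwarz, Lemma \ref{Lp} and (\nameref{Assume})), and Corollary \ref{bar_LP} supplying the $O(\Delta_n)$ coefficient for the $\mathbb{H}$-difference term.
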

\begin{remark} These processes
	$(\Gamma^{n,\tilde{\sigma}}_t(r),1\le r\le 4,t\in[0,1])$ are obtained by gathering together 
	the main terms in \eqref{exp:N}, \eqref{exp:M1}, \eqref{exp:M2} and \eqref{exp:M3} with taking into account their noise types  and with neglecting the rest terms $\big(R^{nm,\tilde{\sigma}}_{\ell,\frac{i-1}{n}}(r)\big)_{0\le r\le 3}$ and ${\tilde R}^{nm,\tilde{\sigma}}_{\ell,\frac{i-1}{n}}(0)$, for $\ell\in\{1,\dots,d\}$.
\end{remark}
\section{Asymptotic Behavior of the main terms}
According to expansion  \eqref{eq:3.2} and \eqref{eq:3.1} appearing in the decompositions of $U^n$ and $V^n$  we need to focus on the main terms $(\mathcal{M}^{n,1},\mathcal{M}^{n,2})$, where we recall that 
\begin{align}\label{eq:newmain1}
 \mathcal{M}^{n,1}_t=&-\sum_{i=1}^{[nt]}\sum_{\substack{1\le k, k'\le m\\k<k'}}^m(\mathbb{H}(X^{nm}_{\frac{i-1}{n}})+\mathbb{H}(X^{nm,\sigma}_{\frac{i-1}{n}}))\blackdiamond\left(\delta W_{ik}\delta W_{ik'}^{\top}-\delta W_{ik'}\delta W_{ik}^{\top}\right),\\
 \mathcal{M}_t^{n,2}=&\frac{1}{2}\sum_{\tilde{\sigma}\in\{\Id,\sigma\}}\sum_{r=1}^4\Gamma^{n,\tilde{\sigma}}_t(r)+\tilde{N}^{nm}_t+\tilde{M}^{nm,1}_t-\frac{1}{2}\tilde{M}^{nm,3}_t\nonumber
 \end{align}
with $\tilde{N}^{nm}_t$ respectively $\tilde{M}^{nm,1}_t$ and $\tilde{M}^{nm,3}_t$ are given by relation \eqref{eq:Ntild} respectively $\eqref{eq:M1tild}$ and $\eqref{eq:M3tild}$, $(\Gamma^{n,\tilde{\sigma}}_t(r),1\le r\le 4,t\in[0,1])$ are defined as above in \eqref{eq:gam1}, \eqref{eq:gam2}, \eqref{eq:gam3} and \eqref{eq:gam4}.\\
Unlike the first main term $\mathcal{M}^{n,1}$, that has explicit form of the noise, the second main term $\mathcal{M}^{n,2}$ needs further development  in order to identify its noise parts. To do so,  we need the following lemma that will be proven in appendix \ref{app:B}. 
\begin{lemma}\label{lem:gambar}
Let $\bar{\Gamma}^{n}_t(r)=\frac{\Gamma^{n,\Id}_t(r)+\Gamma^{n,\sigma}_t(r)}{2}\in\R^d$, for $r\in\{1,2,3,4\}$. Then we rewrite $\bar{\Gamma}^{n}(r)$ as follows, for $\ell\in\{1,\hdots,d\}$,
\begin{multline*}
	\bar{\Gamma}^{n}_{\ell,t}(1)=\sum_{i=1}^{[nt]}\frac{(m-1)\Delta_n}{2m}\Bigg[\nabla f_{\ell}(X_{\frac{i-1}{n}})^{\top}f(X_{\frac{i-1}{n}}){\Delta_n}\\
	+\frac{1}{2}\Big[g(X_{\frac{i-1}{n}})^\top\nabla^2f_{\ell}(X_{\frac{i-1}{n}})g(X_{\frac{i-1}{n}})\Big]\blackdiamond \sum_{k=1}^{m}\delta W_{ik}\delta W_{ik}^{\top}\Bigg],
\end{multline*}
\begin{multline*}
	\bar{\Gamma}^{n}_{\ell,t}(2)=\sum_{i=1}^{[nt]}\sum_{j=1}^q\frac{(m-1)\Delta_n}{2m}\Bigg[{\nabla f_{\ell}}(X_{\frac{i-1}{n}})^\top g_{\bullet j}(X_{\frac{i-1}{n}})\Delta W_i^j+\nabla g_{\ell j}(X_{\frac{i-1}{n}})^{\top}f(X_{\frac{i-1}{n}})\Delta W_i^j\\+\frac{1}{2}g(X_{\frac{i-1}{n}})^\top\nabla^2g_{\ell j}(X_{\frac{i-1}{n}})g(X_{\frac{i-1}{n}})\blackdiamond{\Iq}\Delta W_i^j\Bigg],
\end{multline*}
\begin{align*}
	\bar{\Gamma}^{n}_{\ell,t}(3)=&\frac{1}{2}\sum_{i=1}^{[nt]}\sum_{j=1}^q\Bigg[\nabla g_{\ell j}(X_{\frac{i-1}{n}})^{\top}\mathbb{H}(X_{\frac{i-1}{n}})\blackdiamond\hspace{-0.5cm}\sum_{\substack{1\le k, k'\le m\\k'\neq k}}\left(\delta W_{ik'}\delta W_{ik'}^{\top}-{\Iq}\frac{\Delta_n}{m}\right)\delta W_{ik}^{j}\\&+\frac{1}{2}\Big[g(X_{\frac{i-1}{n}})^\top\nabla^2g_{\ell j}(X_{\frac{i-1}{n}})g(X_{\frac{i-1}{n}})\Big]\blackdiamond\hspace{-0.5cm}\sum_{\substack{1\le k, k'\le m\\k'\neq k}}\left(\delta W_{ik'}\delta W_{ik'}^{\top}-{\Iq}\frac{\Delta_n}{m}\right)\delta W_{ik}^{j}\\&+\Big[ {\dot  h}_{\ell \bullet\bullet}^{n,i}\blackdiamond g_{\bullet j}(X_{\frac{i-1}{n}})\Big]\blackdiamond\hspace{-0.5cm}\sum_{\substack{1\le k, k'\le m\\k'\neq k}}(\delta W_{ik'}\delta W_{ik'}^{\top}-{\Iq} \frac{\Delta_n}{m})\delta W_{ik}^j\Bigg],
\end{align*}
\begin{align*}
	\bar{\Gamma}^{n}_{\ell,t}(4)&=\frac{1}{2}\sum_{i=1}^{[nt]}\Bigg[\sum_{j,j'=1}^q\nabla g_{\ell j}(X_{\frac{i-1}{n}})^{\top}
	\Big[ {\dot g}^n_{i}
	\blackdiamond g_{\bullet j'}(X_{\frac{i-1}{n}})\Big]\hspace{-0.7cm}\sum_{\substack{1\le k'' <k'<k\le m}}\hspace{-0.7cm}\delta W_{ik'}(\delta W_{ik''}^{j'}\delta W_{ik}^j+\delta W_{ik''}^{j}\delta W_{ik}^{j'})
	\nonumber\\&
	+\sum_{j=1}^q\Big[g(X_{\frac{i-1}{n}})^\top\nabla^2g_{\ell j}(X_{\frac{i-1}{n}})g(X_{\frac{i-1}{n}})\Big]\blackdiamond\hspace{-0.7cm}\sum_{\substack{1\le k'' <k'<k\le m}}\hspace{-0.7cm}\delta W_{ik'}(\delta W_{ik''}^{\top}\delta W_{ik}^j+\delta W_{ik''}^j\delta W_{ik}^{\top})\Bigg].
\end{align*}
\end{lemma}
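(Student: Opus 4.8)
The strategy is to handle each index $r\in\{1,2,3,4\}$ separately, in each case writing $\Gamma^{n,\Id}_t(r)$ and $\Gamma^{n,\sigma}_t(r)$ from the definitions \eqref{eq:gam1}, \eqref{eq:gam2}, \eqref{eq:gam3} and \eqref{eq:gam4}, and then exploiting the one structural feature of our permutation, namely that $\sigma$ is the order reversal $\sigma(k)=m-k+1$. The key manipulation is, inside the $\sigma$-term, to perform the change of summation index $k\mapsto m-k+1$ (and simultaneously $k'\mapsto m-k'+1$, $k''\mapsto m-k''+1$ for the nested indices). Since $\delta W_{i\sigma(k)}=\delta W_{i(m-k+1)}$, this substitution turns each permuted increment $\delta W_{i\sigma(\cdot)}$ into a plain increment $\delta W_{i\cdot}$, at the cost either of changing the integer weight attached to it or of reversing the ordering constraint of the summation. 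Averaging the $\Id$-term with the reindexed $\sigma$-term then yields the asserted symmetric formulas; the purely deterministic contributions are unaffected by the averaging.

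For $r\in\{1,2\}$ the effect is a weight-telescoping identity. The increments in $\Gamma(1)$ and $\Gamma(2)$ appear either with the weight $(m-k)$, as in $\sum_{k=1}^{m-1}(m-k)\,\delta W_{i\tilde{\sigma}(k)}$ and its quadratic analogue, or with the complementary weight $(k-1)$, as in $\sum_{k=2}^{m}(k-1)\,\delta W_{i\tilde{\sigma}(k)}^{j}$. Under the reindexing the $\sigma$-copy of an $(m-k)$-weighted sum becomes a $(k-1)$-weighted sum over plain increments and conversely, so that averaging an $\Id$-weighted sum with its $\sigma$-counterpart assigns to every increment $\delta W_{ik}$, $k\in\{1,\dots,m\}$, the single common weight $m-1$. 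Each such weighted sum therefore collapses: the linear pieces become $\tfrac{m-1}{2}\sum_{k=1}^{m}\delta W_{ik}=\tfrac{m-1}{2}\Delta W_i$ and the quadratic piece becomes $\tfrac{m-1}{2}\sum_{k=1}^{m}\delta W_{ik}\delta W_{ik}^{\top}$, which is precisely the content of the first two displayed formulas once the common factor $\tfrac{(m-1)\Delta_n}{2m}$ is pulled out.

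For $r=3$ the averaging symmetrizes the ordering. Each summand of $\Gamma(3)$ is indexed by an ordered pair $k'<k$, with $\delta W_{i\tilde{\sigma}(k')}$ entering quadratically and $\delta W_{i\tilde{\sigma}(k)}^{j}$ linearly. Reindexing the $\sigma$-term sends the constraint $k'<k$ to $k'>k$ while leaving the expression $(\delta W_{ik'}\delta W_{ik'}^{\top}-\Iq\frac{\Delta_n}{m})\delta W_{ik}^{j}$ unchanged, so the $\Id$-sum over $k'<k$ and the $\sigma$-sum over $k'>k$ combine into the full sum over $k'\neq k$, giving the third displayed formula; the Hessian and $\dot h$ pieces are treated identically since they share the same index structure.

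The delicate case is $r=4$, which involves the triple product over $k''<k'<k$, with $\delta W_{i\tilde{\sigma}(k')}$ the vector factor and $\delta W_{i\tilde{\sigma}(k'')}^{j'}$, $\delta W_{i\tilde{\sigma}(k)}^{j}$ the two scalar components. Here the main obstacle is the index bookkeeping: the order reversal $\sigma$ fixes the middle index but interchanges the roles of the two outer indices $k''$ and $k$, so after reindexing the $\sigma$-term the scalar components attach to the opposite increments, producing $\delta W_{ik'}\,\delta W_{ik}^{j'}\,\delta W_{ik''}^{j}$ summed over the same range $k''<k'<k$. Adding the $\Id$-term $\delta W_{ik'}\,\delta W_{ik''}^{j'}\,\delta W_{ik}^{j}$ yields the symmetric combination $\delta W_{ik''}^{j'}\delta W_{ik}^{j}+\delta W_{ik''}^{j}\delta W_{ik}^{j'}$ appearing in the statement, and the same reversal accounts for the analogous symmetrization in the Hessian term. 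I expect verifying this interchange, keeping track of which increment is the vector factor and which carry the $j,j'$ scalars under the substitution, to be the only point requiring genuine care; the remaining steps are the routine telescoping and reordering described above.
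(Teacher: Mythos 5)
Your proposal is correct and is essentially the paper's own proof: the paper likewise proves the lemma by the reindexing $k\mapsto m-k+1$ (using $\sigma(k)=m-k+1$), recording the resulting identities — weight swap $(m-k)\leftrightarrow(k-1)$ for $r\in\{1,2\}$, order reversal $k'<k\mapsto k'>k$ for $r=3$, and the outer-index interchange with fixed middle index for $r=4$ — and then averaging. One cosmetic remark: in $\bar{\Gamma}^{n}(3)$ the ${\dot h}_{\ell\bullet\bullet}$ piece does \emph{not} share the same index structure as the other two (its quadratic factor sits at the larger index $k$ rather than the smaller $k'$), but your argument is unaffected since averaging the identity ordering with the reversed one yields the full sum over $k'\neq k$ regardless of which end carries the quadratic factor.
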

\begin{proof}
Concerning $\bar{\Gamma}^{n}(1)$, according to the definition of $\sigma$, we only use 
$$
\sum_{k=1}^{m}(m-k)\delta W_{i\sigma(k)}\delta W_{i\sigma(k)}^\top=\sum_{k=1}^{m}(k-1)\delta W_{ik}\delta W_{ik}^\top.
$$
Similarly, we obtain  $\bar{\Gamma}^{n}(2)$ using
$$
\sum_{k=1}^{m}(m-k)\delta W_{i\sigma(k)}=\sum_{k=1}^{m}(k-1)\delta W_{ik}\quad\mbox{and}\quad\sum_{k=1}^{m}(k-1)\delta W_{i\sigma(k)}=\sum_{k=1}^{m-1}(m-k)\delta W_{ik}.
$$
To get $\bar{\Gamma}^{n}(3)$, we use
$$
\sum_{\substack{k, k'=1\\k'<k}}^m\left(\delta W_{i\sigma(k')}\delta W_{i\sigma(k')}^{\top}-\Omega\frac{\Delta_n}{m}\right)\delta W_{i\sigma(k)}^{j} =\sum_{\substack{k, k'=1\\k< k'}}^m\left(\delta W_{ik'}\delta W_{ik'}^{\top}-\Omega\frac{\Delta_n}{m}\right)\delta W_{ik}^{j}.
$$
Finally, we obtain $\bar{\Gamma}^{n}(4)$ using
\begin{align*} \sum_{k=3}^m\sum_{k'=2}^{k-1}\sum_{k''=1}^{k'-1}\delta W_{i\sigma(k'')}^{j}\delta W_{i\sigma(k')}^{j'}\delta W_{i\sigma(k)}^{j''}=\sum_{k=1}^{m-2}\sum_{k'=k+1}^{m-1}\sum_{k''=k'+1}^{m}\delta W_{ik''}^{j}\delta W_{ik'}^{j'}\delta W_{ik}^{j''}.
		\end{align*}
\end{proof}		

Now, thanks to the above lemma, \eqref{eq:3.1} can be rewritten in a better way as follows
\begin{align*}
	V^{n}_{t}=&\sum_{i=1}^{[nt]}\bar{f}^n_i\blackdiamond V^n_{\frac{i-1}{n}}\Delta_n+\sum_{i=1}^{[nt]}\left(\bar{g}^n_i \blackdiamond V^{n}_{\frac{i-1}{n}}\right)\Delta W_{i}+\mathcal{M}^{n,2}_{t}+\mathcal{R}^{n,2}_{t},
\end{align*}
where for $t\in[0,1]$,
\begin{align}\label{eq:newmain2}
	\mathcal{M}^{n,2}_t=&\sum_{r=1}^{4}\bar{\Gamma}^{n}_t(r)+\tilde{N}^{nm}_t+\tilde{M}^{nm,1}_t-\frac{1}{2}\tilde{M}^{nm,3}_t,
\end{align}
here we recall that for $r\in\{1,3\}$, $\tilde{M}^{nm,r}_t=\sum_{i=1}^{[nt]}\tilde{M}^{nm,r}_{\frac{i-1}{n}}$, $\tilde{N}^{nm}_t=\sum_{i=1}^{[nt]}\tilde{N}^{nm}_{\frac{i-1}{n}}$  with  $\tilde{N}^{nm}_{\frac{i-1}{n}}$ and  $(\tilde{M}^{nm,r}_{\frac{i-1}{n}})_{r\in\{1, 3\}}$ are respectively given by \eqref{eq:Ntild},\eqref{eq:M1tild} and \eqref{eq:M3tild}.\\
Now, in order to prove the convergence in law of the couple $(\mathcal{M}^{n,1},\mathcal{M}^{n,2})$, we first need to study the asymptotic behavior of the distribution of the noises vector
$(Z^n_0,Z^n_1,Z^n_2,Z^n_3),$   where $Z^n_0=(Z^{n,jj'}_0)_{j,j'\in\{1,\dots,q\}}$, $Z^n_2=(Z^{n,jj'}_2)_{j,j'\in\{1,\dots,q\}}$ are $q^2$-matrices
\begin{align*}
  Z^{n,jj'}_{0,t}=&\sum_{i=1}^{[nt]}\sum_{k=1}^{m}\delta W_{ik}^j\delta W_{ik}^{j'},\\  Z^{n,jj'}_{2,t}=&\sqrt{n}\sum_{i=1}^{[nt]}\sum_{\substack{1\le k<k'\le m}}\hspace{-0.5cm}\left(\delta W_{ik}^j\delta W_{ik'}^{j'}-\delta W_{ik'}^j\delta W_{ik}^{j'}\right),
  \end{align*}
 and  $Z^n_1=(Z^{n,jj'j''}_1)_{j,j',j''\in\{1,\dots,q\}}$ and $Z^n_3=(Z^{n,jj'j''}_3)_{j,j',j''\in\{1,\dots,q\}}$ are $q^3$-matrices 
\begin{align*}
 Z^{n,jj'j''}_{1,t}=&n\sum_{i=1}^{[nt]}\hspace{-0.3cm}\sum_{\substack{k'\neq k\\1\leq k, k'\leq m}}\hspace{-0.3cm}(\delta W_{ik'}^j\delta W_{ik'}^{j'}-\delta_{jj'}\frac{\Delta_n}{m})\delta W_{ik}^{j''},\\ 
	Z^{n,jj'j''}_{3,t}=&n\sum_{i=1}^{[nt]}\sum_{\substack{1\leq k'' <k'<k\leq m}}\hspace{-0.5cm}\delta W_{ik'}^{j'}(\delta W_{ik}^{j}\delta W_{ik''}^{j''}+\delta W_{ik''}^{j}\delta W_{ik}^{j''}).
\end{align*}
\begin{lemma}\label{prop:1}
	Let us consider the triangular arrays given by
	\begin{align*}
			Z^{n}_{0,t}=\sum_{i=1}^{[nt]}\sum_{k=1}^{m}\delta W_{ik}\delta W_{ik}^{\top}, \quad \textrm{with}\quad t\in[0,1].
	\end{align*}
	$$ \textrm{As } n\rightarrow \infty, 	\textrm{we have}\quad Z^{n}_0-Z_0\stackrel{L^p}{\rightarrow}0,\quad\textrm{where}\quad Z_{0,t}=t{\Iq},\quad t\in[0,1].$$
	
\end{lemma}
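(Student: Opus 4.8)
The plan is to split $Z^n_0-Z_0$ into a martingale part and a deterministic drift, and to control the martingale part by the discrete Burkholder--Davis--Gundy (BDG) inequality. It suffices to treat $p\ge 2$, the smaller exponents following by Jensen's inequality. First I would center each summand: with $M^n_{ik}:=\delta W_{ik}\delta W_{ik}^{\top}-\Iq\frac{\Delta_n}{m}$, one has $\mathbb E\big(M^n_{ik}\mid \mathcal F_{\frac{m(i-1)+k-1}{nm}}\big)=0$, since $\delta W_{ik}$ is independent of the past with covariance $\Iq\frac{\Delta_n}{m}$. As $\sum_{i=1}^{[nt]}\sum_{k=1}^m \Iq\frac{\Delta_n}{m}=\eta_n(t)\Iq$, this gives
\[
Z^n_{0,t}-Z_{0,t}=\sum_{i=1}^{[nt]}\sum_{k=1}^m M^n_{ik}+(\eta_n(t)-t)\Iq.
\]
The drift is harmless: $|(\eta_n(s)-s)\Iq|\le q\Delta_n$ for all $s$, so $\sup_{s\le t}|(\eta_n(s)-s)\Iq|\to 0$ deterministically, hence in every $L^p$.

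For the martingale part I would work entrywise. Fix $j,j'\in\{1,\dots,q\}$ and reindex the fine grid by $a=m(i-1)+k\in\{1,\dots,nm\}$, writing $M^n_a$ for the corresponding centred matrix. The partial sums of the scalar increments $(M^n_a)_{jj'}$ form a discrete martingale with respect to the fine filtration $(\mathcal F_{a/(nm)})_a$. Since the coarse-time partial sums defining $Z^n_0$ form a subsequence of the fine-time partial sums, $\sup_{s\le t}$ of the former is dominated by the running maximum of the latter, and scalar BDG together with Jensen's inequality (convexity of $x\mapsto x^{p/2}$ against the uniform measure on the $[nt]m$ indices) yields
\[
\mathbb E\Big(\max_{A\le [nt]m}\Big|\sum_{a\le A}(M^n_a)_{jj'}\Big|^p\Big)\le C_p\,(nm)^{p/2-1}\sum_{a\le [nt]m}\mathbb E\big(|(M^n_a)_{jj'}|^p\big).
\]
Standard Gaussian moment bounds give $\mathbb E\big(|(M^n_a)_{jj'}|^p\big)\le C_p(\Delta_n/m)^p=C_p(nm)^{-p}$, so the right-hand side is $O\big((nm)^{-p/2}\big)$ and tends to $0$. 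Summing the $q^2$ entries through $|A|=\sum_{jj'}|A_{jj'}|$ and recombining with the drift bound gives $\mathbb E\big(\sup_{s\le t}|Z^n_{0,s}-Z_{0,s}|^p\big)\to 0$.

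I do not anticipate a genuine obstacle: the argument is the baseline $L^p$ template (center, BDG, Jensen, Gaussian moments) that recurs throughout the paper, and this lemma is its simplest instance. The only points needing care are verifying the martingale-difference property against the \emph{fine} filtration and tracking the scaling --- there are $nm$ increments on $[0,1]$, each of size of order $(nm)^{-1/2}$, so the centred squared products are of order $(nm)^{-1}$ and the quadratic variation produces precisely the decay $(nm)^{-p/2}$.
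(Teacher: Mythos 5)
Your proposal is correct and follows essentially the same route as the paper's proof: center the summands, note the drift $[nt]\Delta_n\Iq=\eta_n(t)\Iq\to t\Iq$, and control the martingale part via the discrete BDG inequality, Jensen's inequality, and Gaussian moment bounds $\mathbb{E}|\delta W_{ik}\delta W_{ik}^{\top}-\Iq\Delta_n/m|^p=O(\Delta_n^p)$, arriving at the same $O(\Delta_n^{p/2})$ rate. The only cosmetic difference is that you apply BDG to the $nm$ fine-grid martingale differences, whereas the paper groups the $m$ fine increments within each coarse step and applies BDG to the resulting $n$ coarse increments conditionally on $\mathcal{F}_{\frac{i-1}{n}}$ --- both are valid and yield the identical bound.
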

\begin{proof}  For any fixed $t\in[0,1]$, we rewrite $Z^n_0$ as follows
\begin{align*}
		Z^{n}_{0,t}=\sum_{i=1}^{[nt]}\sum_{k=1}^{m}(\delta W_{ik}\delta W_{ik}^{\top}-\frac{\Delta_n}{m}{\Iq})+[nt]\Delta_n{\Iq}.
	\end{align*}
	 As  $\mathbb{E}(\sum_{k=1}^{m}(\delta W_{ik}\delta W_{ik}^{\top}-\frac{\Delta_n}{m}{\Iq})|\mathcal{F}_{\frac{i-1}{n}})=0$ the by the discrete BDG inequality  and Jensen's inequality, there is a generic positive constant $C$ such that
\begin{align*}
\mathbb{E}\left(\sup_{0\leq t\leq 1}|\sum_{i=1}^{[nt]}\sum_{k=1}^{m}(\delta W_{ik}\delta W_{ik}^{\top}-\frac{\Delta_n}{m}\Iq)|^p\right)&=
	\mathbb{E}\left(\max_{0\leq \ell\leq n}|\sum_{i=1}^{\ell}\sum_{k=1}^{m}(\delta W_{ik}\delta W_{ik}^{\top}-\frac{\Delta_n}{m}\Iq)|^p\right)\\
	&\leq C\mathbb{E} \left(\sum_{i=1}^{n}\sum_{k=1}^{m}|\delta W_{ik}\delta W_{ik}^{\top}-\frac{\Delta_n}{m}\Iq|^2\right)^{p/2}\\
	&\leq  C n^{p/2-1}\sum_{i=1}^{n}\sum_{k=1}^{m}\mathbb{E}|\delta W_{ik}\delta W_{ik}^{\top}-\frac{\Delta_n}{m}\Iq|^p \leq C \Delta_n^{p/2}.
\end{align*}
 Then it follows that  $\max_{0\leq \ell\leq [nt]}\sum_{i=1}^{\ell}\sum_{k=1}^{m}(\delta W_{ik}\delta W_{ik}^{\top}-\frac{\Delta_n}{m}{\Iq})\stackrel{L^p}{\rightarrow} 0.$
Thus, we get the convergence of $Z^n$ using that $[nt]\Delta_n\Iq\rightarrow t {\Iq}$ as $n\rightarrow\infty$.
\end{proof}	
\begin{theorem}\label{thm:1}
	Let us consider the scalar components of the triangular array triplet $(Z^{n}_{1},Z^{n}_{2},Z^{n}_{3})$ given by 
	\begin{align*}
		\forall j,j',j''&\in\{1,\dots,q\},\\&Z^{n,jj'j''}_{1,t}=\sum_{i=1}^{[nt]}\zeta^{n,jj'j''}_{i,1}\textrm{ where }\zeta^{n,jj'j''}_{i,1}=n\hspace{-0.4cm}\sum_{\substack{k\neq k'\\1\leq k, k'\leq m}}\hspace{-0.4cm}\left(\delta W_{ik'}^{j}\delta W_{ik'}^{j'}-\delta_{jj'}\frac{\Delta_n}{m}\right)\delta W_{ik}^{j''},\\
		\forall j,j'\in&\{1,\dots,q\},\\&Z^{n,jj'}_{2,t}=\sum_{i=1}^{[nt]}\zeta^{n,jj'}_{i,2}\textrm{ where }\zeta^{n,jj'}_{i,2}=\sqrt{n}\hspace{-0.4cm}\sum_{\substack{1\le k<k'\le m}}\hspace{-0.4cm}\left(\delta W_{ik}^j\delta W_{ik'}^{j'}-\delta W_{ik'}^j\delta W_{ik}^{j'}\right),\\
		\forall j,j',j''& \in\{1,\dots,q\},\\& Z_{3,t}^{n,jj'j''}=\sum_{i=1}^{[nt]}\zeta_{i,3}^{n,jj'j''}\textrm{ where }\zeta_{i,3}^{n,jj'j''}=n\hspace{-0.6cm}\sum_{\substack{1\le k'' <k'<k\le m}}\hspace{-0.5cm}\delta W_{ik'}^{j'}(\delta W_{ik}^{j}\delta W_{ik''}^{j''}+\delta W_{ik''}^{j}\delta W_{ik}^{j''}),
	\end{align*}
	with $t\in[0,1]$ and $Z^n_{1,t}\in\mathbb{R}^{q^3}$, $Z^n_{2,t}\in\mathbb{R}^{q^2}$ and $Z^n_{3,t}\in\mathbb{R}^{q^3}$.
	 Then as $n\rightarrow \infty$, we have
	\begin{align*}
		(W,Z^{n}_{1},Z^{n}_{2},Z^{n}_{3})\stackrel{\rm stably}{\Rightarrow}(W,Z_1,Z_2,Z_3),
	\end{align*}
	where $Z_{1,t}^{jj'j''}=\left\{\begin{array}{lll}
	\frac{\sqrt{m-1}}{m}B^{jj'j''}_{1,t}&,&j> j'\\\frac{\sqrt{2(m-1)}}{m}B^{jjj''}_{1,t}&,&j=j'\\
	\frac{\sqrt{m-1}}{m}B^{j'jj''}_{1,t}&,&j< j'
	\end{array}\right.$ , $Z_{2,t}^{jj'}=\left\{\begin{array}{lll}
	\sqrt{\frac{m-1}{m}}B^{jj'}_{2,t}&,&j>j'\\0&,&j=j'\\-\sqrt{\frac{m-1}{m}}B^{j'j}_{2,t}&,&j< j'\end{array}\right.$ 
	$$\mbox{and }\quad Z_{3,t}^{jj'j''}=\left\{\begin{array}{lll}
	\sqrt{\frac{(m-1)(m-2)}{3m^2}}B^{jj'j''}_{3,t}&,&j> j''\\ \sqrt{\frac{2(m-1)(m-2)}{3m^2}}B^{jj'j''}_{3,t}&,&j= j''\\ \sqrt{\frac{(m-1)(m-2)}{3m^2}}B^{j''j'j}_{3,t}&,&j< j''\end{array}\right.$$ with $(B^{jj'j''}_1)_{\substack{1\leq j,j',j''\leq q\\j\geq j'}}$ and $(B^{jj'j''}_3)_{\substack{1\leq j,j',j''\leq q\\j\geq j''}}$   are two standard $q^2(q+1)/2$-dimensional Brownian motions and $(B^{jj'}_2)_{1\leq j'<j\leq q}$ is a standard $q(q-1)/2$-dimensional Brownian motion. Moreover, we have $B_1$,  $B_2$ and $B_3$ are independent of $W$ and also independent of each other. Furthermore, we have the  \eqref{ut} of $Z^n_0$, $Z^{n}_{1}$, $Z^{n}_{2}$ and $Z^{n}_{3}$.
\end{theorem}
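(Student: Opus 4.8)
The plan is to apply the stable central limit theorem for martingale triangular arrays of Jacod \cite{c} (recalled in Appendix \ref{app:C}) simultaneously to the three families stacked into a single array. Writing $\zeta^n_i=(\zeta^{n,jj'j''}_{i,1},\zeta^{n,jj'}_{i,2},\zeta^{n,jj'j''}_{i,3})_{j,j',j''}$, whose entries are $\mathcal{F}_{\frac{i}{n}}$-measurable and built only from the increments $(\delta W_{ik})_{1\le k\le m}$ lying in the $i$-th coarse block, it suffices to verify: (i) the martingale-difference property $\mathbb{E}(\zeta^n_i\mid\mathcal{F}_{\frac{i-1}{n}})=0$; (ii) the convergence of the predictable quadratic covariation $\sum_{i=1}^{[nt]}\mathbb{E}(\zeta^n_i(\zeta^n_i)^{\top}\mid\mathcal{F}_{\frac{i-1}{n}})\stackrel{\mathbb P}{\rightarrow}C_t$ to a deterministic continuous limit $C_t$; (iii) the asymptotic orthogonality to the driving motion, $\sum_{i=1}^{[nt]}\mathbb{E}(\zeta^n_i\,\Delta W_i^{a}\mid\mathcal{F}_{\frac{i-1}{n}})\stackrel{\mathbb P}{\rightarrow}0$ for every $a\in\{1,\dots,q\}$; and (iv) a Lindeberg condition. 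Conditions (ii)--(iii) with a deterministic $C_t$ force the limit to be a continuous Gaussian martingale with bracket $C_t$, defined on an extension of $(\Omega,\mathcal F,\mathbb P)$ and independent of $W$.

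First I would dispatch (i), (iii) and (iv), which are essentially exact or elementary. Since the increments $(\delta W_{ik})_{k}$ are independent, centred and independent of $\mathcal{F}_{\frac{i-1}{n}}$, each summand of $\zeta^{n}_{i,2}$ and $\zeta^n_{i,3}$ is a product of factors carried by \emph{distinct} subintervals, hence has conditional mean zero; the same holds for $\zeta^n_{i,1}$ because the factor $\delta W^{j''}_{ik}$ sits in a block $k\neq k'$ disjoint from the centred factor $(\delta W^{j}_{ik'}\delta W^{j'}_{ik'}-\delta_{jj'}\frac{\Delta_n}{m})$. For (iii), multiplying $\zeta^n_{i,r}$ by $\Delta W_i^{a}=\sum_{l}\delta W^a_{il}$ produces in each block an isolated odd-order Gaussian factor, so every term vanishes identically by the disjoint-block structure and the vanishing of odd Gaussian moments; thus (iii) holds with exact zeros. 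For (iv) I would check the stronger Lyapunov bound $\sum_{i=1}^{[nt]}\mathbb{E}|\zeta^n_{i,r}|^{4}=O(\Delta_n)$: each entry scales like $\sqrt{\Delta_n}$ in $L^p$ (for $Z_2$: $\sqrt n\cdot O(\Delta_n)$; for $Z_1,Z_3$: $n\cdot O(\Delta_n^{3/2})$), so $\mathbb E|\zeta^n_{i,r}|^4=O(\Delta_n^2)$ and the sum of $[nt]$ terms tends to $0$.

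The core of the proof is condition (ii). Because each $\zeta^n_i$ depends only on block-$i$ increments, the conditional expectations equal unconditional Gaussian moments, which I would evaluate by Wick's (Isserlis') formula together with the combinatorial sums over $k,k',k''\in\{1,\dots,m\}$ such as $\sum_{k\neq k'}$, $\sum_{k<k'}$ and $\sum_{k''<k'<k}$. The within-family diagonal contributions produce exactly the announced constants, e.g.\ $\sum_{i\le[nt]}\mathbb E((\zeta^{n,jj'}_{i,2})^2)\to\frac{m-1}{m}t$ for $j\neq j'$, and analogously $\frac{m-1}{m^2}t$ (resp.\ $\frac{2(m-1)}{m^2}t$) for $Z_1$ off-diagonal (resp.\ diagonal) and $\frac{(m-1)(m-2)}{3m^2}t$ (resp.\ its double) for $Z_3$, matching the normalisations of $B_1,B_2,B_3$; the within-family off-index covariances reproduce the built-in symmetries (antisymmetry of $Z_2$ in $(j,j')$, symmetry of $Z_1$ in its first two indices, symmetry of $Z_3$ in $(j,j'')$), which is why $B_1$ and $B_3$ are indexed only over $j\ge j'$ and $j\ge j''$ with the $\sqrt2$ factor on the diagonal. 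The cross-family covariances vanish: $Z_1$--$Z_2$ and $Z_2$--$Z_3$ are of odd total degree in the Gaussian increments and vanish termwise, while for the even case $Z_1$--$Z_3$ a pairing argument shows that the two factors of $\zeta^n_{i,1}$ sitting in the same subinterval can never be matched by the single factor $\zeta^n_{i,3}$ places there, leaving a centred or odd isolated moment; hence $B_1,B_2,B_3$ are mutually independent, and together with (iii) independent of $W$.

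Finally, collecting (i)--(iv) and invoking the quoted theorem yields $(W,Z^n_1,Z^n_2,Z^n_3)\stackrel{\rm stably}{\Rightarrow}(W,Z_1,Z_2,Z_3)$ with the stated representation. For the \eqref{ut} (uniform tightness) assertion needed later to apply Theorem \ref{thm:2.5 1998}, I would note that each $Z^n_r$ is a sum of martingale differences whose predictable brackets are bounded in $L^p$ uniformly in $n$ by the very moment estimates above, which gives their U.T.; for $Z^n_0$ the stronger $L^p$-convergence $Z^n_0\to Z_0$ of Lemma \ref{prop:1} implies U.T. immediately. The main obstacle is step (ii): the bookkeeping of the Gaussian fourth- and sixth-order moments across the three index families---pinning down the exact constants and, above all, proving that all cross-family brackets vanish so that $B_1,B_2,B_3$ decouple---is where all the work lies.
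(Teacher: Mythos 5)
Your proposal is correct and follows essentially the same route as the paper: both verify conditions \eqref{eq:a}--\eqref{eq:d} of Jacod's Theorem \ref{thm:d3} for the stacked martingale array, compute the predictable brackets via independence of the block increments and Gaussian moment calculations, and show all cross-family brackets vanish (your explicit pairing argument for the $Z_1$--$Z_3$ bracket, whose terms have \emph{even} total degree, is in fact a sharper justification than the paper's blanket ``odd number of increments'' remark, since it is the per-subinterval parity that kills those terms). The only minor deviation is your justification of \eqref{ut} through uniform $L^1$ bounds on the predictable brackets, where the paper instead invokes Lemma \ref{lm:d1} for i.i.d.\ arrays; both arguments are valid and of comparable effort.
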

\begin{remark}\label{rk:UT}
It is worth noticing that when $m=2$ the noise term $Z_{3}^{n}$ vanishes at the limit. 
\end{remark}

\begin{proof}[Proof of Theorem \ref{thm:1}]
We aim to use Theorem 3.2 of Jacod in \cite{c} (see in appendix Theorem \ref{thm:d3}) combined with some useful technical tools in the proof of  Theorem 5.1 of Jacod and Protter (1998) \cite{d}. We split our proof into four main steps to check the four conditions of  theorem \ref{thm:d3}. 
	\paragraph{\textbf{Step 1}} For all $ j,j',j''\in\{1,\dots,q\}$, we have $\mathbb{E}(\zeta^{n,jj'j''}_{i,1}|\mathcal{F}_{\frac{i-1}{n}})=\mathbb{E}(\zeta^{n,jj'}_{i,2}|\mathcal{F}_{\frac{i-1}{n}})=\mathbb{E}(\zeta^{n,jj'j''}_{i,3}|\mathcal{F}_{\frac{i-1}{n}})=0.$
	 Then the first condition \eqref{eq:a} of theorem \ref{thm:d3} is satisfied.
	
	\paragraph{\textbf{Step 2}} For this step, we need to check the validity of condition \eqref{eq:b} of theorem \ref{thm:d3} for our three triangular arrays.
\paragraph{{\bf First triangular array.}}
		Using the symmetric structure of  $\zeta^{n,jj'j''}_{i,1}$ it is sufficient to consider only the case $j\geq j'$. Now thanks to the independence between the increments, we have for all $i\in\{1,\dots, n\}$,
		\begin{align*}
			&\mathbb{E}((\zeta^{n,jj'j''}_{i,1})^2|\mathcal{F}_{\frac{i-1}{n}})=\sum_{\substack{k\neq k'\\1\leq k, k'\leq m}}n^2\mathbb{E}\left(\left(\delta W_{ik'}^{j}\delta W_{ik'}^{j'}-\delta_{jj'}\frac{\Delta_n}{m}\right)^2(\delta W_{ik}^{j''})^2\right)+\\
			&\sum_{\substack{{k}_1\neq{k'}_1, {k}_2\neq{k'}_2 \\1\leq {k'}_1,{k'}_2,{k}_1,{k}_2\leq m\\({k'}_1,{k}_1)\neq({k'}_2,{k}_2)}}n^2\mathbb{E}\left(\left(\delta W_{i{k'}_1}^{j}\delta W_{i,{k'}_1}^{j'}-\delta_{jj'}\frac{\Delta_n}{m}\right)\delta W_{i{k}_1}^{j''}\left(\delta W_{i{k'}_2}^{j}\delta W_{i{k'}_2}^{j'}-\delta_{jj'}\frac{\Delta_n}{m}\right)\delta W_{i{k}_2}^{j''}\right).
		\end{align*}	
Actually, the generic term of the above second sum is equal to zero. To check that we consider the three following subcases.
\begin{itemize}
\item If $k_1\neq k_2$ and ${k'}_1={k'}_2$, then we can deduce that $k_1\notin\{{k'}_1,{k'}_2,k_2\}$ and therefore the generic term  is equal to 
	$$\mathbb{E}(\delta W_{i{k}_1}^{j''})\mathbb{E}\left(\left(\delta W_{i{k'}_1}^{j}\delta W_{i,{k'}_1}^{j'}-\delta_{jj'}\frac{\Delta_n}{m}\right)\left(\delta W_{i{k'}_2}^{j}\delta W_{i{k'}_2}^{j'}-\delta_{jj'}\frac{\Delta_n}{m}\right)\delta W_{i{k}_2}^{j''}\right)=0.$$
\item If $k_1= k_2$ and ${k'}_1\neq{k'}_2$, then we  can deduce that ${k'}_1\notin\{{k}_1,{k'}_2,k_2\}$. Therefore, the generic term is equal to
$$
\mathbb{E}\left(\delta W_{i{k'}_1}^{j}\delta W_{i,{k'}_1}^{j'}-\delta_{jj'}\frac{\Delta_n}{m}\right)\mathbb{E}\left(\delta W_{i{k}_1}^{j''}\left(\delta W_{i{k'}_2}^{j}\delta W_{i{k'}_2}^{j'}-\delta_{jj'}\frac{\Delta_n}{m}\right)\delta W_{i{k}_2}^{j''}\right)=0.
$$
\item If $k_1\neq k_2$ and ${k'}_1\neq{k'}_2$, then we have two  subsubcases:
\begin{itemize}  
	\item If $k_1={k'}_2$, we have  ${k}_1\notin\{{k'}_1,k_2\}$. Then, the  generic term is equal to 
	$$\mathbb{E}\left(\left(\delta W_{i{k'}_1}^{j}\delta W_{i,{k'}_1}^{j'}-\delta_{jj'}\frac{\Delta_n}{m}\right)\delta W_{i{k}_2}^{j''}\right)\mathbb{E}\left(\delta W_{i{k}_1}^{j''}\left(\delta W_{ik_1}^{j}\delta W_{ik_1}^{j'}-\delta_{jj'}\frac{\Delta_n}{m}\right)\right)=0.$$
	\item If $k_1\neq{k'}_2$, we have $k_1\notin\{{k'}_1,{k'}_2,k_2\}$. Then, the generic term is equal to 
	$$\mathbb{E}(\delta W_{i{k}_1}^{j''})\mathbb{E}\left(\left(\delta W_{i{k'}_1}^{j}\delta W_{i,{k'}_1}^{j'}-\delta_{jj'}\frac{\Delta_n}{m}\right)\left(\delta W_{i{k'}_2}^{j}\delta W_{i{k'}_2}^{j'}-\delta_{jj'}\frac{\Delta_n}{m}\right)\delta W_{i{k}_2}^{j''}\right)=0.$$
\end{itemize}
\end{itemize}
It is worth noticing that the above arguments rely only on the independence between the increments without using the independence between the components of the Brownian vector.     
Concerning the generic term of the first sum, if $j=j'$ then  it is equal to
$$
\mathbb{E}\left((\delta W_{ik'}^{j})^2-\frac{\Delta_n}{m}\right)^2\mathbb{E}(\delta W_{ik}^{j''})^2=\frac{2}{n^3m^3}
$$
and for $j>j'$ it is equal to 
$
\mathbb{E}(\delta W_{ik'}^{j})^2\mathbb{E}(\delta W_{ik'}^{j'})^2\mathbb{E}(\delta W_{ik}^{j''})^2=\frac{1}{n^3m^3}.
$
Thus, we get as ${n\rightarrow\infty}$
	$$\sum_{i=1}^{[nt]}\mathbb{E}((\zeta^{n,jj'j''}_{i,1})^2|\mathcal{F}_{\frac{i-1}{n}})	\displaystyle {\longrightarrow}\left\{\begin{array}{ll}
		\displaystyle \dfrac{m-1}{m^2}t,&j>j'\\\\\displaystyle \dfrac{2(m-1)}{m^2}t&j=j'.
	\end{array}\right.
$$
Now, it remains to check that for any $q\ge j\geq j'\ge 1$, $q\ge \bar{j}\geq \bar{j'}\ge 1$ and $j'',\bar{j''}\in\{1,\dots,q\}$ s.t. $(j,j',j'')\neq(\bar{j},\bar{j'},\bar{j''})$ , we have
		$\sum_{i=1}^{[nt]}\mathbb{E}(\zeta_{i,1}^{n,jj'j''}\zeta_{i,1}^{n,\bar j\bar{j'}\bar{j''}}|\mathcal{F}_{\frac{i-1}{n}})=0.$ To do so, we write
		\begin{align*}
			&\mathbb{E}(\zeta_{i,1}^{n,jj'j''}\zeta_{i,1}^{n,\bar j\bar{j'}\bar{j''}}|\mathcal{F}_{\frac{i-1}{n}})\\
			=&\hspace{-0.6cm}\sum_{\substack{1\leq k_1,{k'}_1,k_2,{k'}_2\leq m\\k_1\neq {k'}_1,k_2\neq {k'}_2}}\hspace{-0.6cm}n^2\mathbb{E}\left(\left(\delta W_{i{k'}_1}^{j}\delta W_{i{k'}_1}^{j'}-\delta_{jj'}\frac{\Delta_n}{m}\right)\delta W_{i{k}_1}^{j''}\left(\delta W_{i{k'}_2}^{\bar{j}}\delta W_{i{k'}_2}^{\bar{j'}}-\delta_{\bar{j}\bar{j'}}\frac{\Delta_n}{m}\right)\delta W_{i{k}_2}^{\bar{j''}}\right).
		\end{align*}
		It is easy to check that the arguments given above to prove that this term vanishes remain valid for the particular case $(k_1,{k'}_1)\neq(k_2,{k'}_2)$ and this, as noticed above is independent of the choice of $(j,j',j'')$ and $(\bar{j},\bar{j'},\bar{j''})$.
		Thus, we only need to consider the case $(k_1,{k'}_1)=(k_2,{k'}_2)$. Therefore, by the independence between the increments we rewrite the generic term as follows
		\begin{equation}\label{cov_first_TA} \mathbb{E}\left((\delta W_{i{k'}_1}^{j}\delta W_{i{k'}_1}^{j'}-\delta_{jj'}\frac{\Delta_n}{m})(\delta W_{i{k'}_1}^{\bar{j}}\delta W_{i{k'}_1}^{\bar{j'}}-\delta_{\bar{j}\bar{j'}}\frac{\Delta_n}{m})\right)
		       \mathbb{E}\left( \delta W_{i{k}_1}^{j''}\delta W_{i{k}_1}^{\bar{j''}}\right).
		 \end{equation}
		Then, it is obvious that when $j''\neq \bar{ j''}$ this generic term vanishes. Now
		when $j''= \bar{ j''}$, thanks to its symmetric structure it is sufficient to consider only the case $j\neq \bar{j}$. For this we have three subcases.
		   \begin{itemize}
		        \item If $j>j'$ and $\bar j> \bar{j'}$ then $\delta_{jj'}=\delta_{\bar j\bar{j'}}=0$ and we have two possibilities: either $j\neq \bar{j'}$ then the generic term rewrites 
		         $
		        \mathbb{E}\left(\delta W_{i{k'}_1}^{j}\right) \mathbb{E}\left(\delta W_{i{k'}_1}^{j'}\delta W_{i{k'}_1}^{\bar{j}}\delta W_{i{k'}_1}^{\bar{j'}}\right)=0
		        $
		        or $j=\bar{j'}$ and then as $\bar j>\bar j'=j>j'$  the generic term rewrites 
		        $\mathbb{E}\left((\delta W_{i{k'}_1}^{j})^2\right) \mathbb{E}\left(\delta W_{i{k'}_1}^{j'}\delta W_{i{k'}_1}^{\bar{j}}\right)=0$.
		        \item If $j=j'$ and $\bar j> \bar{j'}$ or $j>j'$ and $\bar j= \bar{j'}$, by the symmetry we can consider only the first case and as $\bar j \notin \{{j},j', \bar{j'}\}$ for which the generic term is equal to zero.
		        \item If $j=j'$ and $\bar j=\bar{j'}$ then the generic term rewrites 
		        $
		         \mathbb{E}(((\delta W_{i{k'}_1}^{j})^2-\frac{\Delta_n}{m})((\delta W_{i{k'}_1}^{\bar{j}})^2-\frac{\Delta_n}{m}))=0.
		        $
		       \end{itemize}
\paragraph{{\bf Second triangular array.}}	
Using the anti-symmetric structure of $\zeta^{n,jj'}_{i,2}$ it is also sufficient to consider only the case $j> j'$ as $\zeta^{n,jj}_{i,2}=0$. Then, for $i\in\{1,\dots, n\}$, we have 
		\begin{align*}
			&	\mathbb{E}((\zeta^{n,jj'}_{i,2})^2|\mathcal{F}_{\frac{i-1}{n}})=\sum_{1\leq k<k'\leq m}n\mathbb{E}\left(\left(\delta W_{ik}^j\delta W_{ik'}^{j'}-\delta W_{ik'}^j\delta W_{ik}^{j'}\right)^2\right)\\&+\sum_{\substack{1\leq k_1<{k'}_1\leq m\\1\leq k_2<{k'}_2\leq m\\(k_1,{k'}_1)\neq (k_2,{k'}_2)}}n\mathbb{E}\left(\left(\delta W_{ik_1}^j\delta W_{i{k'}_1}^{j'}-\delta W_{i{k'}_1}^j\delta W_{ik_1}^{j'}\right)\left(\delta W_{ik_2}^j\delta W_{i{k'}_2}^{j'}-\delta W_{i{k'}_2}^j\delta W_{ik_2}^{j'}\right)\right).
		\end{align*}
In the same way as the first triangular array, the generic term of the above second sum is also equal to zero. This follows easily by expanding this generic term and using  the independence structure between the increments under conditions  $(k_1,{k'}_1)\neq (k_2,{k'}_2)$, $k_1<{k'}_1$ and $k_2<{k'}_2$.
Now, concerning the generic term of the first sum, as $j>j'$ it is easy to check that
$
\mathbb{E}\left(\left(\delta W_{ik}^j\delta W_{ik'}^{j'}-\delta W_{ik'}^j\delta W_{ik}^{j'}\right)^2\right)=\frac{2}{n^2m^2}.
$
Thus, as ${n\rightarrow\infty}$,	
$\sum_{i=1}^{[nt]}\mathbb{E}((\zeta^{n,jj'}_{i,2})^2|\mathcal{F}_{\frac{i-1}{n}})
			{\rightarrow}\frac{m-1}{m}t.$
Now, it remains to check that for any  $q\ge j>j'\ge 1$, $q\ge\bar{j}>\bar{j'}\ge 1$ s.t. $(j,j')\neq(\bar{j},\bar{j'})$, $\sum_{i=1}^{[nt]}\mathbb{E}(\zeta_{i,2}^{n,jj'}\zeta_{i,2}^{n,\bar{j}\bar{j'}}|\mathcal{F}_{\frac{i-1}{n}})=0.$
 So, we have
		\begin{align*}
			&\mathbb{E}(\zeta_{i,2}^{n,jj'}\zeta_{i,2}^{n,\bar{j}\bar{j'}}|\mathcal{F}_{\frac{i-1}{n}})\\=&\sum_{\substack{1\leq k_1<{k'}_1\leq m\\1\leq k_2<{k'}_2\leq m}}\hspace{-0.5cm}n\mathbb{E}((\delta W_{i,k_1}^{j}\delta W_{i,{k'}_1}^{j'}-\delta W_{i,{k'}_1}^{j}\delta W_{i,k_1}^{j'})(\delta W_{ik_2}^{\bar{j}}\delta W_{i{k'}_2}^{\bar{j'}}-\delta W_{i{k'}_2}^{\bar{j}}\delta W_{ik_2}^{\bar{j'}})).
		\end{align*}
		When $(k_1,{k'}_1)\neq(k_2,{k'}_2)$ by similar arguments as for the first triangular array the generic term of the above sum is equal to zero thanks to the independence between the increments. When $(k_1,{k'}_1)=(k_2,{k'}_2)$ $(j,j')\neq (\bar j,\bar{j')}$, we need to treat  two cases.
		\begin{itemize}
		 \item If $j=\bar{j}$ and $j'\neq \bar{j'}$ as $j>j'$ we have $j'\notin \{j,\bar j, \bar{j'}\}$ and consequently the generic term  is equal to zero. If $j\neq\bar{j}$ and $j'= \bar{j'}$ we use similar arguments to prove that the generic term is zero.  
		 \item If $j\neq\bar j$, $j'\neq \bar{j'}$ we have two possibilities: either $j\neq\bar{j'}$ then $j\notin\{j',\bar j, \bar{j'}\}$ or $\bar j >\bar{j'}=j>j'$ and in both cases it is obvious that the generic term also vanishes. 
		\end{itemize}
		
\paragraph{{\bf Third triangular array.}}	
Using the symmetric structure of $\zeta^{n,jj'j''}_{i,3}$ it is also sufficient to consider only the case  $j\geq j''$. 
 Then, for $i\in\{1,\dots, n\}$, we have 
		\begin{align*}
			&\mathbb{E}((\zeta_{i,3}^{n,jj'j''})^2|\mathcal{F}_{\frac{i-1}{n}})
			=\sum_{1\leq k''<k'<k\leq m}n^2\mathbb{E}\left(\delta W^{j'}_{ik'}(\delta W^{j}_{ik}\delta W^{j''}_{ik''}+\delta W^{j}_{ik''}\delta W^{j''}_{ik})\right)^2\\
			&+\hspace{-0.5cm}\sum_{\substack{1\leq {k''}_1<{k'}_1<k_1\leq m\\1\leq {k''}_2<{k'}_2<k_2\leq m\\(k_1,{k'}_1,{k''}_1)\neq(k_2,{k'}_2,{k''}_2)}} \hspace{-1cm}n^2\mathbb{E}\left(\delta W^{j'}_{i{k'}_1}(\delta W^{j}_{ik_1}\delta W^{j''}_{i{k''}_1}+\delta W^{j}_{i{k''}_1}\delta W^{j''}_{ik_1})\delta W^{j'}_{i{k'}_2}(\delta W^{j}_{ik_2}\delta W^{j''}_{i{k''}_2}+\delta W^{j}_{i{k''}_2}\delta W^{j''}_{ik_2})\right).
		\end{align*}
Similarly as for the first triangular array, we use the independence structure between the increments under conditions
$(k_1,{k'}_1,{k''}_1)\neq(k_2,{k'}_2,{k''}_2)$, $1\leq {k''}_1<{k'}_1<k_1\leq m$ and $1\leq {k''}_2<{k'}_2<k_2\leq m$  
to check that the generic term of the  second sum is also equal to zero. 	Now, concerning the generic term of the first sum, if $j=j''$ then  it is equal to
$$
4\mathbb{E}(\delta W_{ik'}^{j'})^2\mathbb{E}(\delta W_{ik}^{j})^2\mathbb{E}(\delta W_{ik''}^{j})^2=\frac{4}{n^3m^3}
$$
and for $j>j'$ it is equal to 
$$
\mathbb{E}(\delta W_{ik'}^{j'})^2(\mathbb{E}(\delta W_{ik}^{j})^2\mathbb{E}(\delta W_{ik''}^{j''})^2+
\mathbb{E}(\delta W_{ik''}^{j})^2\mathbb{E}(\delta W_{ik}^{j''})^2)=\frac{2}{n^3m^3}.
$$
Thus, we get as ${n\rightarrow\infty}$
	$$\sum_{i=1}^{[nt]}\mathbb{E}((\zeta^{n,jj'j''}_{i,3})^2|\mathcal{F}_{\frac{i-1}{n}})	\displaystyle {\longrightarrow}\left\{\begin{array}{ll}
		\displaystyle  \frac{(m-1)(m-2)}{3m^2}t&j>j''\\\\\displaystyle \frac{2(m-1)(m-2)}{3m^2}t&j=j''.
	\end{array}\right.
$$		
Now, it remains to check that for any $q\ge j\geq j''\ge 1$, $q\ge \bar{j}\geq \bar{j''}\ge 1$ and $j',\bar{j'}\in\{1,\dots,q\}$ s.t. $(j,j',j'')\neq(\bar{j},\bar{j'},\bar{j''})$ , we have
		$\sum_{i=1}^{[nt]}\mathbb{E}(\zeta_{i,3}^{n,jj'j''}\zeta_{i,3}^{n,j\bar{j'}\bar{j''}}|\mathcal{F}_{\frac{i-1}{n}})=0.$ To do so, we write
		\begin{align*}
			&\mathbb{E}(\zeta_{i,3}^{n,jj'j''}\zeta_{i,3}^{n,j\bar{j'}\bar{j''}}|\mathcal{F}_{\frac{i-1}{n}})\\
			=&n^2\hspace{-1cm}\sum_{\substack{1\leq {k''}_1<{k'}_1<k_1\leq m\\1\leq {k''}_2<{k'}_2<k_2\leq m}} \hspace{-1cm}\mathbb{E}\left(\delta W^{j'}_{i{k'}_1}(\delta W^{j}_{ik_1}\delta W^{j''}_{i{k''}_1}+\delta W^{j}_{i{k''}_1}\delta W^{j''}_{ik_1})\delta W^{j'}_{i{k'}_2}(\delta W^{j}_{ik_2}\delta W^{j''}_{i{k''}_2}+\delta W^{j}_{i{k''}_2}\delta W^{j''}_{ik_2})\right).	
		\end{align*}
		When $({k''}_1,{k'}_1,{k}_1)\neq({k''}_2,{k'}_2,{k}_2)$, by similar arguments as for the first triangular array it is easy to check that the generic term of the above sum is equal to zero. When $({k''}_1,{k'}_1,{k}_1)=({k''}_2,{k'}_2,{k}_2)$, with the condition $(j,j',j'')\neq(\bar{j},\bar{j'},\bar{j''})$, the generic term equals to
	$$\mathbb{E}(\delta W_{i{k'}_1}^{{j'}}\delta W_{i{k'}_1}^{\bar{j'}})\mathbb{E}((\delta W_{i{k_1}}^{{j''}}\delta W_{i{k''}_1}^{j}+\delta W_{i{k}_1}^{j}\delta W_{i{k''}_1}^{{j''}})(\delta W_{i,{k}_1}^{\bar{j''}}\delta W_{i{k''}_1}^{\bar{j}}+\delta W_{i{k}_1}^{\bar{j}}\delta W_{i{k''}_1}^{\bar{j''}})).$$ 
	By the same arguments used to treat \eqref{cov_first_TA} we easily deduce that the above generic term vanishes.
\paragraph{{\bf Covariance between of the different triangular arrays.} }
	For any $j,j',j''$ and $\bar{j},\bar{j'}$ in $\{1,\dots,q\}$ $j\geq j'$, $\bar{j}> \bar{j'}$, we have
		\begin{align*}
			&\mathbb{E}(\zeta_{i,1}^{n,jj'j''}\zeta_{i,2}^{n,\bar{j}\bar{j'}}|\mathcal{F}_{\frac{i-1}{n}})\\=&n\sqrt{n}\hspace{-1cm}\sum_{\substack{1\leq k_1,{k'}_1,k_2,{k'}_2\leq m\\k_1\neq {k'}_1\\k_2<{k'}_2}}\hspace{-1cm}\mathbb{E}((\delta W_{ik_1}^{j}\delta W_{ik_1}^{j'}-\delta_{jj'}\frac{\Delta_n}{m})\delta W_{i{k'}_1}^{j''}(\delta W_{ik_2}^{\bar{j}}\delta W_{i{k'}_2}^{\bar{j'}}-\delta W_{i{k'}_2}^{\bar j}\delta W_{ik_2}^{\bar{j'}})).
		\end{align*}
	 For any $j,{j'},{j''}$ and $\bar{j},\bar{j'},\bar{j''}$ in $\{1,\dots,q\}$ $j\geq {j'}$, $\bar{j}\geq \bar{j''}$,  we have
		\begin{align*}
			&\mathbb{E}(\zeta_{i,1}^{n,j{j'}{j''}}\zeta_{i,3}^{n,\bar{j}\bar{j'}\bar{j''}}|\mathcal{F}_{\frac{i-1}{n}})\\=&n^2\hspace{-1cm}\sum_{\substack{1\leq k_1,{k'}_1\leq m\\k_1\neq {k'}_1\\1\le {k''}_2<{k'}_2<k_2\le m}}\hspace{-0.5cm}\mathbb{E}((\delta W_{ik_1}^{j}\delta W_{ik_1}^{{j'}}-\delta_{j{j'}}\frac{\Delta_n}{m})\delta W_{i{k'}_1}^{{j''}}\delta W_{i{k'}_2}^{\bar{j'}}(\delta W_{i{k_2}}^{\bar{j}}\delta W_{i{k''}_2}^{\bar{j''}}+\delta W_{i{k_2}}^{\bar{j''}}\delta W_{i{k''}_2}^{\bar{j}})).
		\end{align*}
	 For any $j,{j'},{j''}$ and $\bar{j},\bar{j'}\in \{1,\dots,q\}$ $j\geq {j''}$, $\bar{j}> \bar{j'}$, we have
		\begin{align*}
			&\mathbb{E}(\zeta_{i,3}^{n,j{j'}{j''}}\zeta_{i,2}^{n,\bar{j}\bar{j'}}|\mathcal{F}_{\frac{i-1}{n}})\\=&n\sqrt{n}\hspace{-0.5cm}\sum_{\substack{1\le {k''}_1<{k'}_1<k_1\le m\\1\leq k_2<{k'}_2\leq m}}\hspace{-0.5cm}\mathbb{E}(\delta W_{i{k'}_1}^{{j'}}(\delta W_{i{k_1}}^{{j}}\delta W_{i{k''}_1}^{{j''}}+\delta W_{i{k_1}}^{{j''}}\delta W_{i{k''}_1}^{{j}})(\delta W_{ik_2}^{\bar{j}}\delta W_{i{k'}_2}^{\bar{j'}}-\delta W_{i{k'}_2}^{\bar{j'}}\delta W_{ik_2}^{\bar{j'}})).
		\end{align*}
		When developing the above three generic terms, we notice that we always have a product of an odd number of increments of the Brownian motion. Then, combining this together with the independence structure between the increments, we easily get  
		$\sum_{i=1}^{[nt]}\mathbb{E}(\zeta_{i,\alpha}^{n,jj'j''}\zeta_{i,\beta}^{n,\bar j{\bar j'}}|\mathcal{F}_{\frac{i-1}{n}})=0, \mbox{for all }  \alpha,\beta\in\{1,2,3\}\; \mbox{ with } \alpha\neq\beta.$
		
	\paragraph{\textbf{Step 3}} {{\bf Independence with respect to the original Brownian motion}} We check the condition \eqref{eq:c} of theorem \ref{thm:d3}.
	\paragraph{{\bf The first triangular array}}
		For any $j,j',j''$ and $j_1$ in $\{1,\dots,q\}$, $j\geq j'$, using the independence between the increments, we have
		\begin{align*} 
			\mathbb{E}(\zeta_{i,1}^{n,jj'j''}\Delta W_{i}^{j_1}|\mathcal{F}_{\frac{i-1}{n}})		&=\sum_{\substack{1\leq k_1,k_2\leq m\\k_1\neq k_2}}n\mathbb{E}((\delta W_{ik_1}^j\delta W_{ik_1}^{j'}-\delta_{jj'}\frac{\Delta_n}{m})\delta W_{ik_2}^{j''}(\delta W_{ik_1}^{j_1}+\delta W_{ik_2}^{j_1}))\\&
			=\sum_{\substack{1\leq k_1,k_2\leq m\\k_1\neq k_2}}n\mathbb{E}((\delta W_{ik_1}^j\delta W_{ik_1}^{j'}-\delta_{jj'}\frac{\Delta_n}{m})\delta W_{ik_1}^{j_1})\mathbb E(\delta W_{ik_2}^{j''}) 
			\\&+\sum_{\substack{1\leq k_1,k_2\leq m\\k_1\neq k_2}}n\mathbb{E}((\delta W_{ik_1}^j\delta W_{ik_1}^{j'}-\delta_{jj'}\frac{\Delta_n}{m}))\mathbb E(\delta W_{ik_2}^{j''}\delta W_{ik_2}^{j_1})
			=0.
			\end{align*}
		\paragraph{{\bf The second triangular array}}
		For any $j,j'$ and $j_1$ in $\{1,\dots,q\}$, it is straight forward that 
		$$
			\mathbb{E}(\zeta_{i,2}^{n,jj'}\Delta W_{i}^{j_1}|\mathcal{F}_{\frac{i-1}{n}})
			=\sum_{\substack{1\leq k_1,k_2\leq m\\k_1<k_2}}\sqrt{n}\mathbb{E}((\delta W_{ik_1}^{j}\delta W_{ik_2}^{j'}-\delta W_{ik_2}^{j}\delta W_{ik_1}^{j'})(\delta W_{ik_1}^{j_1}+\delta W_{ik_2}^{j_1}))=0
		$$
	since when developping the generic term of the above sum we always have the expectation of a product of an odd number of the Brownian increments. 	
		 \paragraph{{\bf The third triangular array}}
		 
		 For any $j,j',j''$, $j\geq j''$ and $j_1$ in $\{1,\dots,q\}$,   using the independence between the different increments we have
	$$
			\mathbb{E}(\zeta_{i,3}^{n,jj'j''}\Delta W_{i}^{j_1}|\mathcal{F}_{\frac{i-1}{n}})
			=n\hspace{-0.7cm}\sum_{\substack{1\le k'' <k'<k\le m}}\hspace{-0.7cm}\mathbb{E}(\delta W^{j'}_{ik'}(\delta W^{j}_{ik}\delta W^{j''}_{ik''}+\delta W^{j}_{ik''}\delta W^{j''}_{ik})(\delta W_{ik'}^{j_1}+\delta W_{ik}^{j_1}+\delta W_{ik''}^{j_1}))=0.
		$$	
	\paragraph{\textbf{Step 4}}({\bf Lyapunov's condition})  Now we check condition \eqref{eq:d} of theorem \ref{thm:d3}.  
	\paragraph{{\bf First triangular array.}} For any $j,j',j''\in\{1,\dots,q\}$, $j\geq j'$, we prove $\sum_{i=1}^{[nt]}\mathbb{E}(|\zeta_{i,1}^{n,jj'j''}|^4|\mathcal{F}_{\frac{i-1}{n}})$ tends to $0$ when $n\rightarrow\infty.$
		In fact, using the convexity property of the function $x \mapsto x^4$ we note first that there is a constant $C_q>0$ depending only on $q$ such that 
		$$
		\mathbb{E}(|\zeta_{i,1}^{n,jj'j''}|^4|\mathcal{F}_{\frac{i-1}{n}})
			\leq C_q\sum_{\substack{1\leq k_1,k_2\leq q\\k_1\neq k_2}}n^4\mathbb{E}\left((\delta W^j_{ik_1}\delta W^{j'}_{ik_1}-\delta_{jj'}\Delta_n/m)^4(\delta W^{j''}_{ik_2})^4\right).$$
Then by the scaling property of the Brownian motion it is easy to check that there is a constant $C_m>0$ depending only on $m$ such that for all $j,j',j''\in\{1,\dots,q\}$, $j\geq j'$ and $1\leq k_1,k_2\leq q$ with $k_1\neq k_2$, we have $\mathbb{E}\left((\delta W^j_{ik_1}\delta W^{j'}_{ik_1}-\delta_{jj'}\Delta_n/m)^4(\delta W^{j''}_{ik_2})^4\right) \leq \frac{C_m}{n^6}$.

\paragraph{{\bf Second triangular array.}}  Similarly, for any $j,j'\in\{1,\dots,q\}$, $j>j'$, there is a constant $C_q>0$ depending only on $q$ such that 
$$
\mathbb{E}(|\zeta_{i,2}^{n,jj'}|^4|\mathcal{F}_{\frac{i-1}{n}})\leq	C_q\sum_{\substack{1\leq k_1,k_2\leq q\\k_1<k_2}}n^2\mathbb{E}\left((\delta W^j_{ik_1}\delta W^{j'}_{ik_2}-\delta W^{j'}_{ik_1}\delta W^j_{ik_2})^4\right)
$$
and we deduce  the result using the estimate $\mathbb{E}\left((\delta W^j_{ik_1}\delta W^{j'}_{ik_2}-\delta W^{j'}_{ik_1}\delta W^j_{ik_2})^4\right)\leq \frac{C_m}{n^4}$ where $C_m$ is a positive constant depending only on $m$.
\paragraph{{\bf Third triangular array.}} In the same way we have  $\mathbb{E}\left(\delta W^{j'}_{ik'}(\delta W^{j}_{ik}\delta W^{j''}_{ik''}+\delta W^{j}_{ik''}\delta W^{j''}_{ik})\right)^4\leq \frac{C_m}{n^6}$
for $C_m>0$.
\end{proof}
 Now we are ready to prove the convergence in law of the couple of main terms $(\mathcal{M}^{n,1},\mathcal{M}^{n,2})$ given by \eqref{eq:newmain1} and \eqref{eq:newmain2}. The following proposition is the core of our main result Theorem \ref{thm:main}. 
\begin{proposition}\label{prop:M}
		As $n\to\infty$ , we have
		\begin{align}\label{eq:M}(\sqrt{n}\mathcal{M}^{n,1},n\mathcal{M}^{n,2})\stackrel{\rm stably}{\Rightarrow}(\mathcal{M}_{1},\mathcal{M}_{2}),\end{align}
		where for $\ell\in\{1,\dots,d\}$, the $\ell^{th}$ components  of $\mathcal{M}_{1}$ and $\mathcal{M}_{2}$ are given by  $$\mathcal{M}_{1,t}^{\ell}=-2\int_0^th_{\ell\bullet\bullet}(X_s)\blackdiamond dZ_{2,s} \mbox{ and } \mathcal{M}_{2,t}^{\ell}=\sum_{r=1}^{4}\bar{\Gamma}_{\ell,t}(r)+\tilde{N}_{\ell,t}+\tilde{M}^{1}_{\ell,t}-\frac{1}{2}\tilde{M}^{3}_{\ell,t},  \;\;t\in[0,1],$$ with 
		\begin{align*}
		\bar{\Gamma}_{\ell,t}(1)=&\frac{m-1}{2m}\int_0^t\bigg(\nabla f_{\ell}(X_s)^{\top} f(X_s)+\frac{1}{2}\sum_{j,j'=1}^qg_{\bullet j}(X_s)^{\top}\nabla^2f_{\ell}(X_s)g_{\bullet j'}(X_s)\bigg)ds\\
		\bar{\Gamma}_{\ell,t}(2)=&\frac{m-1}{2m}\sum_{j=1}^q\int_0^t\bigg(\nabla f_{\ell}(X_{s})^{\top}g_{\bullet j}(X_{s})+\nabla g_{\ell j}(X_s)^{\top}f(X_s)+
	\frac{1}{2}g(X_s)^{\top}\nabla^2 g_{\ell j}(X_s)g(X_s)\blackdiamond {\Iq} \bigg)dW^{j}_s\\
		\bar{\Gamma}_{\ell,t}(3)=&\frac{1}{2}\sum_{j=1}^q\int_0^t\bigg[\nabla g_{\ell j}(X_{s})^{\top}\mathbb{H}(X_{s})+\frac{1}{2}g(X_s)^{\top}\nabla^2g_{\ell j}(X_s)g(X_s)+{\dot h}_{\ell\bullet\bullet}^s\blackdiamond g_{\bullet j}(X_s)\bigg]\blackdiamond dZ_{1,s}^{\bullet\bullet j}\\
		\bar{\Gamma}_{\ell,t}(4)=&\frac{1}{2}\sum_{j,j'=1}^q\int_0^t\nabla g_{\ell j}(X_s)^{\top}[{\dot{g}^s}\blackdiamond g_{\bullet j''}(X_s)]dZ^{j\bullet j'}_{3,s}+\frac{1}{2}\sum_{j=1}^q\int_0^t[g(X_s)^{\top}\nabla^2 g_{\ell  j}(X_{s})g(X_s)]\blackdiamond dZ_{3,s}^{j \bullet\bullet}\\
		\tilde{N}_{\ell,t}=&\int_0^t\frac{1}{8}U^{\top}_s\nabla^2f_{\ell}(X_s)U_sds\\
		\tilde{M}^{1}_{\ell,t}=&\sum_{j=1}^q\int_0^t\frac{1}{8}U^{\top}_s\nabla^2 g_{\ell j}(X_s)U_sdW^{j}_s\\
		\tilde{M}^{3}_{\ell,t}=&\int_0^t({\dot h}_{\ell\bullet\bullet}^s\blackdiamond U_s)\blackdiamond dZ_{2,s},
		\end{align*}
		 where $\dot{g}^s\in(\R^{d\times1})^{d\times q}$ is a block matrix such that for $\ell\in\{1,\dots,d\}$, $j\in\{1,\dots,q\}$, the $\ell j$-th block is given by $(\dot{g}^s)_{\ell j}=\nabla g_{\ell j}(X_s)$, $s\in[0,t]$ and   ${\dot  h}_{\ell \bullet\bullet}^s\in(\R^{d\times 1})^{q\times q}$ is a random block matrix such that for $j$ and $j'\in\{1,\dots,q\}$, the $jj'$-th block is given by  $({\dot  h}_{\ell \bullet\bullet}^s)_{j j'}=\nabla h_{\ell jj'}( X_s)\in \R^{d\times 1}$, $s\in[0,t]$. Here, $Z_{1}$, $Z_{2}$ and $Z_{3}$ are defined above in Theorem \ref{thm:1} and for any $j,j''\in\{1,\dots,q\}$, for $r\in\{1,3\}$, we denote $$Z_{r,s}^{\bullet\bullet j}=\left(\begin{array}{ccc}Z_{r,s}^{11 j}&\hdots&Z_{r,s}^{1q j}\\\vdots&\ddots&\vdots\\Z_{r,s}^{q1 j}&\hdots&Z_{r,s}^{qq j}\end{array}\right)\quad \textrm{and} \quad Z^{j\bullet j''}_{r,t}=(Z^{j1j''}_{r,t},\dots,Z^{jq j''}_{r,t})^{\top}.$$
\end{proposition}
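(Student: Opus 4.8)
The plan is to express both normalized main terms as discrete stochastic integrals driven by the noise triangular arrays $Z_0^n,Z_1^n,Z_2^n,Z_3^n$ together with $Y=(t,W^1,\dots,W^q)^\top$, with smooth bounded integrands frozen at the grid points, and then to pass to the limit using the joint stable convergence and the uniform tightness property \eqref{ut} furnished by Theorem \ref{thm:1} and Lemma \ref{prop:1}. First I would treat $\sqrt n\,\mathcal{M}^{n,1}$: by \eqref{eq:newmain1} its $\ell$-th component is exactly the discrete integral of the coefficient $-(h_{\ell\bullet\bullet}(X^{nm}_{\frac{i-1}{n}})+h_{\ell\bullet\bullet}(X^{nm,\sigma}_{\frac{i-1}{n}}))$ against the increments of $Z_2^n$, the factor $\sqrt n$ being already absorbed in the definition of $Z_2^n$. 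Since both schemes converge to $X$ in $L^p$ by Lemma \ref{lem:1} and Lemma \ref{Lp}, a standard coefficient-freezing step (replacing the scheme values first by $X_{\frac{i-1}{n}}$ and then by $X_s$, the $L^p$-error being controlled by the strong rate together with the tightness of the integrator) lets the integrand converge to $2h_{\ell\bullet\bullet}(X_s)$, and Theorem \ref{thm:1} then yields $\sqrt n\,\mathcal{M}^{n,1}\stackrel{\rm stably}{\Rightarrow}\mathcal{M}_1$.

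Next I would establish the joint stable convergence of $(\sqrt n\,U^n,W,Z_0^n,Z_1^n,Z_2^n,Z_3^n)$ towards $(U,W,Z_0,Z_1,Z_2,Z_3)$. Since by \eqref{eq:3.2} the process $\sqrt n\,U^n$ solves a \emph{linear} recursion driven by $\sqrt n\,\mathcal{M}^{n,1}+\sqrt n\,\mathcal{R}^{n,1}$, with $\sqrt n\,\mathcal{R}^{n,1}\stackrel{L^p}{\rightarrow}0$ by Lemma \ref{lem:R1}, the first step together with Lemma \ref{lm:d5} and the continuity theorem for such equations (Theorem \ref{thm:2.5 1998}) applied to this self-contained first component gives the desired joint convergence, the limit $U$ being the solution of \eqref{eq:2.11}. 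This step resolves the apparent circularity in the statement: $\mathcal{M}_1$, and hence $U$, depends only on $W$ and $Z_2$, so it is determined before the remaining noises enter $\mathcal{M}^{n,2}$.

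For the second component I would use the reformulation of Lemma \ref{lem:gambar} to write, following \eqref{eq:newmain2}, $n\,\mathcal{M}^{n,2}=\sum_{r=1}^4 n\,\bar\Gamma^{n}(r)+n\,\tilde N^{nm}+n\,\tilde M^{nm,1}-\tfrac n2\,\tilde M^{nm,3}$, and split it into a \emph{pure noise} part and a \emph{$U^n$-dependent} part. For the pure noise part I would recognize each $n\,\bar\Gamma^{n}(r)$ as a discrete integral of a bounded smooth coefficient frozen at $X_{\frac{i-1}{n}}$: $n\,\bar\Gamma^{n}(1)$ integrates against $Z_0^n$ and against $\Delta_n$, $n\,\bar\Gamma^{n}(2)$ against $W$, $n\,\bar\Gamma^{n}(3)$ against $Z_1^n$ and $n\,\bar\Gamma^{n}(4)$ against $Z_3^n$, the scalings matching because $n\Delta_n=1$ and because $Z_1^n,Z_3^n$ already carry the factor $n$. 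Using Lemma \ref{prop:1} to replace $Z_0^n$ by $t\,\Iq$ and Theorem \ref{thm:1} with the uniform tightness property \eqref{ut} to pass the limit inside the integrals against $Z_1^n$, $Z_3^n$ and $W$, these converge to $\bar\Gamma_{\ell,t}(1),\dots,\bar\Gamma_{\ell,t}(4)$ after coefficient freezing. For the $U^n$-dependent part I would use $n=(\sqrt n)^2$ to recast the quadratic terms in \eqref{eq:Ntild}, \eqref{eq:M1tild} and \eqref{eq:M3tild} as bilinear expressions in $\sqrt n\,U^n$; the joint convergence of the second step then gives $n\,\tilde N^{nm}\to\tilde N$ and $n\,\tilde M^{nm,1}\to\tilde M^{1}$ (integrators $ds$ and $dW^j$), while for $-\tfrac n2\,\tilde M^{nm,3}$ the coupling of $\sqrt n\,U^n$ with the antisymmetric increments of $Z_2^n$ yields $-\tfrac12\,\tilde M^{3}$. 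Collecting all limits identifies $\mathcal{M}_2$ and, jointly with the first step, the couple $(\mathcal{M}_1,\mathcal{M}_2)$.

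The hard part will be this $U^n$-dependent part, and specifically the term $\tilde M^{nm,3}$: it is a genuine stochastic integral whose random integrand $\dot h_{\ell\bullet\bullet}\blackdiamond\sqrt n\,U^n$ and whose integrator $Z_2^n$ converge only jointly and only stably, so passing to the limit requires simultaneously the joint stable convergence secured in the second step and the uniform tightness \eqref{ut} of $Z_2^n$ in order to legitimately interchange limit and integration. The coefficient-freezing estimates, based on Lemma \ref{lem:1}, Lemma \ref{Lp} and Corollary \ref{cor:1}, must moreover be carried out with $L^p$ control uniform in the grid so as not to disturb this delicate interchange, while keeping track that the limiting integrand $U_s$ is itself the $(W,Z_2)$-functional produced by $\mathcal{M}_1$.
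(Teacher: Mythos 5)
Your proposal is correct and follows essentially the same route as the paper's proof: stable convergence of the noise vector $(W,Z_0^n,Z_1^n,Z_2^n,Z_3^n)$ via Theorem \ref{thm:1} and Lemma \ref{lm:d5}, coefficient freezing plus the \eqref{ut} property and Theorem \ref{thm:B5} to handle $\sqrt{n}\mathcal{M}^{n,1}$, Theorem \ref{thm:2.5 1998} with Lemma \ref{lem:R1} to append $\sqrt{n}U^n$ to the joint limit, and then the same split of $n\mathcal{M}^{n,2}$ (via Lemma \ref{lem:gambar}) into the pure-noise integrals $n\bar\Gamma^n(r)$ and the $\sqrt{n}U^n$-bilinear terms $n\tilde N^{nm}$, $n\tilde M^{nm,1}$, $n\tilde M^{nm,3}$, each passed to the limit by Theorem \ref{thm:B5}. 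Your identification of $\tilde M^{nm,3}$ as the delicate term, requiring the previously secured joint stable convergence together with the uniform tightness of $Z_2^n$, is exactly how the paper's argument resolves it.
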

\begin{proof}
At first, let us denote $\rho^n=(W,Z^n_0,Z^n_1,Z^n_2,Z^n_3)$. From Lemma \ref{prop:1} and Theorem \ref{thm:1} combined with Lemma \ref{lm:d5}, we deduce that $\rho^n\stackrel{\rm stably}{\Rightarrow}\rho,$
as $n\to\infty$  with $\rho=(W,Z_0,Z_1,Z_2,Z_3)$.
Besides, as the coefficients $\dot{f}^n_i$ and  $\dot{g}^n_i$ are functions of vector points lying between $X^{nm}_{\frac{i-1}{n}}$ and $X^{nm,\sigma}_{\frac{i-1}{n}}$, the equation \eqref{eq:3.2} can be rewritten into the following continuous form 
\begin{multline*}
	\sqrt{n}U^n_{t}=\sum_{j=0}^q\int_{0}^{\eta_n(t)}\dot{F}^{n,j}_{\eta_n(s)}\blackdiamond \sqrt{n}U^n_{\eta_n(s)}dY^j_s-\int_{0}^{\eta_n(t)}(\mathbb{H}(X^{nm}_{\eta_n(s)})+\mathbb{H}(X^{nm,\sigma}_{\eta_n(s)}))\blackdiamond dZ_{2,s}^n+\sqrt{n}\mathcal{R}^{n,1}_{t},
\end{multline*}
where \begin{align*}
\dot{F}^{n,j}_{\frac{i-1}{n}}=	\left\{\begin{array}{ll}\dot{f}^{n}_i,&j=0\\(\dot{g}^{n}_i)_{\bullet j},&j\in\{1,\dots,q\}\end{array}\right., \hskip 0.1cm \textrm{ where }(\dot{g}^{n}_i)_{\bullet j}=((\dot{g}^{n}_i)_{1 j},\hdots,(\dot{g}^{n}_i)_{d j})^{\top}.
	\end{align*}
Here we used that $\int_{\frac{i-1}{n}}^{\frac i n}dZ^n_{2,s}=Z^n_{2,\frac i n}-Z^n_{2,\frac{i-1}{n}}$ and $Y_t=(t,W_t^1,\hdots,W_t^q)^{\top}$.  Thanks to lemmas \ref{lem:1} and  \ref{Lp}, under assumption  (\nameref{Assume}) the process $(\mathbb{H}(X^{nm})+\mathbb{H}(X^{nm,\sigma}))-(2\mathbb{H}(X))\stackrel{L^p}{\rightarrow}0$. 
 Then, since $\rho^n$  is  \eqref{ut} (see Theorem \ref{thm:1}) we deduce thanks to  Theorem \ref{thm:B5} that as $n\to\infty$
 $$(\rho^n,\sqrt{n}\mathcal{M}^{n,1})=(\rho^n,\int(\mathbb{H}(X^{nm}_{\eta_n(s)})+\mathbb{H}(X^{nm,\sigma}_{\eta_n(s)}))\blackdiamond dZ_{2,s}^n)\stackrel{\rm stably}{\Rightarrow}(\rho,\int2\mathbb{H}(X_s)\blackdiamond dZ_{2,s}).$$
 Moreover, under assumption (\nameref{Assume}) by  lemmas \ref{lem:1} and  \ref{Lp}, it is straightforward that for any $j\in\{0,\dots,q\}$,  $\int\dot{F}^{n,j}_{\eta_n(s)}\blackdiamond \mathds{1}_ddY^j_s-\int\dot{F}^{j}_{s}\blackdiamond \mathds{1}_ddY^j_s\stackrel{L^p}{\rightarrow}0$, with $\mathds{1}_d=(1,\hdots,1)^{\top}$. Thus, by Lemma \ref{lm:d5} we deduce that as $n\to\infty$
   \begin{align*}(\rho^n,\sqrt{n}\mathcal{M}^{n,1},\int\dot{F}^{n,j}_{\eta_n(s)}\blackdiamond \mathds{1}_ddY^j_s)\stackrel{\rm stably}{\Rightarrow}(\rho,\int2\mathbb{H}(X_s)\blackdiamond dZ_{2,s},\int\dot{F}^{j}_{s}\blackdiamond \mathds{1}_ddY^j_s),
   \end{align*}
   with $\dot{F}^0_s=\nabla f(X_s)$ and for any $j\in\{1,\dots,q\}$, $\dot{F}^j_s=\nabla g_{\bullet j}(X_s)$.
Therefore, by Lemma \ref{lem:R1} and Theorem \ref{thm:2.5 1998} we get that 
\begin{align}\label{eq:vect}(\rho^n,\sqrt{n}\mathcal{M}^{n,1}, \sqrt{n}U^n)\stackrel{\rm stably}{\Rightarrow}(\rho,J,U), \quad \mbox{as } n\to\infty.
\end{align}
Now let us recall that \eqref{eq:Ntild}, \eqref{eq:M1tild} and \eqref{eq:M3tild}, can be rewritten into a continuous  form
\begin{align*}
	n{\tilde N}^{nm}_{\ell,t}=&\int_0^{\eta_n(t)}\frac{1}{16}\sqrt{n}{U^n_{\eta_n(t)}}^{\top}\left(\nabla^2 f_{\ell}(\zeta^{n,1}_{\eta_n(t)})+\nabla^2 f_{\ell}(\zeta^{n,2}_{\eta_n(t)})\right)\sqrt{n}U^n_{\eta_n(t)}ds,\\
	n{\tilde M}^{nm,1}_{\ell,t}=&\int_0^{\eta_n(t)}\frac{1}{16}\sum_{j'=1}^q\sqrt{n}{U^n_{\eta_n(t)}}^{\top} \left(\nabla^2g_{\ell j'}(\zeta^{n,3}_{\eta_n(t)})+\nabla^2g_{\ell j'}(\zeta^{n,4}_{\eta_n(t)})\right)\sqrt{n}{U^n_{\eta_n(t)}}dW^{j'}_s,\\
	n{\tilde M}^{nm,3}_{\ell,t}=&\int_0^{\eta_n(t)}\Big[{{\dot h}}^{n,n\eta_n(t)+1,2}_{\ell\bullet\bullet}\blackdiamond\sqrt{n}{U^n_{\eta_n(t)}}\Big] \blackdiamond dZ^n_{2,s}.
\end{align*}
Under the assumption (\nameref{Assume}) and thanks to lemmas \ref{lem:1} and  \ref{Lp} combined with \eqref{eq:vect}, we deduce by Theorem \ref{thm:B5} that 
\begin{align*}(\rho^n,\sqrt{n}\mathcal{M}^{n,1}, n{\tilde N}^{nm},n{\tilde M}^{nm,1},n{\tilde M}^{nm,3})\stackrel{\rm stably}{\Rightarrow}(\rho,J, {\tilde N},{\tilde M}^{1},{\tilde M}^{3})\quad \mbox{as } n\to\infty.
\end{align*}
Similarly, by rewriting $\bar{\Gamma}^n(r)$, $r\in\{1,\dots,4\}$ in continuous forms  we deduce by Theorem \ref{thm:B5}
\begin{multline*}(\rho^n,\sqrt{n}\mathcal{M}^{n,1}, n{\tilde N}^{nm},n{\tilde M}^{nm,1},n{\tilde M}^{nm,3},\bar{\Gamma}^n(1),\bar{\Gamma}^n(2),\bar{\Gamma}^n(3),\bar{\Gamma}^n(4))\\\stackrel{\rm stably}{\Rightarrow}(\rho,J, {\tilde N},{\tilde M}^{1},{\tilde M}^{3},\bar{\Gamma}(1),\bar{\Gamma}(2),\bar{\Gamma}(3),\bar{\Gamma}(4))\quad \mbox{as } n\to\infty,
\end{multline*}
where for $i=1,\dots,4$ and $1\le \ell\le d$  the $\ell$-th component of  the process $\bar{\Gamma}(i)$  is given by the process  $\bar{\Gamma}_{\ell}(i)$. This completes the proof. 
\end{proof}
\appendix

\section{Proofs concerning analysis of $U^n$ and $V^n$}\label{app:B}
\begin{proof}[Proof of Lemma \ref{lem:R1}]
By \eqref{eq:3.2}, we recall that
\begin{multline*}
	\mathcal{R}^{n,1}_{t}=\sum_{i=1}^{[nt]}(\mathbb{H}(X^{nm}_{\frac{i-1}{n}})-\mathbb{H}(X^{nm,\sigma}_{\frac{i-1}{n}}))\blackdiamond (\Delta W_{i}\Delta W_{i}^{\top}-{\Iq}\Delta_n)\\+\sum_{i=1}^{[nt]}(M^{nm,\Id}_{\frac{i-1}{n}}-M^{nm,\sigma}_{\frac{i-1}{n}})+\sum_{i=1}^{[nt]}(N^{nm,\Id}_{\frac{i-1}{n}}-N^{nm,\sigma}_{\frac{i-1}{n}}).
\end{multline*}	
At first,  it is obvious that $\mathbb{E}(\sqrt{n}\sum_{i=1}^{[nt]}(\mathbb{H}(X^{nm}_{\frac{i-1}{n}})-\mathbb{H}(X^{nm,\sigma}_{\frac{i-1}{n}}))\blackdiamond (\Delta W_{i}\Delta W_{i}^{\top}-{\Iq}\Delta_n)|\mathcal{F}_{\frac{i-1}{n}})=0$. Then, by the discrete BDG inequality  combined with Lemma \ref{Lp} and assumption (\nameref{Assume}), there is a generic positive constant $C$ such that
\begin{align*}
	&n^{p/2}\mathbb{E}\left(\sup_{0\leq t\leq 1}|\sum_{i=1}^{[nt]}(\mathbb{H}(X^{nm}_{\frac{i-1}{n}})-\mathbb{H}(X^{nm,\sigma}_{\frac{i-1}{n}}))\blackdiamond (\Delta W_{i}\Delta W_{i}^{\top}-{\Iq}\Delta_n)|^p\right)\\
	&\leq Cn^{p/2}\mathbb{E} \left(\sum_{i=1}^{n}|\mathbb{H}(X^{nm}_{\frac{i-1}{n}})-\mathbb{H}(X^{nm,\sigma}_{\frac{i-1}{n}})|^2||\Delta W_{i}\Delta W_{i}^{\top}-{\Iq}\Delta_n|^2\right)^{p/2}\\
	&{\leq} C n^{p-1}\sum_{i=1}^{n}\mathbb{E}|\mathbb{H}(X^{nm}_{\frac{i-1}{n}})-\mathbb{H}(X^{nm,\sigma}_{\frac{i-1}{n}})|^p\mathbb{E}|\Delta W_{i}\Delta W_{i}^{\top}-{\Iq}\Delta_n|^p \leq C \Delta_n^{p/2}.
\end{align*}
Then the process  $\sqrt{n}\sum_{i=1}^{[n.]}(\mathbb{H}(X^{nm}_{\frac{i-1}{n}})-\mathbb{H}(X^{nm,\sigma}_{\frac{i-1}{n}}))\blackdiamond (\Delta W_{i}\Delta W_{i}^{\top}-{\Iq}\Delta_n)\stackrel{L^p}{\rightarrow} 0$. 
In the same way as above,  we use the discrete  BDG inequality and \eqref{eq:2.6.3a}, there exists a positive constant $C$ such that 
\begin{align*}
        \mathbb{E}(\sup_{0\leq t\leq 1} |\sum_{i=1}^{[nt]}M^{nm,\Id}_{\frac{i-1}{n}}-M^{nm,\sigma}_{\frac{i-1}{n}}|^p)\leq& Cn^{p/2-1}\sum_{i=1}^n\mathbb{E}(|M^{nm,\Id}_{\frac{i-1}{n}}-M^{nm,\sigma}_{\frac{i-1}{n}}|^p)\leq C\Delta_n^p. 
\end{align*}
Therefore, we obtain also the convergence of the process $\sqrt{n} \sum_{i=1}^{[n.]}(M^{nm,\Id}_{\frac{i-1}{n}}-M^{nm,\sigma}_{\frac{i-1}{n}})\stackrel{L^p}{\rightarrow}0$.
Now, by $\eqref{eq:2.6.3b}$, we have
$\mathbb{E}(\sup_{0\leq t\leq 1} |\sum_{i=1}^{[nt]}(N^{nm,\Id}_{\frac{i-1}{n}}-N^{nm,\sigma}_{\frac{i-1}{n}})|^p)$ is bounded by
$$\ n^{p-1}\sum_{i=1}^n\mathbb{E}(|N^{nm,\Id}_{\frac{i-1}{n}}-N^{nm,\sigma}_{\frac{i-1}{n}}|^p)\leq n^{p-1}\sum_{i=1}^n2K_p\Delta_n^{2p}=2K_p\Delta_n^p.$$
Thus, we get  $\sqrt{n} \sum_{i=1}^{[n.]}(N^{nm,\Id}_{\frac{i-1}{n}}-N^{nm,\sigma}_{\frac{i-1}{n}})\stackrel{L^p}{\rightarrow}0$. 
Finally, we get the  \eqref{ut} of $Z^n_0$, $Z^{n}_{1}$, $Z^{n}_{2}$ and $Z^{n}_{3}$ thanks to Lemma \ref{lm:d1}.
\end{proof}
\begin{proof}[Proof of Lemma \ref{lem:R2}]
By \eqref{eq:cal-R2}, we recall that
\begin{multline*}
		\mathcal{R}^{n,2}_t=\frac{1}{2}\sum_{\tilde{\sigma}\in\{\Id,\sigma\}}\sum_{i=1}^{[nt]}\bigg({\tilde R}^{nm,\tilde{\sigma}}_{\frac{i-1}{n}}(0)+\sum_{r=0}^3R^{nm,\tilde{\sigma}}_{\frac{i-1}{n}}(r)\bigg)
		+\tilde{M}^{nm,2}_t\\+\sum_{i=1}^{[nt]}(\mathbb{H}(\bar{X}^{nm,\sigma}_{\frac{i-1}{n}})-\mathbb{H}(X^{n}_{\frac{i-1}{n}}))\blackdiamond (\Delta W_{i}\Delta W_{i}^{\top}-{\Iq}\Delta_n).
	\end{multline*}
At first, thanks to Jensen's inequality and \eqref{eq:tilR0}, there is a positive constant $C$ such that  
\begin{align*}n^p\mathbb{E}(\sup_{0\leq t\leq 1}|\sum_{i=1}^{[nt]}{\tilde R}^{nm,\tilde{\sigma}}_{\frac{i-1}{n}}(0)|^p)\le C n^{2p-1}\sum_{i=1}^{n}\mathbb{E}|{\tilde R}^{nm,\tilde{\sigma}}_{\frac{i-1}{n}}(0)|^p=o(1).\end{align*}
Then  we get $n\sum_{i=1}^{[n.]}{\tilde R}^{nm,\tilde{\sigma}}_{\frac{i-1}{n}}(0)\stackrel{L^p}{\rightarrow}0$ as $n\to\infty$.
Now, we consider $n\sum_{i=1}^{[nt]}R^{nm,\tilde{\sigma}}_{\frac{i-1}{n}}(r)$ for any $r\in\{0,\dots,3\}$.  
By the discrete BDG inequality, \eqref{eq:R0}, \eqref{eq:R1}, \eqref{eq:R2}, \eqref{eq:R3} and Jensen's inequality, there is a generic constant $C>0$ such that  for all $r\in\{0,\dots,3\}$ we have 
$$
n^p\mathbb{E}(\sup_{0\leq t\leq 1}|\sum_{i=1}^{[nt]}{ R}^{nm,\tilde{\sigma}}_{\frac{i-1}{n}}(r)|^p)\le C n^{p}\mathbb{E}\big(\sum_{i=1}^{n}|{ R}^{nm,\tilde{\sigma}}_{\frac{i-1}{n}}(r)|^2\big)^{p/2}
\le C n^{3p/2-1}\sum_{i=1}^{n}\mathbb{E}|{ R}^{nm,\tilde{\sigma}}_{\frac{i-1}{n}}(r)|^p=o(1).
$$
Then we get $n\sum_{i=1}^{[n.]}{R}^{nm,\tilde{\sigma}}_{\frac{i-1}{n}}(r)\stackrel{L^p}{\rightarrow}0$ as n$\to\infty$, for any $r\in\{0,\dots,3\}$.
Next, we recall  from \eqref{eq:M2tild} that for $\ell\in\{1,\dots,d\}$, the $\ell^{th}$ component of the generic term of the martingale triangular array $\tilde{M}^{nm,2}_t$ is  given by
\begin{align*}
	{\tilde M}^{nm,2}_{\ell,\frac{i-1}{n}}=&\frac{1}{4}\Big[{{\dot h}}^{n,i,1}_{\ell\bullet\bullet}\blackdiamond(X^{nm}_{\frac{i-1}{n}}-X^{nm,\sigma}_{\frac{i-1}{n}})\Big] \blackdiamond  (\Delta W_{i}\Delta W_{i}^{\top}-{\Iq}\Delta_n).
\end{align*}
Similarly, we use the discrete BDG and Jensen inequalities to get     
$
	\mathbb{E}(n^p\sup\limits_{0\leq t\leq 1}|\sum_{i=1}^{[nt]}\tilde M^{nm,2}_{\frac{i-1}{n}}|^p )
$
is  bounded by  $C n^{3p/2-1}\sum_{i=1}^{n}	\mathbb{E}|\tilde M^{nm,2}_{\frac{i-1}{n}}|^p$.
Besides, according to \eqref{eq:M2tild}, for $j$ and $j'\in\{1,\dots,q\}$, the $jj'$-th block is given by  $({\dot  h}_{\ell \bullet\bullet}^{n,i,1})_{j j'}=\nabla h_{\ell jj'}( \zeta^{n,5}_{\frac{i-1}{n}})-\nabla h_{\ell jj'}( \zeta^{n,6}_{\frac{i-1}{n}})\in \R^{d\times 1}$ where $ \zeta^{n,5}_{\frac{i-1}{n}}\in(X^{nm}_{\frac{i-1}{n}},\bar{X}^{nm,\sigma}_{\frac{i-1}{n}})$ and $ \zeta^{n,6}_{\frac{i-1}{n}}\in(X^{nm,\sigma}_{\frac{i-1}{n}},\bar{X}^{nm,\sigma}_{\frac{i-1}{n}})$. By using the independence between $\Delta W_i$ and $\mathcal{F}_{\frac{i-1}{n}}$, Cauchy-Schwarz inequality, Lemma \ref{Lp}, Corollary \ref{bar_LP}  and (\nameref{Assume}), we have 
\begin{align*}
	\max_{1\le i\le n}\mathbb{E}|\tilde{M}^{nm,2}_{\frac{i-1}{n}}|^p&\le C \Big[\max_{1\le i\le n}\mathbb{E}|{{\dot h}}^{n,i,1}_{\ell\bullet\bullet}|^{2p}\max_{1\le i\le n}\mathbb{E}|X^{nm}_{\frac{i-1}{n}}-X^{nm,\sigma}_{\frac{i-1}{n}}|^{2p}\Big]^{1/2}\max_{1\le i\le n}\mathbb{E} |\Delta W_{i}\Delta W_{i}^{\top}-{\Iq}\Delta_n|^p\\&\le C (\Delta_n^{p}\Delta_n^{p})^{1/2}\Delta_n^p= C\Delta_n^{2p}.	\end{align*} 
Therefore, we get
$$\mathbb{E}\left(n^p\sup_{0\leq t\leq 1}|\sum_{i=1}^{[nt]}\tilde{M}^{nm,2}_{\frac{i-1}{n}}|^p\right)= O(\Delta_n^{p/2}).$$
Finally, similarly as above, since $\mathbb{E}((\mathbb{H}(\bar{X}^{nm,\sigma}_{\frac{i-1}{n}})-\mathbb{H}(X^{n}_{\frac{i-1}{n}}))\blackdiamond (\Delta W_{i}\Delta W_{i}^{\top}-{\Iq}\Delta_n)|\mathcal{F}_{\frac{i-1}{n}})=0$,  by the discrete BDG and Jensen inequalities and as $\mathbb{E}|\Delta W_{i}\Delta W_{i}^{\top}-{\Iq}\Delta_n|^p=O(\Delta_n^p)$ , we get
$
n^{p}\mathbb{E}\big(\sup\limits_{0\leq t\leq 1}|\sum_{i=1}^{[nt]}(\mathbb{H}(\bar{X}^{nm,\sigma}_{\frac{i-1}{n}})-\mathbb{H}(X^{n}_{\frac{i-1}{n}}))\blackdiamond (\Delta W_{i}\Delta W_{i}^{\top}-{\Iq}\Delta_n)|^p\big)$
is bounded up to a positive multiplicative constant by 
$n^{p/2-1}\sum_{i=1}^{n}\mathbb{E}|\mathbb{H}(\bar{X}^{nm,\sigma}_{\frac{i-1}{n}})-\mathbb{H}(X^{n}_{\frac{i-1}{n}})|^p$. Next, thanks to Corollary \ref{bar_LP}  and assumption (\nameref{Assume}), we deduce that 
this upper bound is $O(\Delta_n^{p/2})$.
Then we get   $n\sum_{i=1}^{[n.]}(\mathbb{H}(\bar{X}^{nm,\sigma}_{\frac{i-1}{n}})-\mathbb{H}(X^{n}_{\frac{i-1}{n}}))\blackdiamond (\Delta W_{i}\Delta W_{i}^{\top}-{\Iq}\Delta_n)\stackrel{L^p}{\rightarrow} 0$ as $n\to\infty$.
\end{proof}

\section{Proof of essential lemmas}\label{app:A}
\begin{proof}[Proof of Lemma \ref{finer}]
By the tower property we have 
 \begin{align*}
\mathbb{E}( M^{nm,\tilde{\sigma},1}_{\ell,\frac{i-1}{n}}|\mathcal{F}_{\frac{i-1}{n}})&=\sum_{k=2}^m{\nabla f_{\ell}}(X^{nm,\tilde{\sigma}}_{\frac{i-1}{n}})^\top\sum_{k'=1}^{k-1}\mathbb{E}\bigg(g(X^{nm,\tilde{\sigma}}_{\frac{m(i-1)+k'-1}{nm}})\mathbb{E}\bigg(\delta W_{i\tilde{\sigma}(k')}\frac{\Delta_n}{m}|\mathcal{F}_{\frac{i-1}{n}} ^{k'-1,\tilde \sigma}\bigg)
|\mathcal{F}_{\frac{i-1}{n}}\bigg)
\\
\mathbb{E}( M^{nm,\tilde{\sigma},2}_{\ell,\frac{i-1}{n}}|\mathcal{F}_{\frac{i-1}{n}})&=\sum_{k=2}^m\sum_{j=1}^q\mathbb{E}\Bigg(\Bigg[\nabla g_{\ell j}^{\top}(X^{nm,\tilde{\sigma}}_{\frac{i-1}{n}})\sum_{k'=1}^{k-1}\bigg(f(X^{nm,\tilde{\sigma}}_{\frac{m(i-1)+k'-1}{nm}})\frac{\Delta_n}{m}+\\
&\hspace{-2cm}\Big[ {\dot g}^n_{ik'}\blackdiamond(X^{nm,\tilde{\sigma}}_{\frac{m(i-1)+k'-1}{nm}}-X^{nm,\tilde{\sigma}}_{\frac{i-1}{n}})\Big]\delta W_{i\tilde{\sigma}(k')}+\mathbb{H}(X^{nm,\tilde{\sigma}}_{\frac{m(i-1)+k'-1}{nm}})\blackdiamond(\delta W_{i\tilde{\sigma}(k')}\delta W_{i\tilde{\sigma}(k')}^{\top}-{\Iq}\frac{\Delta_n}{m})\bigg)\\
&\hspace{-2cm}+\frac{1}{2}(X^{nm,\tilde{\sigma}}_{\frac{m(i-1)+k-1}{nm}}-X^{nm,\tilde{\sigma}}_{\frac{i-1}{n}})^{\top}\nabla^2g_{\ell j}(\xi^{2,n}_{ik})(X^{nm,\tilde{\sigma}}_{\frac{m(i-1)+k-1}{nm}}-X^{nm,\tilde{\sigma}}_{\frac{i-1}{n}})\Bigg]\mathbb{E}(\delta W_{i\tilde{\sigma}(k)}^j |\mathcal{F}_{\frac{i-1}{n}} ^{k-1,\tilde \sigma})
|\mathcal{F}_{\frac{i-1}{n}}\Bigg)\\
\mathbb{E}( M^{nm,\tilde{\sigma},3}_{\ell,\frac{i-1}{n}}|\mathcal{F}_{\frac{i-1}{n}})&=\sum_{k=2}^m\mathbb{E}\bigg( \Big[ {\dot  h}_{\ell \bullet\bullet}^{n,ik}\blackdiamond(X^{nm,\tilde{\sigma}}_{\frac{m(i-1)+k-1}{nm}}-X^{nm,\tilde{\sigma}}_{\frac{i-1}{n}})\Big]\\&\hspace{5cm}\blackdiamond
\mathbb{E}\bigg( \delta W_{i\tilde{\sigma}(k)}\delta W_{i\tilde{\sigma}(k)}^{\top}-{\Iq} \frac{\Delta_n}{m}|\mathcal{F}_{\frac{i-1}{n}} ^{k-1,\tilde\sigma}\bigg)
|\mathcal{F}_{\frac{i-1}{n}}\bigg).
 \end{align*}
Since $\delta W_{i,\tilde\sigma(k)}$ is independent of  $\mathcal{F}_{\frac{i-1}{n}} ^{k-1,\tilde \sigma}$, $k\in\{1,\dots,m\}$, 
$\mathbb{E}( \delta W_{i,\tilde\sigma(k)})=0$ and
$\mathbb{E}( \delta W_{i\tilde{\sigma}(k)}\delta W_{i\tilde{\sigma}(k)}^{\top}-{\Iq} \frac{\Delta_n}{m})=0$, we get $\mathbb{E}(M^{nm,\sigma}_{\frac{i-1}{n}}|\mathcal{F}_{\frac{i-1}{n}})=0$. Now, it remains to have upper bounds for $\mathbb{E}(|M^{nm,\sigma}_{\frac{i-1}{n}}|^p)$ and $\mathbb{E}(|N^{nm,\sigma}_{\frac{i-1}{n}}|^p)$. Thanks to  our assumption (\nameref{Assume}) it is easy to see the existence of $C>0$ s.t. 
 \begin{align*}
|M^{nm,\tilde{\sigma},1}_{\ell,\frac{i-1}{n}}|&\leq C \sum_{k=2}^m \sum_{k'=1}^{k-1}\Big(1+\Big|X^{nm,\tilde{\sigma}}_{\frac{m(i-1)+k'-1}{nm}}\Big|\Big)\Big|\delta W_{i\tilde{\sigma}(k')}\frac{\Delta_n}{m}\Big|
\\
|M^{nm,\tilde{\sigma},2}_{\ell,\frac{i-1}{n}}|&\leq C\sum_{k=2}^m\sum_{j=1}^q\Bigg(\sum_{k'=1}^{k-1}\Bigg[(1+|X^{nm,\tilde{\sigma}}_{\frac{m(i-1)+k'-1}{nm}}|)(\frac{\Delta_n}{m}+|\delta W_{i\tilde{\sigma}(k')}\delta W_{i\tilde{\sigma}(k')}^{\top}-{\Iq}\frac{\Delta_n}{m}|)\\
&+|X^{nm,\tilde{\sigma}}_{\frac{m(i-1)+k'-1}{nm}}-X^{nm,\tilde{\sigma}}_{\frac{i-1}{n}}||\delta W_{i\tilde{\sigma}(k')}|\Bigg]
+\frac{1}{2}|X^{nm,\tilde{\sigma}}_{\frac{m(i-1)+k-1}{nm}}-X^{nm,\tilde{\sigma}}_{\frac{i-1}{n}}|^2\Bigg)|\delta W_{i\tilde{\sigma}(k)}^j |\\
|M^{nm,\tilde{\sigma},3}_{\ell,\frac{i-1}{n}}|&\leq C\sum_{k=2}^m |X^{nm,\tilde{\sigma}}_{\frac{m(i-1)+k-1}{nm}}-X^{nm,\tilde{\sigma}}_{\frac{i-1}{n}}|
|\delta W_{i\tilde{\sigma}(k)}\delta W_{i\tilde{\sigma}(k)}^{\top}-{\Iq} \frac{\Delta_n}{m}|,
 \end{align*}
 Here, the constant $C$ is a generic positive constant whose values may vary from line to line. We obtain \eqref{eq:2.6.3a}  using  the independence between the above increments combined with Lemma \ref{Lp} and the fact that $\mathbb{E}|\delta W_{i,\tilde\sigma(k)}|^p=O(\Delta_n^{p/2})$  and
$\mathbb{E}|\delta W_{i\tilde{\sigma}(k)}\delta W_{i\tilde{\sigma}(k)}^{\top}-{\Iq} \frac{\Delta_n}{m}|^p=O(\Delta_n^{p})$. 
Similar arguments give us inequality \eqref{eq:2.6.3b}. 
\end{proof}



\begin{proof}[Proof of Lemma \ref{lem:N}]
Thanks to equations \eqref{eq:iter1} and \eqref{eq:N} combined with \eqref{prop:bd}, we deduce relation \eqref{exp:N} with  :
 \begin{multline*}
	R^{nm,\tilde{\sigma}}_{\ell,\frac{i-1}{n}}(0)=\frac{\Delta_n}{m}\sum_{\substack{k,k'=1\\k'<k}}^m\nabla f^{\top}_{\ell}(X^{nm,\tilde{\sigma}}_{\frac{i-1}{n}})\mathbb{H}(X^{nm,\tilde{\sigma}}_{\frac{m(i-1)+k'-1}{nm}})\blackdiamond(\delta W_{i\tilde{\sigma}(k')}\delta W_{i\tilde{\sigma}(k')}^{\top}-{\Iq} \frac{\Delta_n}{m})\\+\Big[g(X_{\frac{i-1}{n}})^\top\nabla^2f_{\ell}(X_{\frac{i-1}{n}})g(X_{\frac{i-1}{n}})\Big]\blackdiamond \sum_{\substack{k,k',k''=1\\k''<k'<k}}^{m}\delta W_{i\tilde{\sigma}(k')}\delta W_{i\tilde{\sigma}(k'')}^{\top}\frac{\Delta_n}{m}
\end{multline*}
and
 \begin{align*}
 	&{\tilde R}^{nm,\tilde{\sigma}}_{\ell,\frac{i-1}{n}}(0)=\frac{\Delta_n}{m}\sum_{\substack{k,k'=1\\k'<k}}^m\nabla f^{\top}_{\ell}(X^{nm,\tilde{\sigma}}_{\frac{i-1}{n}})\big(f(X^{nm,\tilde{\sigma}}_{\frac{m(i-1)+k'-1}{nm}})-f(X_{\frac{i-1}{n}})\big)\frac{\Delta_n}{m}\\&+	\frac{m-1}{2m}\big(\nabla f^{\top}_{\ell}(X^{nm,\tilde{\sigma}}_{\frac{i-1}{n}})-\nabla f^{\top}_{\ell}(X_{\frac{i-1}{n}})\big)f(X_{\frac{i-1}{n}})\Delta_n^2\\&+\frac{1}{2}\sum_{k=2}^m (X^{nm,\tilde{\sigma}}_{\frac{m(i-1)+k-1}{nm}}-X^{nm,\tilde{\sigma}}_{\frac{i-1}{n}})^{\top}\big(\nabla^2f_{\ell}(\xi^{1,n}_{ik})-\nabla^2f_{\ell}(X_{\frac{i-1}{n}})\big)(X^{nm,\tilde{\sigma}}_{\frac{m(i-1)+k-1}{nm}}-X^{nm,\tilde{\sigma}}_{\frac{i-1}{n}})\frac{\Delta_n}{m}\\&+\frac{1}{2}\sum_{\substack{k,k'=1\\k'<k}}^m ( f(X^{nm,\tilde{\sigma}}_{\frac{m(i-1)+k'-1}{nm}})\frac{\Delta_n}{m}	+\mathbb{H}(X^{nm,\tilde{\sigma}}_{\frac{m(i-1)+k'-1}{nm}})\blackdiamond(\delta W_{i\tilde{\sigma}(k')}\delta W_{i\tilde{\sigma}(k')}^{\top}-{\Iq}\frac{\Delta_n}{m}))^{\top}\\&\hspace{7.5cm}\times\nabla^2f_{\ell}(X_{\frac{i-1}{n}})(X^{nm,\tilde{\sigma}}_{\frac{m(i-1)+k-1}{nm}}-X^{nm,\tilde{\sigma}}_{\frac{i-1}{n}})\frac{\Delta_n}{m}\\&+\frac{1}{2}\sum_{\substack{k,k'=1\\k'<k}}^m \bigg(\big(g(X^{nm,\tilde{\sigma}}_{\frac{m(i-1)+k'-1}{nm}})-g(X_{\frac{i-1}{n}})\big)\delta W_{i\tilde{\sigma}(k')}\bigg)^{\top}\nabla^2f_{\ell}(X_{\frac{i-1}{n}})(X^{nm,\tilde{\sigma}}_{\frac{m(i-1)+k-1}{nm}}-X^{nm,\tilde{\sigma}}_{\frac{i-1}{n}})\frac{\Delta_n}{m}\\&+\frac{1}{2}\sum_{k=2}^m \sum_{k'=1}^{k-1}\big(g(X_{\frac{i-1}{n}})\delta W_{i\tilde{\sigma}(k')}\big)^{\top}\nabla^2f_{\ell}(X_{\frac{i-1}{n}})\bigg(\sum_{k'=1}^{k-1} f(X^{nm,\tilde{\sigma}}_{\frac{m(i-1)+k'-1}{nm}})\frac{\Delta_n}{m}
 	+\sum_{k'=1}^{k-1}\mathbb{H}(X^{nm,\tilde{\sigma}}_{\frac{m(i-1)+k'-1}{nm}})\\&\hspace{2.5cm}\blackdiamond(\delta W_{i\tilde{\sigma}(k')}\delta W_{i\tilde{\sigma}(k')}^{\top}-{\Iq}\frac{\Delta_n}{m})+\sum_{k'=1}^{k-1}\big(g(X^{nm,\tilde{\sigma}}_{\frac{m(i-1)+k'-1}{nm}})-g(X_{\frac{i-1}{n}})\big)\delta W_{i\tilde{\sigma}(k')}\bigg)\frac{\Delta_n}{m}.
 \end{align*}
By the tower property we have 
\begin{align*}
	&\mathbb{E}(R^{nm,\tilde{\sigma}}_{\ell,\frac{i-1}{n}}(0)|\mathcal{F}_{\frac{i-1}{n}})=\mathbb{E}\bigg(\frac{\Delta_n}{m}\sum_{\substack{k,k'=1\\k'<k}}^m\nabla f^{\top}_{\ell}(X^{nm,\tilde{\sigma}}_{\frac{i-1}{n}})\mathbb{H}(X^{nm,\tilde{\sigma}}_{\frac{m(i-1)+k'-1}{nm}})\blackdiamond\mathbb{E}(\delta W_{i\tilde{\sigma}(k')}\delta W_{i\tilde{\sigma}(k')}^{\top}-{\Iq} \frac{\Delta_n}{m}|\mathcal{F}^{k'-1,\tilde{\sigma}}_{\frac{i-1}{n}})\\&+\Big[g(X_{\frac{i-1}{n}})^\top\nabla^2f_{\ell}(X_{\frac{i-1}{n}})g(X_{\frac{i-1}{n}})\Big]\blackdiamond \sum_{\substack{k,k',k''=1\\k''<k'<k}}^{m}\mathbb{E}(\delta W_{i\tilde{\sigma}(k')}|\mathcal{F}^{k'-1,\tilde{\sigma}}_{\frac{i-1}{n}})\delta W_{i\tilde{\sigma}(k'')}^{\top}\frac{\Delta_n}{m}\bigg|\mathcal{F}_{\frac{i-1}{n}}\bigg).
	\end{align*}
Since $\delta W_{i\tilde\sigma(k)}$ is independent of  $\mathcal{F}_{\frac{i-1}{n}} ^{k-1,\tilde \sigma}$, $k\in\{1,\dots,m\}$, 
$\mathbb{E}( \delta W_{i,\tilde\sigma(k)})=0$ and
$\mathbb{E}( \delta W_{i\tilde{\sigma}(k)}\delta W_{i\tilde{\sigma}(k)}^{\top}-{\Iq} \frac{\Delta_n}{m})=0$, we get $\mathbb{E}(R^{nm,\tilde{\sigma}}_{\ell,\frac{i-1}{n}}(0)|\mathcal{F}_{\frac{i-1}{n}})=0$. 
Thanks to  our assumption (\nameref{Assume}) it is easy to see the existence of $C>0$ s.t. 
 \begin{multline*}
	|R^{nm,\tilde{\sigma}}_{\ell,\frac{i-1}{n}}(0)|\le C\Delta_n\sum_{\substack{k,k'=1\\k'<k}}^m(1+|X^{nm,\tilde{\sigma}}_{\frac{m(i-1)+k'-1}{nm}}|)|\delta W_{i\tilde{\sigma}(k')}\delta W_{i\tilde{\sigma}(k')}^{\top}-{\Iq} \frac{\Delta_n}{m})|\\+C{\Delta_n}(1+|X_{\frac{i-1}{n}}|^2) \sum_{\substack{k,k',k''=1\\k''<k'<k}}^{m}|\delta W_{i\tilde{\sigma}(k')}||\delta W_{i\tilde{\sigma}(k'')}|,
\end{multline*}
and
 \begin{align*}
	&|{\tilde R}^{nm,\tilde{\sigma}}_{\ell,\frac{i-1}{n}}(0)|\le C \Delta_n^2\sum_{\substack{k,k'=1\\k'<k}}^m|X^{nm,\tilde{\sigma}}_{\frac{m(i-1)+k'-1}{nm}}-X_{\frac{i-1}{n}}|+C	\Delta_n^2|X^{nm,\tilde{\sigma}}_{\frac{i-1}{n}}-X_{\frac{i-1}{n}}|(1+|X_{\frac{i-1}{n}}|)\\&+C{\Delta_n}\sum_{k=2}^m |X^{nm,\tilde{\sigma}}_{\frac{m(i-1)+k-1}{nm}}-X^{nm,\tilde{\sigma}}_{\frac{i-1}{n}}|^{2}|\nabla^2f_{\ell}^{\top}(\xi_{ik}^{1,n})-\nabla^2f_{\ell}^{\top}(X_{\frac{i-1}{n}})|\\&+C\Delta_n\sum_{\substack{k,k'=1\\k'<k}}^m (1+|X^{nm,\tilde{\sigma}}_{\frac{m(i-1)+k'-1}{nm}}|)(\frac{\Delta_n}{m}	+|\delta W_{i\tilde{\sigma}(k')}\delta W_{i\tilde{\sigma}(k')}^{\top}-{\Iq}\frac{\Delta_n}{m}|)|X^{nm,\tilde{\sigma}}_{\frac{m(i-1)+k-1}{nm}}-X^{nm,\tilde{\sigma}}_{\frac{i-1}{n}}|\\&+C\Delta_n\sum_{\substack{k,k'=1\\k'<k}}^m |X^{nm,\tilde{\sigma}}_{\frac{m(i-1)+k'-1}{nm}}-X_{\frac{i-1}{n}}||\delta W_{i\tilde{\sigma}(k')}||X^{nm,\tilde{\sigma}}_{\frac{m(i-1)+k-1}{nm}}-X^{nm,\tilde{\sigma}}_{\frac{i-1}{n}}|\\&+C\Delta_n\sum_{k=2}^m\bigg( \sum_{k'=1}^{k-1}(1+|X_{\frac{i-1}{n}}|)|\delta W_{i\tilde{\sigma}(k')}|\bigg)\bigg(\sum_{k'=1}^{k-1}|X^{nm,\tilde{\sigma}}_{\frac{m(i-1)+k'-1}{nm}}-X_{\frac{i-1}{n}}||\delta W_{i\tilde{\sigma}(k')}|\\&+\sum_{k'=1}^{k-1} (1+|X^{nm,\tilde{\sigma}}_{\frac{m(i-1)+k'-1}{nm}}|)(\frac{\Delta_n}{m}+|\delta W_{i\tilde{\sigma}(k')}\delta W_{i\tilde{\sigma}(k')}^{\top}-{\Iq}\frac{\Delta_n}{m}|)\bigg).
\end{align*}
Here, the constant $C$ is a generic positive constant whose values may vary from line to line.
Next, using  the independence between the  increments, we get
 \begin{multline*}
	\mathbb{E}|R^{nm,\tilde{\sigma}}_{\ell,\frac{i-1}{n}}(0)|^p\le C{\Delta^p_n}\sum_{\substack{k,k'=1\\k'<k}}^m(1+\mathbb{E}|X^{nm,\tilde{\sigma}}_{\frac{m(i-1)+k'-1}{nm}}|^p)\mathbb E|\delta W_{i\tilde{\sigma}(k')}\delta W_{i\tilde{\sigma}(k')}^{\top}-{\Iq} \frac{\Delta_n}{m}|^p	\\+C{\Delta_n^p}(1+\mathbb{E}|X_{\frac{i-1}{n}}|^{2p}) \sum_{\substack{k,k',k''=1\\k''<k'<k}}^{m}\mathbb{E}|\delta W_{i\tilde{\sigma}(k')}|^p\mathbb{E}|\delta W_{i\tilde{\sigma}(k'')}|^p,
\end{multline*}
and  by Cauchy-Schwarz inequality combined with the independence between the  increments, we also get
\begin{align*}
	&\mathbb{E}|{\tilde R}^{nm,\tilde{\sigma}}_{\ell,\frac{i-1}{n}}(0)|^p\le C{\Delta^{2p}_n}\sum_{\substack{k,k'=1\\k'<k}}^m\mathbb{E}|X^{nm,\tilde{\sigma}}_{\frac{m(i-1)+k'-1}{nm}}-X_{\frac{i-1}{n}}|^p\\&+	C\Delta_n^{2p}\big(\mathbb{E}|X^{nm,\tilde{\sigma}}_{\frac{i-1}{n}}-X_{\frac{i-1}{n}}|^{2p}\big)^{1/2}(1+\mathbb{E}|X_{\frac{i-1}{n}}|^{2p})^{1/2}\\&+C\Delta_n^p\sum_{k=2}^m \big(\mathbb{E}|X^{nm,\tilde{\sigma}}_{\frac{m(i-1)+k-1}{nm}}-X^{nm,\tilde{\sigma}}_{\frac{i-1}{n}}|^{4p}\big)^{1/2}\big(\mathbb{E}|\nabla^2f_{\ell}^{\top}(\xi_{ik}^{1,n})-\nabla^2f_{\ell}^{\top}(X_{\frac{i-1}{n}})|^{2p}\big)^{1/2}\\&+C\Delta_n^p\sum_{\substack{k,k'=1\\k'<k}}^m\bigg( (1+\mathbb{E}|X^{nm,\tilde{\sigma}}_{\frac{m(i-1)+k'-1}{nm}}|^{2p})({\Delta_n^{2p}}+\mathbb E|\delta W_{i\tilde{\sigma}(k')}\delta W_{i\tilde{\sigma}(k')}^{\top}-{\Iq}\frac{\Delta_n}{m}|^{2p})\bigg)^{1/2}\\&\times\big(\mathbb{E}|X^{nm,\tilde{\sigma}}_{\frac{m(i-1)+k-1}{nm}}-X^{nm,\tilde{\sigma}}_{\frac{i-1}{n}}|^{2p}\big)^{1/2}\\&+C\Delta_n^p\sum_{\substack{k,k'=1\\k'<k}}^m \big(\mathbb{E}|X^{nm,\tilde{\sigma}}_{\frac{m(i-1)+k'-1}{nm}}-X_{\frac{i-1}{n}}|^{2p}\mathbb{E}|\delta W_{i\tilde{\sigma}(k')}|^{2p}\big)^{1/2}\big(\mathbb{E}|X^{nm,\tilde{\sigma}}_{\frac{m(i-1)+k-1}{nm}}-X^{nm,\tilde{\sigma}}_{\frac{i-1}{n}}|^{2p}\big)^{1/2}\\&+C\Delta_n^p\sum_{k=2}^m \sum_{k'=1}^{k-1}\big[(1+\mathbb{E}|X_{\frac{i-1}{n}}|^{2p})\mathbb{E}|\delta W_{i\tilde{\sigma}(k')}|^{2p}\big]^{1/2}\bigg(\sum_{k'=1}^{k-1} (1+\mathbb{E}|X^{nm,\tilde{\sigma}}_{\frac{m(i-1)+k'-1}{nm}}|^{2p})\Delta_n^{2p}
	+\\&\sum_{k'=1}^{k-1}(1+\mathbb{E}|X^{nm,\tilde{\sigma}}_{\frac{m(i-1)+k'-1}{nm}}|^{2p})\mathbb{E}|\delta W_{i\tilde{\sigma}(k')}\delta W_{i\tilde{\sigma}(k')}^{\top}-{\Iq}\frac{\Delta_n}{m}|^{2p}\\&+\sum_{k'=1}^{k-1}\mathbb{E}|X^{nm,\tilde{\sigma}}_{\frac{m(i-1)+k'-1}{nm}}-X_{\frac{i-1}{n}}|^{2p}\mathbb{E}|\delta W_{i\tilde{\sigma}(k')}|^{2p}\bigg)^{1/2}.
\end{align*}
Now, using Lemma \ref{lem:1} combined with Lemma \ref{Lp} and the fact that $\mathbb{E}|\delta W_{i,\tilde\sigma(k)}|^p=O(\Delta_n^{p/2}),$ 
$\mathbb{E}|\delta W_{i\tilde{\sigma}(k)}\delta W_{i\tilde{\sigma}(k)}^{\top}-{\Iq} \frac{\Delta_n}{m}|^p=O(\Delta_n^{p})$, we obtain \eqref{eq:R0} and we have
\begin{align*}
	&\mathbb{E}|{\tilde R}^{nm,\tilde{\sigma}}_{\ell,\frac{i-1}{n}}(0)|^p= O(\Delta_n^{5p/2})+O(\Delta_n^{2p})\sum_{k=2}^m \big(\mathbb{E}|\nabla^2f_{\ell}^{\top}(\xi_{ik}^{1,n})-\nabla^2f_{\ell}^{\top}(X_{\frac{i-1}{n}})|^{2p}\big)^{1/2}.
\end{align*}
We recall from relation \eqref{eq:xi1n} section 4 that $\xi^{1,n}_{ik}\in(X^{nm,\tilde{\sigma}}_{\frac{i-1}{n}},X^{ nm,\tilde{\sigma}}_{\frac{m(i-1)+k-1}{nm}})$. Then, by using  Lemma \ref{lem:1}, Lemma \ref{Lp} and the assumption (\nameref{Assume}), we have $\mathbb{E}|\nabla^2f_{\ell}^{\top}(\xi_{ik}^{1,n})-\nabla^2f_{\ell}^{\top}(X_{\frac{i-1}{n}})|^{2p}=O(\Delta_n^p)$, which yields us \eqref{eq:tilR0}.
\end{proof}
\begin{proof}[Proof of Lemma \ref{lem:M1}]
Thanks to equation \eqref{eq:M1}, we deduce from relation \eqref{exp:M1}  the exact form of $R^{nm,\tilde{\sigma}}_{\ell,\frac{i-1}{n}}(1)$ that is given by
\begin{multline*}
	R^{nm,\tilde{\sigma}}_{\ell,\frac{i-1}{n}}(1)=\frac{\Delta_n}{m}{\nabla f_{\ell}}(X^{nm,\tilde{\sigma}}_{\frac{i-1}{n}})^\top\sum_{\substack{k,k'=1\\k'<k}}^{m}\big(g(X^{nm,\tilde{\sigma}}_{\frac{m(i-1)+k'-1}{nm}})-g(X_{\frac{i-1}{n}})\big)\delta W_{i\tilde{\sigma}(k')}\\+\frac{\Delta_n}{m}\big({\nabla f_{\ell}}(X^{nm,\tilde{\sigma}}_{\frac{i-1}{n}})^\top-{\nabla f_{\ell}}(X_{\frac{i-1}{n}})^\top\big) g(X_{\frac{i-1}{n}})\sum_{\substack{k,k'=1\\k'<k}}^{m}\delta W_{i\tilde{\sigma}(k')}.
\end{multline*}
By the tower property we have 
\begin{align*}
	\mathbb{E}(R^{nm,\tilde{\sigma}}_{\ell,\frac{i-1}{n}}(1)|\mathcal{F}_{\frac{i-1}{n}})&=\mathbb{E}\bigg(\frac{\Delta_n}{m}{\nabla f_{\ell}}(X^{nm,\tilde{\sigma}}_{\frac{i-1}{n}})^\top\sum_{\substack{k,k'=1\\k'<k}}^{m}\big(g(X^{nm,\tilde{\sigma}}_{\frac{m(i-1)+k'-1}{nm}})-g(X_{\frac{i-1}{n}})\big)\mathbb{E}(\delta W_{i\tilde{\sigma}(k')}|\mathcal{F}^{k'-1,\tilde{\sigma}}_{\frac{i-1}{n}})\\&+\frac{\Delta_n}{m}\big({\nabla f_{\ell}}(X^{nm,\tilde{\sigma}}_{\frac{i-1}{n}})^\top-{\nabla f_{\ell}}(X_{\frac{i-1}{n}})^\top\big) g(X_{\frac{i-1}{n}})\sum_{\substack{k,k'=1\\k'<k}}^{m}\mathbb{E}(\delta W_{i\tilde{\sigma}(k')}|\mathcal{F}^{k'-1,\tilde{\sigma}}_{\frac{i-1}{n}})|\mathcal{F}_{\frac{i-1}{n}}\bigg).
\end{align*}
Since $\delta W_{i,\tilde\sigma(k)}$ is independent of  $\mathcal{F}_{\frac{i-1}{n}} ^{k-1,\tilde \sigma}$, $k\in\{1,\dots,m\}$, 
$\mathbb{E}( \delta W_{i,\tilde\sigma(k)})=0$ and then $\mathbb{E}(R^{nm,\tilde{\sigma}}_{\ell,\frac{i-1}{n}}(1)|\mathcal{F}_{\frac{i-1}{n}})=0$. Now,
thanks to  our assumption (\nameref{Assume}) it is easy to see the existence of $C>0$ s.t. 
\begin{multline*}
	|R^{nm,\tilde{\sigma}}_{\ell,\frac{i-1}{n}}(1)|\le C\Delta_n\sum_{\substack{k,k'=1\\k'<k}}^{m}|X^{nm,\tilde{\sigma}}_{\frac{m(i-1)+k'-1}{nm}}-X_{\frac{i-1}{n}}||\delta W_{i\tilde{\sigma}(k')}|\\+C\Delta_n|X^{nm,\tilde{\sigma}}_{\frac{i-1}{n}}-X_{\frac{i-1}{n}}| (1+|X_{\frac{i-1}{n}}|)\sum_{\substack{k,k'=1\\k'<k}}^{m}|\delta W_{i\tilde{\sigma}(k')}|.
\end{multline*}
Here, the constant $C$ is a generic positive constant whose values may vary from line to line.
Next, applying Cauchy-Schwarz  inequality and using  the independence between the increments, we get
\begin{multline*}
	\mathbb{E}|R^{nm,\tilde{\sigma}}_{\ell,\frac{i-1}{n}}(1)|^p\le C\Delta_n^p\sum_{\substack{k,k'=1\\k'<k}}^{m}\mathbb{E}|X^{nm,\tilde{\sigma}}_{\frac{m(i-1)+k'-1}{nm}}-X_{\frac{i-1}{n}}|^p\mathbb{E}|\delta W_{i\tilde{\sigma}(k')}|^p\\+C\Delta_n^p\big(\mathbb{E}|X^{nm,\tilde{\sigma}}_{\frac{i-1}{n}}-X_{\frac{i-1}{n}}|^{2p}\big)^{1/2} (1+\big(\mathbb{E}|X_{\frac{i-1}{n}}|^{2p}\big)^{1/2})\sum_{\substack{k,k'=1\\k'<k}}^{m}\mathbb{E}|\delta W_{i\tilde{\sigma}(k')}|^p.
\end{multline*}
By the fact that $\mathbb{E}|\delta W_{i,\tilde\sigma(k)}|^p=O(\Delta_n^{p/2})$  and
$\mathbb{E}|\delta W_{i\tilde{\sigma}(k)}\delta W_{i\tilde{\sigma}(k)}^{\top}-{\Iq} \frac{\Delta_n}{m}|^p=O(\Delta_n^{p})$, we have
\begin{multline*}
	\mathbb{E}|R^{nm,\tilde{\sigma}}_{\ell,\frac{i-1}{n}}(1)|^p\le C\Delta_n^{3p/2}\sum_{\substack{k,k'=1\\k'<k}}^{m}\mathbb{E}|X^{nm,\tilde{\sigma}}_{\frac{m(i-1)+k'-1}{nm}}-X_{\frac{i-1}{n}}|^p\\+C\Delta_n^{3p/2}\big(\mathbb{E}|X^{nm,\tilde{\sigma}}_{\frac{i-1}{n}}-X_{\frac{i-1}{n}}|^{2p}\big)^{1/2} (1+\big(\mathbb{E}|X_{\frac{i-1}{n}}|^{2p}).
\end{multline*}
We obtain \eqref{eq:R1} using Lemma \ref{lem:1} and Lemma \ref{Lp}.
\end{proof}
\begin{proof}[Proof of Lemma \ref{lem:M2}]
Thanks to equation \eqref{eq:M2} and  \eqref{prop:bd}, we deduce from relation \eqref{exp:M2}  the exact form of $R^{nm,\tilde{\sigma}}_{\ell,\frac{i-1}{n}}(2)$. We have 
\begin{align*}
	&R^{nm,\tilde{\sigma}}_{\ell,\frac{i-1}{n}}(2)=\sum_{j=1}^q\Bigg[\nabla g_{\ell j}^{\top}(X^{nm,\tilde{\sigma}}_{\frac{i-1}{n}})\sum_{\substack{k,k'=1\\k'<k}}^{m}\bigg(\big(f(X^{nm,\tilde{\sigma}}_{\frac{m(i-1)+k'-1}{nm}})-f(X_{\frac{i-1}{n}})\big)\frac{\Delta_n}{m}+\nonumber\\&\big(\mathbb{H}(X^{nm,\tilde{\sigma}}_{\frac{m(i-1)+k'-1}{nm}})-\mathbb{H}(X_{\frac{i-1}{n}})\big)\blackdiamond(\delta W_{i\tilde{\sigma}(k')}\delta W_{i\tilde{\sigma}(k')}^{\top}-{\Iq}\frac{\Delta_n}{m})\bigg)\Bigg]\delta W_{i\tilde{\sigma}(k)}^j\nonumber\\&+\sum_{j=1}^q\Bigg[\big(\nabla g_{\ell j}^{\top}(X^{nm,\tilde{\sigma}}_{\frac{i-1}{n}})-\nabla g_{\ell j}^{\top}(X_{\frac{i-1}{n}})\big)\sum_{\substack{k,k'=1\\k'<k}}^{m}\bigg(f(X_{\frac{i-1}{n}})\frac{\Delta_n}{m}+\nonumber\\&\mathbb{H}(X_{\frac{i-1}{n}})\blackdiamond(\delta W_{i\tilde{\sigma}(k')}\delta W_{i\tilde{\sigma}(k')}^{\top}-{\Iq}\frac{\Delta_n}{m})\bigg)\Bigg]\delta W_{i\tilde{\sigma}(k)}^j\\&+\sum_{k=3}^m\sum_{j=1}^q\big(\nabla g_{\ell j}^{\top}(X^{nm,\tilde{\sigma}}_{\frac{i-1}{n}})-\nabla g_{\ell j}^{\top}(X_{\frac{i-1}{n}})\big)\sum_{k'=2}^{k-1}\Big[ {\dot g}^n_{ik'}\blackdiamond(X^{nm,\tilde{\sigma}}_{\frac{m(i-1)+k'-1}{nm}}-X^{nm,\tilde{\sigma}}_{\frac{i-1}{n}})\Big]\delta W_{i\tilde{\sigma}(k')}\delta W_{i\tilde{\sigma}(k)}^j\\&+\sum_{k=3}^m\sum_{j=1}^q\nabla g_{\ell j}^{\top}(X_{\frac{i-1}{n}})\sum_{k'=2}^{k-1}\Big[ \big({\dot g}^n_{ik'}-{\dot g}^n_{i}\big)\blackdiamond(X^{nm,\tilde{\sigma}}_{\frac{m(i-1)+k'-1}{nm}}-X^{nm,\tilde{\sigma}}_{\frac{i-1}{n}})\Big]\delta W_{i\tilde{\sigma}(k')}\delta W_{i\tilde{\sigma}(k)}^j\\&+\sum_{k=3}^m\sum_{j=1}^q\nabla g_{\ell j}^{\top}(X_{\frac{i-1}{n}})\sum_{k'=2}^{k-1}\Big[ {\dot g}^n_{i}\blackdiamond\bigg(\sum_{k''=1}^{k'-1} f(X^{nm,\tilde{\sigma}}_{\frac{m(i-1)+k''-1}{nm}})\frac{\Delta_n}{m}
 	\\&+\sum_{k''=1}^{k'-1}\mathbb{H}(X^{nm,\tilde{\sigma}}_{\frac{m(i-1)+k''-1}{nm}})\blackdiamond(\delta W_{i\tilde{\sigma}(k'')}\delta W_{i\tilde{\sigma}(k'')}^{\top}-{\Iq}\frac{\Delta_n}{m})\\&+\sum_{k''=1}^{k'-1}\big(g(X^{nm,\tilde{\sigma}}_{\frac{m(i-1)+k''-1}{nm}})-g(X_{\frac{i-1}{n}})\big)\delta W_{i\tilde{\sigma}(k'')}\bigg)\Big]\delta W_{i\tilde{\sigma}(k')}\delta W_{i\tilde{\sigma}(k)}^j\\&+\sum_{k=2}^m\sum_{j=1}^q\frac{1}{2}(X^{nm,\tilde{\sigma}}_{\frac{m(i-1)+k-1}{nm}}-X^{nm,\tilde{\sigma}}_{\frac{i-1}{n}})^{\top}\big(\nabla^2g_{\ell j}(\xi^{2,n}_{ik})-\nabla^2g_{\ell j}(X_{\frac{i-1}{n}})\big)(X^{nm,\tilde{\sigma}}_{\frac{m(i-1)+k-1}{nm}}-X^{nm,\tilde{\sigma}}_{\frac{i-1}{n}})\delta W_{i\tilde{\sigma}(k)}^j\\&+\frac{1}{2}\sum_{j=1}^q\sum_{\substack{k,k'=1\\k'<k}}^m ( f(X^{nm,\tilde{\sigma}}_{\frac{m(i-1)+k'-1}{nm}})\frac{\Delta_n}{m}	+\mathbb{H}(X^{nm,\tilde{\sigma}}_{\frac{m(i-1)+k'-1}{nm}})\blackdiamond(\delta W_{i\tilde{\sigma}(k')}\delta W_{i\tilde{\sigma}(k')}^{\top}-{\Iq}\frac{\Delta_n}{m}))^{\top}\\&\times\nabla^2g_{\ell j}(X_{\frac{i-1}{n}})(X^{nm,\tilde{\sigma}}_{\frac{m(i-1)+k-1}{nm}}-X^{nm,\tilde{\sigma}}_{\frac{i-1}{n}})\delta W_{i\tilde{\sigma}(k)}^j\\&+\frac{1}{2}\sum_{j=1}^q\sum_{\substack{k,k'=1\\k'<k}}^m \bigg(\big(g(X^{nm,\tilde{\sigma}}_{\frac{m(i-1)+k'-1}{nm}})-g(X_{\frac{i-1}{n}})\big)\delta W_{i\tilde{\sigma}(k')}\bigg)^{\top}\nabla^2g_{\ell j}(X_{\frac{i-1}{n}})(X^{nm,\tilde{\sigma}}_{\frac{m(i-1)+k-1}{nm}}-X^{nm,\tilde{\sigma}}_{\frac{i-1}{n}})\delta W_{i\tilde{\sigma}(k)}^j\\&+\sum_{j=1}^q\frac{1}{2}\sum_{k=2}^m \big(\sum_{k'=1}^{k-1}g(X_{\frac{i-1}{n}})\delta W_{i\tilde{\sigma}(k')}\big)^{\top}\nabla^2g_{\ell j}(X_{\frac{i-1}{n}})\bigg(\sum_{k'=1}^{k-1} f(X^{nm,\tilde{\sigma}}_{\frac{m(i-1)+k'-1}{nm}})\frac{\Delta_n}{m}
 	+\\&\sum_{k'=1}^{k-1}\mathbb{H}(X^{nm,\tilde{\sigma}}_{\frac{m(i-1)+k'-1}{nm}})\blackdiamond(\delta W_{i\tilde{\sigma}(k')}\delta W_{i\tilde{\sigma}(k')}^{\top}-{\Iq}\frac{\Delta_n}{m})+\sum_{k'=1}^{k-1}\big(g(X^{nm,\tilde{\sigma}}_{\frac{m(i-1)+k'-1}{nm}})-g(X_{\frac{i-1}{n}})\big)\delta W_{i\tilde{\sigma}(k')}\bigg)\delta W_{i\tilde{\sigma}(k)}^j.
\end{align*}
By the tower property, the independence of  $\delta W_{i,\tilde\sigma(k)}$ of  $\mathcal{F}_{\frac{i-1}{n}} ^{k-1,\tilde \sigma}$, $k\in\{1,\dots,m\}$, 
and $\mathbb{E}( \delta W_{i,\tilde\sigma(k)})=0$, we deduce $\mathbb{E}(R^{nm,\tilde{\sigma}}_{\ell,\frac{i-1}{n}}(2)|\mathcal{F}_{\frac{i-1}{n}})=0$. Moreover, 
thanks to  our assumption (\nameref{Assume}) it is easy to see the existence of a generic positive constant $C$ which  does not depend on $i$ s.t. 
\begin{align*}
&|R^{nm,\tilde{\sigma}}_{\ell,\frac{i-1}{n}}(2)|\leq 
C\sum_{j=1}^q\sum_{\substack{k,k'=1\\k'<k}}^{m}|X^{nm,\tilde{\sigma}}_{\frac{m(i-1)+k'-1}{nm}}-X_{\frac{i-1}{n}}|\bigg(\frac{\Delta_n}{m}+|\delta W_{i\tilde{\sigma}(k')}\delta W_{i\tilde{\sigma}(k')}^{\top}-{\Iq}\frac{\Delta_n}{m}|\bigg)|\delta W_{i\tilde{\sigma}(k)}^j|\\
&+C\sum_{j=1}^q |X^{nm,\tilde{\sigma}}_{\frac{i-1}{n}}-X_{\frac{i-1}{n}}|\sum_{\substack{k,k'=1\\k'<k}}^{m}(1+|X_{\frac{i-1}{n}}|)\bigg(\frac{\Delta_n}{m}+|\delta W_{i\tilde{\sigma}(k')}\delta W_{i\tilde{\sigma}(k')}^{\top}-{\Iq}\frac{\Delta_n}{m}|\bigg)|\delta W_{i\tilde{\sigma}(k)}^j|\\
&+C\sum_{k=3}^m\sum_{j=1}^q|X^{nm,\tilde{\sigma}}_{\frac{i-1}{n}}-X_{\frac{i-1}{n}}|\sum_{k'=2}^{k-1}|X^{nm,\tilde{\sigma}}_{\frac{m(i-1)+k'-1}{nm}}-X^{nm,\tilde{\sigma}}_{\frac{i-1}{n}}||\delta W_{i\tilde{\sigma}(k')}||\delta W_{i\tilde{\sigma}(k)}^j|\\
&+C\sum_{k=3}^m\sum_{j=1}^q\sum_{k'=2}^{k-1} |X^{nm,\tilde{\sigma}}_{\frac{m(i-1)+k'-1}{nm}}-X^{nm,\tilde{\sigma}}_{\frac{i-1}{n}}||{\dot g}^n_{ik'}-{\dot g}^n_i||\delta W_{i\tilde{\sigma}(k')}||\delta W_{i\tilde{\sigma}(k)}^j|\\&+C\sum_{k=3}^m\sum_{j=1}^q\sum_{k'=2}^{k-1}\Big[ \sum_{k''=1}^{k'-1} (1+|X^{nm,\tilde{\sigma}}_{\frac{m(i-1)+k''-1}{nm}}|)\big(\frac{\Delta_n}{m}+|\delta W_{i\tilde{\sigma}(k'')}\delta W_{i\tilde{\sigma}(k'')}^{\top}-{\Iq}\frac{\Delta_n}{m}|\big)\\
&+\sum_{k''=1}^{k'-1}|X^{nm,\tilde{\sigma}}_{\frac{m(i-1)+k''-1}{nm}}-X_{\frac{i-1}{n}}||\delta W_{i\tilde{\sigma}(k'')}|\Big]|\delta W_{i\tilde{\sigma}(k')}||\delta W_{i\tilde{\sigma}(k)}^j|\\
&+C\sum_{k=2}^m\sum_{j=1}^q|X^{nm,\tilde{\sigma}}_{\frac{m(i-1)+k-1}{nm}}-X^{nm,\tilde{\sigma}}_{\frac{i-1}{n}}|^2|\nabla^2g_{\ell j}(\xi_{ik}^{2,n})-\nabla^2g_{\ell j}(X_{\frac{i-1}{n}})||\delta W_{i\tilde{\sigma}(k)}^j|\\
&+C\sum_{j=1}^q\sum_{\substack{k,k'=1\\k'<k}}^m  (1+|X^{nm,\tilde{\sigma}}_{\frac{m(i-1)+k'-1}{nm}}|)\big(\frac{\Delta_n}{m}	+|\delta W_{i\tilde{\sigma}(k')}\delta W_{i\tilde{\sigma}(k')}^{\top}-{\Iq}\frac{\Delta_n}{m}|\big)
|X^{nm,\tilde{\sigma}}_{\frac{m(i-1)+k-1}{nm}}-X^{nm,\tilde{\sigma}}_{\frac{i-1}{n}}||\delta W_{i\tilde{\sigma}(k)}^j|\\
&+C\sum_{j=1}^q\sum_{\substack{k,k'=1\\k'<k}}^m |X^{nm,\tilde{\sigma}}_{\frac{m(i-1)+k'-1}{nm}}-X_{\frac{i-1}{n}}||\delta W_{i\tilde{\sigma}(k')}||X^{nm,\tilde{\sigma}}_{\frac{m(i-1)+k-1}{nm}}-X^{nm,\tilde{\sigma}}_{\frac{i-1}{n}}||\delta W_{i\tilde{\sigma}(k)}^j|\\
&+C\sum_{j=1}^q\sum_{k=2}^m \bigg(\sum_{k'=1}^{k-1}(1+|X_{\frac{i-1}{n}}|)|\delta W_{i\tilde{\sigma}(k')}|\bigg)\bigg(\sum_{k'=1}^{k-1} (1+|X^{nm,\tilde{\sigma}}_{\frac{m(i-1)+k'-1}{nm}}|)\big(\frac{\Delta_n}{m}
 	+|\delta W_{i\tilde{\sigma}(k')}\delta W_{i\tilde{\sigma}(k')}^{\top}-{\Iq}\frac{\Delta_n}{m}|\big)\\
&+\sum_{k'=1}^{k-1}|X^{nm,\tilde{\sigma}}_{\frac{m(i-1)+k'-1}{nm}}-X_{\frac{i-1}{n}}||\delta W_{i\tilde{\sigma}(k')}|\bigg)|\delta W_{i\tilde{\sigma}(k)}^j|.
\end{align*}
Here, the values of the constant $C$  may vary from line to line.
Next, we apply Cauchy-Schwarz  inequality and use the independence between the increments, we get
\begin{align*}
	&\mathbb{E}|R^{nm,\tilde{\sigma}}_{\ell,\frac{i-1}{n}}(2)|^p\leq C\sum_{j=1}^q\sum_{\substack{k,k'=1\\k'<k}}^{m}\mathbb{E}|X^{nm,\tilde{\sigma}}_{\frac{m(i-1)+k'-1}{nm}}-X_{\frac{i-1}{n}}|^p\bigg(\frac{\Delta_n^p}{m^p}+\mathbb{E}|\delta W_{i\tilde{\sigma}(k')}\delta W_{i\tilde{\sigma}(k')}^{\top}-{\Iq}\frac{\Delta_n}{m}|^p\bigg)\mathbb{E}|\delta W_{i\tilde{\sigma}(k)}^j|^p\\
	&+C\sum_{j=1}^q\sum_{\substack{k,k'=1\\k'<k}}^{m}\big(\mathbb{E}|X^{nm,\tilde{\sigma}}_{\frac{i-1}{n}}-X_{\frac{i-1}{n}}|^{2p}(1+\mathbb{E}|X_{\frac{i-1}{n}}|^{2p})\big)^{1/2}\bigg(\frac{\Delta_n^p}{m^p}+
	\mathbb{E}|\delta W_{i\tilde{\sigma}(k')}\delta W_{i\tilde{\sigma}(k')}^{\top}-{\Iq}\frac{\Delta_n}{m}|^p\bigg)
	\mathbb{E}|\delta W_{i\tilde{\sigma}(k)}^j|^p\\
	&+C\sum_{k=3}^m\sum_{j=1}^q\big(\mathbb{E}|X^{nm,\tilde{\sigma}}_{\frac{i-1}{n}}-X_{\frac{i-1}{n}}|^{2p}\big)^{1/2}\sum_{k'=2}^{k-1}\big(\mathbb{E}|X^{nm,\tilde{\sigma}}_{\frac{m(i-1)+k'-1}{nm}}-X^{nm,\tilde{\sigma}}_{\frac{i-1}{n}}|^{2p}\big)^{1/2}\mathbb{E}|\delta W_{i\tilde{\sigma}(k')}|^p\mathbb{E}|\delta W_{i\tilde{\sigma}(k)}^j|^p\\
	&+C\sum_{k=3}^m\sum_{j=1}^q\sum_{k'=2}^{k-1}\Big[ \mathbb{E}|X^{nm,\tilde{\sigma}}_{\frac{m(i-1)+k'-1}{nm}}-X_{\frac{i-1}{n}}|^{2p}\mathbb{E}|{\dot g}^n_{ik'}-{\dot g}^n_i|^{2p}\Big]^{1/2}\mathbb{E}|\delta W_{i\tilde{\sigma}(k')}|^p\mathbb{E}|\delta W_{i\tilde{\sigma}(k)}^j|^p\\
	&+C\sum_{k=3}^m\sum_{j=1}^q\sum_{k'=2}^{k-1}\Big[ \sum_{k''=1}^{k'-1} (1+\mathbb{E}|X^{nm,\tilde{\sigma}}_{\frac{m(i-1)+k''-1}{nm}}|^p)\big(\frac{\Delta_n^p}{m^p}+\mathbb{E}|\delta W_{i\tilde{\sigma}(k'')}\delta W_{i\tilde{\sigma}(k'')}^{\top}-{\Iq}\frac{\Delta_n}{m}|^p\big)\\
	&+\sum_{k''=1}^{k'-1}\mathbb{E}|X^{nm,\tilde{\sigma}}_{\frac{m(i-1)+k''-1}{nm}}-X_{\frac{i-1}{n}}|^p\mathbb{E}|\delta W_{i\tilde{\sigma}(k'')}|^p\Big]\mathbb{E}|\delta W_{i\tilde{\sigma}(k')}|^p\mathbb{E}|\delta W_{i\tilde{\sigma}(k)}^j|^p\\
	&+C\sum_{k=2}^m\sum_{j=1}^q\bigg(\mathbb{E}|X^{nm,\tilde{\sigma}}_{\frac{m(i-1)+k-1}{nm}}-X^{nm,\tilde{\sigma}}_{\frac{i-1}{n}}|^{4p}\mathbb{E}|\nabla^2 g_{\ell j}(\xi^{2,n}_{ik})-\nabla^2 g_{\ell j}(X_{\frac{i-1}{n}})|^{2p}\bigg)^{1/2}\mathbb{E}|\delta W_{i\tilde{\sigma}(k)}^j|^p\\
	&+C\sum_{j=1}^q\sum_{\substack{k,k'=1\\k'<k}}^m \bigg( (1+\mathbb{E}|X^{nm,\tilde{\sigma}}_{\frac{m(i-1)+k'-1}{nm}}|^{2p})
	(\frac{\Delta_n^{2p}}{m^{2p}}+\mathbb{E}|\delta W_{i\tilde{\sigma}(k')}\delta W_{i\tilde{\sigma}(k')}^{\top}-{\Iq}\frac{\Delta_n}{m}|^{2p})\bigg)^{1/2}\\
	&\times\big(\mathbb{E}|X^{nm,\tilde{\sigma}}_{\frac{m(i-1)+k-1}{nm}}-X^{nm,\tilde{\sigma}}_{\frac{i-1}{n}}|^{2p}\big)^{1/2}\mathbb{E}|\delta W_{i\tilde{\sigma}(k)}^j|^p\\
	&+C\sum_{j=1}^q\sum_{\substack{k,k'=1\\k'<k}}^m \bigg(\mathbb{E}|X^{nm,\tilde{\sigma}}_{\frac{m(i-1)+k'-1}{nm}}-X_{\frac{i-1}{n}}|^{2p}\mathbb{E}|\delta W_{i\tilde{\sigma}(k')}|^{2p}\bigg)^{1/2}\big(\mathbb{E}|X^{nm,\tilde{\sigma}}_{\frac{m(i-1)+k-1}{nm}}-X^{nm,\tilde{\sigma}}_{\frac{i-1}{n}}|^{2p}\big)^{1/2}\mathbb{E}|\delta W_{i\tilde{\sigma}(k)}^j|^p\\
	&+C\sum_{j=1}^q\sum_{k=2}^m \bigg(\sum_{k'=1}^{k-1}(1+\mathbb{E}|X_{\frac{i-1}{n}}|^{2p})\mathbb{E}|\delta W_{i\tilde{\sigma}(k')}|^{2p}\bigg)^{1/2}\bigg(\sum_{k'=1}^{k-1} (1+\mathbb{E}|X^{nm,\tilde{\sigma}}_{\frac{m(i-1)+k'-1}{nm}}|^{2p})(\frac{\Delta_n^{2p}}{m^{2p}}\\
	&+\mathbb{E}|\delta W_{i\tilde{\sigma}(k')}\delta W_{i\tilde{\sigma}(k')}^{\top}-{\Iq}\frac{\Delta_n}{m}|^{2p})
	+\sum_{k'=1}^{k-1}\mathbb{E}|X^{nm,\tilde{\sigma}}_{\frac{m(i-1)+k'-1}{nm}}-X_{\frac{i-1}{n}}|^{2p}\mathbb{E}|\delta W_{i\tilde{\sigma}(k')}|^{2p}\bigg)^{1/2}\mathbb{E}|\delta W_{i\tilde{\sigma}(k)}^j|^p.
\end{align*}
By  using Lemma \ref{lem:1} combined with Lemma \ref{Lp} and the fact that $\mathbb{E}|\delta W_{i,\tilde\sigma(k)}|^p=O(\Delta_n^{p/2})$,
$\mathbb{E}|\delta W_{i\tilde{\sigma}(k)}\delta W_{i\tilde{\sigma}(k)}^{\top}-{\Iq} \frac{\Delta_n}{m}|^p=O(\Delta_n^{p})$, we get
\begin{multline*}
	\mathbb{E}|R^{nm,\tilde{\sigma}}_{\ell,\frac{i-1}{n}}(2)|^p= O(\Delta_n^{2p})+O(\Delta_n^{3p/2})\sum_{k=3}^m\sum_{k'=2}^{k-1}\Big[\mathbb{E}|{\dot g}^n_{ik'}-{\dot g}^n_i|^{2p}\Big]^{1/2}\\+O(\Delta_n^{3p/2})\sum_{k=2}^m\sum_{j=1}^q\bigg(\mathbb{E}|\nabla^2 g_{\ell j}(\xi^{2,n}_{ik})-\nabla^2 g_{\ell j}(X_{\frac{i-1}{n}})|^{2p}\bigg)^{1/2}.
\end{multline*}
Now,  let us recall that from relation \eqref{eq:M2} we have ${\dot g}^n_{ik'}\in (\R^{d\times 1})^{d\times q}$ and $\dot{g}^n_i\in(\R^{d\times1})^{d\times q}$,  for $\ell\in\{1,\dots,d\},j\in\{1,\dots,q\}$, $({\dot g}^n_{ik'})_{\ell j}=\nabla g_{\ell j}( \xi'^{2,n}_{ik'})\in\R^{d\times 1}$ and $(\dot{g}^n_i)_{\ell j}=\nabla g_{\ell j}(X_{\frac{i-1}{n}})$ where  $ \xi'^{2,n}_{ik'}\in(X^{nm,\tilde{\sigma}}_{\frac{i-1}{n}},X^{nm,\tilde{\sigma}}_{\frac{m(i-1)+k'-1}{nm}})$. We also recall that from \eqref{eq:M2}  $\xi^{2,n}_{ik}\in(X^{nm,\tilde{\sigma}}_{\frac{i-1}{n}},X^{nm,\tilde{\sigma}}_{\frac{m(i-1)+k-1}{nm}})$. Then, by   Lemma \ref{lem:1}, Lemma \ref{Lp} and assumption (\nameref{Assume}), we get $\mathbb{E}|\nabla^2 g_{\ell j}(\xi^{2,n}_{ik})-\nabla^2 g_{\ell j}(X_{\frac{i-1}{n}})|^{2p}=\mathbb{E}|{\dot g}^n_{ik'}-{\dot g}^n_i|^{2p}=O(\Delta_n^{p})$. Hence, we deduce \eqref{eq:R2}. 
\end{proof}
\begin{proof}[Proof of Lemma \ref{lem:M3}]
Thanks to equation \eqref{eq:M3}, we deduce from relation \eqref{exp:M3}  the exact form of $R^{nm,\tilde{\sigma}}_{\ell,\frac{i-1}{n}}(3)$. We have 
\begin{align*}	&R^{nm,\tilde{\sigma}}_{\ell,\frac{i-1}{n}}(3)=\sum_{k=2}^m\Big[ \big({\dot  h}_{\ell \bullet\bullet}^{n,ik}-{\dot  h}_{\ell \bullet\bullet}^{n,i}\big)\blackdiamond(X^{nm,\tilde{\sigma}}_{{\frac{m(i-1)+k-1}{nm}}}-X^{nm,\tilde{\sigma}}_{\frac{i-1}{n}})\Big]\blackdiamond(\delta W_{i\tilde{\sigma}(k)}\delta W_{i\tilde{\sigma}(k)}^{\top}-{\Iq} \frac{\Delta_n}{m})\\
 	&+\sum_{k=2}^m\Big[ {\dot  h}_{\ell \bullet\bullet}^{n,i}\blackdiamond\bigg(\sum_{k'=1}^{k-1} f(X^{nm,\tilde{\sigma}}_{\frac{m(i-1)+k'-1}{nm}})\frac{\Delta_n}{m}
 	+\sum_{k'=1}^{k-1}\mathbb{H}(X^{nm,\tilde{\sigma}}_{\frac{m(i-1)+k'-1}{nm}})\blackdiamond(\delta W_{i\tilde{\sigma}(k')}\delta W_{i\tilde{\sigma}(k')}^{\top}-{\Iq}\frac{\Delta_n}{m})\\&+\sum_{k'=1}^{k-1}\big(g(X^{nm,\tilde{\sigma}}_{\frac{m(i-1)+k'-1}{nm}})-g(X_{\frac{i-1}{n}})\big)\delta W_{i\tilde{\sigma}(k')}\bigg)\Big]\blackdiamond(\delta W_{i\tilde{\sigma}(k)}\delta W_{i\tilde{\sigma}(k)}^{\top}-{\Iq} \frac{\Delta_n}{m}).
\end{align*}
By the tower property we have 
\begin{align*}
	&\mathbb{E}(R^{nm,\tilde{\sigma}}_{\ell,\frac{i-1}{n}}(3)|\mathcal{F}_{\frac{i-1}{n}})=\mathbb{E}\bigg(\sum_{k=2}^m\Big[ \big({\dot  h}_{\ell \bullet\bullet}^{n,ik}-{\dot  h}_{\ell \bullet\bullet}^{n,i}\big)\blackdiamond(X^{nm,\tilde{\sigma}}_{{\frac{m(i-1)+k-1}{nm}}}-X^{nm,\tilde{\sigma}}_{\frac{i-1}{n}})\Big]\blackdiamond\mathbb{E}(\delta W_{i\tilde{\sigma}(k)}\delta W_{i\tilde{\sigma}(k)}^{\top}-{\Iq} \frac{\Delta_n}{m}|\mathcal{F}^{k-1,\tilde{\sigma}}_{\frac{i-1}{n}})\\
	&+\sum_{k=2}^m\Big[ {\dot  h}_{\ell \bullet\bullet}^{n,i}\blackdiamond\bigg(\sum_{k'=1}^{k-1} f(X^{nm,\tilde{\sigma}}_{\frac{m(i-1)+k'-1}{nm}})\frac{\Delta_n}{m}
	+\sum_{k'=1}^{k-1}\mathbb{H}(X^{nm,\tilde{\sigma}}_{\frac{m(i-1)+k'-1}{nm}})\blackdiamond(\delta W_{i\tilde{\sigma}(k')}\delta W_{i\tilde{\sigma}(k')}^{\top}-{\Iq}\frac{\Delta_n}{m})\\&+\sum_{k'=1}^{k-1}\big(g(X^{nm,\tilde{\sigma}}_{\frac{m(i-1)+k'-1}{nm}})-g(X_{\frac{i-1}{n}})\big)\delta W_{i\tilde{\sigma}(k')}\bigg)\Big]\blackdiamond\mathbb{E}(\delta W_{i\tilde{\sigma}(k)}\delta W_{i\tilde{\sigma}(k)}^{\top}-{\Iq} \frac{\Delta_n}{m}|\mathcal{F}^{k-1,\tilde{\sigma}}_{\frac{i-1}{n}})|\mathcal{F}_{\frac{i-1}{n}}\bigg).
\end{align*}
Since $\delta W_{i,\tilde\sigma(k)}$ is independent of  $\mathcal{F}_{\frac{i-1}{n}} ^{k-1,\tilde \sigma}$, $k\in\{1,\dots,m\}$, 
$\mathbb{E}( \delta W_{i\tilde{\sigma}(k)}\delta W_{i\tilde{\sigma}(k)}^{\top}-{\Iq} \frac{\Delta_n}{m})=0$, we get  $\mathbb{E}(R^{nm,\tilde{\sigma}}_{\ell,\frac{i-1}{n}}(3)|\mathcal{F}_{\frac{i-1}{n}})=0$.
Now, thanks to  our assumption (\nameref{Assume}) it is easy to see the existence of constant $C>0$ s.t. 
\begin{align*}	&|R^{nm,\tilde{\sigma}}_{\ell,\frac{i-1}{n}}(3)|\le C\sum_{k=2}^m|{\dot  h}_{\ell \bullet\bullet}^{n,ik}-{\dot  h}_{\ell \bullet\bullet}^{n,i}||X^{nm,\tilde{\sigma}}_{{\frac{m(i-1)+k-1}{nm}}}-X^{nm,\tilde{\sigma}}_{\frac{i-1}{n}}||\delta W_{i\tilde{\sigma}(k)}\delta W_{i\tilde{\sigma}(k)}^{\top}-{\Iq} \frac{\Delta_n}{m}|\\
	&+C\sum_{k=2}^m\bigg(\sum_{k'=1}^{k-1} (1+|X^{nm,\tilde{\sigma}}_{\frac{m(i-1)+k'-1}{nm}}|)(\frac{\Delta_n}{m}
	+|\delta W_{i\tilde{\sigma}(k')}\delta W_{i\tilde{\sigma}(k')}^{\top}-{\Iq}\frac{\Delta_n}{m}|)\\
	&+\sum_{k'=1}^{k-1}|X^{nm,\tilde{\sigma}}_{\frac{m(i-1)+k'-1}{nm}}-X_{\frac{i-1}{n}}||\delta W_{i\tilde{\sigma}(k')}|\bigg)|\delta W_{i\tilde{\sigma}(k)}\delta W_{i\tilde{\sigma}(k)}^{\top}-{\Iq} \frac{\Delta_n}{m}|.
\end{align*}
Here, the constant $C$ is a generic positive constant whose values may vary from line to line.
Next, we apply Cauchy-Schwarz inequality and use  the independence between the increments, to get
\begin{align*}	&\mathbb{E}|R^{nm,\tilde{\sigma}}_{\ell,\frac{i-1}{n}}(3)|^p\le C\sum_{k=2}^m\big(\mathbb{E}|{\dot  h}_{\ell \bullet\bullet}^{n,ik}-{\dot  h}_{\ell \bullet\bullet}^{n,i}|^{2p}\mathbb{E}|X^{nm,\tilde{\sigma}}_{{\frac{m(i-1)+k-1}{nm}}}-X^{nm,\tilde{\sigma}}_{\frac{i-1}{n}}|^{2p}\big)^{1/2}\mathbb{E}|\delta W_{i\tilde{\sigma}(k)}\delta W_{i\tilde{\sigma}(k)}^{\top}-{\Iq} \frac{\Delta_n}{m}|^p\\
	&+C\sum_{k=2}^m\bigg(\sum_{k'=1}^{k-1} (1+\mathbb{E}|X^{nm,\tilde{\sigma}}_{\frac{m(i-1)+k'-1}{nm}}|^{p})(\frac{\Delta_n^p}{m^p}
	+\mathbb{E}|\delta W_{i\tilde{\sigma}(k')}\delta W_{i\tilde{\sigma}(k')}^{\top}-{\Iq}\frac{\Delta_n}{m}|^p)
	\\&+\sum_{k'=1}^{k-1}\mathbb{E}|X^{nm,\tilde{\sigma}}_{\frac{m(i-1)+k'-1}{nm}}-X_{\frac{i-1}{n}}|^p\mathbb{E}|\delta W_{i\tilde{\sigma}(k')}|^p\bigg)\mathbb{E}|\delta W_{i\tilde{\sigma}(k)}\delta W_{i\tilde{\sigma}(k)}^{\top}-{\Iq} \frac{\Delta_n}{m}|^p.
\end{align*}
 By Lemma \ref{lem:1}, Lemma \ref{Lp} and the fact that $\mathbb{E}|\delta W_{i,\tilde\sigma(k)}|^p=O(\Delta_n^{p/2})$,
$\mathbb{E}|\delta W_{i\tilde{\sigma}(k)}\delta W_{i\tilde{\sigma}(k)}^{\top}-{\Iq} \frac{\Delta_n}{m}|^p=O(\Delta_n^{p})$, we have 
\begin{align*}	&\mathbb{E}|R^{nm,\tilde{\sigma}}_{\ell,\frac{i-1}{n}}(3)|^p= O(\Delta_n^{2p})+O(\Delta_n^{3p/2})\sum_{k=2}^m\big(\mathbb{E}|{\dot  h}_{\ell \bullet\bullet}^{n,ik}-{\dot  h}_{\ell \bullet\bullet}^{n,i}|^{2p}\big)^{1/2}.
\end{align*}
We recall from relation \eqref{eq:M3} in Section 4 that ${\dot  h}_{\ell \bullet\bullet}^{n,ik}\in(\R^{d\times 1})^{q\times q}$ and $\dot{h}^{n,i}_{\ell\bullet\bullet}\in(\R^{d\times1})^{q\times q}$, for $j$ and $j'\in\{1,\dots,q\}$, $({\dot  h}_{\ell \bullet\bullet}^{n,ik})_{j j'}=\nabla h_{\ell jj'}( \xi^{3,n}_{ik})\in \R^{d\times 1}$ and $(\dot{h}^{n,i}_{\ell\bullet\bullet})_{j j'}=\nabla h_{\ell j j'}(X_{\frac{i-1}{n}})$ where $ \xi^{3,n}_{ik}\in(X^{nm,\tilde{\sigma}}_{\frac{i-1}{n}},X^{nm,\tilde{\sigma}}_{\frac{m(i-1)+k-1}{nm}})$. Then, by using  Lemma \ref{Lp} and the assumption (\nameref{Assume}), we have $\mathbb{E}|{\dot  h}_{\ell \bullet\bullet}^{n,ik}-{\dot  h}_{\ell \bullet\bullet}^{n,i}|^{2p}=O(\Delta_n^{p})$. Hence, we obtain \eqref{eq:R3}.
\end{proof}
\begin{proof}[Proof of Lemma \ref{average}]
	From \eqref{eq:M1tild}, \eqref{eq:M2tild} and \eqref{eq:M3tild}, we use similar arguments as in the proof of Lemma \ref{finer} to get
	\begin{align*}
	\mathbb{E}({\tilde M}^{nm,1}_{\ell,\frac{i-1}{n}}|\mathcal{F}_{\frac{i-1}{n}})=&\frac{1}{16}\sum_{j'=1}^q(X^{nm}_{\frac{i-1}{n}}-X^{nm,\sigma}_{\frac{i-1}{n}})^\top \left(\nabla^2g_{\ell j'}(\zeta^{n,3}_i)+\nabla^2g_{\ell j'}(\zeta^{n,4}_i)\right)(X^{nm}_{\frac{i-1}{n}}-X^{nm,\sigma}_{\frac{i-1}{n}})^{\top}\mathbb{E}(\Delta W^{j'}_{i}|\mathcal{F}_{\frac{i-1}{n}}),\\
	\mathbb{E}({\tilde M}^{nm,2}_{\ell,\frac{i-1}{n}}|\mathcal{F}_{\frac{i-1}{n}})=&\frac{1}{4}\Big[{{\dot h}}^{n,i,1}_{\ell\bullet\bullet}\blackdiamond(X^{nm}_{\frac{i-1}{n}}-X^{nm,\sigma}_{\frac{i-1}{n}})\Big] \blackdiamond  \mathbb{E}(\Delta W_{i}\Delta W_{i}-{\Iq}\Delta_n|\mathcal{F}_{\frac{i-1}{n}}),\\
	\mathbb{E}({\tilde M}^{nm,3}_{\ell,\frac{i-1}{n}}|\mathcal{F}_{\frac{i-1}{n}})=&\sum_{\substack{k,k'=1\\k<k'}}^m\Big[{{\dot h}}^{n,i,2}_{\ell\bullet\bullet}\blackdiamond(X^{nm}_{\frac{i-1}{n}}-X^{nm,\sigma}_{\frac{i-1}{n}})\Big] \blackdiamond \mathbb{E}(\delta W_{ik}\delta W_{ik'}^{\top}-\delta W_{ik'}\delta W_{ik}^{\top}|\mathcal{F}_{\frac{i-1}{n}}).
\end{align*}
Now if we also consider \eqref{eq:Ntild}, we get thanks to assumption (\nameref{Assume}), the existence of  a generic positive constant $C$ s.t.
	\begin{align*}
	\mathbb{E}|{\tilde N}^{nm}_{\ell,\frac{i-1}{n}}|^p\le&C\mathbb{E}|X^{nm}_{\frac{i-1}{n}}-X^{nm,\sigma}_{\frac{i-1}{n}}|^{2p}\Delta_n,\\
	\mathbb{E}|{\tilde M}^{nm,1}_{\ell,\frac{i-1}{n}}|^p\le&C\sum_{j'=1}^q\mathbb{E}|X^{nm}_{\frac{i-1}{n}}-X^{nm,\sigma}_{\frac{i-1}{n}}|^{2p}\mathbb{E}|\Delta W^{j'}_{i}|^p,\\
	\mathbb{E}|{\tilde M}^{nm,2}_{\ell,\frac{i-1}{n}}|^p\le&C\mathbb{E}|X^{nm}_{\frac{i-1}{n}}-X^{nm,\sigma}_{\frac{i-1}{n}}|^{p} \mathbb{E}|\Delta W_{i}\Delta W_{i}-{\Iq}\Delta_n|^{p},\\
	\mathbb{E}|{\tilde M}^{nm,3}_{\ell,\frac{i-1}{n}}|^p\le&C\sum_{\substack{k,k'=1\\k<k'}}^m\mathbb{E}|X^{nm}_{\frac{i-1}{n}}-X^{nm,\sigma}_{\frac{i-1}{n}}|^{p} \mathbb{E}|\delta W_{ik}\delta W_{ik'}^{\top}-\delta W_{ik'}\delta W_{ik}^{\top}|^{p}.
\end{align*}
Thus,  we  easily deduce that $\mathbb{E}({\tilde M}^{nm,1}_{\ell,\frac{i-1}{n}}|\mathcal{F}_{\frac{i-1}{n}})=\mathbb{E}({\tilde M}^{nm,2}_{\ell,\frac{i-1}{n}}|\mathcal{F}_{\frac{i-1}{n}})=\mathbb{E}({\tilde M}^{nm,3}_{\ell,\frac{i-1}{n}}|\mathcal{F}_{\frac{i-1}{n}})=0$ and using Lemma \ref{Lp} we get  $\mathbb{E}|{\tilde N}^{nm}_{\ell,\frac{i-1}{n}}|^p=O(\Delta_n^{2p})$ and $\mathbb{E}|{\tilde M}^{nm,1}_{\ell,\frac{i-1}{n}}|^p=\mathbb{E}|{\tilde M}^{nm,2}_{\ell,\frac{i-1}{n}}|^p=\mathbb{E}|{\tilde M}^{nm,3}_{\ell,\frac{i-1}{n}}|^p=O(\Delta_n^{3p/2})$.
Finally, combining the above estimates with the obtained bounds on $\mathbb{E}(|M^{nm,\tilde \sigma}_{\frac{i-1}{n}}|)$ and $\mathbb{E}(|N^{nm,\tilde \sigma}_{\frac{i-1}{n}}|)$ for $\tilde \sigma\in\{\Id,\sigma\}$ (see \eqref{eq:2.6.3a} and \eqref{eq:2.6.3b}), we easily get the required bounds for the moments of $M_{\frac{i-1}{n}}$ and $N_{\frac{i-1}{n}}$.
\end{proof}
\section{Theoretical tools}\label{app:C}
	\subsection{Uniform tightness}
We first recall the uniform tightness property (UT) defined in Jakubowski, M\'emin and Pag\`es \cite{JakMemPag}.  Let $X^n=(X^{n,i})_{1\leq i\leq d}$ be a sequence of $\mathbb R^d$-valued continuous semimartingales with the decomposition $$X^{n,i}_t =X^{n,i}_0 +A^{n,i}_t +M^{n,i}_t ,\hskip 1cm 0\leq t \leq T,$$ where, for each $n \in\mathbb{N}$ and $1\leq i \leq d$, $A^{n,i}$ is a predictable process with finite variation, null at $0$ and $M^{n,i}$ is a martingale null at $0$.  We say that $X^n$ has (UT) if for each $i$
\begin{equation}\label{ut}
\langle M^{n,i}\rangle_T +\int_0^T|dA^{n,i}_s| \;\;  \mbox{ is tight}\tag{UT}.
\end{equation}
\subsection{Stable convergence}
Let $(X_n)$ be a sequence of random variables with values in a Polish space $E$ defined on a probability space $(\Omega,\mathcal{F},\mathbb{P})$. Let $(\tilde{\Omega},
\tilde{\mathcal{F}},\tilde{\mathbb{P}})$ be an extension of $(\Omega,\mathcal{F},\mathbb{P})$, and let $X$ be an $E$-valued random variable on the extension.
We say that $(X_n)$ converges in law to $X$ stably and write $X_n\stackrel{\rm stably}{\Rightarrow}X$, as $n\to\infty$ if $$\mathbb{E}(Uh(X_n))\rightarrow\tilde{\mathbb{E}}(Uh(X)), \quad \mbox{as }n\to\infty$$
for all $h:E \rightarrow R$ bounded continuous and all bounded random variable $U$ on $(\Omega, \mathcal{F})$. This convergence is obviously stronger than convergence in law that we will denote here by “$\stackrel{\rm stably}{\Rightarrow}$”.

Now, we recall the Lemma 2.1 in \cite{Jacod2004} about the uniform tightness property. For this aim, we consider sums of triangular arrays of the form
\begin{displaymath}
\Gamma_t^n=\sum_{i=1}^{[nt]}\zeta_i^n,
\end{displaymath}
where for each $n$ we have $\R^d$-valued random variables $(\zeta_i^n)_{i\geq1}$ such that each $\zeta_i^n$ is $\mathcal{F}_{i/n}$-measurable.	
\begin{lemma}\label{lm:d1}
	If $\zeta_i^n$ are i.i.d. random variables and $\Gamma^n_1$ converges in law to a limit $U$, then there is a L\'evy process $\Gamma$ such that $\Gamma_1=U$. This process $\Gamma$ is unique in law and $\Gamma^n$ converges in law to $\Gamma$ (for the Skorokhod topology). Further, the sequence $(\Gamma^n)$ has \eqref{ut}. 	 	
\end{lemma}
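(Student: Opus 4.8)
The plan is to reduce everything to characteristic functions and then invoke the classical Gnedenko--Kolmogorov theory of weak limits of row-wise i.i.d. triangular arrays. Write $\mu_n$ for the common law of the $\zeta_i^n$ and $\phi_n(u)=\mathbb{E}(e^{i\langle u,\zeta_1^n\rangle})$, $u\in\R^d$, for its characteristic function; since the $\zeta_i^n$ are i.i.d. the law of $\Gamma_1^n=\sum_{i=1}^n\zeta_i^n$ is the $n$-fold convolution $\mu_n^{*n}$, so the hypothesis $\Gamma_1^n\Rightarrow U$ reads $\phi_n^n\to\psi$ pointwise, with $\psi$ the characteristic function of $U$. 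First I would establish infinitesimality: because $\psi$ is continuous with $\psi(0)=1$ it is nonzero on a ball $|u|\le\delta$, and from $|\phi_n(u)|^n\to|\psi(u)|>0$ there one deduces $\phi_n(u)\to 1$ on $[-\delta,\delta]$; a symmetrization argument (the symmetrized increment has characteristic function $|\phi_n|^2$, and $n(1-|\phi_n|^2)$ stays bounded near $0$, which controls the tails of $\zeta_1^n$) then gives $\zeta_1^n\to 0$ in probability, i.e. the array is uniformly asymptotically negligible. The standard accompanying-laws computation shows that $n(\phi_n-1)$ converges to a continuous function $\Lambda=\log\psi$ of L\'evy--Khintchine form, so $U$ is infinitely divisible with a characteristic triplet $(b,c,\nu)$; equivalently $n\mu_n$ converges vaguely away from the origin to $\nu$, with the truncated first and second moments converging to $b$ and $c$. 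I then define $\Gamma$ to be the L\'evy process with $\mathbb{E}(e^{i\langle u,\Gamma_t\rangle})=\psi(u)^t=e^{t\Lambda(u)}$; this process exists and is unique in law by L\'evy--Khintchine (a L\'evy process is determined in law by its time-$1$ marginal), and $\Gamma_1\sim U$ by construction.

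Second, I would prove the functional convergence $\Gamma^n\Rightarrow\Gamma$ for the Skorokhod topology. The finite-dimensional marginals converge: for $0\le s<t$ the increment $\Gamma_t^n-\Gamma_s^n=\sum_{i=[ns]+1}^{[nt]}\zeta_i^n$ has characteristic function $\phi_n^{[nt]-[ns]}$, which tends to $\psi^{t-s}=e^{(t-s)\Lambda}$ (the power is unambiguous since $\psi$ is infinitely divisible, hence nonvanishing), and increments over disjoint time intervals are independent; hence $(\Gamma^n_{t_1},\dots,\Gamma^n_{t_k})$ converges to the vector of independent increments of $\Gamma$. Tightness in the Skorokhod space then follows from the uniform asymptotic negligibility together with the convergence of the truncated characteristics established above, via the classical triangular-array criterion for convergence to a process with independent increments (Jacod--Shiryaev, or Skorokhod's functional limit theorem). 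Combining finite-dimensional convergence with tightness yields $\Gamma^n\Rightarrow\Gamma$.

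Finally, for the \eqref{ut} property I would work from the canonical semimartingale decomposition of $\Gamma^n$: with $a_n=\mathbb{E}(\zeta_1^n\mathbf{1}_{|\zeta_1^n|\le1})$ the predictable finite-variation part is $A^n_t=[nt]\,a_n$ (augmented by the compensator of the finitely many large jumps), while the local-martingale part $M^n$ collects the compensated jumps. Then $\int_0^T|dA^n_s|$ is controlled by $n|a_n|$ and the bracket $\langle M^n\rangle_T$ by $n\,\mathbb{E}(|\zeta_1^n|^2\wedge1)$; both quantities converge by the triplet computation of the first step (to the drift and the Gaussian-plus-jump mass of $(b,c,\nu)$), so $\langle M^n\rangle_T+\int_0^T|dA^n_s|$ is tight, which is exactly \eqref{ut}. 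The main obstacle is the first step, namely extracting infinitesimality from the bare convergence $\phi_n^n\to\psi$ and identifying the limiting L\'evy triplet, together with verifying Skorokhod tightness; once the characteristic triplet is in hand, the L\'evy process, its uniqueness in law, and \eqref{ut} all follow by now-standard arguments.
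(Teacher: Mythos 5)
Your proposal is correct in outline, but note that the paper does not prove this statement at all: Lemma \ref{lm:d1} is recalled verbatim as Lemma 2.1 of Jacod \cite{Jacod2004}, and is used as a black box. What you have written is essentially a reconstruction of the classical proof behind that citation: reduce to the Gnedenko--Kolmogorov theory of row-wise i.i.d.\ triangular arrays via characteristic functions ($\phi_n^n\to\psi$, infinitesimality, accompanying laws, identification of the L\'evy triplet), build $\Gamma$ from $\psi^t$ by L\'evy--Khintchine, get finite-dimensional convergence from $\phi_n^{[nt]-[ns]}\to\psi^{t-s}$ together with independence of increments, and obtain Skorokhod tightness and \eqref{ut} from the convergence of the predictable characteristics of $\Gamma^n$ (the criterion of Jacod--Shiryaev, in the spirit of \cite{JakMemPag}). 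This is the standard route and it does prove the lemma; since the paper offers nothing to compare against, your proof is a legitimate self-contained substitute for the citation.

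Two steps deserve tightening. First, in the infinitesimality argument, $|\phi_n(u)|^n\to|\psi(u)|>0$ yields only $|\phi_n(u)|\to1$ near $0$; the phase of $\phi_n$ requires a separate argument (a continuous branch of the logarithm on a neighborhood of $0$ where $\psi\neq0$, giving $n\arg\phi_n$ convergent, hence $\arg\phi_n\to0$), and your symmetrization step controls only $\zeta_1^n$ minus its median, so you must still check that the medians tend to $0$ --- which follows because tightness of $\Gamma^n_1$ forces $n\,m_n$ to stay bounded when you recenter at the median $m_n$. Second, for \eqref{ut} your decomposition handles $n|a_n|$ and $n\,\mathbb{E}(|\zeta_1^n|^2\wedge1)$, but the P-UT criterion also requires tightness of the sum of large jumps $\sum_{s\le T}|\Delta\Gamma^n_s|\mathbf{1}_{|\Delta\Gamma^n_s|>1}$, which you dismiss parenthetically; it does hold here, since the number of jumps of $\Gamma^n$ exceeding $1$ on $[0,T]$ is binomial with mean $[nT]\,\mathbb{P}(|\zeta_1^n|>1)\to T\,\nu(\{|x|>1\})$, but it should be stated as part of the verification rather than folded into the drift term.
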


Next, we recall the convergence theorem 3.2. of Jacod in \cite{c} for an $\mathbb{R}^d$-semimartingale process without jumps of form
$$Z^n_t=\sum_{i=1}^{[nt]}\chi^n_i,$$
where $\chi^n_i$ is $\mathcal{F}_{\frac{i}{n}}$-measurable.
\begin{theorem}\label{thm:d3}
	Assume that $M$ is a square-integrable continuous martingale, and that
	each $\chi$ is square-integrable. Assume also that there are two continuous processes $F$
	and $G$ and a continuous process $b$ of bounded variation on $(\Omega, \mathcal{F}, (\mathcal{F}_t)_{0\leq t\leq 1}, \mathbb{P})$ such that
	\begin{align}
		&\sup_t|\sum_{i=1}^{[nt]}\mathbb{E}(\chi_i^n|\mathcal{F}_{\frac{i-1}{n}})-b_t|\stackrel{\mathbb{P}}{\rightarrow}0,\label{eq:a}\tag{a}\\
		&\sum_{i=1}^{[nt]}\left(\mathbb{E}(\chi_i^n{\chi_i^n}^{\top}|\mathcal{F}_{\frac{i-1}{n}})-\mathbb{E}(\chi_i^n|\mathcal{F}_{\frac{i-1}{n}})\mathbb{E}({\chi_i^n}^{\top}|\mathcal{F}_{\frac{i-1}{n}})\right)\stackrel{\mathbb{P}}{\rightarrow}F_t,\hskip 1cm\forall t\in[0,1],\label{eq:b}\tag{b}\\
		&\sum_{i=1}^{[nt]}\mathbb{E}(\chi_i^n\Delta M_{\frac{i-1}{n}}^{\top}|\mathcal{F}_{\frac{i-1}{n}})\stackrel{\mathbb{P}}{\rightarrow}G_t\hskip 1cm\forall t\in[0,1],\label{eq:c}\tag{c}\\
		&\sum_{i=1}^{n}\mathbb{E}(|\chi_i^n|^21_{|\chi^n_i|>\epsilon}|\mathcal{F}_{\frac{i-1}{n}})\stackrel{\mathbb{P}}{\rightarrow}0\hskip 1cm\forall \epsilon>0\hskip 1cm \textrm{(Lindeberg's condition)}.\label{eq:d}\tag{d}
	\end{align}
	Then assume further that $d\langle M^i,M^i\rangle_t\ll dt$ and $dF^{ii}_t\ll dt$, there are predictable processes $u,v, w$ with values in $\mathbb{R}^{D\times D}$, $\mathbb{R}^{d\times D}$ and $\mathbb{R}^{d\times d}$ respectively, such that
	$$\langle M,M^{\top}\rangle_t=\int_0^tu_su^{\top}_sds,\hskip 1cm G_t=\int_0^t v_su_su_s^{\top}ds,$$
	$$F_t=\int_0^t (v_su_su_s^{\top}v_s^{\top}+w_sw^{\top}_s)ds,$$
	we have  $$Z^n\stackrel{\rm stably}{\Rightarrow}Z,$$
	with the limit $Z$ can be realized on the canonical $d$-dimensional Wiener extension of $(\Omega,\mathcal{F},(\mathcal{F}_t)_{0\leq t\leq 1},\mathbb{P})$, with the canonical Wiener process $B$ as 
	$$Z_t=b_t+\int_0^tu_sdM_s+\int_0^tw_sdB_s.$$
\end{theorem}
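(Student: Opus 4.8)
The plan is to recognize Theorem \ref{thm:d3} as a functional stable central limit theorem for a martingale difference triangular array, and to prove it by combining the classical martingale CLT with Jacod's criterion for conditional Gaussianity. First I would center the array: writing
$$
\chi_i^n=\mathbb{E}(\chi_i^n|\mathcal{F}_{\frac{i-1}{n}})+\bar{\chi}_i^n,\qquad \bar{\chi}_i^n:=\chi_i^n-\mathbb{E}(\chi_i^n|\mathcal{F}_{\frac{i-1}{n}}),
$$
we split $Z^n_t=A^n_t+N^n_t$ with $A^n_t=\sum_{i=1}^{[nt]}\mathbb{E}(\chi_i^n|\mathcal{F}_{\frac{i-1}{n}})$ and $N^n_t=\sum_{i=1}^{[nt]}\bar{\chi}_i^n$. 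By hypothesis \eqref{eq:a}, $\sup_t|A^n_t-b_t|\stackrel{\mathbb{P}}{\rightarrow}0$, so the drift part already converges uniformly in probability to the continuous bounded-variation process $b$, and it remains to study the square-integrable martingale array $N^n$.

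Next I would verify the three structural inputs of the martingale CLT for $N^n$. Its predictable quadratic variation is
$$
\langle N^n\rangle_t=\sum_{i=1}^{[nt]}\Big(\mathbb{E}(\chi_i^n{\chi_i^n}^{\top}|\mathcal{F}_{\frac{i-1}{n}})-\mathbb{E}(\chi_i^n|\mathcal{F}_{\frac{i-1}{n}})\mathbb{E}({\chi_i^n}^{\top}|\mathcal{F}_{\frac{i-1}{n}})\Big),
$$
which converges to $F_t$ by \eqref{eq:b}; the joint bracket with $M$ satisfies $\langle N^n,M\rangle_t\to G_t$ by \eqref{eq:c}, the contribution of $A^n$ being asymptotically negligible since $b$ has finite variation while $M$ is continuous; and the Lindeberg condition \eqref{eq:d} guarantees that every limit point is continuous. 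Together with the uniform tightness \eqref{ut} that these conditions entail, one obtains $C$-tightness of $(M,N^n)$ and identifies every limit point as a continuous local martingale with bracket $F$ and cross bracket $G$ with $M$.

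The decisive step is upgrading convergence in law to \emph{stable} convergence and producing the explicit representation. Here I would invoke Jacod's conditional-Gaussianity criterion: it suffices to check that $\langle N^n,\mathcal{N}\rangle$ converges to the correct limit not only for $\mathcal{N}=M$ (handled by \eqref{eq:c}) but for every bounded martingale $\mathcal{N}$ on $(\Omega,\mathcal{F},(\mathcal{F}_t),\mathbb{P})$ orthogonal to $M$, for which the limit must be $0$. This forces the limiting martingale $N$ to be, on a suitable extension, conditionally Gaussian given $\mathcal{F}$ with bracket $F$ and cross bracket $G$ with $M$, whence the stable convergence $(M,N^n)\stackrel{\rm stably}{\Rightarrow}(M,N)$. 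Finally, the absolute-continuity assumptions $d\langle M^i,M^i\rangle_t\ll dt$ and $dF^{ii}_t\ll dt$ let me extract the predictable densities $u,v,w$ and decompose the conditionally Gaussian limit: the part of $N$ correlated with $M$ is $\int_0^{\cdot} v_s\,dM_s$, since $\langle\int v\,dM,M^{\top}\rangle=\int v_su_su_s^{\top}ds=G$, while the orthogonal residual has covariance $F_t-\int_0^t v_su_su_s^{\top}v_s^{\top}ds=\int_0^t w_sw_s^{\top}ds$ and is represented as $\int_0^{\cdot} w_s\,dB_s$ for a standard Brownian motion $B$ independent of $\mathcal{F}$ carried by the Wiener extension. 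Adding back the drift yields $Z_t=b_t+\int_0^t v_s\,dM_s+\int_0^t w_s\,dB_s$.

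The main obstacle is precisely this last step: passing from weak to stable convergence and establishing conditional Gaussianity of the limit. Tightness together with bracket convergence only yields convergence in law; the orthogonality argument against all bounded martingales, which pins down the $\mathcal{F}$-conditional law and legitimizes the construction of the independent Brownian motion $B$ on the extension, is the technically delicate part. It is exactly here that the absolute-continuity hypotheses are indispensable, for they guarantee that the stochastic integrals $\int v\,dM$ and $\int w\,dB$ are well defined and reproduce $G$ and $F$.
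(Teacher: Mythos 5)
First, for context: the paper itself contains no proof of this statement --- Theorem \ref{thm:d3} is recalled verbatim (as Theorem 3.2) from Jacod \cite{c} --- so your sketch can only be measured against the original argument there, which it does follow in outline: split $Z^n$ into compensator plus martingale array, identify the brackets via \eqref{eq:b} and \eqref{eq:c}, use Lindeberg \eqref{eq:d} for $C$-tightness, then pass to a conditionally Gaussian limit realized on a Wiener extension. Two smaller remarks before the main objection. Your final representation $Z_t=b_t+\int_0^t v_s\,dM_s+\int_0^t w_s\,dB_s$ is in fact the correct one: the $u_s\,dM_s$ appearing in the statement is a typo, as your own dimension count ($v\in\mathbb{R}^{d\times D}$, $Z$ being $d$-dimensional) and your bracket computation $\langle\int v\,dM,M^{\top}\rangle=\int v_su_su_s^{\top}ds=G$ show. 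Also, your claim that the drift part contributes only ``asymptotically negligibly'' to the cross bracket undersells the situation: since $M$ is a martingale, $\mathbb{E}(\Delta M_{\frac{i-1}{n}}|\mathcal{F}_{\frac{i-1}{n}})=0$, so the compensator's contribution to $\langle N^n,M\rangle$ vanishes exactly and \eqref{eq:c} gives the bracket of the centered array on the nose.

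The genuine gap sits precisely in the step you flag as decisive. You write that it ``suffices to check'' that $\langle N^n,\mathcal{N}\rangle\to 0$ for every bounded martingale $\mathcal{N}$ orthogonal to $M$, but you never check this --- and you cannot, because hypotheses \eqref{eq:a}--\eqref{eq:d} do not imply it for a general filtration. Concretely, take $\mathcal{F}_t=\sigma(W_s,W'_s:s\le t)$ for two independent Brownian motions, $M=W$, and $\chi^n_i=W'_{i/n}-W'_{(i-1)/n}$: then \eqref{eq:a}--\eqref{eq:d} hold with $b=0$, $F_t=t$, $G=0$, so the conclusion would force $Z=B$ with $B$ independent of $\mathcal{F}$, yet $Z^n\to W'$ uniformly in probability and $W'$ is $\mathcal{F}$-measurable, so the stable limit is not the claimed one. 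The theorem is rescued only by an ingredient absent from your sketch (and tacit in the statement as recalled here): either the orthogonal-martingale condition must be assumed outright, as in Jacod's original formulation, or the filtration must be generated by $M$, so that the predictable representation property makes every bounded martingale orthogonal to $M$ constant and the condition vacuous --- which is exactly the situation in this paper, where $(\mathcal{F}_t)$ is the natural filtration of $W$ and the theorem is applied with $M$ built from $W$. Relatedly, invoking ``Jacod's conditional-Gaussianity criterion'' as a black box is close to circular, since that criterion is the substance of the result in \cite{c}; as you correctly observe, tightness plus bracket identification yields only convergence in law, so a self-contained proof would still have to establish the conditional Gaussianity itself, e.g.\ via the conditional characteristic-function computation that Jacod carries out.
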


\begin{remark}
	If in the theorem above, every $\chi_i^n$, $i\in\{1,\dots,n\}$ have moments of order $p>2$, then the Lindeberg's condition can be obtained by the Lyapunov condition:
	$$\sum_{i=1}^{n}\mathbb{E}(|\chi_i^n|^p|\mathcal{F}_{\frac{i-1}{n}})\stackrel{\mathbb{P}}{\rightarrow}0.$$
\end{remark}

Now, according to Section 2 of Jacod \cite{c} and Lemma 2.1 of Jacod and Protter \cite{d}, we have the following result
\begin{lemma}\label{lm:d5}
	Let $V_n$ and $V$ be defined on $({\Omega},\mathcal{F})$ with values in another metric space $E$. If $V_n\stackrel{\mathbb{P}}{\rightarrow}V$, $X_n\stackrel{\rm stably}{\Rightarrow} X$  then $(V_n,X_n)\stackrel{\rm stably}{\Rightarrow} (V,X)$.\\
	Conversely, if $(V,X_n)\Rightarrow(V,X)$ and $V$ generates the $\sigma$-field $\mathcal{F}$, we can realize this limit as $(V,X)$ with $X$ defined on an extension of $({\Omega},
	\mathcal{F},\mathbb{P})$ and $X_n\stackrel{\rm stably}{\Rightarrow}X$.
\end{lemma}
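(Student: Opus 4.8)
The final statement to prove is Lemma \ref{lm:d5}, which has two parts: a forward implication stating that convergence in probability $V_n\stackrel{\mathbb{P}}{\rightarrow}V$ combined with stable convergence $X_n\stackrel{\rm stably}{\Rightarrow}X$ yields joint stable convergence $(V_n,X_n)\stackrel{\rm stably}{\Rightarrow}(V,X)$, and a converse realization statement. This is a standard but foundational result on stable convergence, essentially Lemma 2.1 of Jacod–Protter \cite{d}.

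Let me think about how I would prove this.

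**Forward direction.** The plan is to use the definition of stable convergence directly. We must show that for every bounded continuous $h: E\times E'\to\mathbb{R}$ and every bounded $\mathcal{F}$-measurable random variable $U$,
$$\mathbb{E}(U\,h(V_n,X_n))\to\tilde{\mathbb{E}}(U\,h(V,X)).$$
The idea is to replace $V_n$ by its limit $V$ using the convergence in probability, and then exploit the stable convergence of $X_n$. First I would note that since $h$ is bounded and continuous and $V_n\stackrel{\mathbb{P}}{\rightarrow}V$, we have $h(V_n,X_n)-h(V,X_n)\stackrel{\mathbb{P}}{\rightarrow}0$ — but this requires uniform continuity-type control because $X_n$ is only converging in law. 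The clean route is approximation: for bounded Lipschitz $h$ one has $|h(V_n,X_n)-h(V,X_n)|\le L\,|V_n-V|$, whence $\mathbb{E}(U|h(V_n,X_n)-h(V,X_n)|)\le \|U\|_\infty L\,\mathbb{E}|V_n-V|\wedge(\text{const})\to 0$ by convergence in probability and boundedness. Then it remains to show $\mathbb{E}(U\,h(V,X_n))\to\tilde{\mathbb{E}}(U\,h(V,X))$. Since $V$ and $U$ are both $\mathcal{F}$-measurable, the product $U' := U\cdot(\text{test against } V)$ can be absorbed: more precisely, by a monotone-class / approximation argument one reduces to $h(v,x)=h_1(v)h_2(x)$ with $h_1,h_2$ bounded continuous, so that $U\,h_1(V)$ is again a bounded $\mathcal{F}$-measurable random variable, and applying the definition of stable convergence of $X_n$ with this new weight gives the claim. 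A density argument extends from bounded Lipschitz $h_1\otimes h_2$ to general bounded continuous $h$ on the Polish product space.

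**Converse direction.** Here I would invoke the structure of extensions. The plan is: assuming $(V,X_n)\Rightarrow(V,X)$ in law and that $V$ generates $\mathcal{F}$, I would construct the extension on which $X$ lives and verify stable convergence. The key observation is that when $V$ generates $\mathcal{F}$, any bounded $\mathcal{F}$-measurable $U$ can be written (up to approximation) as $U=\psi(V)$ for a bounded measurable $\psi$. Then joint convergence in law $(V,X_n)\Rightarrow(V,X)$ applied to the test function $(v,x)\mapsto\tilde\psi(v)h(x)$, with $\tilde\psi$ continuous approximating $\psi$, gives $\mathbb{E}(\psi(V)h(X_n))\to\tilde{\mathbb{E}}(\psi(V)h(X))$, which is exactly the defining relation for stable convergence once we realize $(V,X)$ as living on an extension of $(\Omega,\mathcal{F},\mathbb{P})$ whose first coordinate recovers $V$ and hence $\mathcal{F}$. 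I would cite Section 2 of Jacod \cite{c} for the canonical construction of this extension, so that the marginal of the limit on $\Omega$ is the original $\mathbb{P}$.

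The main obstacle will be handling the measurability/approximation steps rigorously: in the forward direction, justifying the reduction from general bounded continuous $h$ on the product Polish space to product form $h_1\otimes h_2$ (a Stone–Weierstrass or monotone-class argument on the Polish space $E\times E'$), and controlling $h(V_n,X_n)-h(V,X_n)$ uniformly enough despite $X_n$ only converging weakly; the passage through bounded Lipschitz functions is what makes this work cleanly. In the converse, the delicate point is the representation $U=\psi(V)$ valid because $V$ generates $\mathcal{F}$, together with the correct realization of the limit on an extension so that the original probability is preserved. Since both parts are classical and are precisely the content of Lemma 2.1 of Jacod and Protter \cite{d} and Section 2 of Jacod \cite{c}, I would present the forward argument in full and refer to these sources for the technical underpinnings of the converse realization.
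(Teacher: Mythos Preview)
Your proposal is correct and in fact goes beyond what the paper does: the paper gives no proof of this lemma at all, merely stating it with the attribution ``according to Section~2 of Jacod~\cite{c} and Lemma~2.1 of Jacod and Protter~\cite{d}''. Your sketch recovers the standard argument behind those references, so there is nothing to compare against in the paper itself.
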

Now, we recall a result on the convergence of stochastic integrals formulated from Theorem 2.3 in Jacod and Protter \cite{d}. 
\begin{theorem}\label{thm:B5}
	Assume that the sequence $(X^n)$ has \eqref{ut}. Let $H^n$ and $H$ be a sequence of adapted, right-continuous and left-hand side limited processes all defined on the same filtered probability space. If $(H^n,X^n) \stackrel{\rm stably}{\Rightarrow}(H,X)$ then $X$ is a semimartingale with respect to the filtration generated by the limit process $(H,X)$, and we have $(H^n,X^n,\int H^ndX^n)\stackrel{\rm stably}{\Rightarrow}(H,X,\int H dX)$. 	
\end{theorem}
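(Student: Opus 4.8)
The plan is to follow the classical route for convergence of stochastic integrals under the uniform tightness condition, in the spirit of Jakubowski--M\'emin--Pag\`es and Jacod--Protter, by treating $\int H^n\,dX^n$ as an essentially continuous functional of the pair $(H^n,X^n)$, up to an error that the \eqref{ut} property controls uniformly in $n$. First I would record that, since $(X^n)$ has \eqref{ut}, every weak limit point of $(X^n)$ is a semimartingale for a suitable filtration; applied to the joint stable convergence $(H^n,X^n)\stackrel{\rm stably}{\Rightarrow}(H,X)$ this yields that $X$ is a semimartingale with respect to the filtration generated by $(H,X)$, which is the first assertion. This step uses only the canonical decomposition $X^n=X^n_0+A^n+M^n$ together with the tightness of $\langle M^n\rangle_T+\int_0^T|dA^n_s|$ that is built into \eqref{ut}.

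The core of the argument is the approximation of the integrand by piecewise-constant (simple) processes. For a mesh $0=t_0<\cdots<t_N=T$ of size $1/p$, whose points are continuity points of the limit, I set $H^{n,p}_s=\sum_i H^n_{t_i}\mathbf 1_{[t_i,t_{i+1})}(s)$ and likewise $H^{p}$. For such simple integrands the integral reduces to the finite sum $\int_0^t H^{n,p}\,dX^n=\sum_i H^n_{t_i}\bigl(X^n_{t\wedge t_{i+1}}-X^n_{t\wedge t_i}\bigr)$, which is a continuous map of the finite-dimensional marginals of $(H^n,X^n)$. Hence, from $(H^n,X^n)\stackrel{\rm stably}{\Rightarrow}(H,X)$ and the continuous-mapping principle for stable convergence, keeping the conditioning variable via Lemma \ref{lm:d5}, I would obtain $(H^n,X^n,\int H^{n,p}\,dX^n)\stackrel{\rm stably}{\Rightarrow}(H,X,\int H^{p}\,dX)$ for each fixed $p$.

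It then remains to pass to the limit $p\to\infty$ uniformly in $n$, and this is exactly where \eqref{ut} is indispensable. Splitting $X^n$ into its drift and martingale parts, the total variation of $A^n$ and the quadratic variation of $M^n$ are tight, so the stochastic-integral error $\sup_{t\le T}\bigl|\int_0^t (H^n-H^{n,p})\,dX^n\bigr|$ is bounded, up to a quantity that is small with high probability uniformly in $n$, by the modulus of continuity $\sup_{|s-u|\le 1/p}|H^n_s-H^n_u|$; tightness of $(H^n)$, a consequence of the joint convergence, makes this modulus uniformly small as $p\to\infty$. A standard double-limit (diagonal) argument, interchanging $p\to\infty$ with $n\to\infty$, then upgrades the fixed-$p$ convergence of the previous paragraph to $(H^n,X^n,\int H^n\,dX^n)\stackrel{\rm stably}{\Rightarrow}(H,X,\int H\,dX)$, the desired conclusion.

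The main obstacle is precisely this last uniform-in-$n$ control of the approximation error: one must quantify how \eqref{ut} forces $\int(H^n-H^{n,p})\,dX^n$ to be negligible simultaneously for all $n$, which requires handling the finite-variation and the martingale components separately and combining a Doob--Burkholder-type bound against $\langle M^n\rangle$ for the martingale part with the variation bound $\int_0^T|dA^n_s|$ for the drift part. A secondary subtlety is to track the stable, and not merely weak, nature of the convergence through each approximation, ensuring that the bounded conditioning random variable on the original space is preserved at every step; this is exactly what Lemma \ref{lm:d5} is designed to guarantee.
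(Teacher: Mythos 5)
First, note that the paper does not prove this statement at all: Theorem \ref{thm:B5} is recalled verbatim as Theorem 2.3 of Jacod and Protter \cite{d} (itself resting on the uniform-tightness theory of Jakubowski, M\'emin and Pag\`es \cite{JakMemPag}), so the relevant comparison is with the classical proof in those references, whose overall architecture — simple-integrand approximation, continuous mapping kept stable via Lemma \ref{lm:d5}, and \eqref{ut} to control the error — your sketch does follow.

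There is, however, a genuine gap at the decisive step. You bound $\sup_{t\le T}\bigl|\int_0^t(H^n-H^{n,p})\,dX^n\bigr|$ by the plain modulus of continuity $\sup_{|s-u|\le 1/p}|H^n_s-H^n_u|$ and claim tightness of $(H^n)$ makes it uniformly small. For merely c\`adl\`ag integrands this fails: tightness in the Skorokhod $J_1$ topology controls only the \emph{modified} modulus $w'$, which discounts oscillation across a single jump, while the plain modulus does not tend to zero in the presence of jumps (already $H^n=H=\mathbf{1}_{[U,1]}$ with $U$ random defeats it, and a fixed deterministic grid then leaves $H^{n,p}$ at uniform distance $1$ from $H^n$ near $U$). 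So the Doob--Burkholder and total-variation bounds you invoke against $\langle M^n\rangle_T$ and $\int_0^T|dA^n_s|$ do not close the argument. The standard repair — and what the actual proofs do — is either (i) to discretize along \emph{adapted stopping times} $\tau^n_0=0$, $\tau^n_{k+1}=\inf\{t>\tau^n_k:|H^n_t-H^n_{\tau^n_k}|\ge\varepsilon\}$, which forces $|H^n-H^{n,\varepsilon}|\le\varepsilon$ everywhere by construction (at the cost of proving joint convergence of the random grids), or (ii) to use the equivalent characterization of \eqref{ut} as tightness of the family of elementary integrals $\int h\,dX^n$ over predictable $|h|\le 1$, which bounds the error term directly without any modulus estimate. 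A secondary overstatement: your opening claim that \eqref{ut} transfers the semimartingale property to the stable limit is not something one can simply ``record'' from the decomposition $X^n=X^n_0+A^n+M^n$; it is itself a theorem of M\'emin--S{\l}omi\'nski and \cite{JakMemPag}, acceptable as a citation but not as a consequence you derive.
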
 
Now, we recall the Theorem 2.5c in \cite{d}.
\begin{theorem}\label{thm:2.5 1998}
We consider a sequence of SDE's like 
$$X^n_t=J^n_t+\int_0^tX^n_{s-}H^n_sdY_s,$$
all defined on the same filtered probability space and with the same dimensions. Also let $\rho^n$ be an auxiliary sequence of random variables with values in some Polish space $E$, all defined on the same space again.\\
Let $V^n_t=\int_0^tH^n_sdY_s$. Suppose the sequence $\sup_{t\le 1}\|H^n_t\|$ is tight and the sequence $(J^n,V^n,\rho^n)$ stably converges to the limit $(J, V,  \rho)$ defined on some extension of the space. Then $V$ is a semimartingale on some extension and $(J^n,V^n,X^n,\rho^n)$ stably converges to the limit $(J, V,X,  \rho)$ where $X$ is a solution of 
$$X_t=J_t+\int_0^tX_{s-}H_sdY_s.$$
\end{theorem}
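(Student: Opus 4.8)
The plan is to reduce the linear stochastic differential equation to one driven by the single semimartingale $V^n$ and then to invoke the convergence-of-stochastic-integrals machinery recalled in Theorem \ref{thm:B5}. First I would observe that, since $V^n_t=\int_0^tH^n_sdY_s$, the integral term rewrites as $\int_0^tX^n_{s-}H^n_sdY_s=\int_0^tX^n_{s-}dV^n_s$, so that $X^n$ solves the linear equation $X^n_t=J^n_t+\int_0^tX^n_{s-}dV^n_s$ and the limiting object is the solution of $X_t=J_t+\int_0^tX_{s-}dV_s$. The hypothesis that $\sup_{t\le1}\|H^n_t\|$ is tight, combined with the fact that the fixed semimartingale $Y$ trivially satisfies \eqref{ut}, yields through the standard domination estimates for stochastic integrals that the sequence $V^n$ itself satisfies \eqref{ut}; this is the property that makes Theorem \ref{thm:B5} applicable, and it is exactly the point where the tightness of the integrands is used.

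Second, I would construct the solution by Picard iteration. Setting $X^{n,0}=J^n$ and $X^{n,k+1}_t=J^n_t+\int_0^tX^{n,k}_{s-}dV^n_s$, one proves by induction on $k$ that $(J^n,V^n,X^{n,k},\rho^n)\stackrel{\rm stably}{\Rightarrow}(J,V,X^{k},\rho)$, where $X^{k}$ are the Picard iterates of the limit equation. The base case is the assumed stable convergence $(J^n,V^n,\rho^n)\stackrel{\rm stably}{\Rightarrow}(J,V,\rho)$. For the inductive step, the \eqref{ut} property of $V^n$ together with the induction hypothesis lets me apply Theorem \ref{thm:B5} to the integrand $X^{n,k}_{-}$ and the driver $V^n$, giving the stable convergence of $\int_0^{\cdot}X^{n,k}_{s-}dV^n_s$ to $\int_0^{\cdot}X^{k}_{s-}dV_s$; adding back $J^n$ and carrying the auxiliary $\rho^n$ along via Lemma \ref{lm:d5} closes the induction. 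The same framework also certifies that the limit $V$ is a semimartingale on the extension, as the statement requires.

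Third, I would pass from the iterates to the solution by a double-limit argument. Using the \eqref{ut} decomposition of $V^n$ into a finite-variation part and a martingale part, together with a Gronwall–Doob estimate applied to the difference $X^{n,k+1}-X^{n,k}$, one obtains a bound of the form $\mathbb{E}\!\left[\sup_{t\le1}|X^{n,k+1}_t-X^{n,k}_t|\wedge1\right]\le C\,a^{k}/k!$ that is uniform in $n$, so that $X^{n,k}\to X^n$ uniformly in $n$ and likewise $X^{k}\to X$; interchanging the two limits then gives $(J^n,V^n,X^n,\rho^n)\stackrel{\rm stably}{\Rightarrow}(J,V,X,\rho)$ with $X$ the announced solution. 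The main obstacle is precisely this uniform-in-$n$ control of the Picard tail: only \eqref{ut} is available, not $L^p$ bounds on $V^n$, so the estimate must be produced through localization by stopping times that tame the jumps and the total variation of $V^n$. This is the technical heart of the Kurtz–Protter type continuity of the Itô solution map, and it is the reason the theorem is stated at the level of \eqref{ut} rather than under moment assumptions.
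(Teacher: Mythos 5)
The paper offers no proof to compare against: Theorem \ref{thm:2.5 1998} is stated in Appendix \ref{app:C} purely as a recalled tool, namely Theorem 2.5(c) of Jacod and Protter \cite{d}, and is invoked as a black box in the proofs of Theorem \ref{thm:main} and of Proposition \ref{prop:M}. Judged on its own merits, your reconstruction is sound and is essentially the standard argument behind the cited result (in the Kurtz--Protter/Jacod--Protter tradition): the associativity rewriting $\int_0^t X^n_{s-}H^n_s\,dY_s=\int_0^t X^n_{s-}\,dV^n_s$ is legitimate provided $V^n$ is taken matrix-valued; the \eqref{ut} property of $V^n$ does follow from tightness of $\sup_{t\le1}\|H^n_t\|$ by localization (and here, with $Y_t=(t,W_t^1,\dots,W_t^q)^{\top}$ continuous, it is immediate from the bracket and variation bounds); and the induction on Picard iterates via Theorem \ref{thm:B5} works, with the caveat that at each step you must feed the \emph{enlarged} integrand $(X^{n,k},J^n,\rho^n)$ into Theorem \ref{thm:B5}, since componentwise stable limits do not recombine — Lemma \ref{lm:d5} alone does not give the joint convergence. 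Two small imprecisions in your third step: the tail bound $Ca^k/k!$ cannot be uniform in $n$ as literally written; after localizing at level $a$ the constant depends on $a$ and the estimate holds only outside an event whose probability is small uniformly in $n$, so what one actually proves is $\lim_{k}\sup_n\mathbb{P}(\sup_{t\le1}|X^{n,k}_t-X^n_t|>\delta)=0$; and the final interchange of limits requires the stable-convergence analogue of the classical approximation (``convergence together'') theorem, which you should state explicitly rather than invoke implicitly. With these repairs your outline constitutes a complete proof.
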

\subsection{Lindeberg-Feller central limit theorem}
We recall also the following central limit theorem for triangular array (see, e.g., Theorem 7.2 and 7.3 in \cite{l}).
\begin{theorem}\label{central}
	Let $(k_n)_{n\in\mathbb{N}}$ be a sequence such that $k_n\rightarrow\infty$ as $n\rightarrow\infty$. For each $n$, let $X_{n,1}$,$\hdots$,$X_{n,k_n}$ be $k_n$ independent random variables with finite variance such that $\mathbb{E}(X_{n,k})=0$ for all $k\in\{1,\hdots,k_n\}$. Suppose that the following conditions hold:
	\begin{enumerate}[$(1)$]
		\item $\lim_{n\rightarrow\infty}\sum_{k=1}^{k_n}\mathbb{E}|X_{n,k}|^2=\vartheta$, $\vartheta>0$.
		\item Lindeberg's condition: For all $\epsilon>0$, $\lim_{n\rightarrow\infty}\sum_{k=1}^{k_n}\mathbb{E}(|X_{n,k}|^2\mathbbm{1}_{|X_{n,k}|>\epsilon})=0$. Then 
		\begin{align*}
			\sum_{k=1}^{k_n}X_{n,k}\Rightarrow\mathcal{N}(0,\vartheta),\hskip 1cm \textrm{ as }n\rightarrow\infty.
		\end{align*}
		Moreover, if the $X_{n,k}$ have moments of order $p>2$, then the Lindeberg's condition can be obtained by the following one:
		\item Lyapunov's condition: $\lim_{n\rightarrow\infty}\sum_{k=1}^{k_n}\mathbb{E}|X_{n,k}|^p=0.$
	\end{enumerate}
\end{theorem}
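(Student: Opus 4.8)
The plan is to prove Theorem \ref{central} by the classical characteristic function method together with Lévy's continuity theorem. Write $\sigma_{n,k}^2=\mathbb{E}(X_{n,k}^2)$ and $S_n=\sum_{k=1}^{k_n}X_{n,k}$, and denote by $\phi_{n,k}(t)=\mathbb{E}(e^{\mathrm{i}tX_{n,k}})$ the characteristic function of $X_{n,k}$. Since the $X_{n,k}$ are independent within each row, the characteristic function of $S_n$ factorises as $\prod_{k=1}^{k_n}\phi_{n,k}(t)$, and it suffices to show that for every fixed $t\in\mathbb{R}$ this product converges to $e^{-\vartheta t^2/2}$, the characteristic function of $\mathcal{N}(0,\vartheta)$. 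First I would record the \emph{uniform asymptotic negligibility} of the variances: for any $\epsilon>0$, $\sigma_{n,k}^2\le \epsilon^2+\mathbb{E}(X_{n,k}^2\mathbbm{1}_{|X_{n,k}|>\epsilon})\le \epsilon^2+\sum_{j=1}^{k_n}\mathbb{E}(X_{n,j}^2\mathbbm{1}_{|X_{n,j}|>\epsilon})$, so taking the maximum over $k$, letting $n\to\infty$ by Lindeberg's condition $(2)$, and then $\epsilon\to0$ gives $\max_{1\le k\le k_n}\sigma_{n,k}^2\to0$.

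The core step is a remainder estimate for each factor. Using $\mathbb{E}(X_{n,k})=0$ and the elementary inequality $|e^{\mathrm{i}x}-1-\mathrm{i}x+\tfrac{x^2}{2}|\le\min(|x|^2,|x|^3)$, I would bound
\begin{align*}
\Big|\phi_{n,k}(t)-1+\tfrac{t^2\sigma_{n,k}^2}{2}\Big|
&\le \mathbb{E}\big(\min(|tX_{n,k}|^2,|tX_{n,k}|^3)\big)\\
&\le |t|^3\epsilon\,\sigma_{n,k}^2+t^2\,\mathbb{E}\big(X_{n,k}^2\mathbbm{1}_{|X_{n,k}|>\epsilon}\big),
\end{align*}
by splitting the expectation according to whether $|X_{n,k}|\le\epsilon$ or $|X_{n,k}|>\epsilon$ (on the first event $\min(\cdots)\le|t|^3\epsilon X_{n,k}^2$, on the second $\min(\cdots)\le t^2X_{n,k}^2$). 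Summing over $k$ and using conditions $(1)$ and $(2)$ gives $\sum_{k}\big|\phi_{n,k}(t)-1+\tfrac{t^2\sigma_{n,k}^2}{2}\big|\le |t|^3\epsilon(\vartheta+o(1))+o(1)$; letting $n\to\infty$ and then $\epsilon\to0$ shows this sum tends to $0$.

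Next I would telescope the product. Since $\max_k\sigma_{n,k}^2\to0$, for $n$ large the numbers $1-\tfrac{t^2\sigma_{n,k}^2}{2}$ lie in $[0,1]$, so using the standard lemma $|\prod a_k-\prod b_k|\le\sum|a_k-b_k|$ valid for complex numbers of modulus at most $1$ (note $|\phi_{n,k}(t)|\le1$), the previous step yields $\big|\prod_k\phi_{n,k}(t)-\prod_k(1-\tfrac{t^2\sigma_{n,k}^2}{2})\big|\to0$. Finally, $\prod_k(1-\tfrac{t^2\sigma_{n,k}^2}{2})\to e^{-\vartheta t^2/2}$ because $\sum_k\tfrac{t^2\sigma_{n,k}^2}{2}\to\tfrac{\vartheta t^2}{2}$ while $\sum_k\big(\tfrac{t^2\sigma_{n,k}^2}{2}\big)^2\le \tfrac{t^4}{4}\big(\max_k\sigma_{n,k}^2\big)\sum_k\sigma_{n,k}^2\to0$; hence $\prod_k\phi_{n,k}(t)\to e^{-\vartheta t^2/2}$ for every $t$, and Lévy's continuity theorem gives $S_n\Rightarrow\mathcal{N}(0,\vartheta)$. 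The ``moreover'' assertion is immediate: if the $X_{n,k}$ have finite $p$-th moments with $p>2$, then $\sum_k\mathbb{E}(X_{n,k}^2\mathbbm{1}_{|X_{n,k}|>\epsilon})\le\epsilon^{-(p-2)}\sum_k\mathbb{E}(|X_{n,k}|^p)$, so Lyapunov's condition $(3)$ forces Lindeberg's condition $(2)$.

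The result is classical, so there is no genuine conceptual obstacle; the only care required is in the order of limits in the remainder estimate (first $n\to\infty$, then $\epsilon\to0$) and in checking that the negligibility $\max_k\sigma_{n,k}^2\to0$ legitimises both the product-telescoping lemma and the exponential approximation of the product.
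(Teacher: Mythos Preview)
Your proof is correct and follows the standard characteristic-function route to the Lindeberg--Feller theorem. However, the paper does not actually prove this statement: Theorem~\ref{central} appears in the appendix of theoretical tools and is simply \emph{recalled} from the literature, with a reference to Billingsley (Theorems~7.2 and~7.3 in \cite{l}). There is therefore no ``paper's own proof'' to compare against; the authors treat this as a classical black box used only as input to the proof of Theorem~\ref{thm:clt}.

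That said, the argument you wrote is the classical one and is carried out correctly. The only small cosmetic point is that the sharper Taylor bound $|e^{\mathrm{i}x}-1-\mathrm{i}x+\tfrac{x^2}{2}|\le\min(|x|^2,\tfrac{|x|^3}{6})$ is available, but your weaker version with $\min(|x|^2,|x|^3)$ is of course sufficient and your derivation of it (via $|e^{\mathrm{i}x}-1-\mathrm{i}x|\le \tfrac{|x|^2}{2}$ and the triangle inequality) is valid.
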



\end{document}